%%      ----------------------------------------------------------------------------------
%%      ------------------------------- PUBLISHER'S AREA ------------------------------
%%      ----------------------------------------------------------------------------------
%%      Authors, please do not alter the following section.

\documentclass[12pt,twoside]{amsart}
\usepackage[margin=3cm]{geometry}
\usepackage[colorlinks=false]{hyperref}
\usepackage[english]{babel}
\usepackage{graphicx,titling}
\usepackage{float}
\usepackage{amsmath,amsfonts,amssymb,amsthm}
\usepackage{lipsum}
\usepackage[T1]{fontenc}
\usepackage{fourier}
\usepackage{color}
\usepackage[latin1]{inputenc}
\usepackage{esint}
\usepackage{caption}
\usepackage{ccicons}

\makeatletter
\def\blfootnote{\xdef\@thefnmark{}\@footnotetext}
\makeatother

%Please insert first name and last name for each author
\newcommand\ccnote{
    \blfootnote{\copyright\,\, Camillo De Lellis and Anna Skorobogatova}
    \blfootnote{\ccLogo\, \ccAttribution\,\, Licensed under a \href{https://creativecommons.org/licenses/by/4.0/}{Creative Commons Attribution License (CC-BY)}.}
}

\usepackage[export]{adjustbox}
\numberwithin{equation}{section}
\usepackage{setspace}\setstretch{1.05}

\renewcommand{\leq}{\leqslant}

\renewcommand{\geq}{\geqslant}
\renewcommand{\mathbb}{\varmathbb}
\usepackage{fancyhdr}
\pagestyle{fancy}
\fancyhf{}

\newtheorem{theorem}{Theorem}[section]
\newtheorem{lemma}[theorem]{Lemma}
\newtheorem{corollary}[theorem]{Corollary}
\newtheorem{proposition}[theorem]{Proposition}
\newtheorem{definition}[theorem]{Definition}
\newtheorem{remark}[theorem]{Remark}
\fancyhead[LE,RO]{\thepage}

%%      ----------------------- END OF PUBLISHER'S AREA ---------------------
%%      PLEASE PUT HERE THE INITIALS AND SURNAMES OF ALL THE AUTHORS
%%     AND THE SHORT TITLE OF THE ARTICLE
\fancyhead[RE]{C. De Lellis \& A. Skorobogatova}
\fancyhead[LO]{Singularity degree at flat singular points}

%%      ---------------------------------------------------------------------
%%      ------------------------- AUTHOR'S PACKAGES -------------------------
%%      ---------------------------------------------------------------------
%% Insert TeX and LaTeX packages that you would like to use here;
%% Please notice that the main packages have been already loaded;
%% Add here also your macros;
%% Please notice that Theorems/Lemmata/Corollaries/Propositions/Definitions/Remarks have been already defined
%%%%%%%%%%%%%%%%%%%%%%%%%%%%%%%%%%%%%%%%

%\usepackage{amsmath,amsthm,amssymb}
%\usepackage{mathabx}
%\usepackage{lmodern}
\usepackage[initials,msc-links]{amsrefs}
\usepackage{url}
\usepackage[dvipsnames]{xcolor}
\usepackage[english=american]{csquotes}
\usepackage{enumerate}
\usepackage{textcomp}
\usepackage[normalem]{ulem}
\usepackage{mathrsfs}
\usepackage{mathtools}
\definecolor{Gray}{gray}{0.9}
\usepackage{bbm}
%\usepackage[left]{showlabels}
%\usepackage{bm}
%\usepackage{caption} 
%\usepackage[multiple]{footmisc}
%\usepackage{stmaryrd}
%\usepackage{caption}
%\usepackage{stackengine}
%\usepackage{amsmath}
%\usepackage{amsfonts}
%\usepackage{amssymb}
%\captionsetup[figure]{font=footnotesize}
%\usepackage{tikz}

%\usepackage{siunitx}
%\usepackage{xcolor}
\usepackage{booktabs,colortbl, array}
\usepackage{pgfplotstable}
\pgfplotsset{compat=1.8}

% \definecolor{rulecolor}{RGB}{0,71,171}
% \definecolor{tableheadcolor}{gray}{0.92}
% Following is taken from Werner: http://tex.stackexchange.com/a/33761/3061
% and modified for my needs
%

%\usepackage{refcheck}
%\linespread{1.1}

%%%%%%%%DOUBLE BRACKET %%%%%%%%%%%%%%%
\makeatletter
\DeclareFontFamily{OMX}{MnSymbolE}{}
\DeclareSymbolFont{MnLargeSymbols}{OMX}{MnSymbolE}{m}{n}
\SetSymbolFont{MnLargeSymbols}{bold}{OMX}{MnSymbolE}{b}{n}
\DeclareFontShape{OMX}{MnSymbolE}{m}{n}{
    <-6>  MnSymbolE5
    <6-7>  MnSymbolE6
    <7-8>  MnSymbolE7
    <8-9>  MnSymbolE8
    <9-10> MnSymbolE9
    <10-12> MnSymbolE10
    <12->   MnSymbolE12
}{}
\DeclareFontShape{OMX}{MnSymbolE}{b}{n}{
    <-6>  MnSymbolE-Bold5
    <6-7>  MnSymbolE-Bold6
    <7-8>  MnSymbolE-Bold7
    <8-9>  MnSymbolE-Bold8
    <9-10> MnSymbolE-Bold9
    <10-12> MnSymbolE-Bold10
    <12->   MnSymbolE-Bold12
}{}
\let\llangle\@undefined
\let\rrangle\@undefined
\DeclareMathDelimiter{\llangle}{\mathopen}%
{MnLargeSymbols}{'164}{MnLargeSymbols}{'164}
\DeclareMathDelimiter{\rrangle}{\mathclose}%
{MnLargeSymbols}{'171}{MnLargeSymbols}{'171}
\makeatother

%\numberwithin{equation}{section}

%\newtheorem{theorem}{Theorem}[section]
%\newtheorem{lemma}[theorem]{Lemma}
%\newtheorem{proposition}[theorem]{Proposition}
%\newtheorem{corollary}[theorem]{Corollary}

\theoremstyle{definition}
\newtheorem{example}[theorem]{Example}

\newtheorem{assumption}[theorem]{Assumption}

\newcommand{\Crm}{\mathrm{C}}

\newcommand{\Irm}{\mathrm{I}}

\newcommand{\Trm}{\mathrm{T}}

\newcommand{\Wrm}{\mathrm{W}}

\newcommand{\Acal}{\mathcal{A}}
\newcommand{\Bcal}{\mathcal B}

\newcommand{\Fcal}{\mathcal{F}}
\newcommand{\Gcal}{\mathcal{G}}
\newcommand{\Hcal}{\mathcal{H}}

\newcommand{\Kcal}{\mathcal{K}}
\newcommand{\Lcal}{\mathcal{L}}
\newcommand{\Mcal}{\mathcal{M}}

\newcommand{\Scal}{\mathcal{S}}

\newcommand{\Wscr}{\mathscr{W}}

\newcommand{\Abf}{\mathbf{A}}
\newcommand{\Bbf}{\mathbf{B}}
\newcommand{\Cbf}{\mathbf{C}}
\newcommand{\Dbf}{\mathbf{D}}
\newcommand{\Ebf}{\mathbf{E}}

\newcommand{\Gbf}{\mathbf{G}}
\newcommand{\Hbf}{\mathbf{H}}
\newcommand{\Ibf}{\mathbf{I}}

\newcommand{\Sbf}{\mathbf{S}}
\newcommand{\Tbf}{\mathbf{T}}

\newcommand{\Ombf}{\boldsymbol{\Omega}}

\newcommand{\cl}[1]{\overline{#1}}

\newcommand{\dd}{\;\mathrm{d}}

\newcommand{\N}{\mathbb{N}}
\newcommand{\R}{\mathbb{R}}

\newcommand{\loc}{\mathrm{loc}}

\newcommand{\spt}{\mathrm{spt}}

\newcommand{\Sing}{\mathrm{Sing}}

\newcommand{\toweakstar}{\overset{*}\rightharpoonup}

\newcommand{\todown}{\downarrow}

\newcommand{\BV}{\mathrm{BV}}
\newcommand{\TV}{\mathrm{TV}}

\newcommand{\eps}{\epsilon}

\renewcommand{\eps}{\varepsilon}
\newcommand{\vphi}{\varphi}

\DeclareMathOperator{\Lip}{Lip}
%\newcommand{\HhD}{B^{-1}\Hh}

%\newcommand*\Ff[1][\mathcal H]{\Fcal({#1})}

 %complexified

\newcommand{\mres}{\mathbin{\vrule height 1.6ex depth 0pt width
        0.13ex\vrule height 0.13ex depth 0pt width 1.3ex}}
\newcommand{\flatS}{\mathfrak{F}}

%%%%%%%%%%%%%

%\DeclareMathOperator{\curl}{curl}

\newcommand{\aveint}[2]{\mathchoice%
    {\mathop{\kern 0.2em\vrule width 0.6em height 0.69678ex depth -0.58065ex
            \kern -0.8em \intop}\nolimits_{\kern -0.45em#1}^{#2}}%
    {\mathop{\kern 0.1em\vrule width 0.5em height 0.69678ex depth -0.60387ex
            \kern -0.6em \intop}\nolimits_{#1}^{#2}}%
    {\mathop{\kern 0.1em\vrule width 0.5em height 0.69678ex depth -0.60387ex
            \kern -0.6em \intop}\nolimits_{#1}^{#2}}%
    {\mathop{\kern 0.1em\vrule width 0.5em height 0.69678ex depth -0.60387ex
            \kern -0.6em \intop}\nolimits_{#1}^{#2}}}

\newcommand\res{\mathop{\hbox{\vrule height 7pt width .3pt depth 0pt\vrule height .3pt width 5pt depth 0pt}}\nolimits}

\DeclareMathOperator{\Err}{Err}

%%% PLEASE INSERT HERE YOUR FULL NAMES, AFFILIATIONS AND EMAIL ADDRESSES
%\author[C. De Lellis]{Camillo De Lellis}
\address{Camillo De Lellis, School of Mathematics, Institute for Advanced Study, 1 Einstein Dr., Princeton NJ 05840, USA}
\email{camillo.delellis@ias.edu}

%\author[A. Skorobogatova]{Anna Skorobogatova}
\address{Anna Skorobogatova, Institute for Theoretical Sciences, ETH Z\"{u}rich, Scheuchzerstrasse 70, 8006 Z\"{u}rich, Switzerland}
\email{anna.skorobogatova@eth-its.ethz.ch}

%%      -------------------------------------------------------------------------------
%%      -------------------------- DOCUMENT ----------------------------
%%      -------------------------------------------------------------------------------
%% Authors, please do not alter the following section

\begin{document}

\thispagestyle{empty}

\begin{minipage}{0.28\textwidth}
\begin{figure}[H]
%\centering
\includegraphics[width=2.5cm,height=2.5cm,left]{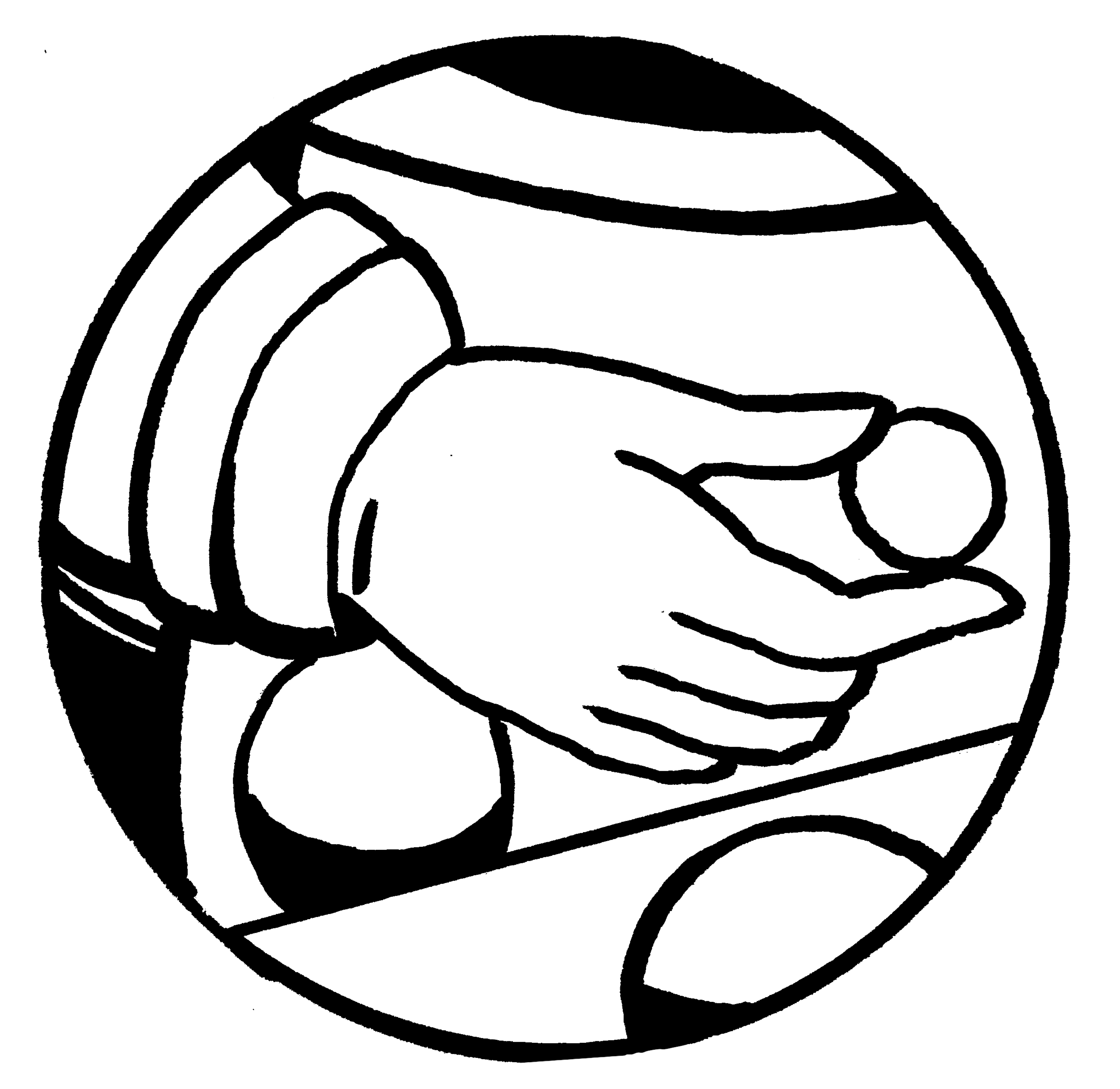}
\end{figure}
\end{minipage}
\begin{minipage}{0.7\textwidth} 
\begin{flushright}
%% The following metadata, in particular
%% the Paper No. and the DOI will be inserted by the journal
Ars Inveniendi Analytica (2025), Paper No. 3, 61 pp.
\\
DOI 10.15781/y8xg-xv26
\\
ISSN: 2769-8505
\end{flushright}
\end{minipage}

\ccnote

\vspace{1cm}

%%      -------------------------------------------------------------------------------
%%      -------------------------- TITLE ----------------------------
%%      -------------------------------------------------------------------------------
%% Authors, please put here the full title of the article

\begin{center}
\begin{huge}
\textit{The fine structure of the singular set of area-minimizing integral currents I: the singularity degree of flat singular points}

%\textit{some titles take two lines}

\end{huge}
\end{center}

\vspace{1cm}

%%      -------------------------------------------------------------------------------
%%      -------------------------- AUTHORS AND AFFILIATIONS ----------------------------
%%      -------------------------------------------------------------------------------
%% Authors, please put here your full names and affiliations

\begin{minipage}[t]{.28\textwidth}
\begin{center}
{\large{\bf{Camillo De Lellis}}} \\
\vskip0.15cm
\footnotesize{Institute for Advanced Study}
\end{center}
\end{minipage}
\hfill
\noindent
\begin{minipage}[t]{.28\textwidth}
\begin{center}
{\large{\bf{Anna Skorobogatova}}} \\
\vskip0.15cm
\footnotesize{ETH Z\"{u}rich}
\end{center}
\end{minipage}

\vspace{1cm}

%%% Please replace "James Mustard" below 
%%% with the name of the managing editor for your submission.
%%% If you are unsure about their identity
%%% please ask an editor-in-chief about.

\begin{center}
\noindent \em{Communicated by Guido De Philippis}
\end{center}
\vspace{1cm}

%%      -------------------------------------------------------------------------------
%%      -------------------------- BEGIN ABSTRACT ----------------------------
%%      -------------------------------------------------------------------------------
%% Authors, please put here the ABSTRACT and KEYBOARDS

\noindent \textbf{Abstract.} \textit{We consider an area-minimizing integral current of dimension $m$ and codimension at least $2$ and fix an arbitrary interior singular point $q$ where at least one tangent cone is flat. For any vanishing sequence of scales around $q$ along which the rescaled currents converge to a flat cone, we define a suitable ``singularity degree" of the rescalings, which is a real number bigger than or equal to $1$. We show that this number is independent of the chosen sequence and we prove several interesting properties linked to its value. Our study prepares the ground for two companion works, where we show that the singular set is $(m-2)$-rectifiable and the tangent cone is unique at $\mathcal{H}^{m-2}$-a.e. point.}
\vskip0.3cm

\noindent \textbf{Keywords.} minimal surfaces, area-minimizing currents, regularity theory, multiple valued functions, blow-up analysis, center manifold. 
\vspace{0.5cm}

%%      -------------------------------------------------------------------------------
%%      -------------------------- BEGIN ARTICLE ----------------------------
%%      -------------------------------------------------------------------------------
%% Authors, copy the body of your paper here

\tableofcontents
\allowdisplaybreaks
\section{Introduction}

Suppose that $T$ is an $m$-dimensional integral current in a complete smooth Riemannian manifold $\Sigma$.
%, which for simplicity we will assume to be properly embedded in an open subset of a sufficiently large Euclidean space. 
We assume that $T$ is area-minimizing in some (relatively) open $\Omega\subset \Sigma$, i.e. 
\[
\mathbf{M} (T+\partial S) \geq \mathbf{M} (T)
\]
for any $(m+1)$-dimensional integral current $S$ supported in $\Omega$. A point $p\in \spt (T)$ is called an interior regular point if there is a ball $\Bbf_r (p)$ in which $T$ is, up to multiplicity, an {\em embedded} submanifold of $\Sigma$ without boundary in $\Bbf_r (p)$. Its complement in $\spt (T)\setminus \spt (\partial T)$ is called the interior singular set and from now on will be denoted by $\Sing (T)$.

Determining the size and structure of $\Sing (T)$ is a problem that has attracted a lot of interest for several decades. The answer depends sensibly on the codimension of $T$ in $\Sigma$. If the codimension is one, the works of De Giorgi, Fleming, Almgren, Simons, and Federer in the sixties and early seventies show that the Hausdorff dimension of $\Sing (T)$ is at most $m-7$, cf. \cite{Federer}. Moreover, the bound is optimal in view of the famous Simons' cone, cf. \cites{Simons, BDG}. The monograph of Almgren~\cite{Almgren_regularity} showed in the early eighties that when the codimension is higher than one, the Hausdorff dimension of $\Sing (T)$ is at most $m-2$, and Almgren's theory has since been simplified and made more transparent in the series of works~\cites{DLS_MAMS, DLS_multiple_valued, DLS14Lp, DLS16centermfld, DLS16blowup}. Almgren's bound is also sharp, given that every holomorphic subvariety of a K\"ahler manifold is an area-minimizing integral current.

In the nineties Simon proved (see~\cite{Simon_rectifiability}) that in codimension one, $\Sing (T)$ is $(m-7)$-rectifiable. Much more recently, Naber and Valtorta in \cite{NV_varifolds} showed that it has locally finite $\mathcal{H}^{m-7}$-measure. In fact \cite{NV_varifolds} exploits the groundbreaking ideas of the earlier work \cite{NV_Annals} to recover at the same time the latter information {\em and} the rectifiability, using independent techniques to Simon. The work of Simon, however, implies also the uniqueness of the tangent cone at $\mathcal{H}^{m-7}$-a.e. point in $\spt (T)\setminus \spt (\partial T)$. 
The aim of this and its two companion works \cites{DLSk2, DMS} is to prove the following counterpart of Simon's theorem in higher codimension.

\begin{theorem}\label{t:big-one}
Let $T$ be an $m$-dimensional area-minimizing current in a $C^{3,\kappa_0}$ complete Riemannian manifold of dimension $m+\bar{n}\geq m+2$, with $\kappa_0>0$. Then $\Sing (T)$ is $(m-2)$-rectifiable and there is a unique tangent cone at $\mathcal{H}^{m-2}$-a.e. $q\in \Sing (T)$. 
\end{theorem}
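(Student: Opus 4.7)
The plan is to first reduce the problem to analysis on the flat singular set $\Sing_F(T)\subset \Sing(T)$, the subset of singular points at which at least one tangent cone is a flat $m$-plane taken with some integer multiplicity $Q\geq 2$. On the complementary set of singular points, where every tangent cone is non-flat, classical stratification combined with the Naber--Valtorta covering scheme for isolated-cone singularities (in the spirit of \cite{NV_varifolds} and its adaptations to higher codimension in the DLS framework) already delivers $(m-2)$-rectifiability and $\mathcal{H}^{m-2}$-a.e.\ uniqueness of the tangent cone, so the remaining task is to establish the same two conclusions on $\Sing_F(T)$.

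Around any $q\in \Sing_F(T)$ and at any scale along which the rescalings of $T$ are close to a flat limit, I would invoke the center manifold and Lipschitz $Q$-valued graph approximation of De Lellis--Spadaro: this produces a regular $C^{3,\kappa}$ center manifold $\mathcal{M}$ tangent to the flat direction and a $Q$-valued map $N$ on $\mathcal{M}$ normal to it, with sharp excess and Dirichlet estimates. The analysis of $T$ near $q$ is thereby reduced to the study of the Almgren-type frequency function of $N$ and of the Dir-minimizing $Q$-valued blow-up limits of $N$ on the flat plane.

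The central construction of the present paper is the singularity degree $I(q)$ at a flat singular point, defined as the limit, along a vanishing sequence of scales at $q$, of an Almgren-type frequency $\Ibf_q(r)$ attached to the center-manifold approximation. The approach is to first prove an almost-monotonicity estimate for $\Ibf_q(r)$, so that a limit automatically exists along any chosen sequence and lies in $[1,\infty)$; the lower bound $I(q)\geq 1$ is forced by the fact that the Dir-minimizing blow-up at an \emph{honestly} singular point cannot reduce to a single-valued harmonic graph (which would correspond to frequency $1$ and regularity). The main obstacle, and the heart of this paper, is to show that $I(q)$ is \emph{intrinsic}: independent both of the chosen scale sequence and of the auxiliary center manifold, which itself depends on the scale and on the particular flat tangent cone that has been selected. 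The strategy is a delicate scale-to-scale comparison of two center manifolds approximating $T$ at $q$, together with a precise bookkeeping of how $\Ibf_q$ transforms under replacement of $\mathcal{M}$, ensuring that the different possible limits are forced to coincide.

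Once $I(q)$ is established as a well-defined intrinsic invariant of $T$ at $q$, the $(m-2)$-rectifiability statement of Theorem \ref{t:big-one} on $\Sing_F(T)$ is proved in the companion work \cite{DLSk2} by stratifying $\Sing_F(T)$ according to the value of $I(q)$ and running a Naber--Valtorta quantitative covering on each stratum: at points of a given level the Dir-minimizing blow-up is $\alpha$-homogeneous with spine of dimension at most $m-2$, which is exactly the geometric input needed to feed the Reifenberg/Naber--Valtorta machinery, with the almost-monotonicity of $\Ibf_q$ supplying the necessary $L^2$-pinching. Finally, uniqueness of the tangent cone at $\mathcal{H}^{m-2}$-a.e.\ point of $\Sing_F(T)$ is established in \cite{DMS} via a {\L}ojasiewicz--Simon inequality anchored at the frequency $I(q)$, which upgrades the almost-monotonicity of $\Ibf_q(r)$ into a summable decay of the distance between successive rescalings, and hence convergence to a unique limit cone.
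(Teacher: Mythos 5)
Your overall architecture matches the paper's: reduce to flat singular points (Naber--Valtorta handles the rest), define the singularity degree via an Almgren-type frequency of the center-manifold normal approximation, prove it is intrinsic, and delegate rectifiability and $\mathcal{H}^{m-2}$-a.e.\ uniqueness to the companion works \cite{DLSk2} and \cite{DMS}. That is the correct skeleton. However, two of your stated mechanisms are materially wrong, and a third omission hides where most of the difficulty lies.

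First, your justification for the lower bound $I(q)\geq 1$ is incorrect. You assert that frequency exactly $1$ ``would correspond to \ldots regularity'' and that a Dir-minimizing blow-up at a genuine singularity ``cannot reduce to a single-valued harmonic graph.'' Neither of these is what forces $I\geq 1$, and the first is false in this setting: flat singular points with singularity degree exactly $1$ do exist, and handling them is precisely the hardest part of the program (the whole of \cite{DMS} is devoted to showing the set of such points is $\mathcal{H}^{m-2}$-null). A $Q$-valued Dir-minimizer $u$ with $u(0)=Q\llbracket 0\rrbracket$ only has $I_u(0)>0$ a priori, not $\geq 1$. The paper obtains $I\geq 1$ from the Hardt--Simon inequality: the error term in the monotonicity formula for mass ratios, passed to the Lipschitz approximation, controls the radial derivative of the rescaled sheets and forces any homogeneous coarse blow-up to have homogeneity degree $\geq 1$ (Theorem~\ref{thm:HS}, combined with Proposition~\ref{p:coarse=fine} to transfer this to fine blow-ups). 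If you pursued your ``frequency $1$ means regular'' route you would conclude there is nothing left to prove at degree $1$, which is a genuine gap.

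Second, you attribute the uniqueness of the tangent cone to a {\L}ojasiewicz--Simon inequality. The paper does not use one. Uniqueness at points with $\Irm(T,q)>1$ is established \emph{in this paper} (Theorem~\ref{t:consequences}\eqref{itm:consequences4}, Propositions~\ref{prop:excessdecay}--\ref{prop:excessdecay-quantitative}) by iterating the decay estimate (Dec) on the parameter $\boldsymbol{m}_{0,k}$ across consecutive intervals of flattening, which yields polynomial decay of $\mathbf{E}(T,\mathbf{B}_r)$ and hence a unique flat limit. The remaining $\mathcal{H}^{m-2}$-a.e.\ uniqueness then comes from the nullity of the $\Irm=1$ stratum together with Naber--Valtorta-type uniqueness at non-flat points, both in \cite{DMS}. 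The mechanism is excess decay driven by $\Irm>1$, not {\L}ojasiewicz--Simon.

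Third, your picture of ``stratify by the value of $I(q)$ and run Naber--Valtorta on each stratum'' underrepresents the split. The companion work \cite{DLSk2} proves $(m-2)$-rectifiability only of $\{\Irm>1\}$, for which the Naber--Valtorta machinery is indeed the right tool (fed by the frequency BV estimate of Proposition~\ref{prop:bv}, which is the paper's main technical export to \cite{DLSk2}). The stratum $\{\Irm=1\}$ is not treated by the same covering argument: it is shown in \cite{DMS} to be $\mathcal{H}^{m-2}$-\emph{null}, which is a different and genuinely harder statement. Your sketch does not acknowledge that distinction, and without it the reduction to the companion works is incomplete.
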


%The tools introduced in \cites{NV_Annals,NV_varifolds} will play a pivotal role in our forthcoming works \cites{DLSk2,DLSk3}. 
%In the present note we assume that the codimension is strictly larger than one. 

Theorem \ref{t:big-one} can in fact be improved in the case of $m=2$, in which it is known that the singularities are isolated, cf. \cite{SXChang} and \cites{DLSS1,DLSS2,DLSS3}. Note also that the uniqueness of tangent cones in the latter case is known since the work of White in the eighties, cf. \cite{White}. In higher dimensions the regularity of $\Sing (T)$ given by Theorem \ref{t:big-one} is optimal, as the recent work \cite{Liu} shows that $\Sing (T)$ can be a fractal with arbitrary dimension $\kappa\leq m-2$. It is however possible to improve the rectifiability statement if one takes a less stringent definition of $\Sing (T)$, because the examples of \cite{Liu} are locally {\em immersed} submanifolds. Moreover, our techniques are far from showing that $\Sing (T)$ has locally finite $\mathcal{H}^{m-2}$-measure, which could be expected, and the general uniqueness of tangent cones remains widely open. 

\subsection{Flat singularities}  The main issue is to establish the $(m-2)$-rectifiability of those singular points where at least one tangent cone is supported in an $m$-dimensional plane, since the remaining portion of the singular set is, by \cite{NV_varifolds}, $(m-2)$-rectifiable. However, we independently establish the $(m-2)$-rectifiability of the singular points with non-flat tangent cones as a consequence of our work \cite{DMS}. From now on if a tangent cone is supported in an $m$-dimensional plane we will call it {\em flat} and a $p\in \Sing (T)$ with at least one flat tangent cone will be called a {\em flat singular point}. We know from the constancy theorem (cf. \cite{Federer}) that a flat tangent cone at a point $q$ must be an oriented $m$-dimensional plane counted with a positive integer multiplicity $Q$. The latter is indeed the density of the current at $q$ and Allard's celebrated regularity theorem \cite{Allard_72} guarantees that if $Q=1$ the point is regular. 
We emphasize that the striking difference in complexity between the codimension one case and the case of higher codimension hinges on the fact that, in higher codimension, flat singular points might exist, while they cannot in codimension one. The latter phenomenon is due to the local characterization of integral hypercurrents as superpositions of boundaries of Caccioppoli sets (cf.~\cite{Simon_GMT}*{Theorem~27.6, Corollary~27.8}), which is very specific to the codimension one setting. The typical examples of area minimizers with flat singular points in higher codimension are branching singularities of holomorphic subvarieties of K\"ahler manifolds. Note moreover that the uniqueness of the tangent cone is still unknown at flat singular points, even under the stronger assumption that {\em all} tangent cones at the considered point are flat. 

In this paper we will be concerned with the definition and properties of a suitable notion of ``singularity degree'' of $T$ at flat singular points. This is a real parameter which will be then used to suitably subdivide the set of flat singular points of $T$. 

\begin{example}\label{e:key-example}
We illustrate the intuition behind the singularity degree in the example of a holomorphic curve in $\mathbb C^2$, defined by
\[
    \Lambda := \{w^Q = z^p : (z,w)\in \mathbb C^2\}\, .
\]
In this example we require that:
\begin{itemize}
\item $p>Q\geq 2$ are coprime integers;
%\item $h$ and $k$ are holomorphic functions;
\item $k (0)\neq 0$.
\end{itemize}
Recall that, by Federer's classical theorem, $\Lambda$ (with the standard orientation given by the complex structure) induces a $2$-dimensional integral area-minimizing current $T=\llbracket \Lambda \rrbracket$ in $\mathbb R^4 \cong \mathbb C^2$. Since $p$ is not a multiple of $Q$ and the latter is strictly larger than $1$, the origin is an interior singular point of $T$. Moreover, since $p$ and $Q$ are coprime and $p$ is larger than $Q$, the (unique) tangent cone to $T$ at $0$ is given by $Q \llbracket\{w=0\}\rrbracket$. In this particular example our notion of singularity degree of $T$ at the flat singular point $0$ gives the number $p/Q$. 
\end{example}

\subsection{Singularity degree} A priori we have very little knowledge of the structure of the singularities at a general flat singular point of an area-minimizing current of arbitrary dimension and codimension. Thus, our definition of singularity degree will necessarily be somewhat involved. In particular, given a flat singular point $q$, we will first identify a suitable analytical definition of singularity degree for a given infinitesimal sequence $\{r_k\}$ of blow-up scales along which the rescaled currents $T_{q,r_k}$ (cf. Section \ref{s:setup} for the definition) converge to a flat tangent cone. These numbers, which might depend on $\{r_k\}$, will be called {\em singular frequency values}, cf. Definition \ref{def:freq_value}. The singularity degree of $T$ at a flat singular point $x$ will then be defined as the infimum of the singular frequency values at $x$, cf. Definition \ref{def:degree}. We will prove a series of interesting properties related to the singularity degree, among which we select the following three:
\begin{itemize}
    \item[(i)] we will show that the singularity degree is necessarily at least $1$, due to the Hardt-Simon inequality and we will show that the singular frequency values all coincide with the singularity degree, i.e. they are the same number, independent of the subsequence, cf. Theorem \ref{thm:uniquefv};
    \item[(ii)] for each infinitesimal blow-up scale we will, up to extraction of a subsequence, identify a suitable rescaled limit, which will be an homogeneous multivalued function and whose degree of homogeneity is indeed the singularity degree, cf. Theorem \ref{t:consequences}(i); 
    \item[(iii)] when the singularity degree is strictly larger than $1$ we will show that the (flat) tangent cone at $x$ is unique and the current decays to it polynomially fast, cf. Theorem \ref{t:consequences}(iv).
\end{itemize}
In the work~\cite{DLSk2} we will then show that the set of flat singular points where the singularity degree is strictly larger than $1$ is $(m-2)$-rectifiable while in \cite{DMS} we will complete the proof by showing that the set of flat singular points where the singularity degree is $1$ is $\mathcal{H}^{m-2}$ negligible. Concerning the uniqueness of the tangent cone, in this paper we show that it is unique at flat singular points where the singularity degree is strictly larger than $1$, while \cite{DMS} will complete the proof by showing $\mathcal{H}^{m-2}$-a.e. uniqueness. 

The three properties (i)-(ii)-(iii) will be fundamental in establishing the proof of Theorem \ref{t:big-one}, however they are not the only important points from this paper which will be heavily used in \cites{DLSk2,DMS}, for instance the BV estimate of Proposition \ref{prop:bv} is crucial for \cite{DLSk2}.

\subsection{Comparison with the work of Krummel \& Wickramasekera} 

At the same time this and the accompanying works \cites{DLSk2,DMS} were being finished, Krummel \& Wickramasekera independently were completing a program also establishing Theorem \ref{t:big-one} (see \cites{KW1,KW2,KW3}). Here we take a moment to discuss the differences and similarities between the two programs, each point addressing a key aspect of each of the three papers in each of the programs. One underlying theme in both programs is to relate structural properties of the singular set to the rate of decay of the current at certain points to its tangent cone. 
\begin{itemize}
    \item In both approaches a monotonicity formula plays an important role in the first step. In our approach, Almgren's monotonicity formula enters to associate to flat singular points (namely, singular points at which at least one tangent cone is supported on a plane) a real number, referred to as the singularity degree, which takes values at least 1. This number is morally the infinitesimal homogeneity of the current relative to the average of its ``sheets'' (the role of which is played by center manifolds which are possibly varying with the scale). A byproduct is that, when the singularity degree is strictly larger than 1, the rate of decay to the tangent plane is at least a power law. This is accomplished in the present paper.
    In their approach, Krummel \& Wickramasekera 
 define a ``planar frequency function'' at the level of the current (see \cite{KW1}), whose definition does not require the introduction of a center manifold, and show that it satisfies a suitable approximate monotonicity whenever the current is decaying to a plane on some interval of radii about a given point. Using this, they prove a certain decomposition theorem holds for the singular set, namely that locally about points of density $Q$ (for given $Q\in \mathbb{Z}_{\geq 1}$), the singular set splits into two disjoint sets, namely a relatively closed set (denoted in \cite{KW1} by $\mathcal{B}$) where the current is decaying with a power law at all scales to a tangent plane with a fixed lower bound on the decay rate, and a set which satisfies a uniform weak approximation property. The latter set could still contain flat singular points. In our approach the analogous set to $\mathcal{B}$ would be the intersection of $\flatS_{Q,\geq 1+\delta}(T)$ with some appropriately small ball and for some appropriate choice of the small threshold $\delta$ (we refer the reader to \cite{DLSk2} for the precise definition). Strictly speaking the two sets do not coincide because the set $\mathcal{B}$ in \cite{KW1} has some uniform control in the prefactor of the power-lay decay to the unique flat tangent. This uniform control could possibly be achieved by making some of our arguments more quantitative.
    \item In both cases, one exploits the power law decay rate at each ``good'' flat singular point (i.e. points where the singularity degree is strictly larger than 1 in our setting, whilst for Krummel \& Wickramasekera it is the subset $\mathcal{B}$ described above), in order to prove $(m-2)$-rectifiability for this subset. For our program, this is achieved in \cite{DLSk2}, whilst for Krummel \& Wickramasekera this is achieved in forthcoming work \cite{KW3}. However, in Krummel \& Wickramasekera's work, the construction of a center manifold is only needed to study flat singular points where not only is the tangent plane unique, but additionally the current is decaying at least quadratically to this tangent plane. In such a setting, the center manifold construction is much simpler (one does not need to deal with intervals of flattening or changing center manifolds as described in Section \ref{s:setup}, for example). The reason for this is that they are able to study the set of flat singular points in the set $\mathcal{B}$ described above at which the decay rate to the tangent plane is a power law with order strictly less than 2 via their planar frequency function. See Section \ref{ss:comparison} for a more in-depth discussion of this matter.
    \item In both approaches one must also deal with ``slowly decaying'' flat singular points; in our works this is when the decay value is exactly $1$ and for Krummel \& Wickramasekera these points are contained in the second set of their decomposition theorem described above. This part is highly non-trivial, and in both programs it is shown that the relevant set is $\mathcal{H}^{m-2}$-null. For us, this is addressed in \cite{DMS} and for Krummel \& Wickramasekera this is handled in \cite{KW2}.
\end{itemize}
It should be noted that aside from the definition of our singularity degree a priori requiring center manifolds (which are a posteriori not necessary in the slow decay case), the order of the last two points above is irrelevant for concluding the program. One could conduct them in either order, and indeed in our case the last point above is chronologically the last step whilst in Krummel \& Wickramasekera's program it is the second step.

One difference between the two sets of works is that our results are all in the general setting of a sufficiently smooth ambient Riemannian manifold, whilst the statements of \cites{KW1,KW2,KW3} are in the Euclidean setting. However, we believe that this is also just a technical matter and not a substantial difference.

Two other differences have already been pointed out above:
\begin{itemize}
    \item[(i)] Whilst Krummel \& Wickramasekera show that the set of singular points without a power law decay rate of some fixed small order to a unique tangent plane is $\mathcal{H}^{m-2}$-null, we show that the set of points with singularity degree exactly equal to $1$ is $\mathcal{H}^{m-2}$-null. The former corresponds to points where our singularity degree is between $1$ and $1+\delta$, for a sufficiently small choice of $\delta>0$. 
    \item[(ii)] Whilst Krummel \& Wickramasekera get a uniform decay estimate for their set $\mathcal{B}$, we do not pursue this for the corresponding set $\flatS_{Q,\geq 1+\delta}(T)$ in our approach and we instead subdivide it in a countable unions of sets for which the rate and the starting scale for the decay is uniform. In \cite{DLSk2} these sets are denoted by $\mathfrak{S}_{K,J}$ for those points with subquadratic decay, and a single set $\Sbf$ for the points with superquadratic decay (here the starting decay scale is shown to be locally uniform). 
\end{itemize}
The combination of (i) and (ii) allow Krummel \& Wickramasekera to achieve the additional conclusion that in fact the set of flat singular points in a sufficiently small neighborhood $U$ of a point of density $Q$ can be decomposed into the union of finitely many sets, say $F_1\cup \ldots\, F_N$, each of which has {\em locally} finite $\mathcal{H}^{m-2}$ measure. In fact they show that $\mathcal{B}$ enjoys the latter structure while the flat singularities in its complement form an $\mathcal{H}^{m-2}$-null set. We caution the reader that this decomposition does not yield the finiteness of the measure of the whole set of flat singular points in $U$ because the sets $F_i$ are not apriori closed. 

\medskip

This raises the natural question of whether our approach is also amenable to yield similar conclusions. We in fact do not believe that (i) is a substantial obstacle for our approach and we think that it is possible to achieve an analogous statement (see \cite{DMS} for a more detailed explanation). Concerning point (ii) we also believe that a suitable refinement of our argument can achieve a uniform decay estimate directly for $\flatS_{Q, \geq 1+\delta}(T)$ in a sufficiently small neighborhood of a point of density $Q$. These considerations are obviously influenced by the insight learned from the works of Krummel \& Wickramasekera.

Provided one can prove the analogous statements to (i) and (ii) in our case (or using the estimates of Krummel and Wickramasekera in combination with our techniques, when the ambient is the Euclidean space), our approach in \cite{DLSk2} would yield the conclusion that $\flatS_{Q,\geq 1+\delta}(T)$ can be decomposed into two sets with locally finite $\mathcal{H}^{m-2}$ measure and that the flat singular points in its complement form an $\mathcal{H}^{m-2}$-null set.
In fact, since in our paper we use a modification of the Naber-Valtorta approach, these two sets would have locally finite $(m-2)$-dimensional Minkowski content. In order to tackle the question of whether $\flatS_{Q, \geq 1+\delta}(T)$ itself has locally finite Minkowski content, one would need instead to suitably modify the arguments in \cite{DLSk2} in order to tackle low frequency and high frequency points at the same time, a task which is certainly more challenging.

Finally, Krummel \& Wickramasekera additionally establish the existence of a unique non-zero (multi-valued) Dirichlet-minimizing tangent function at $\mathcal{H}^{m-2}$-a.e. flat singular point of the current. This is inherently different from our approach in \cite{DLSk2}, given that one major point of the Naber-Valtorta technique is being able to tackle the rectifiability question without addressing the uniqueness of the tangent functions.

\subsection*{Acknowledgments}
C.D.L. and A.S. acknowledge the support of the National Science Foundation through the grant FRG-1854147.

\section{Main statements}\label{s:setup}

In this section we define the singular fequency values and the singularity degree and give the main statements. We follow heavily the notation and terminology of the papers \cites{DLS16centermfld,DLS16blowup} and from now on we will always make the following assumption.

\begin{assumption}\label{asm:1}
    $T$ is an $m$-dimensional integral current in $\Sigma\cap \Omega$ with $\partial T\mres \Omega = 0$, where $\Omega$ is an open set of $\mathbb R^{m+n} = \mathbb R^{m+\bar n + l}$ and $\Sigma$ is an $(m + \bar{n})$-dimensional embedded submanifold of class $\Crm^{3,\kappa_0}$ with $\kappa_0>0$. $T$ is area-minimizing in $\Sigma\cap \Omega$ and $\bar{n} \geq 2$. $0\in \Omega$ is a flat singular point of $T$ and $Q\in \mathbb N\setminus \{0,1\}$ is the density of $T$ at $0$. 
\end{assumption}

We will henceforth let $C$ and $C_0$ denote dimensional constants, depending only on $m,n,Q$. The currents $T_{q,r}$ will denote the dilations $(\iota_{q,r})_\sharp T$, where $\iota_{q,r} (x):= \frac{x-q}{r}$. Since our statements are invariant under dilations, we can also assume that 

\begin{assumption}\label{asm:2}
$T$ and $\Sigma$ satisfy Assumption \ref{asm:1} with $\Omega = \Bbf_{7\sqrt{m}}$ and $\Sigma \cap \Bbf_{7\sqrt{m}}(p)$ is the graph of a $\Crm^{3,\kappa_0}$ function $\Psi_p : \Trm_p\Sigma \cap \Bbf_{7\sqrt{m}}(p) \to \Trm_p\Sigma^\perp$ for every $p \in \Sigma\cap\Bbf_{7\sqrt{m}}$. Moreover
    \[
        \boldsymbol{c}(\Sigma)\coloneqq\sup_{p \in \Sigma \cap \Bbf_{7\sqrt{m}}}\|D\Psi_p\|_{\Crm^{2,\kappa_0}} \leq \bar{\eps},
    \]
    where $\bar\eps$ is a small positive constant which will be specified later.
\end{assumption}
    
In particular the following uniform control on the second fundamental form $A_\Sigma$ of $\Sigma\cap\Bbf_{7\sqrt{m}}$ holds:
\[
        \Abf \coloneqq \|A_\Sigma\|_{\Crm^0(\Sigma\cap\Bbf_{7\sqrt{m}})} \leq C_0\boldsymbol{c} (\Sigma) \leq C_0 \bar\varepsilon.
\]
Following \cite{DLS16blowup}*{Section 2} we introduce appropriate disjoint intervals $]s_j, t_j]\subset ]0,1]$, called {\em intervals of flattening}, the union of which contains\footnote{It is not necessarily true that the inequality $\mathbf{E} (T, \Bbf_{6\sqrt{m} r})\leq \varepsilon_3^2$ holds for all $r\in ]s_j, t_j]$. However the inequality certainly holds at all $r= t_j$, while for the remaining radii in the interval holds up to a suitably fixed constant $C$, cf. \cite{DLS16blowup}.} those radii $r$ such that the spherical excess $\mathbf{E} (T, \Bbf_{6\sqrt{m} r})$ (cf. \cite{DLS16centermfld}*{Definition 1.2} for the definition) falls below a positive fixed threshold $\varepsilon_3^2$. Arguing as in \cite{DLS16blowup}*{Section 2} for each rescaled current $T_{0, t_j}$ and rescaled ambient manifold $\Sigma_{0, t_j}$ we follow the algorithm detailed in \cite{DLS16centermfld} to produce a {\em center manifold} $\mathcal{M}$ and an appropriate multivalued map $N : \mathcal{M} \to \mathcal{A}_Q (\mathbb R^{m+n})$. The latter takes values in the normal bundle of $\mathcal{M}$ and gives an efficient approximation of the current $T_{0, t_j}$ in $\Bbf_3\setminus \Bbf_{s_j/t_j}$. For technical reasons, however, we will use a slightly different definition for the parameter $\boldsymbol{m}_0$ in \cite{DLS16centermfld}*{Assumption 1.3}. Our $\boldsymbol{m}_0$, which we denote by $\boldsymbol{m}_{0,j}$ to underline the dependence on $j$, is defined as
\begin{equation}\label{eq:m_0}
\boldsymbol{m}_{0,j} := \max \{ \mathbf{E} (T_{0, t_j}, \mathbf{B}_{6\sqrt{m}}), \bar\varepsilon^2 t_j^{2-2\delta_2}\}\, , 
\end{equation}
where $\delta_2>0$ is the parameter in \cite{DLS16centermfld}*{Assumption 1.8}.
It can be readily checked that this change is of no consequence for the conclusions of \cites{DLS16centermfld,DLS16blowup}, the relevant point is that, because of simple scaling considerations, $\mathbf{c} (\Sigma_{0, t_j}) \leq \boldsymbol{m}_{0,j}$, therefore all the estimates claimed in \cites{DLS16centermfld,DLS16blowup} are valid with our different choice of parameter $\boldsymbol{m}_{0,j}$, provided we choose it to fall below the same threshold $\eps_3$ as in \cite{DLS16blowup}. In light of this, we will henceforth make the following assumption.

\begin{assumption}\label{asm:3}
    $T$ and $\Sigma$ satisfy Assumption \ref{asm:2}. The parameter $\bar \eps$ is chosen small enough so that $\boldsymbol{m}_{0,0} \leq \eps_3^2$.
\end{assumption}

Before proceeding we record a fact proved in \cite{DLS16blowup}, which is however not explicitly stated there.

\begin{lemma}\label{l:finite-vs-infinite}
Suppose that $T$ and $\Sigma$ are as in Assumption \ref{asm:3}. If $\{j_i\}\subset \mathbb N$ is the set of indices such that $t_{j_i}< s_{j_i-1}$, then either the latter is finite (i.e. $\bigcup_j ]s_j, t_j]$ contains some open interval $]0, \rho[$), or 
\begin{equation}\label{e:restarting_positive}
\liminf_i \mathbf{E} (T_{0, t_{j_i}}, \mathbf{B}_{6\sqrt{m}}) \geq \varepsilon_3^2\, . 
\end{equation}
\end{lemma}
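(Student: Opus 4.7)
The plan is to unpack the scale-selection rule in the construction of intervals of flattening from \cite{DLS16blowup}*{Section 2}: once $s_{j-1}$ has been determined, $t_j$ is defined as the largest scale $r \in \,]0, s_{j-1}]$ at which all the starting conditions for the center manifold algorithm hold. By the definition \eqref{eq:m_0} of $\boldsymbol{m}_{0,j}$, Assumption \ref{asm:3}, and the fact that $r \mapsto \bar\varepsilon^2 r^{2-2\delta_2}$ is monotone decreasing in $r$, the only condition which genuinely varies as $r$ decreases is the excess bound $\mathbf{E}(T_{0,r}, \mathbf{B}_{6\sqrt{m}}) \leq \varepsilon_3^2$. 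Since $0$ is a flat singular point, this bound is met on a sequence of scales accumulating at $0$, so $t_j$ is well-defined at every step and the iteration produces infinitely many intervals.

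In the case when $\{j_i\}$ is infinite, for each $i$ the strict inequality $t_{j_i} < s_{j_i-1}$ combined with the maximality of $t_{j_i}$ forces $\mathbf{E}(T_{0, r}, \mathbf{B}_{6\sqrt{m}}) > \varepsilon_3^2$ for every $r \in \,]t_{j_i}, s_{j_i-1}]$. Using continuity of the spherical excess in the radius, letting $r \downarrow t_{j_i}$ yields $\mathbf{E}(T_{0, t_{j_i}}, \mathbf{B}_{6\sqrt{m}}) \geq \varepsilon_3^2$, whence \eqref{e:restarting_positive} follows on taking liminf. In the complementary case, when $\{j_i\}$ is finite with largest element $j_*$, we have $t_{j+1} = s_j$ for every $j \geq j_*$, so the intervals $]s_j, t_j]$ for $j \geq j_*+1$ are contiguous and their union equals $\,]\inf_{j \geq j_* + 1} s_j,\, t_{j_* + 1}]$. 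The dyadic Whitney structure of the center manifold construction of \cite{DLS16centermfld} provides a universal constant $c_0 \in \,]0, 1[$, depending only on $m, n, Q$, with $s_j \leq c_0 t_j$ for every $j$; iterating along the contiguous tail gives $s_{j_* + k} \leq c_0^k\, t_{j_* + 1} \to 0$, so $\bigcup_j \,]s_j, t_j] \supseteq \,]0, t_{j_* + 1}[$, yielding the alternative conclusion with $\rho = t_{j_* + 1}$.

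I expect the main technical subtleties to be (a) continuity of the spherical excess at the chosen scales $t_{j_i}$, which holds at all scales $R$ with $\|T\|(\partial \mathbf{B}_R) = 0$ and can be handled for the remaining countably many scales by the fact that the masses $\|T\|(\partial \mathbf{B}_{6\sqrt{m} t_{j_i}})$ along the disjoint scales $t_{j_i}$ sum to a finite quantity and hence tend to $0$; and (b) the dyadic stopping ratio $s_j \leq c_0 t_j$, which is encoded in the Whitney-type construction of the center manifold. Both are routine within the framework of \cites{DLS16centermfld, DLS16blowup}.
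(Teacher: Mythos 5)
The overall plan is right, and the finite case is handled correctly (the dyadic stopping condition $s_j\leq c_0 t_j$ with $c_0<1$ is indeed how one sees that a contiguous tail of intervals covers a full punctured neighborhood of $0$). The gap is in the continuity step for the infinite case.

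Writing $f(r):=\mathbf{E}(T_{0,r}, \mathbf{B}_{6\sqrt{m}})=\mathbf{E}(T, \mathbf{B}_{6\sqrt{m}r})$ and $g(r):=\min_\pi\int_{\mathbf{B}_{6\sqrt{m}r}}|\vec T-\pi|^2\,d\|T\|$, one checks that $g$ is nondecreasing, and hence that $\lim_{r\downarrow r_0}f(r)\geq f(r_0)$, with a jump controlled by $\|T\|(\partial\mathbf{B}_{6\sqrt{m}r_0})/r_0^m$ (the normalized boundary mass). This is the wrong direction for your argument: knowing $f(r)>\varepsilon_3^2$ for $r>t_{j_i}$ only tells you $\lim_{r\downarrow t_{j_i}}f(r)\geq\varepsilon_3^2$, and from the potential jump one gets
\[
f(t_{j_i})\ \geq\ \varepsilon_3^2\ -\ C\,\frac{\|T\|(\partial\mathbf{B}_{6\sqrt{m}t_{j_i}})}{t_{j_i}^{m}}\,.
\]
So what needs to vanish is the \emph{normalized} boundary mass $\|T\|(\partial\mathbf{B}_{6\sqrt{m}t_{j_i}})/t_{j_i}^m$. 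Your proposed fix --- that the unnormalized masses $\|T\|(\partial\mathbf{B}_{6\sqrt{m}t_{j_i}})$ sum to a finite total over disjoint spheres and hence tend to $0$ --- does not control the normalized quantity, since the normalizing factor $t_{j_i}^{-m}$ blows up. To close this gap along your route you should instead observe that the normalized boundary masses are the jumps of the (almost) monotone mass ratio $\Theta(r)=\|T\|(\mathbf{B}_r)/(\omega_m r^m)$, which is therefore of bounded variation near $0$, so its jumps along the distinct radii $6\sqrt{m}t_{j_i}$ sum to a finite total and in particular tend to $0$. The paper instead avoids the pointwise continuity issue altogether by a compactness argument: up to a subsequence $T_{0,t_{j_i}}$ converges to a tangent cone $C$, and $r_i\in\,]t_{j_i},s_{j_i-1}[$ with $r_i/t_{j_i}\to 1$ gives $T_{0,r_i}\to C$ as well; since $T$ is area minimizing the mass measures converge weakly-$\ast$, and since $\|C\|(\partial\mathbf{B}_\rho)=0$ for every $\rho$ (as $C$ is a cone) the excesses of both sequences converge to $\mathbf{E}(C,\mathbf{B}_{6\sqrt{m}})$, which must be $\geq\varepsilon_3^2$.
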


For the sake of clarity, we prove this again here; see Section~\ref{ss:proof-finite-vs-infinite}. Since we will repeatedly use it throughout the rest of the paper, it is convenient to introduce the following terminology.

\begin{definition}\label{d:blow-up-sequence}
Let $T$ and $\Sigma$ be as in Assumption \ref{asm:1}. A {\em blow-up sequence of radii} $\{r_k\}$ is a vanishing sequence of positive real numbers such that $T_{0,r_k}$ converges to a flat tangent cone. 
\end{definition}

Of course a similar concept can be introduced by considering a different flat singular point $x$ instead of the origin. In that case we will say that the sequence is a {\em blow-up sequence at the flat singular point $x$}.

Note that, having fixed a blow-up sequence $\{r_k\}$, for every $k$ sufficiently large there is a unique $j(k)$ such that $r_k \in ]s_{j(k)}, t_{j(k)}]$ and we use the following shorthand notations:
\begin{itemize}
    \item $T_k$ and $\Sigma_k$ for the rescaled currents $T_{0, t_{j(k)}} \res \Bbf_{6\sqrt{m}}$ and ambient manifolds $\Sigma_{0, t_{j(k)}}$;
    \item $\Mcal_k$ and $N_k$ for the corresponding center manifolds and normal approximations of $T_k$.
\end{itemize}

\subsection{Compactness procedure}\label{ss:compactness} Let $T$ satisfy Assumption \ref{asm:3} and let $\frac{\bar{s}_k}{t_{j(k)}} \in \big]\frac{3r_k}{2 t_{j(k)}}, \frac{3r_k}{t_{j(k)}}\big]$ be the scale at which the reverse Sobolev inequality~\cite{DLS16blowup}*{Corollary~5.3} holds for $r = \frac{r_k}{t_{j(k)}}$. Then let $\bar{r}_k \coloneqq \frac{2\bar{s}_k}{3t_{j(k)}} \in \big]\frac{r_k}{t_{j(k)}}, \frac{2r_k}{t_{j(k)}}\big]$. We rescale further the currents $T_k$, the ambient manifolds $\Sigma_k$ and the center manifolds to
\[
    \bar{T}_k \coloneqq (\iota_{0,\bar{r}_k})_\sharp T_k =  \big((\iota_{0,\bar{r}_k t_{j(k)}})_\sharp T\big)\mres \Bbf_{\frac{6\sqrt{m}}{\bar{r}_k}}, \qquad \bar{\Sigma}_k \coloneqq \iota_{0,\bar{r}_k} (\Sigma_k), \qquad \bar{\Mcal}_k \coloneqq \iota_{0,\bar{r}_k} (\Mcal_k)\, .
\]
%and let
%\[
%    \bar{\boldsymbol{m}}_0^{(k)} \coloneqq \max\{\boldsymbol{c}(\bar{\Sigma}_k)^2, \Ebf(\bar{T}_k, \Bbf_{6\sqrt{m}})\}.
%\]
Define
\[
    \bar{N}_k: \bar{\Mcal}_k \to {\Acal_Q(\R^{m+n})}, \qquad \bar{N}_k(p) \coloneqq \frac{1}{\bar{r}_k} N_k(\bar{r}_k p),
\]
and let
\[
    u_k \coloneqq \frac{\bar{N}_k \circ \textbf{e}_k}{\mathbf{h}_k}, \qquad u_k:\pi_k \supset B_3 \to \Acal_Q(\R^{m+n}),
\]
where $\textbf{e}_k$ is the exponential map at $p_k \coloneqq \frac{\boldsymbol{\Phi}_k(0)}{\bar{r}_k} \in \bar{\Mcal}_k$ defined on $B_3 \subset \pi_k \simeq T_{p_k} \bar{\Mcal}_k$ and $\mathbf{h}_k \coloneqq \|\bar{N}_k\|_{L^2(\Bcal_{3/2})}$. The reverse Sobolev inequality of \cite{DLS16blowup}*{Corollary~5.3} gives a uniform control on the $W^{1,2}$ norm of $u_k$ on $B_{3/2} (0, \pi_k)$ (which denotes the unit disk of $\pi_k$ centered at $0$ and with radius $3/2$).

Then, following the proof of~\cite{DLS16blowup}*{Theorem~6.2}, there exists a subsequence (not relabeled) a limiting $m$-plane $\pi_0$ and a Dir-minimizing map $u \in \Wrm^{1,2}(B_{3/2}(0, \pi_0);\Acal_Q(\pi_0^\perp))$ with $\boldsymbol{\eta}\circ u = 0$ and $\|u\|_{L^2(B_{3/2})} = 1$, such that (after we apply a suitable rotation to map $\pi_k$ onto $\pi$)
\begin{equation}\label{eq:compactness}
    u_k \longrightarrow u \quad \text{strongly in $\Wrm^{1,2}_\loc\cap L^2$}.
\end{equation}
Recall that Almgren's famous frequency function for Dir-minimizers $u: \Omega \subset \R^m \to \Acal_Q(\R^n)$ at a center point $x \in \Omega$ and scale $r > 0$ is defined by
\[
\frac{r \int_{B_r(x)} |Du|^2}{\int_{\partial B_r(x)} |u|^2}\, .
\]
We refer the reader to~\cite{DLS_MAMS}*{Chapter~3} for the basic properties of the frequency function. The monotonicity of the frequency function~\cite{DLS_MAMS}*{Theorem~3.15} for Dir-minimizers yields existence of the limit as $r\downarrow 0$. It is more convenient to work with a smoother version of the frequency function, which has more robust convergence properties. Following \cite{DLS16centermfld} we consider a Lipschitz cut-off function $\phi: [0,\infty) \to [0,1]$ which vanishes identically for $t$ sufficiently large, equals $1$ for $t$ sufficiently small and is monotone nonincreasing. We then introduce
\begin{align*}
D_{u} (x,r) &:= \int |Du (y)|^2 \phi \left(\frac{|y-x|}{r}\right)\, dy\, ,\\
H_{u} (x,r) &:= -\int \frac{|u(y)|^2}{|y-x|} \phi' \left(\frac{|y-x|}{r}\right)\, dy\, , \\
I_{u} (x,r) &:= \frac{r D_{u} (x,r)}{H_{u} (x,r)}\, .
\end{align*}
The same computations showing the monotonicity of Almgren's frequency function for Dir-minimizers apply to the latter smoothed variant (cf. for instance
\cite{DLS16centermfld}*{Section 3}; note that Almgren's frequency function corresponds, formally, to the choice $\phi = {\mathbf{1}}_{[0,1]}$). Moreover, it can be readily checked that all these smoothed frequency functions are constant when the map is radially homogeneous, and this constant is the degree of homogeneity of the map. It follows then from the arguments in \cite{DLS_MAMS}*{Section 3.3, Section 3.5} that the limit
\[
I_{x,u} (0) = \lim_{r\downarrow 0} I_u (x, r)
\]
is independent of the weight $\phi$, and $I_{x,u}(0) \geq c(m,Q)>0$ whenever $u(x) = Q\llbracket 0 \rrbracket$. For the rest of the paper we will fix a convenient specific choice of $\phi$, given by
\begin{equation}\label{e:def_phi}
\phi (t) =
\left\{
\begin{array}{ll}
1 \qquad &\mbox{for $0\leq t \leq \frac{1}{2}$}\\
2-2t \quad &\mbox{for $\frac{1}{2}\leq t \leq 1$}\\
0 &\mbox{otherwise}\, .
\end{array}
\right.
\end{equation}
When $x=0$, we will omit the dependency on $x$ for $I$ and related quantities, and will merely write $I_{u}(r)$.

\begin{definition}\label{def:freq_value}
Any map $u$ as defined by the above compactness procedure is called a {\em fine blow-up} limit along the sequence $r_k$ and the set
\[
    \Fcal(T,0) \coloneqq \{ I_{u}(0) \ : \ \text{$u$ is a fine blow-up along some $r_k \todown 0$}\}
\]
is the \emph{set of singular frequency values of $T$ at $0$}.
\end{definition}
\begin{remark}
In the rest of the notes we will often omit the adjective ``singular''. 
The reason for using the adjective ``fine'' is that later on we will also introduce a notion of {\em coarse} blow-up, cf. Definition \ref{d:coarse}.
\end{remark}

\begin{definition}\label{def:degree}
The \emph{singularity degree} of $T$ at the flat singular point $0$ is defined as 
\[
    \Irm(T,0) \coloneqq \inf \{\alpha : \alpha \in \Fcal(T,0)\}\, .
\]
\end{definition}

A simple translation allows to extend all the definitions above to any flat interior singular point $x$ of $T$. We will therefore use $\Irm (T,x)$ and $\mathcal{F} (T,x)$ for the singularity degree and the frequency values of $T$ at such an $x$.

\subsection{Main results}\label{sec:main}
We are now in a position to state the main results of this article. Our primary result here is the following.

\begin{theorem}\label{thm:uniquefv}
Assume that $T$ satisfies Assumption \ref{asm:3}. Then $\Irm (T, 0)\geq 1$ and $\Fcal(T,0) = \{\Irm(T,0)\}$, i.e. there is one unique frequency value for $T$ at $0$ and it coincides with the singularity degree.
\end{theorem}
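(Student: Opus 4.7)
The plan is to introduce a scale-by-scale frequency function $r\mapsto I(r)$ built from the center manifolds and to show three things: (a) $I(r)$ has a well-defined limit as $r\downarrow 0$ along radii lying in intervals of flattening; (b) this limit coincides with $I_u(0)$ for every fine blow-up $u$; and (c) $I_u(0)\geq 1$. For small $r$ lying in the unique interval of flattening $]s_{j(r)}, t_{j(r)}]$, I define $I(r):=I_{N_{j(r)}}(0, r/t_{j(r)})$ using the smoothed frequency of Section \ref{s:setup}. For (b), I would use the strong $W^{1,2}_{\loc}\cap L^2$ convergence $u_k \to u$ of \eqref{eq:compactness} to obtain $I_{u_k}(0,s) \to I_u(0,s)$ for every fixed $s\in (0, 3/2)$; since $I$ is a scale-invariant dimensionless ratio, the normalization by $\mathbf{h}_k$ cancels and the exponential-map composition contributes only a lower-order correction, so $I_{u_k}(0,s)$ differs from $I(c\, s\, r_k)$ by $o(1)$. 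Letting $s\downarrow 0$ and applying the monotonicity of the smoothed Dir-minimizer frequency \cite{DLS_MAMS}*{Theorem~3.15} then identifies $I_u(0)$ with a limit of $I(r)$ along a subsequence, so every value of $\mathcal{F}(T,0)$ is a subsequential limit of $I$.

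The existence of $\lim_{r\downarrow 0} I(r)$ rests on two facts. First, on each individual interval of flattening $]s_j, t_j]$, the frequency $r\mapsto I(r)$ is almost monotone in $r$, with error controlled by the size of the interval, as proved in \cite{DLS16blowup}. Second, for adjacent intervals with $t_{j+1}=s_j$, both normal approximations $N_j$ and $N_{j+1}$ describe the same rescaled current on comparable scales, and a quantitative comparison between $\mathcal{M}_j$ and $\mathcal{M}_{j+1}$ (modulo a small rotation) yields $|I_{N_j}(0, s_j/t_j) - I_{N_{j+1}}(0, 1)|\to 0$ as $j\to\infty$. When $t_{j+1}<s_j$, Lemma \ref{l:finite-vs-infinite} forces the excess at $t_{j+1}$ to be at least $\eps_3^2$, so along any blow-up sequence (where the excess vanishes) such ``restart'' transitions can occur only finitely often, and one reduces to the adjacent-interval case. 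Chaining almost monotonicity within intervals and matching across adjacent intervals, the oscillation of $I$ on $(0,\rho)\cap\bigcup_j]s_j,t_j]$ vanishes as $\rho\downarrow 0$, giving the limit and hence $\mathcal{F}(T,0) = \{\mathrm{I}(T,0)\}$.

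For (c), note that $u$ is a nonzero Dir-minimizer with $\boldsymbol{\eta}\circ u = 0$ and $u(0)=Q\llbracket 0\rrbracket$, the latter because $0$ is a density-$Q$ singular point and the center-manifold construction forces the normal approximation to vanish on average there. The Hardt--Simon inequality, applied to the rescaled currents $\bar T_k$ and transferred to the normal approximation $\bar N_k$, gives a weighted $L^2$ control of the form $\int |\bar N_k|^2 |x|^{-2}\lesssim \Dir(\bar N_k)$ on annuli around $0$; passing to the limit with $u_k\to u$ yields the analogous Hardy-type inequality for $u$, which in turn forces the homogeneity degree of the tangent map of $u$ at $0$ to be at least $1$, i.e. $I_u(0)\geq 1$. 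The main obstacle I expect is the matching across adjacent intervals of flattening: the center manifolds $\mathcal{M}_j$ and $\mathcal{M}_{j+1}$ are produced by the same non-linear algorithm at different base scales and may differ by a small rotation, so proving that their frequencies agree up to $o(1)$ at the junction requires a delicate $C^{3,\alpha}$-type quantitative comparison of the two constructions.
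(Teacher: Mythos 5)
Your overall architecture -- a scale-by-scale frequency, a matching lemma across consecutive intervals of flattening, and a Hardt--Simon lower bound -- is the right template, but two of the steps fail as stated, and the failures are precisely where the paper's proof does its heaviest lifting.

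First, the Hardt--Simon step (c) cannot be run directly on the normal approximation $\bar N_k$. The Hardt--Simon inequality \eqref{eq:HS} controls the perpendicular component of the current relative to the origin and, after projection, gives information about the Lipschitz graphical approximation $f_k$ over a plane; it does \emph{not} produce a Hardy-type estimate for the normal part $N_k$ measured from a center manifold, because the center manifold already absorbs the average sheet. What the paper actually shows via Hardt--Simon is $I_{\bar f}(0)\geq 1$ for the \emph{coarse} blow-up $\bar f$ (Theorem~\ref{thm:HS}), and then needs a separate comparison result (Proposition~\ref{p:coarse=fine}) identifying the average-free part of the coarse blow-up with a positive multiple of a fine blow-up. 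Crucially, this comparison requires the scale comparability condition \eqref{e:fine-coarse-assumption}; without it, the multiple may vanish (see Remark~\ref{r:coarse=fine}), and in that regime one cannot deduce anything about $I_u(0)$ from the Hardt--Simon inequality alone. The paper handles this by a three-case split: when $s_J=0$ the lower bound comes from the $D(r)\lesssim r^{m+2-2\delta_2}$ growth of the normal approximation, not from Hardt--Simon at all; when the blow-up scales are comparable to the stopping scales, Proposition~\ref{p:coarse=fine} applies; and when they are not, one first blows up at the stopping scales themselves and propagates the bound via almost monotonicity. Your proposal collapses all three cases into a single transfer-to-$\bar N_k$ step, which does not go through.

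Second, the claim in (a) that the oscillation of $I(r)$ on $(0,\rho)\cap\bigcup_j]s_j,t_j]$ vanishes as $\rho\downarrow 0$ is too strong. The paper establishes this kind of uniform convergence only when $\Irm(T,0)>1$ (see Theorem~\ref{t:consequences}\eqref{itm:consequences3}); when $\Irm(T,0)=1$, the universal frequency function need not converge uniformly, only along blow-up sequences, and the argument in Section~8 is substantially more delicate -- it involves the refined decay Lemma~\ref{l:finer-decay} and a dichotomy on whether the excess at interval endpoints can stay bounded below. Also, Lemma~\ref{l:finite-vs-infinite} does not say that restarts (gaps $t_{j+1}<s_j$) occur only finitely often; it says that \emph{if} they occur infinitely often, the excess at the restart scales has positive $\liminf$. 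A blow-up sequence can perfectly well coexist with infinitely many restarts occurring in between its terms, so your chaining argument does not reduce to the adjacent-interval case. The quantitative BV estimate of Proposition~\ref{prop:bv}, with its careful accounting of powers of $\boldsymbol{m}_{0,k}$ at each jump, is what replaces the simple "$o(1)$ matching" you sketch; proving that the sum of these contributions is finite (or, in the $\Irm(T,0)=1$ case, small over relevant intervals) is the real content.
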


However, our analysis delivers a number of additional pieces of information. We report them here even though some statements will need notions which will be only introduced in the next sections.

\begin{theorem}\label{t:consequences}
Under the same assumptions of Theorem \ref{thm:uniquefv} the following holds:
\begin{enumerate}[\normalfont(i)]
    \item\label{itm:consequences1} All fine blow-ups are radially homogeneous and their homogeneity degree is $\Irm (T, 0)$. 
    \item\label{itm:consequences2} If $s_{j_0}=0$ for some $j_0$, then $\lim_{r\downarrow 0} {\mathbf{I}_{N_{j_0}}} (r) = \Irm (T, 0)$ (see below for the definition of {$\mathbf{I}_{N_{j_0}}$}).
    \item\label{itm:consequences3} If $\{s_j\}$ is infinite, then the functions {$\mathbf{I}_{N_j}$} converge uniformly to $\Irm (T,0)$ if $\Irm (T,0) >1$, while, when $\Irm (T,0)=1$, $\lim_{k\to\infty} {\mathbf{I}_{N_{j(k)}}(\tfrac{r_k}{t_{j(k)}})} = \Irm (T, 0) = 1$ for every blow-up sequence $r_k$ (recall that $j(k)$ is such that $r_k \in ]s_{j(k)}, t_{j(k)}]$). 
    \item\label{itm:consequences4} If $\Irm (T,0)>1$, then $T_{0,r}$ converge polynomially fast to a unique {flat} tangent cone as $r\downarrow 0$.
    \item\label{itm:consequences5} If $\Irm (T, 0) >  2-\delta_2$, then $s_{j_0} =0$ for some $j_0$.
    \item\label{itm:consequences6} If $\Irm (T,0) < 2-\delta_2$ then $\{s_j\}$ is infinite and $\inf_j \frac{s_j}{t_j} > 0$.
%   \item If $\Irm (T, 0) < 2$, fine blow-ups coincide, up to a positive factor, with the average-free parts of coarse blow-ups (cf. Definition \ref{d:coarse}). 
\end{enumerate}
\end{theorem}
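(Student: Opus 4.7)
The plan is to leverage Theorem~\ref{thm:uniquefv} as the backbone. For~(i), a fine blow-up $u$ is a nontrivial Dir-minimizer with $\boldsymbol{\eta}\circ u = 0$, so the smoothed frequency $r \mapsto I_u(r)$ is monotone nondecreasing on $(0,3/2]$, and its limit $I_u(0)$ automatically lies in $\Fcal(T,0) = \{\Irm(T,0)\}$. I would upgrade this to constancy on $(0,3/2]$: for every fixed $r \in (0,3/2]$ the rescaled sequence $\{r\, r_k\}$ is again a blow-up sequence, and the compactness procedure of Section~\ref{ss:compactness} applied to it produces a fine blow-up whose frequency at $0$ can be identified with $I_u(r)$ (thanks to the strong $W^{1,2}$ convergence $u_k\to u$). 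Theorem~\ref{thm:uniquefv} then forces $I_u(r) \equiv \Irm(T,0)$, and the standard characterization of constancy of the frequency for a Dir-minimizer (cf.~\cite{DLS_MAMS}*{Section~3.5}) gives radial homogeneity of degree $\Irm(T,0)$.

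For~(ii) and~(iii), I would use the monotonicity of the smoothed frequency $\mathbf{I}_{N_j}$ on each interval of flattening. In case~(ii), monotonicity yields existence of $\ell := \lim_{r\downarrow 0} \mathbf{I}_{N_{j_0}}(r)$, and any vanishing sequence of radii in $\,]0, t_{j_0}]$ extracts a fine blow-up whose frequency at $0$ equals $\ell$; Theorem~\ref{thm:uniquefv} then gives $\ell = \Irm(T,0)$. For~(iii) when $\Irm(T,0) > 1$, uniform convergence is proved by contradiction: a sequence $(j_k, \rho_k)$ with $\rho_k \in\,]s_{j_k}, t_{j_k}]$ and $|\mathbf{I}_{N_{j_k}}(\rho_k) - \Irm(T,0)| \geq \delta$ for some $\delta>0$ would, via the compactness procedure at the scales $\rho_k t_{j_k}$, produce a fine blow-up with the ``wrong'' frequency, contradicting Theorem~\ref{thm:uniquefv}. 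The pointwise statement when $\Irm(T,0) = 1$ follows from the same compactness argument applied to the specific blow-up sequence $\{r_k\}$.

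For~(iv), if $\Irm(T,0) \geq 1 + 2\eta$ for some $\eta > 0$, combining~(ii)--(iii) grants a uniform lower bound $\mathbf{I}_{N_j}(r) \geq 1+\eta$ on all sufficiently small scales and all $j$. Integrating the standard logarithmic derivative identity for the smoothed frequency (cf.~\cite{DLS16centermfld}*{Section~3}) then yields a polynomial decay of $\|N_j\|_{L^2(\Bcal_r)}$ of order $r^{1+\eta}$, which controls the $L^2$ height excess of $T$ over the appropriate center-manifold plane. Summing this decay across the (countably many) intervals of flattening produces both the uniqueness of the flat tangent cone and the claimed polynomial rate of decay of $T_{0,r}$ to it.

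Finally, for~(v) and~(vi), I would unwind the stopping conditions governing the endpoints $s_j$: they involve the $L^2$ norm (or an equivalent excess/tilting quantity) of $N_j$ at scale $s_j/t_j$ exceeding a threshold proportional to a power of $\boldsymbol{m}_{0,j}$ which, through the definition~\eqref{eq:m_0}, corresponds exactly to the exponent $2-\delta_2$. If $\Irm(T,0) > 2-\delta_2$, the polynomial decay from~(iv) shows that this stopping threshold can never be attained at arbitrarily small $s_j/t_j$, hence $s_{j_0}=0$ for some $j_0$. Conversely, if $\Irm(T,0) < 2-\delta_2$, a matching polynomial lower bound on $\|N_j\|_{L^2}$ coming from the frequency convergence in~(iii) forces the stopping condition to fire at a scale comparable to $t_j$, yielding both infinitely many intervals and $\inf_j s_j/t_j > 0$. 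The main technical obstacle across (iii), (v), and (vi) is the rigorous identification of $I_u(r)$ with limits of $\mathbf{I}_{N_{j(k)}}$ at the right scales: the rescalings and normalizations $\bar r_k, \mathbf{h}_k$ introduce a delicate interplay between the intrinsic frequency of the blow-up and the extrinsic frequency of the normal approximations that must be controlled uniformly in $k$ (and hence in $j$).
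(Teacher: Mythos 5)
Your proposed proof contains a genuine gap in the central step, which then propagates through items~(i), (iii), (iv)--(vi). The problem is the identification of $I_u(r)$ for \emph{positive} $r$ with $\Irm(T,0)$, which you try to extract from Theorem~\ref{thm:uniquefv} alone. That theorem constrains $I_u(0)$, the limiting frequency at scale $0$, for every fine blow-up. Consider your rescaled sequence $\{r\, r_k\}$: assuming it stays in the same interval of flattening (so the center manifold and normal approximation are unchanged), the fine blow-up $u_r$ along $\{r\, r_k\}$ is (up to normalization) $u(r\,\cdot)$ restricted to $B_{3/2}$. Its frequency at $0$ is therefore $I_{u_r}(0)=\lim_{\sigma\downarrow 0}I_u(r\sigma)=I_u(0)$, \emph{not} $I_u(r)$. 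What $I_u(r)$ is identified with is the value of $I_{u_r}$ near the boundary scale $\sigma\approx 1$ -- equivalently $\lim_k \mathbf{I}_{N_{j(k)}}(r\, r_k/t_{j(k)})$ as in \eqref{e:convergence-of-I}. So Theorem~\ref{thm:uniquefv} gives $I_{u_r}(0)=\Irm(T,0)=I_u(0)$, which you already knew, and tells you nothing about $I_u(r)$ beyond the monotone bound $I_u(r)\geq I_u(0)$. Constancy of $I_u$ (hence homogeneity) does not follow; one must separately prove that the \emph{universal frequency function} $\mathbf{I}(r)$ converges as $r\downarrow 0$.

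This is where the real work in the paper lies. In the case $\Irm(T,0)>1$, the convergence of $\mathbf{I}(r)$ is derived by first proving a polynomial excess decay (step (Dec) of Proposition~\ref{prop:excessdecay}), which forces $\boldsymbol{m}_{0,k}\to 0$ at a geometric rate, and then feeding this into the BV estimate of Proposition~\ref{prop:bv} to control the accumulated negative variation of $\log(\mathbf{I}+1)$. In the case $\Irm(T,0)=1$, the argument in Proposition~\ref{p:I=1} is substantially more delicate, hinging on Lemma~\ref{l:finer-decay} and a dichotomy about whether the excess can be forced small; your proposal does not engage with it. A secondary (but real) error is your repeated invocation of "monotonicity" of $\mathbf{I}_{N_j}$: as Theorem~\ref{t:first-lb} makes explicit, the frequency relative to the normal approximation is only \emph{almost} monotone ($\Ibf_{N_j}(a)\leq e^{Cb^\alpha}\Ibf_{N_j}(b)$), and the error terms are precisely what the BV estimate is designed to tame. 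Similarly, for the contradiction argument in~(iii), a sequence $(j_k,\rho_k)$ with $\mathbf{I}_{N_{j_k}}(\rho_k)\not\to\Irm(T,0)$ would give a fine blow-up $u'$ with $I_{u'}(\sigma)$ close to $1+2\gamma>1$ for $\sigma$ near $1$, but $I_{u'}(0)=\Irm(T,0)=1$ by Theorem~\ref{thm:uniquefv}; these are compatible statements (a nonhomogeneous Dir-minimizer), so no contradiction arises without the extra input. Your items (v)--(vi) correctly sense that the stopping thresholds are calibrated to the exponent $2-\delta_2$, but they presuppose (i)--(iv) and the decay lemmas (ND), (D), which again rest on the BV machinery you skip.

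\end{document}
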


\subsection{Rectifiability} Following Almgren (cf. also \cite{WhiteStrat}), the set ${\rm spt}\, (T)\setminus {\rm spt}\, (\partial T)$ can be stratified through
\[
\Scal^{(k)}(T) \coloneqq \left\{x \in \spt\, (T)\setminus\spt\, (\partial T) \ : \ \substack{\text{any tangent cone of T at $x$ splits off} \\ \text{no more than a $k$-dimensional subspace}}\right\}\, ,
\]
where $k =0, 1, \ldots , m$. In particular 
\[
\Scal^{(0)} (T) \subset \Scal^{(1)} (T) \subset \cdots \subset \Scal^{(m-1)} (T) \subset \Scal^{(m)} (T) = {\rm spt} (T)\setminus {\rm spt} (\partial T)\, .
\]
Almgren's argument (which can be seen as a suitable generalization of Federer's reduction argument, cf. \cite{Federer1970}) showed that 
\[
    \dim_{\Hcal}\big(\Scal^{(k)}(T)\big) \leq k\, .
\]
In their recent groundbreaking work~\cite{NV_varifolds}, Naber and Valtorta further proved that $\Scal^{(k)}(T)$ is $k$-rectifiable.
%and has locally finite $k$-dimensional upper Minkowski content. 
Moreover, due to the classification of one-dimensional area-minimizing cones (which are necessarily $1$-dimensional lines with integer multiplicity), $\Scal^{(m-1)}(T)\setminus \Scal^{(m-2)}(T) = \emptyset$. Finally, the set of flat singular points of $T$ (from now on denoted by $\flatS (T)$) is given by 
\[
\flatS (T) = {\rm Sing} (T) \setminus \Scal^{(m-1)} (T) = {\rm Sing} (T)\setminus \Scal^{(m-2)} (T)\, .
\]
Thus, proving the $(m-2)$-rectifiability of ${\rm Sing} (T)$ is equivalent to proving the $(m-2)$-rectifiability of $\flatS (T)$. In our forthcoming works \cites{DLSk2,DMS} the singularity degree will be used to further stratify $\flatS (T)$. The main result of \cite{DLSk2} will be the following

\begin{theorem}\label{t:rectifiability}
Let $T$ be as in Theorem \ref{t:big-one} Then the set $\{q\in \flatS (T) : \Irm (T,q) >1\}$
is $(m-2)$-rectifiable. 
\end{theorem}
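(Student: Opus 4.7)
The plan is to split the set $\{q \in \flatS(T) : \Irm(T,q) > 1\}$ into countably many pieces on which the polynomial rate of decay to the unique flat tangent cone (guaranteed by Theorem \ref{t:consequences}(\ref{itm:consequences4})) and the starting scale for such decay are both uniform. The dichotomy between items (\ref{itm:consequences5}) and (\ref{itm:consequences6}) of Theorem \ref{t:consequences} separates the relevant pieces into two families: the ``superquadratic'' subset $\Sbf$ of points with $\Irm(T,q) \geq 2 - \delta_2$, for which a single center manifold captures $T$ at every sufficiently small scale; and a countable collection $\{\mathfrak{S}_{K,J}\}$ of ``subquadratic'' strata with $1 + 1/K \le \Irm(T,q) \le 2 - \delta_2$, for which one must cope with the infinitely many intervals of flattening $\,]s_j,t_j]\,$, but with a lower bound $\inf_j s_j/t_j > 0$ and a uniform decay rate.

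The strategy on each stratum is to apply a quantitative Naber--Valtorta scheme. The central ingredient is an $L^2$ best-approximation estimate: for any ball $\Bbf_r(q)$ centered at a point of the stratum, the squared $L^2$ distance from $T$ to its $Q$-fold tangent plane, measured against the best-fitting $(m-2)$-plane through nearby singular points of the stratum, should be controlled by the drop of a frequency-like monotone quantity across consecutive dyadic scales. Combined with the monotonicity and the uniform gap $\Irm \geq 1 + 1/K$, such an estimate yields, by a Dini-type summation, a Jones $\beta_2$-type bound for the stratum. Feeding this into the Naber--Valtorta covering lemma and the rectifiable Reifenberg theorem produces a countable covering of each stratum by Lipschitz graphs over $(m-2)$-planes, which is exactly the $(m-2)$-rectifiability sought.

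The bridge from frequency to spatial $L^2$ structure is built through the fine blow-up procedure of Section \ref{ss:compactness}: by Theorem \ref{t:consequences}(\ref{itm:consequences1}), every fine blow-up $u$ is radially homogeneous of degree $\Irm(T,q) > 1$. Classical Almgren theory for homogeneous Dir-minimizers then implies that $u$ can be translation-invariant along at most an $(m-2)$-dimensional linear subspace, because a nontrivial homogeneous $Q$-valued Dir-minimizer of one variable is piecewise linear and hence of degree at most $1$. Quantifying this rigidity on the multivalued approximation $N_{j(k)}$ of the relevant center manifold, and then transferring the information back to $T$ via the sharp comparability established in \cites{DLS16centermfld, DLS16blowup}, yields the required $L^2$ best-approximation statement at the level of the current.

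The hardest step will be to arrange this quantification \emph{uniformly} on the subquadratic strata $\mathfrak{S}_{K,J}$, since the center manifold itself changes each time the scale crosses from one interval of flattening $\,]s_{j+1},t_{j+1}]\,$ to the next $\,]s_j,t_j]\,$. One must ensure that these transitions do not destroy either the frequency pinching or the $L^2$-control, which requires a sharp compatibility between consecutive center manifolds and their multivalued approximations: this is exactly the role of the BV estimate of Proposition \ref{prop:bv}, which is the technical heart of the argument in \cite{DLSk2}. Once this compatibility is established, the Naber--Valtorta machinery runs as usual, the subquadratic case being only subtly more delicate than the superquadratic one due to the varying center manifolds.
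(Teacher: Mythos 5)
This paper does not contain a proof of Theorem~\ref{t:rectifiability}. As stated explicitly in Section~\ref{sec:main} (just before the theorem), it is the main result of the companion paper \cite{DLSk2}; what this paper supplies is the machinery (singularity degree, fine blow-ups, the BV estimate of Proposition~\ref{prop:bv}, the decay statements in Theorem~\ref{t:consequences}) that \cite{DLSk2} builds upon. So there is literally nothing here to compare your argument against line by line.

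That said, your outline is consistent with the description the authors give of the approach used in \cite{DLSk2}. The decomposition into a single superquadratic set $\Sbf$ (finite intervals of flattening, one center manifold near the scale $0$) and countably many subquadratic strata $\mathfrak{S}_{K,J}$ (infinitely many intervals of flattening with $\inf_j s_j/t_j > 0$, uniform decay rate and starting scale) is exactly what is announced in the introduction's comparison with Krummel--Wickramasekera; the statement that the scheme is a ``modification of the Naber--Valtorta approach'' is verbatim; and the role you assign to Proposition~\ref{prop:bv} — controlling the frequency across changes of center manifold to make the pinching usable — matches the remark that the BV estimate ``is crucial for \cite{DLSk2}.'' Your rigidity observation (a homogeneous $Q$-valued Dir-minimizer with degree $>1$ cannot split off more than $m-2$ directions, since one-variable Dir-minimizers are superpositions of affine functions) is also the correct mechanism for the dimension count. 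But be aware that you have only written a road map: the actual content of \cite{DLSk2} is the hard quantitative work of producing the $L^2$ best-approximation/Jones-$\beta$ estimate at the level of the normal approximations, transferring it to the current uniformly across changing center manifolds, and verifying the hypotheses of the rectifiable-Reifenberg machinery. None of that is carried out in your sketch, and none of it can be checked against this paper because the paper defers it.
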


Clearly, in view of the above theorem and of Theorem \ref{thm:uniquefv}, the remaining (challenging) step to prove the rectifiability of ${\rm Sing}\, (T)$ is to show that the set $\{q\in \flatS (T): \Irm (T,0)=1\}$ is $(m-2)$-rectifiable. In \cite{DMS} we will then show

\begin{theorem}\label{t:nullity}
Let $T$ be as in Theorem \ref{t:big-one}. Then
$\mathcal{H}^{m-2} (\{q\in \flatS (T) : \Irm (T,q) = 1\}) = 0$.
\end{theorem}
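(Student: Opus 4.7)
The plan, whose full execution constitutes the content of the companion paper \cite{DMS}, is a contradiction argument based on a density-point analysis. Assume $E \coloneqq \{q \in \flatS(T) : \Irm(T,q) = 1\}$ has positive $\mathcal{H}^{m-2}$-measure. By standard differentiation of Hausdorff measure, pick $q_0 \in E$ with positive $(m-2)$-dimensional upper density:
\[
\limsup_{r\downarrow 0}\, \frac{\mathcal{H}^{m-2}(E \cap \Bbf_r(q_0))}{r^{m-2}} > 0 \, .
\]
Translate so that $q_0 = 0$ and select a blow-up sequence $r_k \downarrow 0$ along which, passing to a subsequence, the rescaled sets $\iota_{0,r_k}(E)$ converge in the Hausdorff sense on compact subsets to a closed set $\Sigma_\infty \subset \mathbb{R}^m$ of positive $\mathcal{H}^{m-2}$-measure.

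Apply the compactness procedure of Section \ref{ss:compactness} along $r_k$ to produce a fine blow-up $u$. By Theorem \ref{t:consequences}\ref{itm:consequences1}, $u$ is a nontrivial, $1$-homogeneous, Dir-minimizing $Q$-valued function with $\boldsymbol{\eta}\circ u \equiv 0$ and $\|u\|_{L^2(\Bcal_{3/2})}=1$. The first and most delicate step is a \emph{persistence-of-frequency} statement: one shows that every $p\in \Sigma_\infty$ is a singular point of $u$ at which the classical Dir-minimizer frequency $I_{u,p}(0)$ equals $1$. The heuristic is clear --- each $p$ is the limit of rescaled $E$-points and the singularity degree is (modulo technicalities) upper semicontinuous under such limits --- but the execution is subtle, since the definition of $\Irm$ passes through the center manifold construction, and the latter restarts at each endpoint of an interval of flattening $]s_j,t_j]$.

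Armed with a full $\mathcal{H}^{m-2}$-positive collection of frequency-$1$ points of $u$, the standard cone-splitting lemma for Dir-minimizers forces $u$ to be translation-invariant along the line through $0$ and each $p \in \Sigma_\infty$. A linear-algebraic argument then upgrades this to invariance along an $(m-2)$-dimensional subspace $V \subset \pi_0$, so that $u$ is the pullback through orthogonal projection of a $2$-dimensional, $1$-homogeneous Dir-minimizer on $V^\perp$. A secondary blow-up of $T$ centered at a suitably chosen generic point of $V$ then splits off the $(m-2)$ translation-invariant directions at the level of the current and reduces the original problem to that of a $2$-dimensional area-minimizing current with a flat singular point. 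At this stage the two-dimensional theory applies: by Chang's theorem \cite{SXChang} and its refinements \cites{DLSS1,DLSS2,DLSS3}, the reduced singularity is isolated; by White \cite{White} the tangent cone there is unique; and a direct examination (in the spirit of Example \ref{e:key-example}) shows that the singularity degree of the reduced $2$-current must be a rational number strictly larger than $1$. Propagating this strict inequality back to the original current via the semicontinuity of $\Irm$ contradicts the fact that $0$ is an accumulation point of $E$.

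The main obstacle is the persistence-of-frequency step, coupled with the upper semicontinuity of $\Irm$. Unlike the Dir-minimizer frequency $I_{u,p}(r)$, which is monotone in $r$ by construction, $\Irm(T,q)$ is defined as an infimum over all blow-up sequences, each requiring its own center-manifold hierarchy; hence the seemingly innocuous step of transferring the identity $\Irm \equiv 1$ on $E$ to the corresponding identity $I_{u,\cdot}(0) \equiv 1$ on $\Sigma_\infty$ is the heart of the matter. A secondary difficulty is that in the low-frequency regime Theorem \ref{t:consequences}\ref{itm:consequences4} is not available, so the tangent cone at $0$ may fail to be unique and the blow-up scales $r_k$ must be selected carefully to respect the Hausdorff structure of $E$. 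Carrying all of this out, together with the splitting-off of the $(m-2)$-invariant direction at the level of the original current so that the rigid two-dimensional theory can actually be invoked, is the principal content of \cite{DMS}.
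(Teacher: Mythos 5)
The paper itself does not contain a proof of Theorem~\ref{t:nullity}: it is stated here and explicitly deferred to the companion work \cite{DMS} (see the paragraph following Theorem~\ref{t:rectifiability}), so there is no ``paper's proof'' against which to compare your proposal. What you offer is a reconstruction of what \cite{DMS} might do, and I believe it misidentifies the route and, more importantly, contains a gap that cannot be filled within the outline you give.

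The Federer-type reduction you sketch does work at the level of the Dir-minimizing fine blow-up $u$: if $u$ is $1$-homogeneous with an $\mathcal{H}^{m-2}$-positive set $\Sigma_\infty$ of points where the Almgren frequency equals $1$, cone-splitting makes $u$ the pullback, under orthogonal projection onto a $2$-plane, of a nontrivial $1$-homogeneous average-free Dir-minimizer $\tilde{u}$. But this yields no contradiction at the linearized level: any superposition $\tilde{u}=\sum_i\llbracket L_i\rrbracket$ of distinct linear maps with $\sum_i L_i=0$ is such an example, and its cylindrical extension is $1$-homogeneous with an $(m-2)$-dimensional set of frequency-$1$ points. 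The contradiction you ultimately invoke lives entirely at the level of the \emph{current}, resting on the assertion that a ``secondary blow-up of $T$ splits off the $(m-2)$ translation-invariant directions at the level of the current'' and produces a genuine $2$-dimensional area-minimizer with a flat singular point. You give no mechanism for this, and I do not see how one could extract it from the information at hand. The fine blow-up $u$ is a first-order linearization of $T$, obtained after subtracting a center manifold and normalizing by the $L^2$-height $\mathbf{h}_k$; the fact that $u$ is cylindrical says nothing, a priori, about the higher-order structure of $T$, about the center manifolds themselves, or about a blow-up of $T$ at a nearby point of $E$ (the latter restarts the entire center-manifold hierarchy and produces a fresh fine blow-up with no built-in relation to $u$). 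Passing from ``the linearization is cylindrical'' to ``the current is cylindrical'' is precisely the nonlinear difficulty around which the whole higher-codimension theory is organized; it is not a consequence of cone-splitting for $u$.

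Two further indications that the actual argument in \cite{DMS} is not a dimension reduction to two dimensions: the paper's own Comparison section identifies the $\mathcal{H}^{m-2}$-nullity of the slowly-decaying set as the counterpart of Krummel \& Wickramasekera's \cite{KW2}, which proceeds via a uniform weak-approximation property and Simon-type $L^2$-height estimates rather than a Federer reduction; and Corollary~\ref{c:coarse=fine} of the present paper — reducing fine to coarse blow-ups and giving the excess lower bound \eqref{e:lower-bound-excess} when $\Irm(T,0)<2-\delta_2$ — is explicitly flagged as an input for \cite{DMS}, which points toward a non-concentration argument for the excess rather than an invocation of Chang and White in a reduced $2$-dimensional setting. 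Your outline is a natural first guess, but it locates the contradiction at a level where none exists and leaves the genuinely hard step as an unsupported declaration.
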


Combined with Theorem \ref{t:consequences} Theorem \ref{t:nullity} implies the uniqueness of the flat tangent cone at $\mathcal{H}^{m-2}$-a.e. flat singular point. To conclude the proof of Theorem \ref{t:big-one} in \cite{DMS} we will also show

\begin{theorem}\label{t:unique-cones}
The tangent cone is unique at $\mathcal{H}^{m-2}$-a.e. $p\in \mathcal{S}^{(m-2)} (T)$.
\end{theorem}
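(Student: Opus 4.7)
The plan is to combine the Naber--Valtorta rectifiability of $\mathcal{S}^{(m-2)}(T)$ with the fine structure theory of 2-dimensional area-minimizing currents. I would first identify the generic points in the stratum, then establish a product structure for any tangent cone at such points, and finally obtain uniqueness via a 2-dimensional reduction.

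By Naber--Valtorta \cite{NV_varifolds}, the stratum $\mathcal{S}^{(m-2)}(T)$ is $(m-2)$-rectifiable with locally finite $\mathcal{H}^{m-2}$-measure, so at $\mathcal{H}^{m-2}$-a.e.\ $p \in \mathcal{S}^{(m-2)}(T)$ there is a well-defined approximate tangent $(m-2)$-plane $V_p$. I would then show that at such a generic $p$, every tangent cone $C$ of $T$ at $p$ decomposes as $C = V_p \times C_0$, where $C_0$ is a $2$-dimensional area-minimizing integral cone in $V_p^\perp$ with isolated singularity at the origin. To see this, one blows up $T$ and the singular set simultaneously along a sequence of scales producing $C$; by upper semicontinuity of density together with the persistence of singular points under the blow-up, the singular set of $C$ must contain a translate of $V_p$. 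Combined with the cone structure of $C$, this forces $V_p$ to lie in the spine of $C$. Since $p \in \mathcal{S}^{(m-2)}(T)$ prevents the spine from being strictly larger than $(m-2)$-dimensional, the spine must be exactly $V_p$, and the product structure follows.

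Uniqueness then reduces to a statement about the 2-dimensional cone $C_0$. The relevant input is the classification of 2D area-minimizing integral currents of Chang \cite{SXChang}, together with the refinements in \cites{DLSS1,DLSS2,DLSS3}: such cones have cross-section on the unit sphere given by a finite union of geodesic arcs carrying integer multiplicities, and the moduli space of such cones is essentially discrete modulo orthogonal rotations. This rigidity translates precisely into the integrability hypothesis required by Simon's classical uniqueness theorem for tangent cones, yielding that $C$ is the unique tangent cone of $T$ at $p$. Alternatively, one could slice $T$ transversally to $V_p$ near $p$ and invoke Chang's theorem directly on the resulting $2$-dimensional area-minimizing slice, transferring the uniqueness back to $T$.

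The principal difficulty I expect lies in the spine reduction step: one must propagate the approximate tangent plane of the singular set to a genuine axis of translation invariance of the tangent cone of $T$. This requires quantitative control on the spatial distribution of singular points of $T$ near $p$ together with a careful compatibility between the blow-up of the current and the blow-up of its singular set. By comparison, the integrability verification for 2D cones is technically delicate but should follow from their explicit combinatorial structure as balanced unions of half-planes.
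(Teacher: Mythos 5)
This paper does not prove Theorem~\ref{t:unique-cones}: it is stated only as an announcement of a result to be established in the companion work~\cite{DMS}, so there is no proof here to compare your sketch against.

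Taken on its own terms, the sketch has genuine gaps. The claimed ``persistence of singular points under blow-up'' is \emph{false} in codimension at least two: a sequence of branch points of $T$ can converge, after rescaling, to a point where the limit cone is a smooth $m$-plane with multiplicity, hence a regular point of the cone. What persists is the super-level set of the density $\Theta$, by upper semicontinuity. To place $V_p$ in the spine of a tangent cone $C$ one must show $\Theta(C,q)=\Theta(C,0)$ for $q\in V_p$ and then invoke the rigidity case of the monotonicity formula; the needed inequality $\Theta(C,q)\ge\Theta(C,0)$ does not come for free from upper semicontinuity (which in fact gives the reverse inequality) but requires a preliminary decomposition of $\mathcal{S}^{(m-2)}(T)$ into pieces of approximately constant density, which you have not supplied. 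Moreover, $p\in\mathcal{S}^{(m-2)}(T)$ by itself does not exclude tangent cones at $p$ whose spine has dimension strictly less than $m-2$. Even granting the splitting $C=V_p\times C_0$ with $C_0=\sum_iQ_i\llbracket L_i\rrbracket$ a sum of $2$-planes, uniqueness does not follow from Simon's theorem when some $Q_i\ge2$: that is a multiplicity-one result, and the multi-valued linearization in higher multiplicity is exactly the hard regime this series of papers is built to handle. When all $Q_i=1$ you still must verify the integrability hypothesis rather than assert it. Finally, the slicing alternative fails at the start, since slices of area-minimizing currents are not themselves area-minimizing, so Chang's theorem is not applicable to them.
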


%Our heuristic behind the above conjecture is the following. Consider an idealized scenario in which we would be given the information that the tangent cones at singular points are unique. At points where the singularity degree is $1$ we know that
%the fine blow-ups would then be nontrivial 1-homogeneous multivalued Dir-minimizing maps with vanishing average $\mathbf{\eta}\circ u$. In particular we have the following alternative
%\begin{itemize}
%    \item[(a)] either $u = \sum_i \llbracket u_i\rrbracket$, where the $u_i$ are classical linear maps; 
%    \item[(b)] or $u$ can be at most $(m-3)$-symmetric, i.e. the dimension of the affine space of directions $v$ along which $u$ is invariant has dimension at most $m-3$. 
%\end{itemize}
%We would expect that the first alternative is excluded, since at an infinitesimal sequence of scales the current would then be much closer to a union of different planes than it is to a single plane with high multiplicity, which we hope would contradict the assumption that the unique cone is flat. This would then mean that the alternative (b) must hold, which in turn suggests that the set of such points has Hausdorff dimension at most $m-3$.

\subsection{Frequency function}\label{ss:frequency} We end the section by introducing a pivotal object in our arguments, the $\phi$-regularized frequency function of the normal approximation of $T$, cf. \cite{DLS16blowup}. Recalling the function $\phi: [0,\infty[$ of \eqref{e:def_phi}, for a given center manifold $\Mcal$ with corresponding $\Mcal$-normal approximation $N:\Mcal \to \Acal_Q(\R^{m+n})$, the $\phi$-regularized frequency function ${\Ibf_N(x,r)}$ of $N$ at a center point $x\in\Mcal$ and scale $r> 0$ is defined as follows:
\begin{align*}
    \Ibf_N(x,r) &\coloneqq \frac{r \Dbf_N(x,r)}{\Hbf_N(x,r)}\, ,
\end{align*}
where
\[
    \Dbf_N(x,r) \coloneqq \int_\Mcal |DN|^2\phi\left(\frac{d(y,x)}{r}\right) \dd y\, ,
\]
and
\[
    \Hbf_N(x,r) \coloneqq - \int_\Mcal \frac{|\nabla_y d(y, x)|^2}{d(y,x)} |N|^2\phi'\left(\frac{d(y, x)}{r}\right) \dd y\, \\
\]
Here $d$ is the geodesic distance on the center manifold $\Mcal$ {and we simply write $d(y)$ for the geodesic distance $d(0,y)$.} We additionally let $\mathbf{p}$ denote the orthogonal projection on $\Mcal$ (and we recall that, by the estimates in \cite{DLS16centermfld}, the points $x$ of interest, which belong to the support of $T$, are in a regular tubular neighborhood of $\Mcal$). Since we will often take the above quantities to be centered at $x=0$, we will omit the implicit dependency on $x$ most of the time.

A major starting point of our paper is the fact that the frequency function is bounded away from infinity and $0$ (independently of the choice of center manifold and corresponding normal approximation). The rightmost inequality is the most important analytical estimate of Almgren's regularity theory, while the left has been established only recently by the second author in \cite{Sk21}. More precisely, the following holds:

\begin{theorem}
Under the assumptions of Theorem \ref{thm:uniquefv}, 
\begin{equation}\label{e:frequency-twosided-bounds}
0 < \inf_j \inf_{r\in {\big]\tfrac{s_j}{t_j}, 3\big]}} \Ibf_{N_j} (r)
\leq \sup_j \sup_{r\in {\big]\tfrac{s_j}{t_j}, 3\big]}} \Ibf_{N_j} (r) < \infty\, .
\end{equation}
\end{theorem}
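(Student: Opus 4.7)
The plan is to split the argument into the upper bound and the lower bound, which rest on two quite different analytical ingredients already available in the literature. Both parts exploit the almost-monotonicity of $r \mapsto \Ibf_{N_j}(r)$ combined with control at one of the two endpoints of the interval $]\tfrac{s_j}{t_j}, 3]$; the uniformity in $j$ is built into all the estimates because $\boldsymbol{m}_{0,j} \leq \eps_3^2$ for every $j$ and all the constants appearing in \cites{DLS16centermfld,DLS16blowup,Sk21} depend only on $m$, $\bar n$, $Q$ and $\eps_3$.

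For the upper bound I would use the almost-monotonicity of $\Ibf_{N_j}(r)$ proved in \cite{DLS16blowup}, which (up to error terms summable in $r$, controlled by $\boldsymbol{m}_{0,j}$ to a positive power) says that the frequency is essentially non-decreasing in $r$. It therefore suffices to bound $\Ibf_{N_j}(3)$ uniformly in $j$. This bound at the top scale is a standard output of the center manifold construction in \cite{DLS16centermfld}: the Lipschitz approximation theorem together with the Taylor expansion of the area functional gives $\Dbf_{N_j}(3) \leq C \boldsymbol{m}_{0,j}$, while the fact that $0$ is a flat singular point of density $Q \geq 2$ (so that the center manifold, being a smooth $m$-dimensional surface of multiplicity one, is far from supporting the current at comparable scales) combined with the height bound forces $\Hbf_{N_j}(3) \geq c\, \boldsymbol{m}_{0,j}$; taking the ratio yields $\Ibf_{N_j}(3) \leq C/c$ uniformly.

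For the lower bound I would invoke \cite{Sk21}, arguing by contradiction. If there were sequences $j_k$ and $r_k \in ]\tfrac{s_{j_k}}{t_{j_k}}, 3]$ with $\Ibf_{N_{j_k}}(r_k) \to 0$, then rescaling the $\bar N_{j_k}$ at scale $r_k$ and running the compactness procedure of Section~\ref{ss:compactness} would yield a fine blow-up limit $u$ which is Dir-minimizing, satisfies $\|u\|_{L^2(B_{3/2})} = 1$ and $\boldsymbol{\eta} \circ u \equiv 0$, and has frequency $I_u(0) = 0$. The fact that $0$ is a flat singular point of $T$ of density $Q \geq 2$ forces the normalized limit to have $u(0) = Q\llbracket 0 \rrbracket$ at the origin, and then the standard non-degeneracy bound $I_u(0) \geq c(m,Q) > 0$ for $Q$-valued Dir-minimizers at such points recalled in Section~\ref{ss:compactness} produces the contradiction. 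The main obstacle, which is the core content of \cite{Sk21}, is to justify the passage to the limit $\Ibf_{N_{j_k}}(r_k) \to I_u(0)$ from only the strong $\Wrm^{1,2}_{\loc}$ convergence $u_k \to u$ given by the compactness procedure: this requires controlling the error terms in the almost-monotonicity formula uniformly along the subsequence, including across transitions between different intervals of flattening where the center manifold and its normal approximation are being reconstructed.
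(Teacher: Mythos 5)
The paper itself does not prove this theorem: it is presented as a known fact, with the upper bound attributed to the Almgren/De~Lellis--Spadaro theory (see~\cite{DLS16blowup}) and the lower bound to \cite{Sk21}*{Theorem~7.8}, exactly as you correctly identify. Your upper-bound sketch is consistent with the strategy of~\cite{DLS16blowup}: use almost-monotonicity to propagate a bound from the top scale, and get the top-scale bound from the estimates $\Dbf_{N_j}\leq C\boldsymbol{m}_{0,j}$ and a complementary lower bound on $\Hbf_{N_j}$ (though the latter is more delicate than ``the center manifold is far from the current''; it ultimately relies on the splitting estimates of \cite{DLS16centermfld}*{Proposition~3.4} applied to a suitable Whitney cube, and on the fact that $0\in\operatorname{Sing}(T)$). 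This part is essentially fine.

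The lower-bound sketch, however, has real gaps that go beyond ``the details are in~\cite{Sk21}''. First, the fine blow-up machinery of Section~\ref{ss:compactness} is defined along \emph{blow-up sequences}, i.e.\ sequences $r_k$ for which $T_{0,r_k}$ converges to a \emph{flat} cone. A sequence with $\Ibf_{N_{j_k}}(r_k)\to 0$ need not be a blow-up sequence: the excess being below $\eps_3^2$ does not force the subsequential limit cone to be a plane, so the compactness procedure is not directly available. Second, even when the fine blow-up $u$ exists, the step $u(0)=Q\llbracket 0\rrbracket$ does not follow from ``$0$ is a flat singular point of density $Q$''; it requires a Hardt--Simon type inequality for the \emph{normal approximation} (this is \cite{Sk21}*{Lemma~5.14}, referenced in the paper in Section~\ref{ss:coarse}), and this is a nontrivial ingredient, not a routine consequence. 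Third, there is a circularity concern: the convergence of frequencies \eqref{e:convergence-of-I} and the BV/almost-monotonicity estimates across intervals of flattening are, in the paper's subsequent development, proved \emph{under the hypothesis} that \eqref{e:frequency-twosided-bounds} already holds, so a contradiction argument that relies on these tools to prove \eqref{e:frequency-twosided-bounds} would need to be set up with much more care than your sketch suggests. Attributing the ``core content'' of~\cite{Sk21} to justifying the passage $\Ibf_{N_{j_k}}(r_k)\to I_u(0)$ somewhat misrepresents where the difficulty lies; the hard part is the Hardt--Simon estimate for $N$ and a non-circular scheme that produces the uniform positive lower bound without presupposing it.
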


\subsection{Proof of Lemma \ref{l:finite-vs-infinite}}\label{ss:proof-finite-vs-infinite} The argument is taken from \cite{DLS16blowup}*{Proof of Theorem 5.1}, where the statement is shown in a step in the proof of the theorem. Observe that, by definition, we have
\[
\mathbf{E} (T_{0,r}, \mathbf{B}_{6\sqrt{m}}) > \varepsilon_3^2
\]
for all $r\in ]t_{j_i}, s_{j_i-1}[$. Pick a sequence $r_i\in ]t_{j_i}, s_{j_i-1}[$ with the property that $\frac{r_i}{t_{j_i}} \to 1$. Up to extraction of a subsequence, not relabeled, we can assume that $T_{0,t_{j_i}}$ converges to a tangent cone $S$ to $T$ at $0$. Note that $T_{0, r_i}$ converge to the same cone. Moreover, by the area minimizing property, we have that $\|T_{0, r_i}\| \toweakstar \|C\|$ and $\|T_{0, t_{j_i}}\| \toweakstar \|C\|$. Since $\|C\| (\partial \mathbf{B}_r) =0$ for every $r$, it follows immediately that $\|T_{0, r_i}\|\res \mathbf{B}_{6\sqrt{m}}\toweakstar \|C\|\res  \mathbf{B}_{6\sqrt{m}}$ and $\|T_{0, t_{j_i}}\|\res \mathbf{B}_{6\sqrt{m}}\toweakstar \|C\|\res \mathbf{B}_{6\sqrt{m}}$. These convergences can be easily seen to imply 
\begin{align*}
\lim_i \mathbf{E} (T_{0, t_{j_i}}, \mathbf{B}_{6\sqrt{m}}) &= \mathbf{E} (C, \mathbf{B}_{6\sqrt{m}}) = \lim_i \mathbf{E} (T_{0, r_i}, \mathbf{B}_{6\sqrt{m}}) \geq \varepsilon_3^2\, .
\end{align*}

\subsection{Comparison of this article with \texorpdfstring{\cite{KW1}}{KW}}\label{ss:comparison}

Let us compare in more detail the present article with its analogue \cite{KW1} in the program implemented by Krummel \& Wickramasekera discussed in the introduction. In both \cite{KW1} and this paper an almost monotone quantity plays a pivotal role. Here, this is Almgren's frequency function as defined in \cite{DLS16centermfld}. Instead in \cite{KW1} the authors introduce a new ``planar frequency function''. Rather than capturing the degree of singularity of the current at a flat singular point, the planar frequency function identifies the order of contact of the current with the flat tangent cone. Let us consider Example \ref{e:key-example} for an intuition: our singularity degree there is the number $p/Q$, while the planar frequency function at scale $0$ (with respect to the tangent plane $\{w=0\}$) coincides with $p/Q$ if the latter is smaller than the degree of the first nontrivial homogeneous polynomial in the Taylor expansion of $h$ at the origin. Otherwise, it coincides with the latter degree.

In fact, given that $\flatS_{Q,>1} (T)$ identifies the set of flat singular points at which there is a unique tangent cone to which the current decays with a power law rate, the latter coincides with those singular points where there is one plane for which the Krummel-Wickramasekera planar frequency function converges to a number larger than $1$, as the radius goes to $0$. 

As pointed out in the introduction, one significant difference of the approach in \cite{KW1} is that they avoid the requirement of introducing changing center manifolds at appropriate scales around those flat singular points where the decay to the cone is slow. As mentioned in \cites{KW1,KW2}, this in addition avoids the need for quite a few technical issues even to prove Almgren's original dimension bound. Indeed, here we a posteriori conclude that blowing up relative to center manifolds is not necessary for points with singularity degree between 1 and $2-2\delta_2$ (see Corollary \ref{c:coarse=fine}, \cite{DMS}*{Proposition 2.2}), but nevertheless for us the use of center manifolds is unavoidable to deduce this.

In the current work we instead establish a BV estimate on the frequency function (relative to varying center manifolds) which keeps the errors due to the change of center manifolds under control. In doing this, we capture the homogeneity of the first singular order in the expansion of the current. This way, we may use the same frequency function (relative to the center manifolds) in all of our arguments. We expect that, to conclude the rectifiability of those flat singular points which have a high order of contact with the tangent plane, in their forthcoming work \cite{KW3} Krummel \& Wickramasekera will need to resort to the frequency function with respect to the center manifold also, albeit only in the simpler setting.
Common to both approaches is that a suitable closeness of the current to a suitable reference plane is needed to get an almost monotonicity estimate for both frequency functions. 

The planar frequency function in \cite{KW1} depends only on the current and the reference plane, while the ones used here (and in the works \cites{DLS16centermfld,DLS16blowup}) depend on the current, the center manifold, {\em and the normal approximation}. Taking inspiration from \cite{KW1}, we believe that it is possible to eliminate the dependence on the latter approximation. If we denote by $\mathbf{p}$ the orthogonal projection on $\mathcal{M}$, we can substitute $r \mathbf{D}_N (x,r)$ with the ``curvilinear excess''
\[
r \int_{\Bbf_{2r} (x)} |\vec{T} (z) - \vec{\mathcal{M}} (\mathbf{p} (z))|^2 \phi \left(\frac{d (\mathbf{p} (z), x)}{r}\right) d\|T\| (z)
\]
and the height $\mathbf{H}_N (x,r)$ with a suitable squared $L^2$ distance of the current from $\mathcal{M}$
\[
\int_{\Bbf_{2r} (x)} |z-\mathbf{p} (z)|^2 \frac{|\nabla_y d (\mathbf{p} (z), x)|^2}{d (\mathbf{p} (z), x)} \phi' \left(\frac{d (\mathbf{p} (z), x)}{r}\right) d\|T\| (z)\, .
\]
The ratio of these two quantities differs from $\mathbf{I}_N (x,r)$ only by errors which can be bounded with suitable powers of the planar excess, as follows from the estimates in \cites{DLS16centermfld,DLS16blowup}. In particular this implies the almost monotonicity of the ``intrinsic ratio'' through the almost monotonicity of $\mathbf{I}_N (x,r)$. But in fact it is highly likely that appropriate variants of the computations in \cites{DLS16centermfld,DLS16blowup} prove directly the monotonicity of the ``intrinsic ratio''.

This also suggests the possibility of introducing a general frequency function, where $\mathcal{M}$ is replaced by any sufficiently regular surface with the same dimension as the current $T$. In view of the Taylor expansion of the area functional (see e.g. \cite{DLS_multiple_valued}), it is tempting to speculate that a suitable almost monotonicity will hold if one has a multi-valued map on the normal bundle of $\Mcal$ which approximates the current with a sufficiently high degree of accuracy and if one of the following two properties (or a suitable combination of the two) holds:
\begin{itemize}
    \item[(i)] The mean curvature of $\mathcal{M}$ vanishes, or it is asymptotically small as we approach the central point $x$;
    \item[(ii)] The average of the multi-valued approximation is asymptotically small as we approach the central point $x$. 
\end{itemize}

\section{The Hardt-Simon inequality and coarse blow-ups}\label{s:HS}

\subsection{Coarse blow-ups}\label{ss:coarse} Consider a blow-up sequence $\{r_k\}_k$ at the flat singular point $0$ and let:
\begin{itemize}
    \item $T_{0, r_k}$ be the corresponding rescaled currents;
    \item $\Sigma_{0, r_k}$ be the corresponding rescaled manifolds.
\end{itemize}
Without loss of generality we can assume that $T_{0,r_k}$ converges to $Q\llbracket \pi_0\rrbracket$ with $\pi_0=\mathbb R^m\times \{0\}$. {For $r_k:=\frac{r_k}{t_{j(k)}}$, where $]s_{j(k)},t_{j(k)}]$ is the interval of flattening containing $r_k$,} let $M > 0$ be large enough such that $\Bbf_L \subset \Cbf_{4M\bar{r}_k}$ for any $L \in {\Wscr^{j(k)}}$ with $L\cap \cl{B}_{\bar{r}_k}(0,\pi_0) \neq \emptyset$ (cf. \cite{DLS16centermfld} for the definitions). Consider further a sequence of planes $\pi_k$ with the property that $\pi_k$ optimizes the excess of $T_{0, r_k}$ in $\mathbf{B}_{8M}$ and {observe that for $k$ sufficiently large,}
\begin{align}\label{eq:E_k}
\Ebf(T_{0, r_k}, \Cbf_{4M}(\pi_k), \pi_k) \leq C \mathbf{E} (T_{0, r_k}, \mathbf{B}_{8M}) =: E_k \to 0\, , 
\end{align}
and define $\Abf_k \coloneqq \Abf_{\Sigma_{0, r_k}}$.
Clearly we must have $\pi_k\to \pi_0$. By applying a rotation which is infinitesimally close to the identity we can map $\pi_k$ to $\pi_0$. We then push forward the current $T_{0,r_k}$ under this rotation so that we can assume $\pi_k=\pi_0$, while, with a slight abuse of notation, we keep using $T_{0,r_k}$ and $\Sigma_{0,r_k}$ for the rotated objects.

If $k_0 \in \N$ is large enough, we can ensure that 
\begin{equation}\label{eq:smallexcess}
    E_k + \Abf_k^2 < \min\Big\{\eps_1, \frac{1}{2}\Big\} \qquad \text{for every $k \geq k_0$},
\end{equation}
where $\eps_1$ is the threshold in~\cite{DLS14Lp}*{Theorem~2.4}. We can therefore let $f_k : B_1(0,\pi_0) \to \Acal_Q(\pi_0^\perp)$ be the strong Lipschitz approximation of~\cite{DLS14Lp}*{Theorem~2.4} for $T_{0, r_k}$ and define the rescaled maps
\begin{equation}\label{eq:HSfreq}
        \bar{f}_k \coloneqq \frac{f_k}{E_k^{1/2}}\, .
    \end{equation}
We will make the additional assumption that 
\begin{align}
&\Abf_k^2 \leq C r_k^2 = o (E_k)\, .\label{e:A-E-infinitesimal}
\end{align}
Note that this does not need to hold in general, but we will verify that it holds whenever the sequence of blowup scales $r_k$ remain comparable to the stopping scales in their respective intervals of flattening; see Proposition \ref{p:coarse=fine}. It then follows from \cite{DLS14Lp} that, up to subsequences, 
\begin{itemize}
    \item[(i)] $\bar f_k$ converges strongly in $L^2\cap W^{1,2}_\loc (B_1 (0, \pi_0))$ to a Dir-minimizing map $\bar{f}: B_1(0,\pi_0) \to \Acal_Q(\pi_0^\perp)$,
    \item[(ii)] $\bar f$ takes values in the orthogonal complement to $\pi_0$ in $T_0 \Sigma$, 
    \item[(iii)] $\bar f (0) = Q \llbracket 0\rrbracket$.
 \end{itemize}
 The first two conclusions follow from \cite{DLS14Lp}*{Theorem 2.4, Theorem 2.6}, while the last conclusion is a consequence of the Hardt-Simon inequality \cite{Spolaor_15}*{(1.7)} for $T_{0,r_k}$, passed to the graphical approximation $f_k$ (see \cite{Sk21}*{Lemma 5.14} for analogous reasoning for the normal approximation).
 Note that there is no guarantee that the blow-up is nontrivial: the nontriviality of $\bar f$ is in fact equivalent (cf. \cite{DLS14Lp}) to
\begin{equation}\label{e:non-triviality}
{\liminf_{k\to\infty}} \frac{\Ebf (T_{0, r_k}, \Cbf_\rho, \pi_0)}{E_k} \geq \bar c> 0\,
\end{equation}
for some $\rho \in (0,1)$ and some $\bar c$.

\begin{definition}\label{d:coarse}
A Dir-minimizing map $\bar f$ as above will be called a {\em coarse} blow-up (at $0$). Its average free part is given by the map
\begin{equation}\label{e:average-free}
v (x) := \sum_i \llbracket \bar f_i (x) - \boldsymbol{\eta} \circ \bar f (x)\rrbracket\, .
\end{equation}
We say that $\bar{f}$ is nontrivial if it does not vanish identically.
\end{definition}

Obviously, if we focus our attention on some other flat singular point $q$, an obvious modification of the above procedure defines a notion of {\em coarse blow-up at $q$}. Our main claim for coarse blow-ups, which (as already pointed out) is a consequence of the Hardt-Simon inequality, is the following.

\begin{theorem}\label{thm:HS}
Let $T$ be as in Assumption \ref{asm:3}, $\bar f$ be a nontrivial coarse blow-up, and $v$ be its average-free part. Then $I_{\bar{f}} (0) \geq 1$ and,
if $v$ does not vanish identically, $I_v (0)\geq 1$.
\end{theorem}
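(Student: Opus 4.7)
The plan is to decompose $\bar f$ into its average and average-free parts and reduce both claims to a single uniform $L^2$-decay estimate on $\bar f$ near $0$, which will in turn be extracted from the Hardt-Simon inequality for $T$. Write $h := \boldsymbol{\eta}\circ\bar f$ and $v$ as in \eqref{e:average-free}, so $\bar f = \llbracket h\rrbracket \oplus v$ and $\boldsymbol{\eta}\circ v \equiv 0$. The identity $\bar f(0) = Q\llbracket 0\rrbracket$ recorded in item (iii) of Section~\ref{ss:coarse} gives $h(0) = 0$ and $v(0) = Q\llbracket 0\rrbracket$. A pointwise computation using $\sum_i v_i = Q\boldsymbol{\eta}\circ v = 0$ gives
\begin{equation*}
|\bar f|^2 = Q|h|^2 + |v|^2,\qquad |D\bar f|^2 = Q|Dh|^2 + |Dv|^2,
\end{equation*}
and in particular $|v|^2\leq |\bar f|^2$ pointwise.

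Both claims of Theorem~\ref{thm:HS} will then follow from the single decay estimate
\begin{equation}\label{e:hs-decay}
\int_{B_\rho(0,\pi_0)} |\bar f|^2 \, \mathrm{d}x \leq C\rho^{m+2}\qquad\text{for all } \rho\in(0,\rho_0],
\end{equation}
for some $\rho_0>0$. Indeed, for any nontrivial Dir-minimizer $u$ with $u(0)=Q\llbracket 0\rrbracket$, monotonicity of the frequency together with the tangent-map analysis of \cite{DLS_MAMS}*{Chapter~3} produce a nontrivial Dir-minimizing tangent map at $0$ that is radially homogeneous of degree $I_u(0)$, whence $\int_{B_\rho}|u|^2 \sim c_u\, \rho^{m+2I_u(0)}$ with $c_u>0$ as $\rho\downarrow 0$. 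The bound \eqref{e:hs-decay} thus forces $I_{\bar f}(0)\geq 1$, and applied to $v$ (via $|v|^2\leq |\bar f|^2$) it forces $I_v(0)\geq 1$ whenever $v$ is nontrivial.

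To establish \eqref{e:hs-decay} I would apply the Hardt-Simon inequality of \cite{Spolaor_15}*{(1.7)} to each rescaled current $T_{0,r_k}$ on concentric balls $\mathbf{B}_\rho$: this gives a quantitative bound on the $L^2$-distance of $\spt(T_{0,r_k})\cap \mathbf{B}_\rho$ to $\pi_0$, which after being transferred to the Lipschitz approximation $f_k$ via \cite{DLS14Lp}*{Theorem~2.4} (whose ``bad set'' errors decay faster than $E_k$ by a positive power) and divided by $E_k$ yields \eqref{e:hs-decay} for $\bar f_k$ on a range $\rho\in[\delta_k,\rho_0]$ with $\delta_k\downarrow 0$. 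Passing to the strong $L^2_\loc$ limit $\bar f_k\to\bar f$ then gives \eqref{e:hs-decay} for $\bar f$. The main technical obstacle I expect is to ensure that the constants in the resulting estimate are uniform in both $k$ and $\rho$: at the smaller scale $\rho r_k$ the excess $\mathbf{E}(T,\mathbf{B}_{\rho r_k},\pi_0)$ is in general not comparable to $E_k$, so the sharpness of the Hardt-Simon exponent (together with the minimality of $T$) must be used to absorb this scale dependence into the $\rho^{m+2}$ prefactor.
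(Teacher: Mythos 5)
Your overall structure is a genuinely different route from the paper's: you aim for a uniform $L^2$-decay estimate $\int_{B_\rho}|\bar f|^2\leq C\rho^{m+2}$, from which both conclusions follow (the decomposition $|\bar f|^2 = Q|\boldsymbol\eta\circ\bar f|^2 + |v|^2$ and the pointwise bound $|v|\leq|\bar f|$ are correct and are a clean way to get $I_v(0)\geq 1$). The paper instead first reduces, via closure of coarse blow-ups under rescaling (Lemma~\ref{l:rescaling-closure}), to the case where $\bar f$ is $\alpha$-homogeneous, and then shows directly that $\alpha\geq 1$.

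The gap is in how you propose to obtain \eqref{e:hs-decay}. The Hardt–Simon inequality \cite{Spolaor_15}*{(1.7)}, i.e.\ \eqref{eq:HS}, does \emph{not} bound the $L^2$-distance of $\spt(T_{0,r_k})\cap\mathbf{B}_\rho$ to $\pi_0$. The vector $q^\perp$ appearing there is the component of the position vector $q$ orthogonal to the tangent plane $\vec T(q)$ of the current at $q$ — not the component orthogonal to $\pi_0$. This quantity vanishes identically on \emph{any} cone through the origin, flat or not, so the integral $\int |q|^{-m}|q^\perp/|q||^2\,d\|T\|$ measures a conicity defect, not a flatness/height defect. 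Transferred to the Lipschitz approximation, the integrand is (up to lower-order terms controlled by $\Lip(f_k)$) of size $|x|^{-m-2}|f_k(x) - Df_k(x)\cdot x|^2$, i.e.\ $|x|^{-m+2}|\partial_r(f_k(x)/|x|)|^2$, not $|x|^{-m-2}|f_k(x)|^2$. For a degree-$1$ homogeneous $f$ this conicity defect is identically zero while the height is not, so the height bound \eqref{e:hs-decay} cannot be read off from \eqref{eq:HS} in the way you describe.

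The paper's reduction to homogeneous $\bar f$ is exactly what resolves this: once $\bar f(x) = |x|^\alpha\bar f(x/|x|)$, one has $\partial_r(\bar f(x)/|x|) = (\alpha-1)|x|^{\alpha-2}\bar f(x/|x|)$, so passing the Hardt–Simon integral bound \eqref{e:control_singularity} to the limit forces $(\alpha-1)^2\int_0^{1/2}s^{-1-2(1-\alpha)}\,ds<\infty$ and hence $\alpha\geq 1$. Without the reduction, to extract a height bound from \eqref{eq:HS} you would need a further step — e.g.\ Fubini in the angular variable and integration of $\partial_r(\bar f(r\omega)/r)$ along rays via Cauchy–Schwarz, which yields $\int_{B_\rho}|\bar f|^2\lesssim\rho^{m+2}|\log\rho|$ (with a logarithmic loss that still suffices for $I_{\bar f}(0)\geq 1$) — but this argument is not in your sketch, and it is precisely where the ``uniformity in $\rho$'' issue you flag as the main obstacle actually bites. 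As written, the proposal asserts a height estimate that the cited inequality does not provide.
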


In this section we prove Theorem \ref{thm:HS}. 

\subsection{Closure under rescalings} Before coming to the proof of Theorem \ref{thm:HS} we need the following elementary observation, which verifies that the property of being a coarse blow-up is closed under normalized $L^2$ limits.

\begin{lemma}\label{l:rescaling-closure}
Let $T$ be as in Assumption \ref{asm:3} and $\bar f$ be a nontrivial coarse blow-up. Let $\rho_j\downarrow 0$ be any vanishing sequence, let 
\[
D(j) := \int_{B_{\rho_j}} |D\bar f|^2,
\]
and define the rescaled maps $\bar f_j (x) := (\rho_j^{2-m} D(j))^{-1/2} \bar f (\rho_j x)$. If $\bar f_\infty$ is the $L^2$ limit of any subsequence of $\{\bar f_j\}$ on $B_1$, then $\bar f_\infty$ is (up to a nonzero multiplicative factor) also a nontrivial coarse blow-up.
\end{lemma}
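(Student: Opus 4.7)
The plan is to realize a nonzero constant multiple of $\bar f_\infty$ as a coarse blow-up of $T$ along a diagonal vanishing sequence of scales $\tilde r_\ell := r_{k_\ell}\rho_\ell$, combining the sequence $\{r_k\}$ producing $\bar f$ with the rescaling sequence $\{\rho_\ell\}$ producing $\bar f_\infty$. The guiding heuristic is that $T_{0,\tilde r_\ell} = (\iota_{0,\rho_\ell})_\sharp T_{0, r_{k_\ell}}$, so blowing up $\bar f$ at scale $\rho_\ell$ should correspond to blowing up $T$ itself at scale $\tilde r_\ell$. As a preliminary step I would note that each $\bar f_j$ is Dir-minimizing, being a rescaling of the Dir-minimizer $\bar f$. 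Hence by compactness of Dir-minimizers the $L^2$-convergence $\bar f_j \to \bar f_\infty$ automatically improves to strong $W^{1,2}_\loc(B_1)$ convergence and $\bar f_\infty$ is itself Dir-minimizing.

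I would then pick $k_\ell \to \infty$, increasing in $\ell$, satisfying three simultaneous properties. First, $T_{0, \tilde r_\ell}$ is close enough to $Q\llbracket \pi_0\rrbracket$ on $\mathbf{B}_{10M}$ that $\{\tilde r_\ell\}$ is a valid blow-up sequence at $0$ converging to $\pi_0$: this is achievable because $(\iota_{0,\rho_\ell})_\sharp Q\llbracket \pi_0\rrbracket = Q\llbracket \pi_0\rrbracket$, so for fixed $\rho_\ell$ one has $T_{0, r_k\rho_\ell}\to Q\llbracket \pi_0\rrbracket$ as $k\to\infty$. Second, the strong $W^{1,2}_\loc$ convergence $\bar f_k \to \bar f$ has progressed enough that $\int_{B_{\rho_\ell}} |D\bar f_{k_\ell}|^2$ and the analogous $L^2$ integral are within a $1+o_\ell(1)$ factor of the corresponding integrals of $\bar f$. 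Third, $r_{k_\ell}^2/E_{k_\ell}$ is sufficiently small compared to $\rho_\ell^{-m-2}D(\ell)$ to ensure that the curvature-excess condition \eqref{e:A-E-infinitesimal} holds for the sequence $\{T_{0, \tilde r_\ell}\}$ after the excess computation in the next step; this is possible by the original hypothesis $r_k^2/E_k\to 0$.

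Applying the coarse blow-up construction to $\{T_{0, \tilde r_\ell}\}$ produces Lipschitz approximations $g_\ell : B_1 \to \Acal_Q(\pi_0^\perp)$, excesses $\tilde E_\ell := \mathbf{E}(T_{0, \tilde r_\ell}, \mathbf{B}_{8M})$, and normalized maps $\bar g_\ell := g_\ell/\tilde E_\ell^{1/2}$. The key computation is that the Lipschitz approximation of a rescaled current agrees, up to an error controlled by the excess, with the rescaling of the original approximation: $g_\ell(x) \sim \rho_\ell^{-1}f_{k_\ell}(\rho_\ell x)$ on $B_1$. Combining this with the comparability between excess and Dirichlet energy of the Lipschitz approximation and the second property above yields $\tilde E_\ell \sim \rho_\ell^{-m} D(\ell) E_{k_\ell}$. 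Substituting, $\bar g_\ell$ is asymptotically a fixed dimensional constant $c \neq 0$ times $(\rho_\ell^{2-m} D(\ell))^{-1/2}\bar f_{k_\ell}(\rho_\ell\,\cdot\,)$, which by the second property and the definition of $\bar f_\ell$ converges in $L^2(B_1)$ to $c\,\bar f_\infty$. Thus $c\,\bar f_\infty$ is a coarse blow-up along $\{\tilde r_\ell\}$; in particular $\bar f_\infty$ is nontrivial.

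The main obstacle is the diagonal bookkeeping. The scaling identity $g_\ell(x) \sim \rho_\ell^{-1} f_{k_\ell}(\rho_\ell x)$ must be asserted quantitatively, with errors smaller than the prefactor $E_{k_\ell}^{1/2}$; this uses the coincidence-set characterization of the strong Lipschitz approximation in \cite{DLS14Lp}. Similarly, the convergence $\bar f_{k_\ell} \to \bar f$ must be quantitative on the shrinking balls $B_{\rho_\ell}$, which is a standard diagonal extraction once the quantitative form of the $W^{1,2}_\loc$ convergence is spelled out. All of these can be arranged by picking $k_\ell$ sufficiently large relative to $\ell$.
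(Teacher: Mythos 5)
Your proposal is correct and follows essentially the same diagonal argument as the paper: you build the blow-up sequence $\tilde r_\ell = \rho_\ell r_{k_\ell}$ with $k_\ell$ large enough to make $T_{0,\tilde r_\ell}$ converge to $Q\llbracket\pi_0\rrbracket$, to make \eqref{e:A-E-infinitesimal} hold, to match the Lipschitz approximation of $T_{0,\tilde r_\ell}$ with $\rho_\ell^{-1}f_{k_\ell}(\rho_\ell\cdot)$ up to $o(E_{k_\ell}^{1/2})$, and to get the two-sided comparison $\tilde E_\ell \sim \rho_\ell^{-m}D(\ell)E_{k_\ell}$ -- this is exactly what the paper does with its decoupled two-index radii $r_{j,k}=\rho_j r_k$ and conditions (a)--(d). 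The one slip is the closing ``in particular $\bar f_\infty$ is nontrivial'': that does not follow from being a coarse blow-up but rather from the normalization $\int_{B_1}|D\bar f_j|^2 = 1$ together with compactness of Dir-minimizers (equivalently, one should check \eqref{e:non-triviality} for $\{\tilde r_\ell\}$, which the paper does explicitly by noting $\bar\rho = \tfrac12$ and $\bar c$ depending only on the lower comparison constant work uniformly).
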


\begin{proof} 
Let $r_k$ be a blow-up sequence with the property that the maps $\bar f_k$ defined in the previous section converge to $\bar f$ and fix constants $\bar \rho$ and $\bar c$ so that \eqref{e:non-triviality} holds. We consider a sequence $r'_j:= \rho_j r_{k(j)}$ %and $\pi_j := \pi_{k(j)}$
and we will show that, for an appropriate choice of $k (j)$, the following holds:
\begin{itemize}
    \item[(a)] $r'_j$ is a blow-up sequence, i.e. $T_{0, r'_j}$ converges to $Q\llbracket\pi_0\rrbracket$;
    \item[(b)] $\tilde{E}_j := \Ebf (T_{0, r'_j}, \Cbf_4, \pi_0)$ converges to $0$;
    \item[(c)] The conditions \eqref{e:A-E-infinitesimal} and \eqref{e:non-triviality} hold for this new blow-up sequence;
    \item[(d)] If $f_j$ are the approximating maps given by \cite{DLS14Lp}*{Theorem~2.4}, then $\tilde{E}_j^{-1/2} f_j$ converges (up to subsequences) to $\lambda \bar f_\infty$ for some nonzero scalar $\lambda$.
\end{itemize}
The argument is a classical diagonal one and in order to deal efficiently will all the conditions, it is useful to decouple the two indices and introduce the radii $r_{j,k} := \rho_j r_k$. We introduce then the corresponding excess $E_{j,k}:= \Ebf (T_{0, r_{j,k}}, \Cbf_4, \pi_0)$ and $\Abf_{j,k} := \Abf_{\Sigma_{0, r_{j,k}}}$. Combining the estimates of \cite{DLS14Lp}*{Theorem 2.4} with \eqref{e:A-E-infinitesimal} we immediately see that there are two positive constants $\tilde{c}^+$ and $\tilde{c}^-$ such that
\begin{equation}\label{e:rate-2}
c^- \leq \liminf_{j\to \infty} \frac{E_{j,k}\rho_j^m}{E_k D(j)}\leq \limsup_{j\to \infty} \frac{E_{j,k}\rho_j^m}{E_k D(j)}\leq c^+\, ,
\end{equation}
where $E_k$ is as in \eqref{eq:E_k}. Note that \eqref{e:A-E-infinitesimal} is required to control the $\Abf_{j,k}$ terms in the estimates of \cite{DLS14Lp}*{Theorem 2.4}. Moreover, obvious scaling arguments show that $\Abf_{j,k} \leq C \rho_j^2 \Abf_k$. It is then pretty obvious that the conditions corresponding to (a), (b), and (c) above hold for any sequence $\{r_{j,k}\}_j$ once we keep $k$ fixed. Observe also that for (c) we can choose constants which are independent of $k$: the radius $\bar \rho$ can in fact be taken equal to $\frac{1}{2}$, while the constant $\bar c$ will depend only upon $c^-$. In particular, for any sequence $\{k(j)\}_j$ which converges to infinity sufficiently fast, (a), (b), and (c) will hold. 

%Next we apply, as above, suitable rotations and assume that all the planes $\pi_k$ coincide with $\pi_0$ (without changing notation for the various objects introduced). 
We consider the rescaled maps
\[
\tilde f_{j,k} (x) := \rho_j^{-1} f_k (\rho_j x)
\]
and let instead $f_{j,k}:B_1 (0, \pi_0)\to \mathcal{A}_Q (\pi_0^\perp)$ be the Lipschitz approximations which are given by \cite{DLS14Lp}*{Theorem 2.4} applied to $T_{0, r_{j,k}}$. Observe that, by the estimates in \cite{DLS14Lp}*{Theorem~2.4},
\[
\lim_{j\to \infty} E_{k,j}^{-1/2} \|\mathcal{G} (f_{j,k}, \tilde{f}_{j,k})\|_{L^2} = 0\, .
\]
On the other hand, for every fixed $k$, the limit of $E_{k,j}^{-1} \tilde{f}_{j,k}$ is clearly a scalar multiple $\lambda (k)$ of $\bar f_k$, and it is easy to see that this scalar multiple has a fixed range $[\lambda^-, \lambda^+]$ for positive constants $\lambda^\pm$ depending upon $c^\pm$ and upon the constant $\bar c$ in condition \eqref{e:non-triviality} for $r_k$. It follows therefore that (d) holds for any $k(j)$ which diverges sufficiently fast.
\end{proof}

\subsection{Proof of Theorem \ref{thm:HS}} Recalling \cite{DLS_MAMS}*{Theorem~3.19}, the frequency value $\alpha$ at $0$ of any non-trivial Dir-minimizer $f$ with $f(0)=Q\llbracket 0\rrbracket$ is a strictly positive number and by \cite{DLS_MAMS}*{Corollary~3.18}, we have that
\begin{align}
&\lim_{\rho\to 0} \rho^{2-2\bar\alpha-m} \int_{B_\rho} |Df|^2 = \infty \qquad \forall \bar \alpha > \alpha\, ,\\
&\lim_{\rho\to 0} \rho^{2-2\bar\alpha-m} \int_{B_\rho} |Df|^2 = 0\qquad \forall \bar\alpha < \alpha\, .
\end{align}
On the other hand, since
\[
    |D\bar f|^2 = |Dv|^2 + Q |D(\boldsymbol\eta\circ \bar f)|^2,
\]
where $v$ is the average free part of $\bar f$, for any coarse blow-up $\bar f$ we conclude that $I_{\bar f} (0) = \min \{I_v (0), I_{\boldsymbol{\eta}\circ \bar f} (0)\}$ if $\boldsymbol{\eta}\circ \bar f$ is not identically vanishing, otherwise $\bar f = v$ and so $I_{\bar f} (0) = I_v (0)$. Recall that $\boldsymbol{\eta}\circ \bar f$ is a classical harmonic function with $\boldsymbol{\eta}\circ \bar f(0)=0$ and hence $I_{\boldsymbol{\eta}\circ \bar f}(0)$ is a positive integer if $\boldsymbol{\eta}\circ \bar f \not\equiv 0$. Thus, in order to prove that $I_{v} (0)\geq 1$, it suffices to show that $I_{\bar f} (0) \geq 1$. Introduce now 
\[
\bar f_r := r^{\frac{m-2}{2}} \frac{ f (rx)}{\sqrt{{\rm Dir} (f, B_r)}}\, .
\]
and apply Lemma \ref{l:rescaling-closure} to conclude that, if there is a coarse blow-up $\bar f$ with $\alpha = I_{\bar f} (0)$, then there is a coarse blow-up which is $\alpha$-homogeneous.

We will now prove that, if $\bar f$ is an $\alpha$-homogeneous coarse blow-up, then necessarily $\alpha\geq 1$. This is in fact the same argument used in  \cite{WicJDG}*{Proposition 3.10} and we report it for the reader's convenience.
Consider thus such a coarse blow-up and fix a blow-up sequence $r_k$ leading to it, according to the procedure explained above. In order to simplify our notation we denote by $T_k$ the current $T_{0,r_k}$.

First of all, recall that since $\Theta(T_k,0) \geq Q$, the error from the monotonicity formula for mass ratios gives the estimate
\begin{equation}\label{eq:HS}
\int_{\Bbf_{4}} \frac{1}{|q|^m}\left|\frac{q^\perp}{|q|}\right|^2\dd \|T_k\|(q) \leq C E_k + C\Abf_k^2\, .
\end{equation}
See, for example,~\cite{Spolaor_15} for a derivation of this. The only subtlety compared to the classical literature (cf. for instance \cite{Simon_GMT}) is that the usual derivation of the above estimate is reduced to the one for varifolds with bounded mean curvature. The latter is not good enough for us because it would give a linear dependence on $\Abf_k$, rather than a quadratic one. The quadratic improvement, which is possible using the stronger information that our current induces a {\em stationary varifold} in a Riemannian submanifold, is remarked in \cite{DLS14Lp}*{Appendix A}. 

As described in the procedure leading to coarse blow-ups we rotate the currents suitably so that $\pi_k = \pi_0$. We next pass the inequality~\eqref{eq:HS} to the Lipschitz approximations $f_k$ given by \cite{DLS14Lp}*{Theorem~2.4}.
We let $\sum_i \llbracket (f_k)_i \rrbracket$ be a (measurable) selection for the $f_k$ as in~\cite{DLS_MAMS}*{Theorem~0.4}. We then write
\begin{equation}\label{eq:HSgraph}
\int_{K_k} \sum_i \frac{|\big(x + (f_k)_i(x)\big)^\perp|^2}{|x + (f_k)_i(x)|^{m+2}}\dd x \leq C (E_k + \Abf_k^2)\leq C E_k,
\end{equation}
where $K_k \subset B_1 \subset \pi_k$ is the (closed) domain over which the graph of the Lipschitz approximation $f_k$ coincides with the current $T_k$ (cf. \cite{DLS14Lp}*{Theorem~0.4}. Note that, for the point $q = x + (f_k)_i(x)
\in K_k \times \pi_k^\perp$, $q^\perp$ denotes the orthogonal projection of $q$ to $(T_q \Gbf_{f_k})^\perp$, where $\Gbf_{f_k}$ (the current induced by the graph of the multivalued function $f_k$) is defined as  in~\cite{DLS_multiple_valued}*{Definition~1.10}.

However, since $f_k$ is Lipschitz and thus differentiable almost everywhere by Rademacher's Theorem~\cite{DLS_MAMS}*{Theorem~1.3}, we can formally compute
\begin{equation}\label{eq:radialder}
        \frac{\partial}{\partial r}\left(\frac{(f_k)_i(x)}{|x|}\right) = \frac{\partial}{\partial r}\left(\frac{x+ (f_k)_i(x)}{|x|}\right) = \frac{\partial_r\left(x + (f_k)_i(x)\right)}{|x|} -\frac{x+(f_k)_i(x)}{|x|^2}.
\end{equation}
Since the first term on the left-hand side belongs to $T_q \mathbf{G}_f$ at $q= x + (f_k)_i (x)$, we have
\[
    \left|\left[\frac{\partial}{\partial r} \left(\frac{(f_k)_i(x)}{|x|}\right)\right]^\perp\right|^2 = 
    \frac{\left|\left[x+(f_k)_i(x)\right]^\perp\right|^2}{|x|^4}.
\]
Combining this with~\eqref{eq:HSgraph}, we have
\begin{equation}\label{eq:HSgraph2}
    \int_{K_k} \sum_i \frac{|x|^4}{|x + (f_k)_i(x)|^{m+2}}\left|\left[\frac{\partial}{\partial r} \left(\frac{(f_k)_i(x)}{|x|}\right)\right]^\perp\right|^2 \dd x \leq C E_k\, .
\end{equation}
We next wish to estimate the tangential component of the right-hand side of~\eqref{eq:radialder} as follows:
\[
        \left|\left[\frac{\partial}{\partial r} \left(\frac{(f_k)_i(x)}{|x|}\right)\right]^\parallel\right|^2 \leq \|\mathbf{p}_{\vec{T}_k(q)} - \mathbf{p}_{\pi_0}\|^2 \left|\frac{\partial}{\partial r} \left(\frac{(f_k)_i(x)}{|x|}\right)\right|^2 \leq C E_k^\beta \left|\frac{\partial}{\partial r} \left(\frac{(f_k)_i(x)}{|x|}\right)\right|^2,
\]
where we have used that, at the point $q= x+ (f_k)_i (x)$ of interest, the tangent to the current coincides with the tangent to $\mathbf{G}_f$, and the distance of the latter to $\pi_0$ can be estimated with the Lipschitz constant of $f_k$ (cf. \cite{DLS14Lp}*{Theorem~2.4}). Writing
\begin{align*}
\left|\frac{\partial}{\partial r}\left(\frac{(f_k)_i(x)}{|x|}\right)\right|^2 = 
\left|\left[\frac{\partial}{\partial r} \left(\frac{(f_k)_i(x)}{|x|}\right)\right]^\perp\right|^2 +
\left|\left[\frac{\partial}{\partial r} \left(\frac{(f_k)_i(x)}{|x|}\right)\right]^\parallel\right|^2
\end{align*}
we immediately conclude
\[
\left|\frac{\partial}{\partial r}\left(\frac{(f_k)_i(x)}{|x|}\right)\right|^2
\leq 2 \left|\left[\frac{\partial}{\partial r} \left(\frac{(f_k)_i(x)}{|x|}\right)\right]^\perp\right|^2 
\]
as soon as $E_k$ is sufficiently small. Hence, by \eqref{eq:HSgraph2}, we conclude
\begin{equation}\label{eq:homogeneitybd}
\int_{K_k} \sum_i\frac{|x|^4}{|x + (f_k)_i(x)|^{m+2}}\left|\frac{\partial}{\partial r} \left(\frac{(f_k)_i(x)}{|x|}\right)\right|^2 \dd x \leq C E_k\, .
\end{equation}
Next, consider $\bar f_k := E_k^{-1/2} f_k$ and infer, from \eqref{eq:homogeneitybd} the estimate
\[
\int_{\bigcap_{j\geq k_0} K_j\setminus B_\rho} \sum_i\frac{|x|^4}{|x + E_k^{1/2} (\bar f_k)_i(x)|^{m+2}}\left|\frac{\partial}{\partial r} \left(\frac{(\bar f_k)_i(x)}{|x|}\right)\right|^2 \dd x \leq C\, ,
\]
for any $k\geq k_0$ and $\rho>0$.
Recall that:
\begin{itemize}
    \item $\bar f_k$ converges strongly in $W^{1,2} (B_{1/2})$ to $\bar f$;
    \item The height bound of \cite{Spolaor_15} implies that $\|\bar f_k\|_\infty$ is uniformly bounded. 
\end{itemize}
We can thus pass into the limit in $k$ to conclude
\[
\int_{(B_{1/2}\setminus B_\rho) \cap \bigcap_{j\geq k_0} K_j} \sum_i\frac{1}{|x|^{m-2}}\left|\frac{\partial}{\partial r} \left(\frac{(\bar f)_i(x)}{|x|}\right)\right|^2 \dd x \leq C\, .
\]
By choosing a fast converging subsequence, we can assume that the series $\sum |B_1\setminus K_j|$ is summable. Therefore, let $k_0\uparrow \infty$ and $\rho\downarrow 0$ we get 
\begin{equation}\label{e:control_singularity}
\int_{B_{1/2}} \sum_i\frac{1}{|x|^{m-2}}\left|\frac{\partial}{\partial r} \left(\frac{(\bar f)_i(x)}{|x|}\right)\right|^2 \dd x \leq C
\end{equation}
Since $\bar f$ is $\alpha$-homogeneous we have
\[
\bar f_i(x) =|x|^\alpha \bar f_i\left(\frac{x}{|x|}\right),   
\]
and so
\[
\frac{\partial}{\partial r}\left(\frac{\bar f_i(x)}{|x|}\right) = (\alpha - 1)|x|^{\alpha - 2}\bar f_i\left(\frac{x}{|x|}\right).
\]
Inserting in \eqref{e:control_singularity} and passing to polar coordinates we conclude
\[
(\alpha -1)^2 \int_{\partial B_1} |\bar f|^2 \int_0^{1/2} s^{-1-2(1-\alpha)} \dd s \leq C\, .
\]
The latter inequality implies immediately $\alpha\geq 1$, and thus completes the proof.

\section{Comparison of coarse and fine blow-ups}\label{ss:coarse-fine}

In this section we compare fine and coarse blow-ups at scales which are comparable to the left endpoints of a sequence of intervals of flattening. The main conclusion is that the average-free parts of coarse blow-ups are scalar multiples of fine blow-ups. More precisely we have the following proposition.

\begin{proposition}\label{p:coarse=fine}
Let $T$ be as in Assumption \ref{asm:3}. Let $r_k\in (s_{j(k)}, t_{j(k)}[$ be a blow-up sequence at the origin and assume that 
\begin{equation}\label{e:fine-coarse-assumption}
\liminf_{k\to \infty} \frac{s_{j(k)}}{r_k} > 0\, .
\end{equation}
Then
\eqref{e:A-E-infinitesimal} holds and we can consider a coarse blow-up $\bar f$ generated by a (subsequence) according to Section \ref{ss:coarse} and
 a fine blow-up $u$ (generated by a further subsequence) according to the procedure detailed in Section \ref{ss:compactness}.
 If we denote by $v$ the average-free part of $\bar f$, then there is a real number $\lambda > 0$ such that $v= \lambda u$. 
\end{proposition}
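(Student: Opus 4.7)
The strategy is to identify, at the common rescaled scale $\sim r_k$, the rescaled normal approximation $\bar N_k$ of the center manifold with $E_k^{1/2}$ times the average-free part of the coarse Lipschitz approximation $\bar f_k$. The heuristic is that the center manifold is designed to track the $\boldsymbol{\eta}$-average of the current's sheets, so its deviation from a reference plane morally equals $\boldsymbol{\eta}\circ f_k$, whereas $f_k$ itself records the deviation of every sheet. The difference $\sum_i\llbracket (f_k)_i-\boldsymbol{\eta}\circ f_k\rrbracket$ is therefore encoded by $\bar N_k$ up to higher-order errors in the excess, and upon dividing by the respective normalizations one recovers $v=\lambda u$.

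First I verify \eqref{e:A-E-infinitesimal}. Since $\Abf_k\leq C r_k\Abf$, with $\Abf$ the (fixed) curvature bound on $\Sigma$, the condition amounts to $r_k^2=o(E_k)$. Under the hypothesis $\liminf_k s_{j(k)}/r_k>0$, the scale $r_k$ is comparable to the stopping scale $s_{j(k)}$, at which one of the stopping criteria in the Whitney decomposition underlying the center manifold at scale $t_{j(k)}$ is triggered. Combining this with the lower bound $\boldsymbol{m}_{0,j(k)}\geq \bar\eps^2 t_{j(k)}^{2-2\delta_2}$ from \eqref{eq:m_0} and the scaling of the excess, one obtains $E_k\gtrsim \bar\eps^2 r_k^{2-2\delta_2}$; since $\delta_2<1$ this gives $r_k^2=o(E_k)$ and makes the coarse blow-up procedure of Section~\ref{ss:coarse} legitimate.

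Next I carry out the quantitative comparison. After applying the small rotations that identify both $\pi_k^{\text{coarse}}$ (the optimal plane in the coarse construction) and the tangent plane $\pi_k^{\text{fine}}$ to $\bar\Mcal_k$ at $p_k$ with the limiting plane $\pi_0$---both tilts being $O(E_k^{1/2})$---I invoke from \cites{DLS16centermfld,DLS16blowup} the pointwise estimates that compare (i) the current $\bar T_k$ with the graph of $f_k$ over $\pi_0$, (ii) the center manifold $\bar\Mcal_k$ with the graph of $\boldsymbol{\eta}\circ f_k$ (so that $\bar\Mcal_k-\pi_0\approx \boldsymbol{\eta}\circ f_k$), and (iii) $\bar N_k$ with the sheets of $f_k$. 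Assembling them in $L^2(\Bcal_{3/2})$ yields
\[
\bar N_k(e_k(x)) \;=\; E_k^{1/2}\sum_i \llbracket (\bar f_k)_i(x)-\boldsymbol{\eta}\circ \bar f_k(x)\rrbracket \;+\; o_{L^2}\bigl(E_k^{1/2}\bigr).
\]
Dividing by $\mathbf{h}_k$ and setting $\lambda_k:=E_k^{1/2}/\mathbf{h}_k$, we obtain $u_k=\lambda_k(\bar f_k-\boldsymbol{\eta}\circ \bar f_k)+o_{L^2}(1)$. The exponential map $e_k$ converges to the identity because $\bar\Mcal_k$ flattens to $\pi_0$, so the strong $L^2$ convergences $\bar f_k\to \bar f$ and $u_k\to u$ upgrade this to $u=\lambda v$ with $\lambda=\lim\lambda_k$.

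Finally I check $\lambda\in(0,\infty)$. Squaring and integrating the identification gives $\mathbf{h}_k^2=E_k\|v\|_{L^2}^2+o(E_k)$, so $\lambda_k$ is bounded above whenever $v\not\equiv 0$; the matching upper bound $\mathbf{h}_k\leq C E_k^{1/2}$ is the standard $L^2$ comparison of the normal approximation with the excess from \cite{DLS16blowup}, while the lower bound $\mathbf{h}_k\geq c E_k^{1/2}$---which forces $v\not\equiv 0$---follows from the stopping criteria at $s_{j(k)}$: a stopping condition guarantees that either the tilt excess or the height of the sheets relative to the center manifold is of the order of the excess in some cube of size $\gtrsim \bar r_k$, and the hypothesis $\liminf s_{j(k)}/r_k>0$ keeps these cubes at a positive distance from the boundary of $\Bcal_{3/2}$. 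Alternatively, one can observe that $v\equiv 0$ together with the identification above would force $u\equiv 0$, contradicting $\|u\|_{L^2(\Bcal_{3/2})}=1$.

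The main obstacle is the bookkeeping in the comparison step, which requires tracking simultaneously four sources of error---the approximation of $\bar T_k$ by the graph of $f_k$ over $\pi_0$; the approximation of $\bar\Mcal_k$ by the graph of $\boldsymbol{\eta}\circ f_k$ with the correct prefactor $E_k^{1/2}$; the tilt between $\pi_k^{\text{coarse}}$ and $\pi_k^{\text{fine}}$; and the pullback by the exponential map $e_k$---each controlled individually in \cites{DLS16centermfld,DLS16blowup}, but whose joint compatibility must be verified in $L^2$ after division by $E_k^{1/2}$, so that the identification survives to the limit.
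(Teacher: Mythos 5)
Your high-level strategy matches the paper's, and you correctly identify the four sources of error whose joint control is the crux of the argument. The derivation of \eqref{e:A-E-infinitesimal} from the stopping condition and the two-sided comparability of $\mathbf{h}_k$ with $E_k^{1/2}$ are both the right ingredients, and the nontriviality/positivity reasoning for $\lambda$ is sound modulo those ingredients. However, there is a genuine gap in the central comparison step: you write that the pointwise estimates comparing $\bar\Mcal_k$ with the graph of $\boldsymbol{\eta}\circ f_k$ and $\bar N_k$ with the sheets of $f_k$ can be ``invoked from \cites{DLS16centermfld,DLS16blowup}.'' Neither estimate is available there. The estimate $\int_{B_{3/2}}|\bar{\boldsymbol{\vphi}}_k-\boldsymbol{\eta}\circ f_k|^2 = o(E_k)$ is a new statement which must be assembled cube-by-cube: over the contact set the two maps agree, but over each Whitney cube $H$ one must reparametrize to the tilted plane $\pi_H$ and chain together the comparisons $\boldsymbol{\varphi}_H\leftrightarrow h_H\leftrightarrow\boldsymbol{\eta}\circ f_H\leftrightarrow\mathbf{f}_H$, control the tilt $|\pi_0-\pi_H|$, account for the bad sets where the Lipschitz approximation or the normal approximation is not graphical, and verify that the sum over cubes produces the desired $o(E_k)$ (not merely $O(E_k)$) — this is the content of Lemma~\ref{lem:height-excess}(ii) and takes several pages.

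Similarly, the comparison of $\bar N_k$ (parametrized over $\Mcal_k$) with the average-free sheets of $f_k$ (parametrized over $\pi_0$) requires a genuine reparametrization lemma of the type of Lemma~\ref{l:reparametrization-lemma}: $\Gcal\bigl(N(\boldsymbol{\varphi}(x)),\tilde g(x)\bigr)\le C\|D\boldsymbol{\varphi}\|_{C^0}\bigl(\|g\|_{C^0}+\|D\boldsymbol{\varphi}\|_{C^0}\bigr)$, proved by an inductive separation argument in $Q$. Even once that lemma is in hand, it cannot be applied directly because the rescaled map $g_k$ carries only an $L^2$ (Sobolev) bound, not a $C^0$ bound; one needs a truncation argument (cut $g_k$ and $N_k$ at level $\bar M$, apply the lemma, then let $\bar M\to\infty$ using the uniform $W^{1,2}$ control and Sobolev embedding). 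Without these two lemmas and the truncation your ``identification'' $\bar N_k\circ e_k = E_k^{1/2}\sum_i\llbracket(\bar f_k)_i-\boldsymbol\eta\circ\bar f_k\rrbracket + o_{L^2}(E_k^{1/2})$ is an unproven assertion, and since the rest of the proof (bounds on $\lambda$, nontriviality of $v$) rests on it, the argument is incomplete. You flag this correctly as the ``main obstacle'' at the end, but the way to close it is by proving those two auxiliary lemmas, not by citing the existing excess/height estimates.
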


\begin{remark}\label{r:coarse=fine}
In general, without assumption \eqref{e:fine-coarse-assumption} it might be that \eqref{e:A-E-infinitesimal} does not hold and that we cannot, therefore, define a coarse blow-up. Even if we were to assume \eqref{e:A-E-infinitesimal}, but not \eqref{e:fine-coarse-assumption}, we could at best infer that $v=\lambda u$ for some $\lambda \geq 0$, but not that $\lambda$ is necessarily positive. Easy examples for the latter behavior can be constructed using holomorphic curves of $\mathbb C^2$ of the form $\{(z,w) : (w-h(z))^Q = z^p\}$, for a nontrivial holomorphic $h$ with $h(0)=h'(0)=0$ and a fraction $\frac{p}{Q}$ which is noninteger and larger than the order of vanishing of $h$ at $0$. 
\end{remark}

An obvious corollary of the latter proposition is that, under the above assumptions, $v$ is necessarily nontrivial and that $I_v (0) = I_u (0)$.

\subsection{Nontriviality and homogeneity of coarse blow-ups}
If we combine it with Theorem \ref{t:consequences}(i),(vi), Proposition \ref{p:coarse=fine} has the following further consequence, which will be useful in \cite{DMS}.

\begin{corollary}\label{c:coarse=fine}
Let $T$ be as in Assumption \ref{asm:1}, let $\delta_2>0$ be the parameter in \cite{DLS16centermfld}*{Assumption~1.8}  and assume the singularity degree $\Irm (T,0)$ is strictly smaller than $2-\delta_2$. Then any coarse blow-up $\bar{f}$ at $0$ is nontrivial, $\Irm (T, 0)$-homogeneous, and has average $0$ (so in particular $\bar{f}=v$ for the average-free part $v$). 

Moreover, for every $\gamma> 2 (\Irm (T, 0)-1)$, we have
\begin{equation}\label{e:lower-bound-excess}
\liminf_{r\downarrow 0} \frac{\mathbf{E} (T, \Bbf_r)}{r^\gamma} > 0 \, 
\end{equation}
and there exists a radius $r_0$ (which depends on the current $T$) such that 
\begin{equation}\label{e:lower-bound-excess-2}
\mathbf{E} (T, \Bbf_r) \geq \frac{r^\gamma}{s^\gamma} \mathbf{E} (T, \Bbf_s) \qquad
\forall r<s<r_0\, .
\end{equation}
\end{corollary}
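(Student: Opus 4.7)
The plan is to first establish the excess decay lower bounds \eqref{e:lower-bound-excess} and \eqref{e:lower-bound-excess-2}, then use them to verify hypothesis \eqref{e:A-E-infinitesimal} of Proposition \ref{p:coarse=fine}, and finally upgrade the conclusion to a statement about the full coarse blow-up, not only its average-free part.

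\emph{Setup and excess decay.} Since $\Irm(T,0)<2-\delta_2$, Theorem \ref{t:consequences}(vi) yields that $\{(s_j,t_j]\}$ is infinite with $\underline s:=\inf_j s_j/t_j>0$. I would then use Theorem \ref{t:consequences}(iii): $\mathbf I_{N_j}(r)$ is uniformly close to $\alpha:=\Irm(T,0)$ on $(s_j/t_j,3]$ (uniformly in $j$ when $\alpha>1$, and at the scales $r_k/t_{j(k)}$ for any blow-up sequence when $\alpha=1$). The almost-monotonicity identity $(d/dr)\log\mathbf H_{N_j}(r)=[(m-1)+2\mathbf I_{N_j}(r)]/r+(\text{error})$ then integrates to a two-sided power-law growth $\mathbf H_{N_j}(r)\sim r^{m-1+2\alpha+o(1)}$ on the relevant range; combining with the approximation theorem relating $\mathbf D_{N_j}$ to the excess of $T$ at comparable scales, and using $\underline s>0$ to stitch consecutive intervals of flattening, one obtains $\mathbf E(T,\mathbf B_r)\geq c_\gamma r^\gamma$ for every $\gamma>2(\alpha-1)$ and $r$ small enough, which is \eqref{e:lower-bound-excess}, as well as the scale-invariant version \eqref{e:lower-bound-excess-2}.

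\emph{Applying Proposition \ref{p:coarse=fine}.} For any blow-up sequence $r_k\downarrow 0$ with $r_k\in(s_{j(k)},t_{j(k)}]$ we have $s_{j(k)}/r_k\geq\underline s>0$, so hypothesis \eqref{e:fine-coarse-assumption} holds. Fixing any $\gamma\in(2(\alpha-1),2)$---nonempty since $\alpha<2$---the bound \eqref{e:lower-bound-excess} gives $E_k\geq c r_k^\gamma$, while $\mathbf A_k^2\leq Cr_k^2$, so $\mathbf A_k^2=o(E_k)$ and \eqref{e:A-E-infinitesimal} is verified. Proposition \ref{p:coarse=fine} then tells us that for any coarse blow-up $\bar f$, its average-free part $v$ satisfies $v=\lambda u$ for some $\lambda>0$ and some fine blow-up $u$; by Theorem \ref{t:consequences}(i) this $u$ is radially homogeneous of degree $\alpha$, so $v$ is nontrivial and $\alpha$-homogeneous, and $\bar f\not\equiv 0$.

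\emph{Vanishing of the average.} It remains to show $g:=\boldsymbol\eta\circ\bar f\equiv 0$, which gives $\bar f=v$ and hence the claimed $\alpha$-homogeneity. The function $g$ is harmonic by Dir-minimality of $\bar f$, vanishes at $0$ by the Hardt-Simon inequality, and has $\nabla g(0)=0$ thanks to the first-order optimality of the planes $\pi_k$ in the coarse blow-up procedure. Hence a nontrivial $g$ would satisfy $I_g(0)\geq 2>\alpha$; but then the decompositions $|\bar f|^2=|v|^2+Q|g|^2$ and $|D\bar f|^2=|Dv|^2+Q|Dg|^2$ would force $\mathbf I_{\bar f}(r)$ to be strictly increasing in $r$, contradicting the $\alpha$-homogeneity produced by combining Lemma \ref{l:rescaling-closure} (normalized rescalings of $\bar f$ are again coarse blow-ups) with the identification of Proposition \ref{p:coarse=fine} at every rescaling. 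The \emph{main obstacle} is to make this last point rigorous: $\nabla g(0)=0$ only pushes $I_g(0)$ up to $2$, and the $C^0$ bound $\|\varphi_k\|\leq C\sqrt{\mathbf m_{0,j(k)}}$ on the center manifold's graph over $\pi_0$ is only bounded (not vanishing) when divided by $\sqrt{E_k}$ in our regime. Extinction of $g$ must therefore be extracted from the scale-rigidity of the identification $v=\lambda u$ coupled with the rescaling closure of Lemma \ref{l:rescaling-closure} and the scale-invariant excess decay \eqref{e:lower-bound-excess-2}.
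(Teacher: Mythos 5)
Your proposal has a genuine gap at the crucial step: showing that the average $g:=\boldsymbol{\eta}\circ\bar f$ vanishes identically. You correctly observe that $g$ is harmonic, that $g(0)=0$, and that $\nabla g(0)=0$ (using the Taylor expansion and optimality of $\pi_0$), hence $I_g(0)\geq 2>\alpha:=\Irm(T,0)$. But your proposed contradiction -- that a nontrivial $g$ would make $\mathbf{I}_{\bar f}(r)$ strictly increasing, ``contradicting the $\alpha$-homogeneity'' of $\bar f$ -- is circular: at this point you know only that the average-free part $v$ is $\alpha$-homogeneous, not $\bar f$ itself. A Dir-minimizer $\bar f$ with $\alpha$-homogeneous average-free part plus a non-trivial harmonic average with $I_g(0)\geq 2$ is perfectly consistent with $I_{\bar f}(0)=\alpha$ and all normalized blow-downs of $\bar f$ at $0$ being $\alpha$-homogeneous, so Lemma \ref{l:rescaling-closure} and Proposition \ref{p:coarse=fine} at scales $\rho\to 0$ give you nothing. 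You candidly flag this as the ``main obstacle,'' and indeed the argument as written does not close.

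The missing idea, which the paper supplies, is to go to \emph{larger} scales rather than smaller ones, paired with a uniform compactness bound. Concretely: (1) since all coarse blow-ups have nontrivial $\alpha$-homogeneous average-free parts, a compactness argument produces constants $\omega,\Omega>0$, uniform over \emph{all} coarse blow-ups, with $\int_{B_1}|D\bar u|^2\geq\omega$ and $\int_{B_1}|D(\boldsymbol{\eta}\circ\bar f)|^2\leq\Omega\int_{B_1}|D\bar u|^2$ for the average-free part $\bar u$; (2) for each $j$, one constructs a coarse blow-up $\bar g_j$ along the magnified scales $2^j r_k$ with $\bar f(x)=\lambda_j\bar g_j(2^{-j}x)$ (this requires the observation that Lipschitz approximations over larger cylinders match $f_k$ on $B_1$ up to an $o(E_{k,j})$ set); (3) comparing Dirichlet energies at $B_{2^{-j}}\subset\operatorname{dom}(\bar g_j)$, the exact $(\alpha-1)$-homogeneity of $D\bar v_j$ in the denominator and the bound $I_{\boldsymbol{\eta}\circ\bar g_j}(0)\geq 2$ in the numerator yield
\[
\frac{\int_{B_1}|D(\boldsymbol{\eta}\circ\bar f)|^2}{\int_{B_1}|D\bar u|^2}\leq 2^{2j(\alpha-2)}\,\Omega\leq 2^{-2\delta_2 j}\,\Omega\, ,
\]
and letting $j\to\infty$ forces $D(\boldsymbol{\eta}\circ\bar f)\equiv 0$. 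This magnification-plus-uniform-bound mechanism is the part your proposal lacks.

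A secondary remark on structure: you prove the excess lower bound \eqref{e:lower-bound-excess} first via $\mathbf H_{N_j}$-asymptotics and then feed it into Proposition \ref{p:coarse=fine} to verify \eqref{e:A-E-infinitesimal}. This is an unnecessary detour (and introduces stitching subtleties across intervals of flattening that would need to be justified): Theorem \ref{t:consequences}(vi) already gives $\inf_j s_j/t_j>0$, which is exactly hypothesis \eqref{e:fine-coarse-assumption} of Proposition \ref{p:coarse=fine}, and \eqref{e:A-E-infinitesimal} is then established inside Lemma \ref{lem:height-excess}. The paper derives \eqref{e:lower-bound-excess}--\eqref{e:lower-bound-excess-2} at the very end, \emph{after} showing the average vanishes, by exploiting the resulting $\alpha$-homogeneity of the full coarse blow-up to control the ratio $\mathbf E(T,\Bbf_{r/2})/\mathbf E(T,\Bbf_r)$ and then iterating at dyadic scales with a case analysis on whether $\{r:\Ebf(T,\Bbf_r)<\bar E/2\}$ contains a neighborhood of $0$.
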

\begin{proof}
It follows directly from Proposition \ref{p:coarse=fine} and from Theorem \ref{t:consequences}(i),(vi) that the average-free part of any coarse blow-up at $0$ is nontrivial and is $\Irm (T, 0)$-homogeneous. We therefore just need to show that the average vanishes. 

First of all observe that, if $\{\bar f_k\}$ is any family of coarse blow-ups, then $\|\bar f_k\|_{W^{1,2} (B_1)}$ is uniformly bounded and any limit $\bar f_\infty$ of any subsequence is also a coarse blow-up. Since every such $\bar f_\infty$ must have an average-free part which is nontrivial and $\Irm (T, 0)$-homogeneous, it follows immediately that for any coarse blow-up $\bar f$ there is a positive number $\omega>0$ (independent of $\bar f$) such that 
\[
\int_{B_1} |D\bar{u}|^2 \geq \omega > 0
\]
whenever $\bar{u}$ is the average-free part of $\bar f$. In particular, since the coarse blow-up $\bar f$ is itself nontrivial, we also conclude the existence of some constant $\Omega>0$ (again not depending on $\bar f$) such that
\begin{equation}\label{e:reverse-control}
\int_{B_1} |D (\boldsymbol{\eta} \circ \bar f)|^2 \leq \Omega \int_{B_1} |D \bar{u}|^2
\end{equation}
for every coarse blow-up $\bar f$, its average free part $\bar{u}$, and its average $\boldsymbol{\eta}\circ {\bar f}$.

Consider now the sequence $r_k\downarrow 0$ which generates any coarse blow-up $\bar f$ and recall that we are assuming $\pi_0$ to be an optimal plane so that 
\[
    \mathbf{E} (T, \Bbf_{8M r_k}, \pi_0) = \mathbf{E} (T, \Bbf_{8M r_k}) \eqqcolon E_k \to 0,
\]    
as explained in Section \ref{ss:coarse}. The Taylor expansion of the area functional {and \eqref{e:A-E-infinitesimal}} combined with the fact that any coarse blow-up along the sequence $8M r_k$ is $\Irm(T,0)$-homogeneous and satisfies the nontriviality property \eqref{e:reverse-control} implies that for $k$ sufficiently large we have
\[
\mathbf{E} (T, \mathbf{B}_{8M r_k}) \leq C \mathbf{E} (T, \mathbf{B}_{r_k})\, .
\]
Indeed, this is a consequence of a uniform lower bound on the corresponding ratio of normalized Dirichlet energies over $B_1$ and $B_{1/8M}$ of any such coarse blow-up. From the above, if $\tilde\pi_k$ is an optimal plane such that $\mathbf{E} (T, \Bbf_{r_k}, \tilde\pi_k) = \mathbf{E} (T, \Bbf_{r_k})$, then 
$|\pi_0 - \tilde\pi_k|\leq C \mathbf{E} (T, \mathbf{B}_{r_k})$ and thus 
\[
\mathbf{E} (T, \mathbf{B}_{8M r_k}, \tilde\pi_k) \leq C \mathbf{E} (T, \mathbf{B}_{r_k})\,.
\]
However, observe as well that for any constant $C$ fixed, the sequence $Cr_k$ also generates (up to possibly extract a subsequence) a coarse blow-up: in fact the excess must go to $0$ (because the currents $T_{0, Cr_k}$ converges to the same tangent cone as $T_{0, r_k}$, which thus must be flat) and $\mathbf{E} (T, \Bbf_{8 CM r_k})\geq C^{-m} \mathbf{E} (T, \Bbf_{8Mr_k})$, so that \eqref{e:A-E-infinitesimal} holds for the sequence $Cr_k$ as well.

{For any $j \in \N_{\geq 1}$, letting $\pi_{k,j}$ be a plane with 
\[
    E_{k,j} \coloneqq \mathbf{E} (T, \mathbf{B}_{2^{j+3}M r_k}) = \mathbf{E} (T, \mathbf{B}_{2^{j+3} M r_k}, \pi_{k,j}),
\]
we have
\[
    |\pi_0-\pi_{k,j}| = o((E_{k,j})^{1/2})
\]
and
\[
C(j)^{-1} \leq \liminf_k \frac{E_k}{E_{k,j}}
\leq \limsup_k \frac{E_k}{E_{k,j}} \leq C(j)\, ,
\]
so for $k$ sufficiently large we can apply \cite{DLS14Lp}*{Theorem 2.4} to $T_{0,r_k}$ in $\mathbf{B}_{2^{j+3}M}$ relative to the plane $\pi_0$ to get a Lipschitz approximation $g_{k,j}:B_{2^{j+2} M} (0,\pi_0) \to\Acal_Q((\pi_0)^\perp)$ in the cylinder $\mathbf{C}_{2^{j+2} M} (0,\pi_0)$, 
as in the algorithm detailed in Section \ref{ss:coarse}. This new Lipschitz approximation $g_{k,j}$ coincides with $f_k$ on $B_1(0,\pi_0)$, except for a set whose $\Hcal^m$-measure is estimated by $o (E_{k,j})$. In particular, for each $j$, as $k\to \infty$ the rescaled functions $\bar{g}_{k,j} = (E_{k,j})^{-\frac{1}{2}} g_{k,j}$ converge to a Dir-minimizing function $\bar{g}_j$ over $B_{2^{j+2} M}(0,\pi_0)$ which coincides with $\bar f$ on $B_1(0,\pi_0)$.

Next, we observe that 
\begin{align*}
 D(\boldsymbol{\eta}\circ \bar f) (0) &= \frac{1}{\omega_m} \int_{B_1(0,\pi_0)} D (\boldsymbol{\eta}\circ \bar f) 
= \frac{1}{\omega_m 2^{jm}} \int_{B_{2^j}(0,\pi_0)} D (\boldsymbol{\eta} \circ \bar{g}_j) = D(\boldsymbol{\eta}\circ \bar g_j) (0) \, ,
\end{align*}}
by the harmonicity of the two functions $\boldsymbol{\eta}\circ \bar f$ and $\boldsymbol{\eta}\circ \bar g_j$.
But we then must have $D (\boldsymbol{\eta} \circ \bar{f}) (0) = D(\boldsymbol{\eta}\circ \bar g_j) (0) = 0$, otherwise we can use the Taylor expansion of \cite{DLS_multiple_valued} to contradict the optimality of the plane $\pi_0$.  

In summary, by rescaling the domain of the functions $\bar g_{k,j}$ above to be $B_{8M}(0,\pi_k)$ (without relabeling), if $r_k\downarrow 0$ is a sequence which generates a coarse blow-up $\bar f$, then as $k\to \infty$, a subsequence of the sequence of scales $2^j r_k$ generates a coarse blow-up $\bar g_j$ with the property that $\bar f (x) = \lambda_j \bar g_j (2^{-j} x)$ for some positive nonzero number $\lambda_j$. 

Next, denote by $\bar u$ the average-free part of $\bar f$ and by $v_j$ the average-free part of $\bar{g}_j$. Observe that $D \bar u$ and $D \bar v_j$ are $(\Irm (T, 0) -1)$-homogeneous, while $D (\boldsymbol{\eta}\circ \bar f)$ and $D (\boldsymbol{\eta}\circ \bar g_j)$ are classical harmonic functions with $D (\boldsymbol{\eta} \circ \bar f) (0) = D (\boldsymbol{\eta}\circ \bar g_j) (0) = 0$ and $\boldsymbol{\eta}\circ \bar g_j (0) = \boldsymbol{\eta}\circ \bar f(0)=0$, in particular $I_{\boldsymbol{\eta}\circ \bar g_j}(0) \geq 2$. Therefore, we observe that 
\begin{align*}
\frac{\int_{B_1} |D(\boldsymbol{\eta}\circ \bar f)|^2}{\int_{B_1} |D \bar u|^2 } &= \frac{\int_{B_{2^{-j}}} |D(\boldsymbol{\eta}\circ \bar g_j)|^2}{\int_{B_{2^{-j}}} |D \bar v_j|^2 } \\
&\leq \frac{2^{-j(2I_{\boldsymbol{\eta}\circ \bar g_j}(0)-2)}\int_{B_{1}} |D(\boldsymbol{\eta}\circ \bar g_j)|^2}{2^{-j(2\Irm(T,0)-2)}\int_{B_{1}} |D \bar v_j|^2 } \\
&\leq \frac{2^{2j(\Irm(T,0)-2)}\int_{B_{1}} |D(\boldsymbol{\eta}\circ \bar g_j)|^2}{\int_{B_{1}} |D \bar v_j|^2 }
\end{align*}
On the other hand the bound \eqref{e:reverse-control} is valid also for $\bar g_j$ and $\bar v_j$ in place of $\bar f$ and $\bar u$, because $\bar{g}_j$ is a coarse blow-up and $\bar v_j$ is its average-free part. In particular, recalling that $\Irm (T, 0) < 2 - \delta_2$ we conclude
\[
\frac{\int_{B_1} |D(\boldsymbol{\eta}\circ \bar f)|^2}{\int_{B_1} |D \bar u|^2 } \leq 2^{-2 \delta_2 j} \Omega\, .
\]
Since $\Omega$ is {a fixed positive constant}, $j$ an arbitrary integer, and $\delta_2$ a positive number, we immediately conclude that $D (\boldsymbol{\eta} \circ \bar f) \equiv 0$ and $\boldsymbol{\eta}\circ \bar f$ is a constant. On the other hand recall that, since $\Theta (T, 0) = Q$, $\bar f (0) = Q \llbracket 0\rrbracket$, and in particular $\boldsymbol{\eta}\circ \bar f (0) = 0$. We thus have proved that $\boldsymbol{\eta}\circ \bar f \equiv 0$.

\medskip

Next observe that the arguments detailed so far have also the following outcome. If $r_k\downarrow 0$ is a sequence such that $\Ebf (T, \Bbf_{r_k})\to 0$, then
\[
\lim_{r\downarrow 0} \frac{\Ebf (T, \Bbf_{r/2})}{\Ebf (T, \Bbf_r)} = 2^{-(\Irm (T, 0)-1)}\, .
\]
Fix now any $\gamma< {\Irm (T, 0)-1}$. The above implies the following: there is $\bar{r}>0$ and $\bar{E}>0$ such that:
\begin{itemize}
\item If $r< \bar{r}$ and $\Ebf (T, \Bbf_r)< \bar{E}$, then 
\[
\frac{\Ebf (T, \Bbf_{r/2})}{\Ebf (T, \Bbf_r)} \geq 2^{-\gamma}\, . 
\]
\end{itemize}
We next distinguish two cases. We consider the following set 
\[
\mathcal{R} := \{0<r<\bar r : \Ebf (T, r) < 2^{-1} \bar{E}\}\, ,
\]
which can be easily checked to be open if $\bar r$ is sufficiently small.
We then argue differently depending on whether $\mathcal{R}$ contains a neighborhood of the origin or not (and notice that, when $\Irm (T, 0)>1$, we are certainly in the first case). If it contains a neighborhood of the origin, then there is $\tilde{r}>0$ such that 
\[
\frac{\Ebf (T, \Bbf_{r/2})}{\Ebf (T, \Bbf_r)} \geq 2^{-\gamma} \qquad \forall r< \tilde{r}\, .
\]
In particular, if we let $\tilde{c} := \inf \{\Ebf (T, \Bbf_r) : \frac{\tilde{r}}{2} \leq r< \tilde{r}\} > 0$, iterating the inequality above at all dyadic scales we achieve
\[
\Ebf (T, \Bbf_r) \geq \tilde{c} \left(\frac{r}{2\tilde{r}}\right)^\gamma \, .
\]
If it does not contain the origin then let $\mathcal{R} = \bigcup_k ]r_k^-, r_k^+[$ where $r_{k+1}^+ < r_k^-$ and both are infinite sequences of infinitesimal numbers. Then, $\Ebf (T, \Bbf_{r_k^+})= \frac{\bar{E}}{2}$ and, up to subsequences, $T_{0, r_k^+}$ converges to a cone $C$ which is nonplanar and such that $\Ebf (C, \Bbf_\rho) = \frac{\bar{E}}{2}$ for every $\rho$. It follows in particular that there exists $k_0$ such that 
\[
\frac{\bar{E}}{4} \leq \Ebf (T, \Bbf_r) < \bar{E} \qquad \forall r \in \bigcup_{k\geq k_0} \left]{\textstyle{\frac{r_k^+}{2}}}, r_k^+\right[\, .
\]   
In particular, arguing as above we conclude
\[
\Ebf (T, \Bbf_r) \geq \frac{\bar{E}}{4} \left(\frac{r}{2 r_k^+}\right)^\gamma \qquad \forall r\in \bigcup_{k\geq k_0} ]r_k^-, r_k^+[\, ,
\]
while 
\[
\Ebf (T, \Bbf_r) \geq \bar{E} \qquad \forall r < r_{k_0}^+ \quad \mbox{s.t. } r\not\in \bigcup_{k\geq k_0} ]r_k^-, r_k^+[\, .
\]
The combination of these two facts give that
\[
\liminf_{r\downarrow 0} \frac{\Ebf (T, \Bbf_r)}{r^\gamma} > 0
\]
and thus concludes the proof of \eqref{e:lower-bound-excess}.
\end{proof}

\subsection{Reparametrization} An important tool for proving the Proposition \ref{p:coarse=fine} is the following lemma, where we follow the notation and techniques introduced in \cite{DLS_multiple_valued}.

\begin{lemma}\label{l:reparametrization-lemma}
There are constants $\kappa (m,n,Q)>0$ and $C (m,n,Q)$ with the following property. Consider:
\begin{itemize}
    \item A Lipschitz map $g: \mathbb R^m \supset B_2 \to \mathcal{A}_Q (\mathbb R^n)$ with $\|g\|_{C^0} + {\rm Lip}\, (g) \leq \kappa$;
    \item A $C^2$ function $\boldsymbol{\varphi} : B_2 \to \mathbb R^n$ with $\boldsymbol{\varphi} (0) = 0$ and $\|D\boldsymbol{\varphi}\|_{C^1} \leq \kappa$;
    \item The function $f (x) = \sum_i \llbracket \boldsymbol{\varphi} (x) + g_i (x)\rrbracket$ and the manifold $\mathcal{M} := \{(x, \boldsymbol{\varphi }(x))\}$;
    \item The maps $N, F: \mathcal{M}\cap \Cbf_{3/2} \to \mathcal{A}_Q (\mathbb R^{m+n})$ given by \cite{DLS_multiple_valued}*{Theorem 5.1}, satisfying $F (p) = \sum_i \llbracket p+ N_i (p)\rrbracket$, $N_i(p)\perp T_p \mathcal{M}$, and $\boldsymbol{T}_F \res \Cbf_{5/4} = \Gbf_f \res \Cbf_{5/4}$.
\end{itemize}
If we denote by $\tilde{g}$ the multivalued map $x\mapsto \tilde{g} (x) = \sum_i \llbracket (0, g_i (x))\rrbracket \in \mathcal{A}_Q (\mathbb R^{m+n})$, then
\begin{equation}\label{e:comparison-estimate}
\mathcal{G} (N (\boldsymbol{\varphi} (x)), \tilde{g} (x)) \leq C \|D \boldsymbol{\varphi}\|_{C^0} (\|g\|_{C^0} + \|D \boldsymbol{\varphi}\|_{C^0}) \qquad \forall x\in B_1\, .
\end{equation}
\end{lemma}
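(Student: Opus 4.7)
The plan is to use the hypothesis $\boldsymbol{T}_F\res \Cbf_{5/4}=\Gbf_f\res \Cbf_{5/4}$ to identify, for $p\coloneqq(x,\boldsymbol{\varphi}(x))\in\Mcal$, each value $N_i(p)\in T_p\Mcal^\perp$ with the normal displacement reaching a specific ``sheet'' of $\Gbf_f$ above a nearby point. Since $p+N_i(p)$ lies in $\operatorname{spt}\Gbf_f$ and $|N(p)|$ is small by the smallness of the data, after relabeling (harmless because $\Gcal$ is symmetric under permutations) there are $y_i\in B_2$ with
\[
    p+N_i(p)=(y_i,f_i(y_i)),\qquad N_i(p)\perp T_p\Mcal.
\]
Writing $N_i(p)=(a_i,b_i)\in\R^m\times\R^n$ I get $a_i=y_i-x$, $b_i=\boldsymbol{\varphi}(x+a_i)-\boldsymbol{\varphi}(x)+g_i(x+a_i)$, and, since $T_p\Mcal=\{(v,D\boldsymbol{\varphi}(x)v):v\in\R^m\}$, the orthogonality relation
\[
    a_i=-D\boldsymbol{\varphi}(x)^T b_i.
\]
Comparing $N_i(p)$ with the $i$-th value $(0,g_i(x))$ of $\tilde g(x)$ thus reduces to estimating $|a_i|$ and $|b_i-g_i(x)|$.

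Inserting the formula for $b_i$ into the orthogonality relation and using the mean value theorem on $\boldsymbol{\varphi}$ together with the Lipschitz bound on $g_i$ gives
\[
    |a_i|\le\|D\boldsymbol{\varphi}\|_{C^0}\bigl(\|D\boldsymbol{\varphi}\|_{C^0}+\Lip(g)\bigr)|a_i|+\|D\boldsymbol{\varphi}\|_{C^0}|g_i(x)|,
\]
which I can absorb, for $\kappa$ small, to $|a_i|\le C\|D\boldsymbol{\varphi}\|_{C^0}|g_i(x)|$. Substituting back,
\[
    |b_i-g_i(x)|\le\bigl(\|D\boldsymbol{\varphi}\|_{C^0}+\Lip(g)\bigr)|a_i|\le C\|D\boldsymbol{\varphi}\|_{C^0}|g_i(x)|.
\]
Summing the squares over $i$ and taking square roots then yields
\[
    \Gcal\bigl(N(p),\tilde g(x)\bigr)\le C\|D\boldsymbol{\varphi}\|_{C^0}\|g\|_{C^0},
\]
which implies \eqref{e:comparison-estimate} since $\|g\|_{C^0}\le \|g\|_{C^0}+\|D\boldsymbol{\varphi}\|_{C^0}$. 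In fact the proof exhibits the product structure $\|D\boldsymbol{\varphi}\|_{C^0}\|g\|_{C^0}$, which is sharp: one expects $N\equiv 0$ when $g\equiv 0$ (the graph of $f$ coincides with $Q\llbracket\Mcal\rrbracket$).

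The step that requires most care is the identification of indices in the first paragraph, namely checking that the $Q$-tuple $(N_i(p))_{i=1}^Q$ provided by \cite{DLS_multiple_valued}*{Theorem~5.1} can be paired one-to-one, up to permutation, with the ``vertical'' displacements $\{(0,g_i(x))\}_{i=1}^Q$. For each $i$ I would apply the implicit function theorem to the map $\Psi_i\colon y\mapsto \mathbf{p}_{T_p\Mcal}\bigl((y,f_i(y))-p\bigr)$ in a neighborhood of $y=x$: at $y=x$ its value is the tangential component of $(0,g_i(x))$, of modulus at most $C\|D\boldsymbol{\varphi}\|_{C^0}\|g\|_{C^0}$, while its differential at $x$ differs from the restriction of $\mathbf{p}_{T_p\Mcal}$ to $\R^m\times\{0\}$ (an isomorphism of $\R^m$) by a term of order $\kappa$, hence is invertible for $\kappa$ small. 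This provides a unique nearby zero $y_i$ of $\Psi_i$ and therefore a normal displacement $(y_i-x,f_i(y_i)-\boldsymbol{\varphi}(x))$; the collection of these $Q$ vectors must coincide as an unordered $Q$-point with $\{N_j(p)\}_j$, by the uniqueness part of \cite{DLS_multiple_valued}*{Theorem~5.1} combined with the current identity $\boldsymbol{T}_F\res\Cbf_{5/4}=\Gbf_f\res\Cbf_{5/4}$. The remainder of the proof is then the short calculation above.
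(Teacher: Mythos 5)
The calculation in your second and third paragraphs — absorbing to get $|a_i|\leq C\|D\boldsymbol{\varphi}\|_{C^0}|g_i(x)|$ and then $|b_i-g_i(x)|\leq C\|D\boldsymbol{\varphi}\|_{C^0}|g_i(x)|$ — is essentially the computation in the paper. However, the first paragraph contains the real content of the lemma, and that is where the argument has a gap.

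You assert that, ``after relabeling,'' one has $p+N_i(p)=(y_i,f_i(y_i))$, i.e.\ that the slice of $\Gbf_f$ along $p+T_p\Mcal^\perp$ can be matched sheet-by-sheet with a measurable selection of $f$ in an index-preserving way. This is precisely what is not true in general, and it is the reason the paper does \emph{not} assert it: the paper writes $q_i=(x_i,f_{j(i)}(x_i))$ for some \emph{unknown} map $j\colon\{1,\dots,Q\}\to\{1,\dots,Q\}$ that need not be a bijection, and then observes that if $j$ (equivalently, the induced $\pi$) were injective the estimate would be immediate — and builds the rest of the proof around the fact that it may fail to be. The paper resolves the difficulty by a dichotomy/induction: if $\operatorname{diam}\{g_i(x)\}$ is small, any pairing gives the bound; if it is large, one splits $g$ into lower-multiplicity Lipschitz pieces on $B_{2d}(x)$ via \cite{DLS_MAMS}*{Proposition 1.6} and concludes by induction on $Q$.

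Your IFT argument in the last paragraph is intended to supply the missing bijection, but it is not sound as written. The map $\Psi_i(y)=\mathbf{p}_{T_p\Mcal}\bigl((y,f_i(y))-p\bigr)$ is built from a \emph{measurable} selection $f=\sum_i\llbracket f_i\rrbracket$; such a selection need not be continuous — in general a Lipschitz $\mathcal{A}_Q$-valued map admits no continuous (let alone $C^1$ or Lipschitz) single-valued selection. Thus $\Psi_i$ can jump, there is no ``differential at $x$,'' and neither the classical nor the Clarke inverse function theorem applies. Relatedly, your estimate $|b_i-g_i(x)|\leq(\|D\boldsymbol{\varphi}\|_{C^0}+\Lip(g))|a_i|$ implicitly uses $\Lip(g_i)\leq\Lip(g)$, which again fails for individual selections — the Lipschitz bound in the hypothesis is on the $Q$-valued metric map $g$, not on any selection. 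Both of these issues disappear in the paper's proof, which never leaves the multi-valued metric framework: it compares $g(x_i)$ with $g(x)$ via $\Gcal(g(x_i),g(x))\leq\Lip(g)\,|x_i-x|$ to obtain a matching index $\pi(i)$, and then handles the possible non-injectivity of $\pi$ by the induction on $Q$ sketched above.
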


\begin{proof} We fix a point $x\in B_1$, denote by $p\in \mathcal{M}$ the point $p=(x, \boldsymbol{\varphi} (x))$ and let $N (x) = \sum_i \llbracket q_i\rrbracket$ and $g (x) = \sum_i \llbracket p_i\rrbracket$. We fix a measurable selection for the function $g$, so that we can write $g = \sum_i \llbracket g_i\rrbracket$ and a corresponding measurable selection for $f$, where $f_i = \boldsymbol{\varphi} + g_i$. According to \cite{DLS_multiple_valued}*{Lemma 5.4}, the set of points $\{q_i\}$ can be determined as follows. If we let $\varkappa$ be the orthogonal complement of $T_p \mathcal{M}$, then $\{q_i\}$ is given by the intersection of $p+\varkappa$ with the support of the current $\mathbf{G}_f$ (i.e. the set-theoretic graph of $f$). This means that there are points $x_1, \ldots, x_Q$ such that 
\[
q_i = (x_i, f_{j(i)} (x_i)) = (x_i, \boldsymbol{\varphi} (x_i) + g_{j(i)} (x_i))\, ,
\]
where $j: \{1, \ldots , Q\} \to \{1, \ldots, Q\}$ is some unknown function.
Observe that 
\[
|x_i-x|\leq C |q_i-p| |\varkappa - \varkappa_0|\, 
\]
where $\varkappa_0$ denotes the vertical plane $\{0\} \times \mathbb R^n$. We therefore easily conclude the estimate
\[
|x_i-x|\leq C \|N\|_{C^0} \|D \boldsymbol{\varphi}\|_0\, .
\]
Since however $\|N\|_{C^0} \leq C (\|g\|_{C^0} + \|\boldsymbol{\varphi}\|_{C^0})\leq C (\|g\|_{C^0} + \|D \boldsymbol{\varphi}\|_{C^0})$, clearly
\begin{equation}\label{e:horizontal-component}
|x_i-x|\leq C \|D \boldsymbol{\varphi}\|_{C^0} (\|g\|_{C^0} + \|\boldsymbol{\varphi}\|_{C^0})\, .
\end{equation}
Given the Lipschitz bound on $g$ we conclude that there is a $\pi (i)$ such that 
\begin{equation}\label{e:vertical-component}
|g_{j(i)} (x_i) - g_{\pi (i)} (x)|\leq C \|\boldsymbol{\varphi}\|_{C^0} (\|g\|_{C^0} + \|D \boldsymbol{\varphi}\|_{C^0})\, .
\end{equation}
If $\pi: \{1, \ldots, Q\} \to \{1, \ldots , Q\}$ were injective, we would immediately conclude \eqref{e:comparison-estimate}. While this might generally not be the case, it certainly is when $Q=1$, hence establishing the estimate in this particular case. 

For the general case we argue by induction. Assume therefore to have fixed $Q$ and to have proved the estimate valid for maps which are $Q'$-valued for every $Q'<Q$. Consider now the following alternatives:
\begin{itemize}
    \item[(a)] the diameter of the set $\{g_i (x)\}$ is smaller than $\|D \boldsymbol{\varphi}\|_{C^0} (\|g\|_{C^0} + \|D \boldsymbol{\varphi}\|_{C^0})$;
    \item[(b)] the diameter of the set $\{g_i (x)\}$ is larger.
\end{itemize}
In the first case we have 
\[
|g_{j(i)} (x_i) - g_i (x)|\leq |g_{j(i)} (x_i) - g_{\pi (i)} (x)| + |g_{\pi (i)} (x) - g_i (x)|
\leq (C+1) \|D \boldsymbol{\varphi}\|_{C^0} (\|g\|_{C^0} + \|D \boldsymbol{\varphi}\|_{C^0})\, .
\]
In the second case we set $d:= \|D \boldsymbol{\varphi}\|_{C^0} (\|g\|_{C^0} + \|\boldsymbol{\varphi}\|_{C^0})$ and recall \cite{DLS_MAMS}*{Proposition 1.6}: if the Lipschitz constant of $g$ is smaller than a constant depending only on $C$, $Q$, and $n$, the map $g$ decomposes, in the ball $B_{2 d} (x)$ into two Lipschitz $Q_i$-valued maps with $Q_1+Q_2 = Q$. In particular we can use the inductive assumption to get \eqref{e:comparison-estimate}.
\end{proof}

\subsection{Comparison estimates} In order to prove Proposition~\ref{p:coarse=fine}, \eqref{e:comparison-estimate} will be combined with two important estimates comparing the Lipschitz approximation and the normal approximation over the relevant center manifold.

The first estimate is the following control on the $L^2$ height of a normal approximation in terms of the excess.
\begin{lemma}\label{lem:height-excess}
Under the assumptions of Proposition \ref{p:coarse=fine}, the estimate  \eqref{e:A-E-infinitesimal} holds. Moreover, the following holds.
\begin{itemize}
\item[(i)] Let $\mathbf{h}_k$ be as in Section~\ref{ss:compactness} for the scales $r_k$. Then we have
\begin{equation}\label{e:two-sided-control}
0<	\liminf_{k\to\infty} \frac{\mathbf{h}_k^2}{E_k} \leq 
\limsup_{k\to\infty} \frac{\mathbf{h}_k^2}{E_k} < \infty\, . 
\end{equation}
\item[(ii)] Let $f_k$ be as in Section~\ref{ss:coarse} and 
consider the map $\bar{\boldsymbol{\varphi}}_k$ on $B_2 = B_2 (0,\pi_0)$ whose graph coincides with the center manifold $(\mathcal{M}_{j(k)})_{0, r_k/t_{j(k)}}$ over the cylinder $\mathbf{C}_{3/2} = \mathbf{C}_{3/2} (0,\pi_0)$. Then we have
\begin{equation}\label{e:stima-L2}
\int_{B_{3/2}} |\bar{\boldsymbol{\vphi}}_k - \boldsymbol{\eta}\circ f_k|^2 = o(E_k)\, .
\end{equation}
\end{itemize}
\end{lemma}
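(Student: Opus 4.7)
The plan is to prove the three claims in the stated order, with the lower bound in (i) being the main technical hurdle. For \eqref{e:A-E-infinitesimal}, since the second fundamental form rescales linearly, $\Abf_k \leq C r_k$, so it suffices to prove $r_k^2 = o(E_k)$. The hypothesis $\liminf s_{j(k)}/r_k > 0$ together with $r_k \leq t_{j(k)}$ forces $r_k \sim s_{j(k)}$ up to constants, and at such scales the Whitney-cube stopping conditions at $s_{j(k)}$ (see \cite{DLS16centermfld}) pin down $E_k \sim \boldsymbol{m}_{0,j(k)}$. The modified choice \eqref{eq:m_0} then gives
\[
E_k \sim \boldsymbol{m}_{0,j(k)} \geq \bar\varepsilon^2 t_{j(k)}^{2-2\delta_2} \gtrsim r_k^{2-2\delta_2},
\]
whence $\Abf_k^2 / E_k \lesssim r_k^{2\delta_2} \to 0$.

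\emph{For (ii):} The center manifold $\bar{\boldsymbol\varphi}_k$ is constructed in \cite{DLS16centermfld} as a $C^{3,\kappa_0}$ interpolation of the averages of the graphical $\pi$-approximations on dyadic Whitney cubes, so it is $L^2$-close to $\boldsymbol\eta\circ f_k$ with a higher-order error. The relevant bound, which adapts \cite{DLS16centermfld}*{Proposition~4.4}, reads
\[
\int_{B_{3/2}} |\bar{\boldsymbol\varphi}_k - \boldsymbol\eta\circ f_k|^2 \leq C\,\boldsymbol{m}_{0,j(k)}^{1+\gamma_1}
\]
for some dimensional $\gamma_1 > 0$; by the previous step this is $o(E_k)$.

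\emph{For (i):} The upper bound $\mathbf{h}_k^2 \leq C E_k$ is the $L^2$ height estimate on the normal approximation (cf.~\cite{DLS16centermfld}*{Theorem~2.4}) after rescaling by $\bar r_k$. For the lower bound, apply Lemma \ref{l:reparametrization-lemma} with $\boldsymbol\varphi = \bar{\boldsymbol\varphi}_k$ and the multi-valued map $g = f_k - \bar{\boldsymbol\varphi}_k$, whose $C^1$-norms are small by the center-manifold bounds and the Lipschitz-approximation estimates of \cite{DLS14Lp}*{Theorem~2.4}. Squaring and integrating \eqref{e:comparison-estimate}, and passing from the planar $L^2$ norm on $B_1$ to the intrinsic norm on $\bar{\mathcal{M}}_k$ at a cost of $1+o(1)$, yields
\[
\mathbf{h}_k^2 = \|f_k - \bar{\boldsymbol\varphi}_k\|_{L^2(B_1)}^2 + o(E_k),
\]
and the orthogonality of the average/average-free decomposition for multi-valued maps, combined with (ii), reduces this to $\mathbf{h}_k^2 = \|f_k - \boldsymbol\eta\circ f_k\|_{L^2(B_1)}^2 + o(E_k)$.

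The remaining inequality $\|f_k - \boldsymbol\eta\circ f_k\|_{L^2(B_1)}^2 \gtrsim E_k$ is the principal obstacle. If it failed along a subsequence, the normalized Lipschitz approximations $f_k/E_k^{1/2}$ would converge to a $Q$-fold cover of a single harmonic function, so $T_{0,r_k}$ would be $L^2$-close to a smooth $Q$-sheeted graph --- incompatible with $0$ being a \emph{singular} point of $T$, via an Allard/Almgren-type excess-decay argument. Establishing this rigorously is the crux of the proof and couples the reparametrization Lemma \ref{l:reparametrization-lemma}, the $L^2$-closeness (ii), and a structural nontriviality argument forced by the singular nature of the origin.
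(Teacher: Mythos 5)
Your approach has a genuine gap in the lower bound of part (i), and there are also issues with the intermediate claims.

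\emph{The crucial gap: the lower bound in (i).} You propose to derive $\|f_k - \boldsymbol\eta\circ f_k\|_{L^2(B_1)}^2 \gtrsim E_k$ by contradiction, arguing that failure would make $T_{0,r_k}$ look like a $Q$-fold cover of a smooth graph, incompatible with $0$ being singular. This cannot work. The paper's own Remark~\ref{r:coarse=fine} exhibits explicit counterexamples: holomorphic curves $\{(z,w): (w-h(z))^Q = z^p\}$ with $h$ nontrivial and $p/Q$ larger than the vanishing order of $h$. For such currents $0$ is singular, yet along generic blow-up sequences (without the constraint~\eqref{e:fine-coarse-assumption}) the average part $\boldsymbol\eta\circ f_k$ carries all of the excess and $\|f_k - \boldsymbol\eta\circ f_k\|^2 = o(E_k)$. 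So the ``singular nature of the origin'' does \emph{not} force the nontriviality you want. What actually powers the lower bound is the Whitney-stopping structure at the scale $s_{j(k)}$: because $s_{j(k)} \sim r_k$, there is a Whitney cube $L_k\in \mathscr{W}^{j(k)}$ of sidelength $\sim s_{j(k)}/t_{j(k)}$ meeting $\mathbf{B}_{s_{j(k)}/t_{j(k)}}$, and \cite{DLS16centermfld}*{Proposition~3.4} gives a \emph{lower} bound $\int_{\mathcal{L}_k}|N_{j(k)}|^2 \gtrsim \boldsymbol{m}_{0,j(k)}\,\ell(L_k)^{m+4-2\delta_2}$ on that Whitney region. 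Combined with the complementary excess bound $E_k \lesssim \boldsymbol{m}_{0,j(k)} \bar r_k^{2-2\delta_2}$ (which follows from tilt estimates for ancestors of $L_k$, \cite{DLS16centermfld}*{Proposition~1.11}), this yields $\mathbf{h}_k^2 \gtrsim E_k$ directly, with no contradiction argument and no excess-decay machinery. The hypothesis~\eqref{e:fine-coarse-assumption} is not ancillary; it is the hypothesis that makes the conclusion true.

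\emph{Secondary issues.} Your intermediate claim ``$E_k \sim \boldsymbol{m}_{0,j(k)}$'' is off by the factor $\bar r_k^{2-2\delta_2}$: the correct two-sided comparison is $E_k \sim \boldsymbol{m}_{0,j(k)}\,\bar r_k^{2-2\delta_2}$ (both sides are excesses but at different scales, and $\bar r_k = r_k/t_{j(k)}$ need not stay bounded below). The conclusion $E_k \gtrsim r_k^{2-2\delta_2}$ you reach is nonetheless correct because $\boldsymbol{m}_{0,j(k)} \geq \bar\varepsilon^2 t_{j(k)}^{2-2\delta_2}$ and the extra factor $\bar r_k^{2-2\delta_2}$ is precisely what converts $t_{j(k)}^{2-2\delta_2}$ into $r_k^{2-2\delta_2}$; but you should track this factor. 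Finally, your treatment of (ii) is too cursory: the bound $\int|\bar{\boldsymbol\varphi}_k - \boldsymbol\eta\circ f_k|^2 \lesssim \boldsymbol{m}_{0,j(k)}^{1+\gamma_1}$ does not follow from a single application of \cite{DLS16centermfld}*{Proposition~4.4} and, divided by $E_k \sim \boldsymbol{m}_{0,j(k)}\bar r_k^{2-2\delta_2}$, does not obviously go to zero when $\bar r_k\to 0$. The proof needs a scale-aware decomposition of $B_{3\bar r_k/2}$ into the contact set $\Gamma_k$ (where the two functions literally coincide off a small set) plus the Whitney regions, with the reparametrization to tilted planes $\pi_H$, the tilted interpolating functions $h_H$, and \cite{DLS16centermfld}*{Proposition~5.2, Lemma~5.6}; the correct bound tracks both $E_k^{1/2}$ and $\boldsymbol{m}_{0,j(k)}^{1/2}\bar r_k$ as multiplicative factors.
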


\begin{proof} We fix $r_k$ as in the statement and, upon extraction of a further subsequence, we assume the existence of 
\[
\lim_{k\to \infty} \frac{r_k}{s_{j(k)}} := \tilde{c} \in [1, +\infty[\, .
\]
It is convenient to introduce the rescaled radii $\bar{r}_k := \frac{r_k}{t_{j(k)}} \in ]0,1]$ and $\bar s_{j(k)} := \frac{s_{j(k)}}{t_{j(k)}}$. Recalling the stopping condition which defines $s_{j(k)}$ in \cite{DLS16blowup}*{Section 2.1}, there is a cube $L_k\in \mathscr{W}^{j(k)}$ with $L_k\cap \mathbf{B}_{\bar s_{j(k)}}\neq\emptyset$ and $\ell (L_k) = c_s \bar s_{j(k)}$ for the specific geometric constant $c_s=\frac{1}{64\sqrt{m}}$. Observe that, since $\Theta(T,0)=Q$, \cite{DLS16centermfld}*{Proposition 3.1} implies that $L_k$ cannot belong to $\mathscr{W}^{j(k)}_h$. If $L_k\in \mathscr{W}^{j(k)}_n$, we may apply \cite{DLS16centermfld}*{Corollary 3.2} to find a nearby cube $L'_k \in \mathscr{W}^{j(k)}_e$ of comparable size. Thus, we may assume that $L_k\in \mathscr{W}^{j(k)}_e$. We can thus apply \cite{DLS16centermfld}*{Proposition 3.4} to conclude
\[
\boldsymbol{m}_{0,j(k)} \ell (L_k)^{2-2\delta_2} \leq C \mathbf{E} (T_{0, t_{j(k)}} , \mathbf{B}_{L_k})\, ,
\]
for some geometric constant $C$, where $\boldsymbol{m}_{0,j(k)}$ is as in \eqref{eq:m_0} with index $j(k)$. Recalling however that the cylinder $\mathbf{C}_{4M \bar r_k }$ as in Section \ref{ss:coarse} contains $\mathbf{B}_{L_k}$, as well as our amended definition of $\boldsymbol{m}_{0,j(k)}$, we immediately conclude that 
\begin{align*}
E_k & := \mathbf{E} (T_{0, r_k}, \mathbf{C}_{4M}, \pi_0) \geq \mathbf{E} (T_{0, t_{j(k)}}, \mathbf{B}_{L_k}) \geq C^{-1} \boldsymbol{m}_{0,j(k)} \ell (L_k)^{2-2\delta_2}\\
&\geq C^{-1} c_s^2 \bar \varepsilon^2 t_{j(k)}^{2-2\delta_2} \frac{s_{j(k)}^{2-2\delta_2}}{t_{j(k)}^{2-2\delta_2}}= C^{-1} c_s^2  \bar \varepsilon^2 s_{j(k)}^{2-2\delta_2}\, .
\end{align*}
In light of the comparability of $s_{j(k)}$ and $r_k$, it thus follows immediately that 
\begin{equation}\label{e:liminf_on_Ek}
\liminf_{k\to \infty} \frac{E_k}{r_k^{2-2\delta_2}} > 0\, ,
\end{equation}
which in turn immediately implies \eqref{e:A-E-infinitesimal}. In addition, rescaling by $t_{j(k)}$ and again using the definition of $\boldsymbol{m}_{0,j(k)}$, we have
\begin{equation}\label{e:compare_Ek-m0}
E_k \geq \tilde{C}^{-1} \boldsymbol{m}_{0,j(k)} \bar r_k^{2-2\delta_2}\, ,
\end{equation}
where $\tilde{C}$ is independent of $k$ (it is not, however, a geometric constant, namely it might depend on the blow-up sequence that we fixed at the beginning).

Next, observe that 
\[
\mathbf{h}_k^2 \leq \frac{C}{\bar{r}_k^{m+1}} \mathbf{H}_{N_{j(k)}} (2 \bar r_k) \stackrel{\eqref{e:frequency-twosided-bounds}}{\leq} \tilde{C} \bar r_k^{-m} \mathbf{D}_{N_{j(k)}} (2 \bar r_k)\, ,
\]
where $\tilde{C}$ is independent of $k$. {Note that the first inequality is a simple consequence of the scaling of $\bar N_k$ and the fact that $\mathbf{h}_k \leq C \Hbf_{\bar N_{j(k)}}(2)$.} On the other hand we recall (see for instance \cite{DLS16blowup}*{Remark 3.4}) that $\mathbf{D}_{N_{j(k)}} (2 \bar r_k) \leq C \boldsymbol{m}_{0,j(k)} \bar r_k^{m+2-2\delta_2}$. We thus conclude that 
\[
\mathbf{h}_k^2 \leq C \boldsymbol{m}_{0,j(k)} \bar r_k^{2-2\delta_2} 
\]
and we achieve the right-hand inequality of \eqref{e:two-sided-control} when combining the above with \eqref{e:compare_Ek-m0}. 

As for the left-hand inequality of \eqref{e:two-sided-control}, first recall that, by \cite{DLS16centermfld}*{Proposition 3.4} we also have the opposite inequality 
\begin{equation}\label{e:hk-from-below}
\mathbf{h}_k^2 \geq \tilde{C}^{-1} \bar r_k^{-m-2} \int_{\mathcal{L}_k} |N_{j(k)}|^2 \geq \tilde{C}^{-1} \boldsymbol{m}_0^{(k)} \bar r_k^{2-2\delta_2}\, ,
\end{equation}
where $\Lcal_k$ is the Whitney region corresponding to $L_k$. On the other hand {recall that we are assuming $\pi_0$} optimizes the excess of $T_{0, r_k}$ in $\mathbf{B}_{8M}$, which implies that it optimizes the excess of $T_{0, t_{j(k)}}$ in $\mathbf{B}_{8M \bar r_k}$. Because of the condition $s_{j(k)} \leq r_k \leq \bar{c} s_{j(k)}$, we can find a cube $H\in \mathscr{S}^{j(k)}\cup \mathscr{W}^{j(k)}$ with the property that $\mathbf{B}_{32 M \bar r_k} \supset \mathbf{B}_H \supset \mathbf{B}_{8M \bar r_k}$. Due to \cite{DLS16centermfld}*{Proposition 1.11}, we thus must have
\begin{align*}
\mathbf{E} (T_{0, t_{j(k)}}, \mathbf{B}_{8M \bar r_k}, \pi_0) &\leq \mathbf{E} (T_{0, t_{j(k)}}, \mathbf{B}_{8M \bar r_k}, \pi_H) \leq C \mathbf{E} (T_{0, t_{j(k)}}, \mathbf{B}_H, \pi_H) \\
&\leq C \boldsymbol{m}_{0,j(k)} \ell (H)^{2-2\delta_2}
\leq C \boldsymbol{m}_{0,j(k)} \bar r_k^{2-2\delta_2}\, .
\end{align*}
Combining this with the height bound~\cite{DLS16centermfld}*{Theorem A.1} on $T_{0, t_{j(k)}}$, we can write
\begin{equation}\label{e:Ek-from-above}
E_k = \mathbf{E} (T_{0, t_{j(k)}}, \mathbf{C}_{4 M \bar r_k}) \leq C  \mathbf{E} (T_{0, t_{j(k)}} \mathbf{B}_{8M \bar r_k}, \pi_0) 
\leq C \boldsymbol{m}_{0,j(k)} \bar r_k^{2-2\delta_2}\, .
\end{equation}
It thus follows immediately from \eqref{e:hk-from-below} and \eqref{e:Ek-from-above} that $\liminf_k \frac{\mathbf{h}_k^2}{E_k} > 0$.

\medskip
 
We now address the last part of the lemma, namely statement (ii).
First of all we apply a homothetic rescaling  of center $0$ and ratio $\bar r_k$ to the graphs of $\bar{\boldsymbol{\varphi}}_k$ and of $f_k$. We denote by $\bar{\boldsymbol{\varphi}}^r_k \coloneqq \bar{r}_k^{-1} \bar{\boldsymbol{\varphi}}_k(\bar{r}_k \cdot)$ and $f^r_k \coloneqq \bar{r}_k^{-1} f_k(\bar{r}_k \cdot)$ the corresponding maps and note that the desired estimate is equivalent to 
\[
\bar r_k^{-m-2} \int_{B_{3 \bar r_k/2} (0,\pi_0)} |\bar{\boldsymbol{\varphi}}^r_k - \boldsymbol{\eta} \circ f^r_k|^2 = o (E_k)\, ,
\]
and given the estimate \eqref{e:compare_Ek-m0}, it suffices to show
\begin{equation}\label{e:rescaled-comparison}
\int_{B_{3 \bar r_k/2} (0,\pi_0)} |\bar{\boldsymbol{\varphi}}^r_k - \boldsymbol{\eta} \circ f^r_k|^2 = E_k^{1/2} o (\boldsymbol{m}_{0,j(k)}^{1/2} \bar r_k^{m+3-\delta_2}) \, ,
%+o (E_k \bar r_k^m)\, .
\end{equation}
where we are keeping a factor of $E_k^{1/2}$ on the right-hand side for the purpose of convenience, since it will appear naturally in the estimates we will proceed to obtain. Consider now the plane $\pi_0 (j(k))$ which served as reference to construct the center manifold $\mathcal{M}_{j(k)}$. It is easy to see that $|\pi_0 (j(k))- \pi_0| \leq C \boldsymbol{m}_{0,j(k)}^{1/2} \leq C \bar\varepsilon$ for some geometric constant (see \cite{DLS16centermfld}*{Proposition 4.1}). Since nothing else will be used about $\pi_0 (j(k))$, except that it serves as reference to construct the center manifold $\mathcal{M}_{j(k)}$, in order to simplify our notation we will simply denote it by $\tilde{\pi}_0$, even though the plane does depend on $k$.

We now consider all the cubes $H\in \mathscr{W}^{j(k)}$ which intersect $\mathbf{B}_{2\bar r_k}$ and denote such collections by $\mathscr{C}^{(k)}$. For each $H\in \mathscr{C}^{(k)}$ we consider a cylinder $\mathbf{C}_{2 C \ell(H)} (q_H, \pi_H)$, where $C$ is a geometric constant (which will be specified later) and $q_H$ is the center of the cube $H$. We then consider the cylinder 
$\mathbf{C}_{C \ell (H)} (q_H, \pi_0)$ and, given that the height of $T_{0, r_k}$ over $\pi_0$ converges to $0$, conclude that the set $({\rm gr}\, (\bar{\boldsymbol{\varphi}}^r_k) \cup {\rm gr}\,  (f_k^r)) \cap \mathbf{C}_{C \ell (H)} (q_H, \pi_0)$ is contained in $\mathbf{C}_{2 C\ell (H)} (q_H, \pi_H)$. Further, let $\boldsymbol{\Phi}_{j(k)} (\boldsymbol{\Gamma}_{j(k)})$ be the contact set of the current $T_{0, t_{j(k)}}$ and the center manifold $\mathcal{M}_{j(k)}$, as defined in \cite{DLS16centermfld}*{Definition 1.18}, and denote by $\Gamma_k$ its projection onto the plane $\pi_0$. Finally, it will also be convenient to define the point $\bar q_L$ as the orthogonal projection onto $\pi_0$ of $q_L$. 

If $C$ is a geometric constant sufficiently large (e.g. $10\sqrt{m}$ suffices, provided $\bar \varepsilon$ is small enough), then the set $\Gamma_k$ and the disks $B_{C\ell (H)} (\bar q_H, \pi_0)$ cover the disk $B_{3\bar r_k/2} (0, \pi_0)$. It will be convenient to devise a slightly delicate cover, made of pairwise disjoint Borel sets, with the following algorithm. We enumerate the disks $B_{C\ell (H)} (\bar q_H, \pi_0)$ as $B^i$, $i\in \{1, 2, \ldots \}= \mathbb{N} \setminus \{0\}$ and set $F_0 := \Gamma_k \cap B_{3\bar r_k/2}$ and define inductively $F_{j+1} := B^{j+1} \setminus \bigcup_{i\leq j} F_i$. 

Next, for each $H$ we recall the approximating Lipschitz map $f_H$ of \cite{DLS16centermfld}*{Definition 1.13 \& Lemma 1.15} and let $\bar f_H$ be the reparametrization of ${\rm gr}\, (f_H) \cap \mathbf{C}_{C\ell (H)} (q_H, \pi_0)$ as a graph over the disk $B_{C \ell (H)} (\bar q_H, \pi_0)$, according to \cite{DLS_multiple_valued}*{Proposition 5.2}. We are now going to define a good set $G\subset B_{3\bar r_k/2}(\pi_0)$ as follows 
\begin{itemize}
\item $G\cap F_0$ consists of those points $q\in F_0$ where $f_k^r (q) = Q \llbracket\bar{\boldsymbol{\varphi}}^r_k\rrbracket$;
\item For each $j>0$, $G\cap F_j$ consists of those points $q\in F_j$ where $f_k^r$ coincides with $\bar f_H$ for the corresponding $H$ such that $B_{C\ell (H)} (\bar q_H, \pi_0) = B^j$.
\end{itemize}  
Observe that 
\begin{align*}
B_{3\bar r_k/2}(\pi_0)\setminus G \subset &
\underbrace{\mathbf{p}_{\pi_0} (({\rm spt}\, (T_{0, t_{j(k)}})\setminus {\rm gr}\,  ( f^r_k))\cap \mathbf{C}_{3\bar r_k/2} (0, \pi_0))}_{=: \Xi^1_k}\\
&\qquad\qquad
\cup \underbrace{\mathbf{p}_{\pi_0} ({\rm spt}\, (T_{0, t_{j(k)}}\setminus \mathbf{T}_{F_{j(k)}}) \cap \mathbf{C}_{3\bar r_k/2} (0, \pi_0)))}_{=: \Xi^2_k}\, .
\end{align*}
On the other hand, recalling that $\mathbf{A}_k^2 = o (E_k)$, we can use \cite{DLS14Lp}*{Theorem 2.4} to estimate
\begin{align*}
|\Xi^1_k|  &\leq \mathcal{H}^m ({\rm spt}\, (T_{0, t_{j(k)}})\setminus {\rm gr}\, ( f^r_k)) \cap \mathbf{C}_{3\bar r_k/2} (0, \pi_0)) \\
&\leq \bar r_k^m \mathcal{H}^m ({\rm spt}\, (T_{0, r_k})\setminus {\rm gr}\, (f_k))\cap \mathbf{C}_{M} (0, \pi_0)) = \bar r_k^m O (E_k^{1+\gamma_1})\, .
\end{align*}
As for $\Xi^2_k$, we instead use the analogous estimates for each $f_H$ to get 
\begin{align*}
|\Xi^2_k| &\leq \sum_{H\in \mathscr{C}^{(k)}} \mathcal{H}^m ({\rm spt}\, ((T_{0, t_{j(k)}}\setminus {\rm gr}\, (f_H)))\cap \mathbf{C}_{C\ell (H)} (q_H, \pi_0))
\\
&\leq \sum_{H\in \mathscr{C}^{(k)}} \mathcal{H}^m ((T_{0, t_{j(k)}}\setminus {\rm gr}\, (f_H)))\cap \mathbf{C}_{2 C\ell (H)} (q_H, \pi_H))
\\
&\leq \sum_{L\in \mathscr{C}^{(k)}} \ell (H)^m (\boldsymbol{m}_{0,j(k)} \ell (H)^{2-2\delta_2})^{1+\gamma_1} \leq {\boldsymbol{m}_{0,j(k)}^{1+\gamma_1}} \bar r_k^{m+2+\gamma_1/2}\, 
\end{align*}
(we recall here that the constant {$\gamma_1$} is fixed in \cite{DLS14Lp}, while $\delta_2$ is chosen later in \cite{DLS16centermfld}*{Assumption 1.8} and satifies $(2-2\delta_2)(1+\gamma_1) \leq 1+\gamma_1/2$).

On the other hand, 
\begin{align*}
\|f_k^r\|_{C^0 (B_{3\bar r_k})} 
&\leq C \mathbf{h} (T_{0, t_{j(k)}}, \mathbf{C}_{3\bar r_k} (0, \pi_0))
= C \bar r_k \mathbf{h} (T_{0, r_k}, \mathbf{C}_{3} (0, \pi_0))\leq C \bar r_k E_k^{1/2}\, ,
\end{align*}
where in the latter inequality we have used the information that $0$ is a point of density $Q$ point of $T$ and the {height bound} \cite{Spolaor_15}. Moreover, recalling that
\[
    \|N_{j(k)}\|^2_{L^2 (\mathcal{L})} \leq C \boldsymbol{m}_{0,j(k)}^{1/2} \ell (L)^{m+ 4-2\delta_2}\,,
\]
we infer in particular the existence of at least one point $x\in \mathbf{p}_{\pi_0} (\mathcal{L})$ and $y\in \pi_0^\perp$ such that $(x,y)\in {\rm spt}\, (T_{0,t_{j(k)}})$ and  
\[
|\bar{\boldsymbol{\varphi}}^r_k (x) - y|\leq C \boldsymbol{m}_{0,j(k)}^{1/2} \bar r_k^{2-\delta_2}\, ,
\]
which in turn leads to the bound $|\bar{\boldsymbol{\varphi}}^r_k (x)|\leq C (\boldsymbol{m}_{0,j(k)}^{1/2} + E_k^{1/2}) \bar r_k \leq C E_k^{1/2} \bar r_k$. Note that $\bar{\boldsymbol{\varphi}}^r_k$ is Lipschitz, with a constant uniformly controlled in $k$. We thus conclude that 
\begin{equation}\label{e:laborious_C0_estimate}
\|f_k^r\|_{C^0 (B_{3\bar r_k})} + \|\bar{\boldsymbol{\varphi}}^r_k\|_{C^0 (B_{3\bar r_k})} \leq C E_k^{1/2} \bar r_k\, .
\end{equation}
In particular, combining the latter estimate with $|B_{3\bar r_k/2}\setminus G| \leq C E_k \bar r_k^m$, we conclude that 
\begin{equation}\label{e:bad-set}
\int_{B_{3\bar r_k/2}\setminus G} |\bar{\boldsymbol{\varphi}}^r_k - \boldsymbol{\eta}\circ f_k^r|^2 \leq C E_k^2 \bar r_k^{m+2}\, .
\end{equation}
Considering that on $G\cap F_0$ the functions $\bar{\boldsymbol{\varphi}}^r_k$ and $\boldsymbol{\eta}\circ f_k^r$ coincide, we are left to estimate
\begin{align}\label{e:ugly-series}
\sum_{j\geq 1} \int_{G\cap F_j} |\bar{\boldsymbol{\varphi}}^r_k - \boldsymbol{\eta}\circ f_k^r|^2
\leq E_k^{1/2} \bar r_k \sum_{j\geq 1} \int_{B_{C\ell (H)} (\bar q_H, \pi_0)} |\bar{\boldsymbol{\varphi}}^r_k - \boldsymbol{\eta}\circ \bar f_H|\, .
\end{align}
We now wish to estimate each integral in the above summation by changing coordinates to the reference plane $\pi_H$ for each $H\in\mathscr{C}^{(k)}$. Denote by $\boldsymbol{\varphi}_H$ the function whose graph over $B_{2 C \ell (H)} (q_H, \pi_H)$ coincides with $\mathcal{M}_{j(k)}$ (which, we recall, is the graph of $\bar{\boldsymbol{\varphi}}^r_k$ over an appropriate subset of $\pi_0$). We likewise introduce $\mathbf{f}_H$ which is the function over $B_{2 C \ell (H)} (q_H, \pi_H)$ whose graph coincides with the graph of $\boldsymbol{\eta}\circ \bar f_H$. Applying \cite{DLS16centermfld}*{Lemma B.1(b)} we can then estimate
\[
\int_{B_{C\ell (H)} (\bar q_H, \pi_0)} |\bar{\boldsymbol{\varphi}}^r_k - \boldsymbol{\eta}\circ \bar f_H|
\leq C \int_{B_{2C\ell (H)} (q_H, \pi_H)} |\boldsymbol{\varphi}_H - \mathbf{f}_H|\, .
\]
Let us now estimate 
\begin{equation}\label{e:yet-more-complications}
\int_{B_{2C\ell (H)} (q_H, \pi_H)} |\boldsymbol{\varphi}_H - \mathbf{f}_H| \leq
\int_{B_{2C\ell (H)} (q_H, \pi_H)} |\boldsymbol{\varphi}_H - \boldsymbol{\eta}\circ f_H| +
\int_{B_{2C\ell (H)} (q_H, \pi_H)} |\boldsymbol{\eta}\circ f_H - \mathbf{f}_H|\, .
\end{equation}
In order to handle the second integral we wish to estimate $|\pi_0 - \pi_H|$, since we will be using $C^0$-estimates on $f_H$ here. First of all we compare the tilt between $\pi_0$ and $\pi_{H'}$ for the ancestor $H'$ of $H$ with the smallest side length such that $\mathbf{B}_{H'} \supset \mathbf{B}_{8 M \bar{r}_k}$. Observe that $\ell (H') \leq C \bar r_k$. Since $\pi_{H'}$ optimizes the excess of $T_{0, t_{j(k)}}$ in $\mathbf{B}_{H'}$, while $\pi_0$ optimizes the excess of the same current over $\mathbf{B}_{8 M \bar{r}_k}$, a simple comparison argument (cf. for instance \cite{DLS16centermfld}*{Proof of (4.5)}), implies
\[
|\pi_0 - \pi_{H'}|\leq C (\mathbf{E} (T_{0, t_{j(k)}}, \mathbf{B}_{8M})^{1/2} + \mathbf{E} (T_{0, t_{j(k)}}, \mathbf{B}_H')^{1/2}) 
\leq C E_k^{1/2} + C \boldsymbol{m}_{0,j(k)}^{1/2} \bar r_k^{1-\delta_2}\, .
\]
On the other hand, by \cite{DLS16centermfld}*{Proposition 4.1} we have 
\[
|\pi_H-\pi_{H'}|\leq C \boldsymbol{m}_{0,j(k)}^{1/2} \bar r_k^{1-\delta_2}
\]
and we thus reach
\begin{equation}\label{e:tilt}
|\pi_0 - \pi_H|\leq C E_k^{1/2} + C \boldsymbol{m}_{0,j(k)}^{1/2} \bar r_k^{1-\delta_2} \leq C E_k^{1/2}\, .
\end{equation}
We can now employ \cite{DLS16centermfld}*{Lemma 5.6} to estimate
\[
\int_{B_{2C\ell (H)} (q_H, \pi_H)} |\boldsymbol{\eta}\circ f_H - \mathbf{f}_H| \leq C (\|f_H\|_{C^0(B_{2C\ell (H)} (q_H, \pi_H))} + E_k^{1/2}) ({\rm Dir}\, (f_H) + \ell (H)^m E_k)\, .  
\]
Recall that $\|f_H\|_{C^0(B_{2C\ell (H)} (q_H, \pi_H)} \leq \boldsymbol{m}_{0,j(k)}^{1/2m} \ell (H)^{1+\beta_2}$, while ${\rm Dir}\, (f_H) \leq \boldsymbol{m}_{0,j(k)} \ell (H)^{m+2-2\delta_2} \leq \ell (H)^m E_k$. We thus easily conclude that 
\[
\int_{B_{2C\ell (H)} (q_H, \pi_H)} |\boldsymbol{\eta}\circ f_H - \mathbf{f}_H| \leq C \ell (H)^m E_k^{1+1/2m}\, .
\]
We now come to the first integral in the right hand side of \eqref{e:yet-more-complications}. First of all we recall the tilted interpolating function $h_H$ of \cite{DLS16centermfld}*{Definition 1.16} and observe that, by construction, $\boldsymbol{\varphi}_H$ and $h_H$ coincide in a neighborhood of $q_H$. Now recall that, by \cite{DLS16centermfld}*{Proposition 4.4} $\|D h_H\|\leq C \boldsymbol{m}_{0,j(k)}^{1/2}$. Since moveover $\|D^2 \boldsymbol{\varphi}_H\|$ is controlled by the second fundamental form of $\mathcal{M}_{j(k)}$, which in turn is bounded by $\boldsymbol{m}_{0,j(k)}^{1/2}$, we easily see that the estimate $\|D^2 \boldsymbol{\varphi}_H\| \leq C \boldsymbol{m}_{0,j(k)}^{1/2}$ holds as well. In particular, using a second order Taylor expansion on a point where $\boldsymbol{\varphi}_H-h_H$ and its derivative both vanish (to gain an extra factor of $\ell(H)^2$) we can estimate
\[
\int_{B_{2C\ell (H)} (q_H, \pi_H)} |\boldsymbol{\varphi}_H - \boldsymbol{\eta}\circ f_H| \leq C \boldsymbol{m}_{0,j(k)}^{1/2} \ell (H)^{m+2} +
\int_{B_{2C\ell (H)} (q_H, \pi_H)} |h_H - \boldsymbol{\eta}\circ f_H| \, .
\]
Finally we can use \cite{DLS16centermfld}*{Proposition 5.2} to estimate
\[
\int_{B_{2C\ell (H)} (q_H, \pi_H)} |h_H - \boldsymbol{\eta}\circ f_H| \leq C {\boldsymbol{m}_{0,j(k)}} \ell (H)^{m+3+\beta_2}\, .
\]
In summary, we have reached the estimate
\[
\int_{B_{C\ell (H)} (\bar q_H, \pi_0)} |\bar{\boldsymbol{\varphi}}^r_k - \boldsymbol{\eta}\circ \bar f_H| \leq 
C \boldsymbol{m}_{0,j(k)}^{1/2} \ell (H)^{m+2}\, .
\]
Inserting this into \eqref{e:ugly-series} and decomposing into cubes $H$, we then get 
\begin{align*}
 \int_G |\bar{\boldsymbol{\varphi}}^r_k - \boldsymbol{\eta}\circ \bar f_k^r|^2 
 &\leq C \bar{r}_k E_k^{1/2} \boldsymbol{m}_{0,j(k)}^{1/2} \sum_{H\in \mathscr{C}^{(k)}} \ell (H)^{m+2} \leq C E_k^{1/2} \boldsymbol{m}_{0,j(k)}^{1/2} \bar r_k^{m+3}\, .
\end{align*}
The latter, together with \eqref{e:bad-set}, gives finally \eqref{e:rescaled-comparison} and completes the proof of the lemma.
\end{proof}

\begin{proof}[Proof of Proposition \ref{p:coarse=fine}]
We wish to compare
\[
\tilde{N}_k := \frac{N_k\circ\boldsymbol{\vphi}_k}{\mathbf{h}_k} \qquad \text{and} \qquad v_k \coloneqq \frac{\sum_i \llbracket (f_k)_i - \boldsymbol{\eta}\circ f_k\rrbracket}{E_k^{1/2}}\, ,
\]
in particular we wish to show that they have the same $L^2$ limit, up to a scalar constant. Since both sequences are converging to respective Dir-minimizing maps, it suffices to compare the maps $\tilde N_k$ and $v_k$ on some nonempty open set; we will do it on $B_1(\pi_0)$ for simplicity. 

First of all we replace $\boldsymbol{\eta}\circ f_k$ with the parameterizing map $\boldsymbol{\vphi}_k$ for $\Mcal_k$ in $v_k$ to give a map $\hat{v}_k$ given by
\[
\hat{v}_k = \frac{\sum_i \llbracket (f_k)_i - \boldsymbol{\vphi}_k \rrbracket}{E_k^{1/2}} \, ,
\]
since Lemma~\ref{lem:height-excess} implies that 
\[
\lim_{k\uparrow \infty} \int_{B_{3/2}(\pi_0)} \mathcal{G} (v_k, \hat{v}_k)^2 = 0\, . 
\]
Recalling \cite{DLS14Lp}, 
\[
|\mathbf{p}_k (({\spt (T_{0,r_k})}\setminus {\rm gr}\, (f_k) \cup {\rm gr}\, f_k\setminus {\rm spt}\, (T_{0,r_k}) \cap \mathbf{C}_{3/2})| = o (E_k)\, .
\] 
Next, introduce the map $\mathbf{F} (p) := \sum_i \llbracket (N_k)_i (p)+p\rrbracket$ on $\Mcal_k$ and let $f^1_k: B_2 (0, \pi_0) \to \mathcal{A}_Q (\pi_0^\perp)$ be the map whose graph coincides with the current $\mathbf{T}_F\cap \mathbf{C}_2 (0, \pi_0)$. By \cite{DLS16centermfld}*{Theorem 2.4} and \cite{DLS16blowup}*{Section 4.2 \& Corollary 5.3},
\[
|\mathbf{p}_{\pi_0} (({\rm gr}\, (f^1_k)\setminus {\rm spt}\, (T_{0,r_k}) \cup {\rm spt}\, (T_{0,r_k})\setminus {\rm gr}\, (f^1_k))\cap \mathbf{C}_{3/2}| = o (E_k)\, .
\]
In particular, if we consider the map
\[
\hat{v}^1_k = \frac{\sum_i \llbracket (f^1_k)_i - \boldsymbol{\vphi}_k\rrbracket}{E_k^{1/2}}
\]
we have that $|\{\hat{v}^1_k \neq \hat v_k\}| \to 0$, and using that both have a uniform bound on the Dirichlet energy, we conclude that
\[
\lim_{k\to\infty}\int_{B_{3/2}} \mathcal{G} (\hat{v}^1_k, \hat{v}_k)^2 = 0\, .
\]
We also take advantage of Lemma~\ref{lem:height-excess} to assume, up to extraction of a subsequence (not relabeled), that $E_k/\mathbf{h}_k^2$ converges to some finite constant $\lambda >0$. We are therefore left to show that the maps $\tilde{N}_k$ and 
\[
\hat{v}^2_k = \frac{\sum_i \llbracket (f^1_k)_i - \boldsymbol{\vphi}_k\rrbracket}{\mathbf{h}_k^{1/2}}
\]
have the same limit. We now wish to apply Lemma \ref{l:reparametrization-lemma} to the maps $N_k$. We observe that the map $g$ in Lemma \ref{l:reparametrization-lemma} can be taken to be the map $g_k$ defined by
\[
g_k:= \sum_i \llbracket (f^1_k)_i - \boldsymbol{\vphi}_k\rrbracket \, .
\]
Moreover, observe that $\|D \boldsymbol{\varphi}_k\|_{C^0}$ converges to $0$. 
If we had a uniform bound on $\|g_k\|_{C^0}$ in terms of $\mathbf{h}_k$ we could then apply Lemma \ref{l:reparametrization-lemma} to complete the proof. Given that we only have the bound $\|g_k\|_{L^2} \leq C \mathbf{h}_k$ we need to overcome this issue. We use the following simple argument. We fix a truncation parameter $\bar{M}$ and introduce the truncation
\[
g_k^{\bar M} := \sum_i \llbracket(g_k)_i^{\bar M}\rrbracket
\] 
where the maps $(g_k)_i^{\bar M}$ are defined by replacing each component $(\xi_i)_j (x)$ of the vector $(g_k)_i (x)$ with $\max \{-\bar M, \min \{(\xi_i)_j (x), \bar M\}\}$. By the Sobolev embedding and the uniform $W^{1,2}$ bound on $g_k$ it is easy to see that 
\[
\lim_{\bar M\to\infty} \sup_k \mathbf{h}_k^{-2} \int \mathcal{G} (g_k, g_k^{\bar M})^2 = 0\, .
\]
Likewise, after defining the maps $N_k^{\bar M}$ as those corresponding to $g_k^{\bar M}$ in the same way as $N_k$ corresponds to $g_k$, we see as well
\[
\lim_{\bar M\to \infty} \sup_k \mathbf{h}_k^{-2} \int \mathcal{G} (N_k, N_k^{\bar M})^2 = 0\, .
\]
We can now apply Lemma \ref{l:reparametrization-lemma} to conclude that the limit (in $k$) of $\mathbf{h}_k^{-1} N^{\bar M}_k \circ \boldsymbol{\varphi}_k$ and the limit of $g_k^{\bar M}$ coincides on $B_1$. Letting $\bar M\to \infty$ we then reach the desired conclusion.
\end{proof}

\section{Frequency bound for fine blow-ups}\label{s:HS2}

In this section we prove the lower bound for the frequency values, which we equivalently restate as follows for the reader's convenience.

\begin{theorem}\label{t:lower-bound}
Suppose that $T$ and $\Sigma$ are as in Assumption~\ref{asm:3} and let $u$ be a fine blow-up. Then $I_u (0) \geq 1$.
\end{theorem}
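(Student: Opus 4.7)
The plan is to reduce Theorem~\ref{t:lower-bound} to Theorem~\ref{thm:HS} (the Hardt--Simon bound for coarse blow-ups) via Proposition~\ref{p:coarse=fine}. Let $u$ be a fine blow-up at $0$ along a sequence $r_k \in \,]s_{j(k)}, t_{j(k)}]$, and set $\alpha := I_u(0)$. In the favorable case $\liminf_k s_{j(k)}/r_k > 0$, Proposition~\ref{p:coarse=fine} applies immediately and yields a nontrivial coarse blow-up whose average-free part $v$ equals $\lambda u$ for some $\lambda > 0$. Since the frequency is invariant under multiplication by positive scalars, Theorem~\ref{thm:HS} gives $I_u(0) = I_v(0) \geq 1$.

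In the complementary case $s_{j(k)}/r_k \to 0$ along a subsequence, I reduce to the favorable case through a homogenization argument. Standard blow-up theory for Dir-minimizers (cf.\ \cite{DLS_MAMS}*{Sections 3.3 \& 3.5}) ensures that, along a subsequence $\rho \downarrow 0$, the normalized rescalings $u^\rho(x) := u(\rho x)/\|u(\rho \cdot)\|_{L^2(B_{3/2})}$ converge to an $\alpha$-homogeneous Dir-minimizer $u^*$. Combining this with the strong $W^{1,2}_\loc \cap L^2$ convergence $u_k \to u$ from Section~\ref{ss:compactness}, a diagonal extraction realizes $u^*$ as a fine blow-up along a new sequence $r'_k := \rho_k r_k$. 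The choice $\rho_k = 2 s_{j(k)}/r_k$ gives $r'_k = 2 s_{j(k)}$; since $r_k/s_{j(k)} \to \infty$ and $r_k \leq t_{j(k)}$, for large $k$ one has $r'_k \in \,]s_{j(k)}, t_{j(k)}]$, so the interval index $j(k')$ can be taken equal to $j(k)$ and the comparability ratio $s_{j(k')}/r'_k = 1/2$ is bounded away from zero. Applying the favorable case to $u^*$ then gives $I_{u^*}(0) \geq 1$, and because $u^*$ is $\alpha$-homogeneous, $\alpha = I_{u^*}(0) \geq 1$.

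The main obstacle lies in the diagonal extraction: one must pass to subsequences in both $k$ (to retain the fine blow-up convergence of $u_k$) and $\rho$ (to reach the $\alpha$-homogeneous limit $u^*$), and then verify that the renormalized rescalings $u_k(\rho_k \cdot)/\|u_k(\rho_k \cdot)\|_{L^2(B_{3/2})}$ genuinely coincide, up to vanishing errors, with the fine blow-up approximations generated by the normal approximation $N_{j(k)}$ at the new scale $r'_k$. This identification rests on Lemma~\ref{lem:height-excess}-type comparisons between $\mathbf{h}_k$ and the new $L^2$ normalizations, and on the two-sided frequency bound \eqref{e:frequency-twosided-bounds}, which prevents degeneracy of these normalizations along the smaller scales.
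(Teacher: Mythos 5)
Your reduction in the ``favorable case'' $\liminf_k s_{j(k)}/r_k > 0$ matches the paper's second case and is correct: Proposition~\ref{p:coarse=fine} plus Theorem~\ref{thm:HS} give the bound directly. The problem lies entirely with the complementary case $s_{j(k)}/r_k\to 0$, where your approach diverges from the paper's and, in two distinct ways, does not close.

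First, the case $s_{j(k)}/r_k\to 0$ silently contains the subcase in which the intervals of flattening are \emph{finite}, i.e.\ $s_J=0$ for some $J$ and $j(k)\equiv J$ eventually. Then $\rho_k = 2s_{j(k)}/r_k = 0$ and $r'_k=0$, so the rescaled sequence you propose is vacuous. This is not a minor omission: the paper treats this as its own case~\eqref{e:case-1} with a genuinely different mechanism. There one exploits that $s_J=0$ forces the normal approximation's Dirichlet energy to decay like $\Dbf(r)\leq Cr^{m+2-2\delta_2}$ (from \cite{DLS16blowup}*{(3.4)}), while from the positivity of the frequency one obtains $\liminf_{r\downarrow 0}\Dbf(r)/r^{m+2(I_0-1)+\varepsilon}>0$; comparing exponents forces $2(I_0-1)\geq 2-2\delta_2$, hence $I_0>1$. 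No coarse blow-up or Hardt--Simon step appears at all, and indeed cannot, since when $s_J=0$ there is no guarantee that the coarse blow-up is nontrivial (cf.\ Remark~\ref{r:coarse=fine}).

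Second, in the remaining subcase ($s_{j(k)}>0$ but $s_{j(k)}/r_k\to 0$), your diagonal argument has a tension you acknowledge but do not resolve. Writing the fine blow-up at scale $r'_k=\rho_k r_k$ as a rescaling $u_k(\rho_k\cdot)/\|u_k(\rho_k\cdot)\|_{L^2}$, the identification with $u(\rho_k\cdot)/\|u(\rho_k\cdot)\|_{L^2}$ requires the relative error
\[
\frac{\|u_k-u\|_{L^2(B_{3\rho_k/2})}}{\|u\|_{L^2(B_{3\rho_k/2})}} \longrightarrow 0,
\]
and since $\|u\|_{L^2(B_{3\rho/2})}\sim \rho^{\frac m2+I_u(0)}$ this forces $\rho_k$ to decay slowly relative to the rate of $u_k\to u$. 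But you have \emph{no freedom}: you must take $\rho_k=2s_{j(k)}/r_k$ (anything slower destroys the favorable-case ratio $s_{j(k')}/r'_k$ that you need), and there is no a priori control preventing the stopping radii $s_{j(k)}$ from collapsing much faster than the diagonal tolerates. The paper sidesteps this completely: it observes that $\{s_{j(k)}\}$ is itself a blow-up sequence satisfying the favorable-case hypothesis, deduces $\liminf_k \Ibf_{N_{j(k)}}(s_{j(k)}/t_{j(k)})\geq 1$, and then \emph{propagates} this lower bound to all larger radii inside each interval of flattening using the almost-monotonicity estimate \eqref{e:first-almost-monotonicity}, which is a uniform inequality on the frequency itself rather than a convergence statement. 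Combined with the identity $I_u(\rho)=\lim_k \Ibf_{N_{j(k)}}(\rho r_k/t_{j(k)})$, this gives $I_u(\rho)\geq 1-2\delta$ for all small $\rho$ and all $\delta>0$, with no diagonal extraction and no need to match the rescaling rate against $\|u_k-u\|_{L^2}$. If you want to repair your argument, the almost-monotonicity is what you are missing.
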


In order to show the theorem, we fix a blow-up sequence $\{r_k\}$ which generates the fine blow-up $u$ through the procedure described in Section \ref{ss:compactness} and for each $k$ sufficiently large we choose the interval of flattening $]s_{j(k)}, t_{j(k)}]$ which contains the radius $r_k$. We can then reduce the proof, up to extraction of a subsequence, to three different cases. In the first case we assume that there are finitely many intervals of flattening and hence (up to subsequence), there is a positive integer $J$ such that:
\begin{equation}\label{e:case-1}
s_{J} = 0 \qquad \mbox{and}\qquad \{r_k\}_k \;\subset\; ]0, t_{J}]\, .
\end{equation}
In the remaining two cases we assume that there are infinitely many intervals of flattening and that (up to subsequence) one of the following mutually exclusive conditions hold:
\begin{align}
&\lim_k \frac{s_{j(k)}}{r_k} > 0\label{e:case-2}\\
&\lim_k \frac{s_{j(k)}}{r_k}=0 . \label{e:case-3}
\end{align}

The proof will take advantage of a first coarse lower bound proved recently by the second author, cf. \cite{Sk21}*{Theorem 7.8}, which in turn can be combined with the monotonicity computations in \cite{DLS16blowup} to give a suitable almost-monotonicity formula for $\Ibf_N$, cf. \cite{Sk21}*{Theorem 7.4} as well. We summarize these conclusions in the following theorem.

\begin{theorem}\label{t:first-lb}
Let $T$, $\Sigma$ be as in Assumption \ref{asm:3} and consider any center manifold $\mathcal{M}_j$ and any normal approximation $N_j$ for a given interval of flattening $]s_j, t_j]$ at $0$. Then, 
\begin{align}
\Ibf_{N_j}(r) &\geq c_0 \qquad \forall r\in \left]\frac{s_j}{t_j}, 3\right]\, ,\\
\Ibf_{N_j} (a) &\leq e^{C b^\alpha} \Ibf_{N_j} (b) \qquad \forall ]a, b]\subset \left]\frac{s_j}{t_j}, 3\right]\, ,\label{e:first-almost-monotonicity}
\end{align}
where $\alpha = \alpha(Q,m,n) > 0$, while $c_0$ and $C$ are positive numbers which depend on $T$ (but not on $j$).  
\end{theorem}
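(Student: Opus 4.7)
The plan is to first establish the lower bound $\Ibf_{N_j}(r)\geq c_0$, and then use this lower bound as an input for proving the almost-monotonicity, since without a strictly positive lower bound the error terms in the frequency derivative cannot be absorbed.

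For the lower bound, I would argue by contradiction. Assume there exist indices $j_k$ and radii $r_k\in ]s_{j_k}/t_{j_k}, 3]$ with $\Ibf_{N_{j_k}}(r_k)\to 0$. The idea is to exhibit these as scales along which one can perform a fine blow-up, obtaining a Dir-minimizing multivalued function $u$ with $u(0)=Q\llbracket 0\rrbracket$ and $\|u\|_{L^2(\Bcal_{3/2})}=1$. Indeed, using the definition of the intervals of flattening, one can produce a corresponding blow-up sequence $\rho_k := \bar r_k t_{j_k}$ and apply the compactness procedure of Section~\ref{ss:compactness}. The convergence of the rescaled normal approximations $u_k\to u$ in $W^{1,2}_\loc\cap L^2$ combined with the boundedness of $\Hbf_{u_k}$ from below (which in turn requires \eqref{e:non-triviality} and an argument that the mass of $u$ at the relevant scale is nontrivial, using $u(0)=Q\llbracket 0\rrbracket$) transfers the vanishing of $\Ibf_{N_{j_k}}$ to $I_u(r_\infty)=0$ for some $r_\infty\in [0,1]$. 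However, the Almgren frequency monotonicity of \cite{DLS_MAMS}*{Theorem~3.15} combined with $u(0)=Q\llbracket 0\rrbracket$ (which forces the frequency to be at least a dimensional constant $c(m,Q)>0$) gives a contradiction. In the extremal case $r_\infty=0$, one further uses the Hardt--Simon inequality via Theorem~\ref{thm:HS} applied to the coarse blow-up, which provides a uniform positive lower bound on the frequency at the origin.

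For the almost-monotonicity \eqref{e:first-almost-monotonicity}, I would follow the classical Almgren computation as adapted to the center-manifold setting in \cite{DLS16centermfld}*{Section~3} and \cite{DLS16blowup}*{Section~3}. One computes $\Dbf_{N_j}'(r)$ and $\Hbf_{N_j}'(r)$, using the inner and outer variations of the Dirichlet energy of $N_j$. Because $N_j$ is only an \emph{approximate} $\Dir$-minimizer, one picks up error terms controlled by the curvature of $\Mcal_j$, the mass of $T$ in super-level sets where the approximation fails, and by $L^p$ norms of $|N_j|$ and $|DN_j|$ on high-frequency cubes. All of these can be bounded by powers of $\boldsymbol{m}_{0,j}$ times $r^\beta$ for some $\beta>0$, by the estimates in \cite{DLS16centermfld,DLS16blowup}. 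Putting these together yields a differential inequality of the form
\[
    \frac{\di}{\di r}\log \Ibf_{N_j}(r) \geq -C\,r^{\alpha-1}\,\bigl(1+\Ibf_{N_j}(r)^{-1}\bigr)
\]
for some $\alpha=\alpha(m,n,Q)>0$. It is precisely here that the lower bound $\Ibf_{N_j}(r)\geq c_0$ is essential: it turns the above into $(\log \Ibf_{N_j})'\geq -C r^{\alpha-1}$, whose integration between $a$ and $b$ produces the claimed estimate $\Ibf_{N_j}(a)\leq e^{Cb^\alpha}\Ibf_{N_j}(b)$.

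The main obstacle is the lower bound. Delivering a clean contradiction requires ruling out two degenerate scenarios: that $\mathbf{h}_k\coloneqq \|\bar N_k\|_{L^2(\Bcal_{3/2})}$ might vanish faster than the natural scale (which would trivialize the limit $u$), and that the limit $u$ may have concentrated mass outside the center-manifold approximation region. Both are handled by the height bound of \cite{Spolaor_15} together with the constancy theorem applied to the tangent cone (which is flat of density $Q$), ensuring that the rescaled normal approximations retain a definite $L^2$ mass and vanish at $0$. Once the lower bound is in hand, the almost-monotonicity is a relatively mechanical, if lengthy, reprise of the error estimates already developed in \cite{DLS16centermfld,DLS16blowup}.
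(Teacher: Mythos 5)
The paper does not prove Theorem \ref{t:first-lb} itself; it imports both inequalities from \cite{Sk21}*{Theorems~7.4, 7.8}, so your proposal is a genuine attempt to reconstruct an argument. The contradiction-and-blow-up strategy is the right one, but it has two gaps.

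First, there is a circularity in the stated order of steps. The compactness procedure of Section~\ref{ss:compactness}, which you invoke to produce the Dir-minimizing fine blow-up $u$, relies on the reverse Sobolev inequality of \cite{DLS16blowup}*{Corollary~5.3}, and that corollary is itself a consequence of the \emph{upper} bound on $\Ibf_{N_j}$, which in turn comes from an almost-monotonicity estimate of the type $\frac{d}{dr}\log\bigl(\Ibf_{N_j}+1\bigr)\geq -(\text{integrable})$ that holds \emph{without} any lower bound (cf.\ \cite{DLS16blowup}*{Section~3} and Corollary~\ref{cor:freqmono} of this paper). This weak almost-monotonicity must come first; only the sharper form $\Ibf_{N_j}(a)\leq e^{Cb^\alpha}\Ibf_{N_j}(b)$ in \eqref{e:first-almost-monotonicity} needs the lower bound, since converting from $\log(\Ibf+1)$ to $\log\Ibf$ requires $\Ibf$ bounded away from zero. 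Your plan ("lower bound first, then almost-monotonicity") should be restated as: weak almost-monotonicity $\Rightarrow$ upper bound and reverse Sobolev $\Rightarrow$ fine blow-up and lower bound $\Rightarrow$ sharp almost-monotonicity \eqref{e:first-almost-monotonicity}.

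Second, the contradiction from $I_u(r_\infty)=0$ is not complete as written, because it requires $u(0)=Q\llbracket 0\rrbracket$. A nontrivial average-free Dir-minimizer with $\|u\|_{L^2}=1$ can perfectly well be constant on a ball with $u(0)\neq Q\llbracket 0\rrbracket$, in which case $I_u(r_\infty)=0$ is not a contradiction. Your final paragraph attributes the vanishing at $0$ to the height bound of \cite{Spolaor_15} together with the constancy theorem, but neither of these forces the rescaled normal approximation to vanish at the origin. What does is the Hardt--Simon inequality passed to the normal approximation (this is precisely \cite{Sk21}*{Lemma~5.14}, alluded to in Section~\ref{ss:coarse}), or else Proposition~\ref{p:coarse=fine} plus Theorem~\ref{thm:HS} applied to a coarse blow-up --- but the latter route needs one to check the hypothesis \eqref{e:fine-coarse-assumption}, which you do not discuss. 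This step needs to be made precise, and should be separated from the height/constancy considerations, which only control the $C^0$ size and nontriviality of the limit.
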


\subsection{Proof of Theorem \texorpdfstring{\ref{t:lower-bound}}{t:lower-bound} under assumption \texorpdfstring{\eqref{e:case-1}}{e:case-1}} We let $\Mcal$ be the center manifold related to the interval of flattening $]0, t_J]$, with corresponding normal approximation $N$. Since we are in the case with a single center manifold, we omit the dependency on $N$ for $\Ibf$ and related quantities. Observe that, by Theorem \ref{t:first-lb},
\[
c(m,Q) \leq \Ibf (a) \leq e^{C b^\alpha} \Ibf (b) \qquad \forall 0<a\leq b < 3
\]
and in particular we immediately see that 
\[
c_0 \leq \limsup_{r\downarrow 0} \Ibf (r) \leq \liminf_{r\downarrow 0} \Ibf (r) < +\infty\, .
\]
So the limit $I_0:= \lim_{r\downarrow 0} \Ibf (r)$ exists and it is positive and finite. It follows from the strong convergence of $u_k$ from the definition of $u$ being a fine blowup, that $I_{u} (r)$ is identically equal to $I_0$, and thus $I_0 = I_{u} (0)$. Therefore it just suffices to show that $I_0 \geq 1$. On the other hand, by \cite{DLS16blowup}*{Proposition 3.5}, we readily see that 
\[
\left|\frac{d}{dr} \log \frac{\Hbf (r)}{r^{m-1}}- \frac{2 \Ibf (r)}{r}\right| \leq  \frac{C \Ibf (r)}{r^{1-\gamma}}\, , 
\]
for suitable constants $C$ and $\gamma >0$. In particular, for every $\varepsilon >0$, the inequalities
\[
\frac{2I_0 -\varepsilon}{r} \leq \frac{d}{dr} \log \frac{\Hbf (r)}{r^{m-1}} \leq \frac{2I_0 +\varepsilon}{r}\, 
\]
hold as soon as $r$ is smaller than a suitable scale $r (\varepsilon)>0$. 
Integrating the latter differential inequality, we immediately conclude that 
\[
\liminf_{r\downarrow 0} \frac{\Hbf (r)}{r^{m-1+2I_0+\varepsilon}} > 0\,
\]
for every $\varepsilon > 0$. Combined with the inequality $\frac{r \Dbf (r)}{\Hbf (r)} = \Ibf (r) \geq c_0$, we also conclude that 
\[
\liminf_{r\downarrow 0} \frac{\Dbf (r)}{r^{m+2 (I_0-1) +\varepsilon}} > 0\, .
\]
On the other hand, due to the estimate \cite{DLS16blowup}*{(3.4)} and the fact that $s_{J} =0$, we must have
\[
\Dbf (r) \leq C r^{m+2-2\delta_2}
\]
where $\delta_2$ is the small positive constant of \cite{DLS16centermfld}*{Assumption 1.8}. Comparing this with the previous asymptotic estimate, we conclude in particular that 
\[
2 (I_0 -1) \geq 2-2\delta_2\, ,
\]
and since $2\delta_2 \leq \frac{1}{4m}$, we immediately get that $I_0>1$ (in fact it turns out that $I_0$ is rather close to $2$, in this case). 

\subsection{Proof of Theorem \texorpdfstring{\ref{t:lower-bound}}{t:lower-bound} under assumption \texorpdfstring{\eqref{e:case-2}}{e:case-2}} In this case we can apply Proposition \ref{p:coarse=fine} to a suitable subsequence of $\{r_k\}_k$, not relabeled, and find a coarse blow-up $f$ whose average-free part $v$ has the property that $v = \lambda u$ for some positive number $\lambda$. 
In particular $I_u (0) = I_v (0)$ and from Theorem \ref{thm:HS} we conclude $I_u (0) \geq 1$.

\subsection{Proof of Theorem \texorpdfstring{\ref{t:lower-bound}}{t:lower-bound} under assumption \texorpdfstring{\eqref{e:case-3}}{e:case-3}} We fix a blow-up sequence $\{r_k\}_k$ and a corresponding fine blow-up $u$. One crucial property that we will use is that, because of the convergence of the maps $u_k$ from Section~\ref{ss:compactness} to the fine blow-up $u$, for every positive $\rho<1$ we have 
\begin{equation}\label{e:convergence-of-I}
I_{u} (\rho) = \lim_{k\to \infty} \mathbf{I}_{N_{j(k)}} \left(\frac{\rho r_k }{t_{j(k)}}\right) 
\end{equation}
Observe that under our assumption we know as well that $\frac{s_{j(k)}}{t_{j(k)}}$ is infinitesimal. In particular, since 
\[
\mathbf{E} (T, \mathbf{B}_{s_{j(k)}}) = \mathbf{E} (T_{0, T_{j(k)}}, \mathbf{B}_{s_{j(k)}/t_{j(k)}}) \leq C \boldsymbol{m}_{0,j(k)} \frac{s_{j(k)}^{2-2\delta_2}}{t_{j(k)}^{2-2\delta_2}}\, , 
\]
we conclude that $\mathbf{E} (T, \mathbf{B}_{s_{j(k)}}) \to 0$. So $s_{j(k)}$ is itself a blow-up sequence, and we can apply the previous section to infer that, for any $u'$ coarse blow-up generated by a subsequence, we have $I_{u'} (0) \geq 1$. In particular, since along this subsequence of $\{s_{j(k)}\}$ we have comparability of the coarse and fine blow-ups due to Proposition~\ref{p:coarse=fine}, we can use the corresponding convergence \eqref{e:convergence-of-I} to infer that 
\[
\liminf_{k\to \infty} \mathbf{I}_{N_{j(k)}} \left(\frac{s_{j(k)}}{t_{j(k)}}\right) \geq 1\, .
\]
Fix now an arbitrary small parameter $\delta>0$. Our goal is to show that there is $\bar \rho >0$ such that
\begin{equation}\label{e:epsilon-delta}
\liminf_{k\to \infty} \mathbf{I}_{N_{j(k)}} \left(\frac{\rho r_k }{t_{j(k)}}\right) \geq 1-2 \delta \qquad \forall \rho \in \left]\frac{s_{j(k)}}{r_k}, \bar\rho \right[ \, .
\end{equation}
Knowing \eqref{e:epsilon-delta} and \eqref{e:convergence-of-I}, we would then infer that $I_{u} (\rho) \geq 1-2\delta$ for every positive $\rho < \bar \rho$, which in turn would imply $1-2\delta \leq I_{u} (0)$. 
The arbitrariness of $\delta$ then tells us that $I_u (0) =0$. 

In order to achieve \eqref{e:epsilon-delta}, choose first $k_0$ large enough so that 
\[
\mathbf{I}_{N_{j(k)}} \left(\frac{s_{j(k)}}{t_{j(k)}}\right) \geq 1-\delta \qquad \forall k\geq k_0\, .
\]
Next, because of \eqref{e:first-almost-monotonicity} we can choose $\sigma>0$ small enough (independent of $k$) with the property that 
\[
\mathbf{I}_{N_{j(k)}} (r) \geq 1-2\delta \qquad \forall r\in \left]\frac{s_{j(k)}}{t_{j(k)}}, \sigma\right],\quad \forall k \geq k_0\, .
\]
Since however $r_k \leq t_{j(k)}$, while $\lim_{k\to \infty} \frac{s_{j(k)}}{r_k} = 0$, for any fixed positive $\rho<\sigma$ and for every $k$ large enough we may conclude that
$\frac{\rho r_k}{t_{j(k)}}$ must belong to the interval $[\frac{s_{j(k)}}{t_{j(k)}}, \sigma]$. This implies \eqref{e:epsilon-delta} with $\bar \rho = \sigma$ and thus completes the proof.

\section{Frequency BV estimate}\label{ss:bv}

This section is dedicated to establishing a (quantitative) control on the radial variations of the frequency, which is crucial for proving Theorem~\ref{t:consequences}.

We begin by defining the~\emph{universal frequency function}, which makes sense of the frequency continuously along all blow-up scales where it is possible to construct a center manifold for $T$.

\begin{definition}[Universal frequency function]\label{def:univfreq}
	Suppose that $T$ is as in Assumption~\ref{asm:1} and let $\{]s_k, t_k]\}_{k=j_0}^J$ be a sequence of intervals of flattening with coinciding endpoints (i.e. such that $s_k= t_{k+1}$ for $k = j_0,\dots,J-1$), with corresponding center manifolds $\Mcal_k$ and $\Mcal_k$-normal approximations $N_k$. For $r \in ]s_J, t_{j_0}]$, define
	\begin{align*}
	    \Ibf(r) &\coloneqq \Ibf_{N_k}\left(\frac{r}{t_k}\right) \chi_{]s_k,t_k]}(r), \\
	    \Dbf(r) &\coloneqq \Dbf_{N_k}\left(\frac{r}{t_k}\right) \chi_{]s_k,t_k]}(r), \\
	    \Hbf(r) &\coloneqq \Hbf_{N_k}\left(\frac{r}{t_k}\right) \chi_{]s_k,t_k]}(r).
	\end{align*}
\end{definition}
Note that there may be gaps between intervals of flattening in general, but the universal frequency function only makes sense over uninterrupted strings of intervals of flattening.

Unfortunately, unlike for the linearized problem, we do not have monotonicity of the frequency but merely almost monotonicity. Nevertheless, we can hope to control the variation of the negative part of the radial derivative for the frequency function. The main result of this section is the following proposition. We will use the convention that, given a BV function $f$ of one variable, $\left[\frac{df}{dr}\right]_\pm$ will denote the positive and negative parts of its distribiutional derivatives, while $\|\mu\|_{TV}$ denotes the total variation of a measure $\mu$ on its domain of definition.

\begin{proposition}\label{prop:bv}
    There exists $\gamma_4=\gamma_4(m,n,Q) > 0$ and $C= C (m,n,Q)$ such that the following holds. Suppose that $T$ satisfies Assumption \ref{asm:3}. Let $\{]s_k, t_k]\}_{k=j_0}^{J}$ be intervals of flattening for $T$ around $0$ with coinciding endpoints. Then we have $\log (\Ibf+1) \in \BV(]s_J, t_{j_0}])$, with the quantitative estimate
    \begin{equation}\label{eq:bv}
        \left\| \left[\frac{d \log (\Ibf+1)}{dr}\right]_- \right\|_{\TV(]s_J, t_{j_0}])} \leq C \sum_{k=j_0}^J  \boldsymbol{m}_{0,k}^{\gamma_4}\, .
    \end{equation}
    Moreover, for any $]a,b]$ which is contained in a single interval of flattening $]s_k, t_k[$ we have the improved estimate
    \begin{equation}\label{eq:bv-improved}
        \left\| \left[\frac{d \log (\Ibf+1)}{dr}\right]_- \right\|_{\TV(]a,b])} \leq C \left({\frac{b}{t_k}}\right)^{\gamma_4}  \boldsymbol{m}_{0,k}^{\gamma_4}\, .
    \end{equation}
\end{proposition}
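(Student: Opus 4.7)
The plan is to split the total variation estimate into two contributions: the variation of $\log(\mathbf{I}+1)$ within each single interval of flattening $]s_k,t_k]$, and the jumps at each common endpoint $r = s_k = t_{k+1}$ where the center manifold changes. The proposition then follows by bounding each contribution by a power of $\mathbf{m}_{0,k}$ and summing in $k$.

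For the internal variation on $]s_k, t_k]$ the starting point is the qualitative almost-monotonicity $\mathbf{I}_{N_k}(a)\leq e^{Cb^\alpha}\mathbf{I}_{N_k}(b)$ of Theorem \ref{t:first-lb}. I would refine this by revisiting the monotonicity computations in \cite{DLS16blowup}*{Section 3} (see also \cite{Sk21}*{Section 7}) to obtain the sharper differential inequality
\begin{equation}\label{e:plan-diff-ineq}
\frac{d}{dr}\log\bigl(\mathbf{I}_{N_k}(r)+1\bigr) \geq -C\, \mathbf{m}_{0,k}^{\gamma_4}\, r^{\gamma_4-1} \qquad \text{for a.e. } r\in \bigl]\tfrac{s_k}{t_k},1\bigr]\, .
\end{equation}
Such a refinement is natural since every error term in the monotonicity identity for the $\phi$-regularized frequency (cross terms with the second fundamental form of $\Sigma$, the mean curvature of $\mathcal{M}_k$, and the Taylor remainders of the area functional) carries a factor of $\mathbf{m}_{0,k}$; what is needed is to track this factor explicitly rather than absorb it into a uniform constant. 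Integrating \eqref{e:plan-diff-ineq} on $]\sigma,\rho]\subset ]s_k/t_k,1]$ bounds the negative part of the distributional derivative by $C\mathbf{m}_{0,k}^{\gamma_4}\rho^{\gamma_4}$. Since the restriction of $\log(\mathbf{I}+1)$ to $]s_k,t_k]$ is the pullback of $\log(\mathbf{I}_{N_k}+1)$ under $r\mapsto r/t_k$ and total variation of a function of one real variable is invariant under domain rescaling, this directly gives \eqref{eq:bv-improved} for any $]a,b]\subset ]s_k,t_k[$.

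For the jumps at the transition $r = s_k = t_{k+1}$, one must compare $\mathbf{I}_{N_k}(s_k/t_k)$ (the value approached from inside $]s_k,t_k]$) with $\mathbf{I}_{N_{k+1}}(1)$ (the value approached from inside $]s_{k+1},t_{k+1}]$). Both are frequency functions of normal approximations of the \emph{same} rescaled current $T_{0, s_k}$, built on two different center manifolds at the common scale $s_k$. The plan is to establish the quantitative comparison
\begin{equation}\label{e:plan-jump}
\bigl|\log\bigl(\mathbf{I}_{N_k}(s_k/t_k)+1\bigr) - \log\bigl(\mathbf{I}_{N_{k+1}}(1)+1\bigr)\bigr| \leq C\, \mathbf{m}_{0,k}^{\gamma_4}\, .
\end{equation}
The argument would compare $\mathbf{D}_{N_k}$ with $\mathbf{D}_{N_{k+1}}$ and $\mathbf{H}_{N_k}$ with $\mathbf{H}_{N_{k+1}}$ at the matching scale by passing through the strong Lipschitz approximation of $T$ at scale $s_k$ given by \cite{DLS14Lp}*{Theorem 2.4}, in the spirit of Lemma \ref{lem:height-excess} and the proof of Proposition \ref{p:coarse=fine}. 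The reparametrization Lemma \ref{l:reparametrization-lemma} would then be used to bridge the two normal bundles via the graphs of the two center manifolds. Summing the internal bound and the jump bound over $k\in\{j_0,\ldots,J\}$ yields \eqref{eq:bv}, noting that only downward jumps contribute to the negative part of the derivative, so the absolute comparison \eqref{e:plan-jump} is more than enough.

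The main obstacle is Step 2, i.e.\ the comparison \eqref{e:plan-jump}. The two center manifolds $\mathcal{M}_k$ and $\mathcal{M}_{k+1}$ are constructed at scale $s_k$ relative to potentially different optimal planes, and the normal approximations $N_k, N_{k+1}$ take values in the respective normal bundles, so a naive matching only bounds the discrepancy by a fractional power of the planar excess, which is too weak. The refinement required is to exploit that both center manifolds capture, to leading order, the ``average sheet'' of $T$ at the scale $s_k$, so that after reparametrization their difference and the difference of the corresponding normal approximations reduce to higher-order errors of order $\mathbf{m}_{0,k}^{\gamma_4}$. This will rest on the sharp graphical approximation estimates of \cite{DLS16centermfld}*{Section 5}, the height and interpolation bounds already used in Lemma \ref{lem:height-excess}, together with a truncation trick to transfer $L^2$ comparison of $N_k\circ\boldsymbol{\varphi}_k$ and $N_{k+1}$ into a $C^0$ input admissible to Lemma \ref{l:reparametrization-lemma}.
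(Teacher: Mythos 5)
Your decomposition of the total variation into an in-interval contribution and a sum of jump contributions, each bounded by a power of $\boldsymbol{m}_{0,k}$, is exactly the paper's strategy, and the auxiliary ingredients you identify (strong Lipschitz approximation at scale $s_k$, comparison of successive center manifolds, the reparametrization lemma with a truncation device) are all present in the actual argument. Where your plan is imprecise is the pointwise differential inequality \eqref{e:plan-diff-ineq}. The paper's quantitative almost-monotonicity (Corollary \ref{cor:freqmono}) does \emph{not} produce a bound of the clean form $-C\,\boldsymbol{m}_{0,k}^{\gamma_4}\,r^{\gamma_4-1}$; instead the right-hand side carries an additional term proportional to $\Dbf_N(r)^{\gamma_4-1}\,\partial_r\Dbf_N(r) = \tfrac{1}{\gamma_4}\partial_r\bigl(\Dbf_N(r)^{\gamma_4}\bigr)$, which is a total derivative of a bounded quantity but need not be pointwise controlled by a fixed power of $r$ (if $\Dbf_N$ is small and rising steeply, this factor can spike). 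The paper therefore does not integrate a pointwise bound directly, but instead constructs a monotone compensating function $\Ombf$ whose distributional derivative absorbs both the jump measure and this total-derivative term, and then reads off the TV estimate from $\Ombf(t_{j_0})-\Ombf(s_J)$. Your plan reaches the same conclusion once \eqref{e:plan-diff-ineq} is replaced by the correct (integrable, but not pointwise polynomial) variational inequality; this replacement is exactly what requires the extra bookkeeping with $\Ombf$. I would also flag that your jump estimate \eqref{e:plan-jump} must be proved at the level of the scale-invariant quantities $\bar\Dbf$ and $\bar\Hbf$ separately (not directly on $\log(\Ibf+1)$), via a Taylor expansion of the excess relating $\Dbf_N$ to a linearized Dirichlet energy of a graphical approximation over the center manifold's reference plane (Lemma \ref{lem:excessTaylor} and Corollary \ref{cor:graphDir} in the paper); the lower bound $\bar\Hbf_k(1)\geq c\,\boldsymbol{m}_{0,k}$ from the comparison of center manifolds is then what converts an $O(\boldsymbol{m}_{0,k}^{1+\gamma_2})$ error on the numerator into the $O(\boldsymbol{m}_{0,k}^{\gamma_2})$ bound on the frequency jump.
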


\begin{remark} In our subsequent work \cite{DLSk2} we will need the BV estimate of Proposition \ref{prop:bv} for a different definition of the universal frequency function, for which the intervals of flattening $]s_j, t_j]$ are chosen differently. We point out that, the crucial ingredients needed in proving the above estimates are the following:
\begin{itemize}
    \item[(a)] The estimate in each open interval holds because for each $r\in ]\frac{s_j}{t_j},1]$ the side length $\ell (L)$ of any cube $L\in \mathscr{W}^{(k)}$ which intersects $B_r (0, \pi_0)$ is no larger than $c_s r$ for a fixed constant $c_s = \frac{1}{64\sqrt{m}}$.
    \item[(b)] The estimate at the jumps holds because there is one cube $L\in \mathscr{W}^{(k)}$ which intersects $B_{s_j/t_j} (0, \pi_0)$ and has side length $\ell (L) \geq c_s \frac{s_j}{t_j}$.
\end{itemize}
While in (a) we cannot afford a similar control with a constant larger than $c_s$, in (b) we can afford a constant $\bar{c}_s$ smaller than $c_s$, at the price that the constant $C$ in the estimate \eqref{eq:bv} will then depend on how small $\bar{c}_s$ is.  
\end{remark}

In order to prove this, we will require a number of preliminary results, the proofs of which we will defer until later.

\subsection{Auxiliary results for Proposition \ref{prop:bv}}\label{ss:aux}

First of all, we recall some key variational identities and estimates from~\cite{DLS16blowup} for any normal approximation of $T$, which are a nonlinear analogue of the identities in~\cite{DLS_MAMS}*{Section~3.4}.

Let $]s,t]$ be an interval of flattening for $T$ around $0$ with corresponding center manifold $\Mcal$ and $\Mcal$-normal approximation $N$. We define the quantities
\begin{align*}
	\Ebf_N(r) &\coloneqq -\frac{1}{r} \int_{\Mcal} \phi'\left(\frac{d(y)}{r}\right)\sum_i N_i(y)\cdot DN_i(y)\nabla d(y) \dd y\, , \\
	\Gbf_N(r) &\coloneqq -\frac{1}{r^2} \int_{\Mcal} \phi'\left(\frac{d(y)}{r}\right) \frac{d(y)}{|\nabla d(y)|^2} \sum_i |DN_i(y) \cdot \nabla d(y)|^2 \dd y\, , \\
	\boldsymbol{\Sigma}_N(r) &\coloneqq \int_{\Mcal} \phi\left(\frac{d(y)}{r}\right)|N(y)|^2\dd y\, .
\end{align*}

We thus have the following.
\begin{lemma}\label{lem:firstvar}
	There exist $\gamma_4 (m,n,Q) > 0$ and $C (m,n,Q) > 0$ such that the following holds. Suppose that $T$, $\Sigma$ satisfy Assumption \ref{asm:3} and let $]s,t]$ be an interval of flattening for $T$ around $0$ with corresponding center manifold $\Mcal$ and $\Mcal$-normal approximation $N$. Let $\boldsymbol{m}_0$ be as in~\eqref{eq:m_0} for $]s,t]$. Then $\Dbf_N$ and $\Hbf_N$ are absolutely continouous on $]\frac{s}{t},3]$ and for a.e. $r$ we have 
	\begin{align}
		&\partial_r \Dbf_N(r) = - \int_{\Mcal} \phi'\left(\frac{d(y)}{r}\right) \frac{d(y)}{r^2} |DN(y)|^2 \ \dd y \label{eq:firstvar1} \\
		&\partial_r \Hbf_N(r) - \frac{m-1}{r} \Hbf_N (r) = O(\boldsymbol{m}_0) \Hbf_N (r) + 2 \Ebf_N(r), \label{eq:firstvar2}\\
		&|\Dbf_N(r) - \Ebf_N(r)| \leq \sum_{j=1}^5 |\Err_j^o| \leq C\boldsymbol{m}_0^{\gamma_4}\Dbf_N(r)^{1+\gamma_4} + C\boldsymbol{m}_0\boldsymbol{\Sigma}_N(r), \label{eq:firstvar3}\\
		&\left|\partial_r \Dbf_N(r)  - (m-2) r^{-1} \Dbf_N(r)- 2\Gbf_N(r)\right| \leq 2 \sum_{j=1}^5 |\Err_j^i|  + C \boldsymbol{m}_0\Dbf_N(r) \\
		&\qquad \leq Cr^{-1}\boldsymbol{m}_0^{\gamma_4}\Dbf_N(r)^{1+\gamma_4} + C\boldsymbol{m}_0^{\gamma_4}\Dbf_N(r)^{\gamma_4}\partial_r \Dbf_N(r) +C\boldsymbol{m}_0 \Dbf_N(r),\notag
	\end{align}
	where $\Err_j^o$ and $\Err_j^i$ are as in~\cite{DLDPHM}*{Proposition~9.8,~Proposition~9.9}.
\end{lemma}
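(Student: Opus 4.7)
The plan is to obtain these four statements as standard variational identities and estimates in the style of \cite{DLS16blowup}*{Section~3}. The first two are direct differentiation identities; \eqref{eq:firstvar3} and the last estimate come from testing the first variation of $T$ in $\Sigma$ against specific outer and inner vector fields, and the error terms $\Err_j^o, \Err_j^i$ are precisely the ones classified and controlled in \cite{DLDPHM}*{Propositions~9.8--9.9}.

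The $L^2$ bounds on $N$ and $DN$ from \cite{DLS16centermfld}, together with the smoothness of $\phi$, imply that the integrands defining $\Dbf_N(r)$ and $\Hbf_N(r)$ give absolutely continuous functions of $r$ on $]s/t,3]$ and permit differentiation under the integral sign, which yields \eqref{eq:firstvar1} at once. For \eqref{eq:firstvar2} the plan is to rewrite $\Hbf_N(r)$ in geodesic polar coordinates on $\Mcal$ centered at $0$: the Euclidean leading term $(m-1)r^{-1}\Hbf_N(r)$ arises from the $r$-scaling of the Jacobian of geodesic spheres, the error $O(\boldsymbol{m}_0)\Hbf_N(r)$ quantifies the deviation of that Jacobian from its flat counterpart through the curvature bound $|A_\Mcal|^2\lesssim \boldsymbol{m}_0$, and the contribution $2\Ebf_N(r)$ emerges after integrating by parts the $\partial_r\phi'(d/r)$ factor against $|N|^2$ and using that $N$ is independent of $r$ as a function on $\Mcal$.

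For the outer variation estimate \eqref{eq:firstvar3}, I would push forward the multi-valued vector field $y\mapsto \phi(d(y)/r)N(y)$ via the parametrization $p\mapsto \sum_i\llbracket p+N_i(p)\rrbracket$ of the approximating current over $\Mcal$ and apply the first variation formula $\delta T(X) = -\int X\cdot H_\Sigma\,d\|T\|$, which holds because $T$ is stationary in $\Sigma$. The leading quadratic-in-$N$ piece of the resulting identity is exactly $\Dbf_N(r)-\Ebf_N(r)$, while the remainder decomposes into the five terms $\Err_j^o$ of \cite{DLDPHM}*{Proposition~9.8}. The final estimate is obtained analogously from an inner variation, tested against a vector field of the form $X(y)=\phi(d(y)/r)\,d(y)\,|\nabla d(y)|^{-2}\,\nabla d(y)$: at leading order it produces $(m-2)r^{-1}\Dbf_N(r)+2\Gbf_N(r)$ on one side and $\partial_r\Dbf_N(r)$ on the other (recognized through \eqref{eq:firstvar1}), with remainder $\sum_j \Err_j^i$ controlled by \cite{DLDPHM}*{Proposition~9.9}.

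The main obstacle is the sharp bookkeeping needed for the errors $\Err_j^o, \Err_j^i$: upgrading the naive factor $\boldsymbol{m}_0$ to $\boldsymbol{m}_0^{\gamma_4}$ in front of the super-linear energy $\Dbf_N^{1+\gamma_4}$ requires combining the higher-order Taylor expansion of the area functional from \cite{DLS_multiple_valued} with the $L^p$ graphical approximation estimates of \cite{DLS14Lp,DLS16centermfld} and an $L^2$--$L^\infty$ interpolation between the Dirichlet and height controls on $N$ and $DN$. The term $C\boldsymbol{m}_0\boldsymbol{\Sigma}_N(r)$, in turn, is the natural footprint of the second fundamental form of $\Sigma$ acting on $|N|^2$ via the mean curvature vector of $\Sigma$ that appears in the first variation identity.
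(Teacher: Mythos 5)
Your proposal matches the paper's approach: the authors also prove this lemma by repeating the variational identities of \cite{DLS16blowup}*{Proposition 3.5} (equivalently \cite{DLDPHM}*{Propositions 9.5, 9.10}), obtained from outer and inner variations of the normal approximation, and then tracking that the error constants can be written explicitly as powers of $\boldsymbol{m}_0$ rather than absolute constants, which is precisely the refinement needed for the convergent series in Proposition \ref{prop:bv}. Your identification of the vector fields, the source of the $O(\boldsymbol{m}_0)\Hbf_N$ correction from the curvature of $\Mcal$, and the role of the amended $\boldsymbol{m}_0$ in absorbing the $\boldsymbol{\Sigma}_N$ contribution from the ambient second fundamental form are all consistent with what the paper outlines.
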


We omit the proof of Lemma~\ref{lem:firstvar} here, since it involves a mere repetition of the arguments in the proofs of~\cite{DLS16blowup}*{Proposition~3.5} (see also~\cite{DLDPHM}*{Proposition~9.5,~Proposition~9.10}), combined with the observation that the constants may be optimized to depend on appropriate powers of $\boldsymbol{m}_0$. This is crucial in order to obtain the quantitative BV estimate of Proposition \ref{prop:bv}. Without such an improvement of the variational estimates one would merely obtain a constant bound on the total variation on each interval of flattening, which is insufficient to obtain a convergent series when summing over a string of uninterrupted intervals of flattening. As a consequence of the estimates in Lemma~\ref{lem:firstvar}, we have the following quantitative almost-monotonicity for the frequency in each interval of flattening.

\begin{corollary}\label{cor:freqmono}
	There exist $\gamma_4 (m,n,Q) > 0$ and $C (m,n,Q) > 0$ such that the following holds. Suppose that $T$, $\Sigma$, $]s,t]$, $\mathcal{M}$, $N$, and $\boldsymbol{m}_0$ are as in Lemma \ref{lem:firstvar}. Then $\Ibf_N$ is absolutely continuous on $]\frac{s}{t}, r]$ and for a.e. $r$ we have 
	\begin{align*}
		\partial_r\Ibf_N(r) &\geq -C(1+\Ibf_N (r)) \boldsymbol{m}_0^{\gamma_4} \left(1 + \frac{\Dbf_N(r)^{\gamma_4}}{r} + \Dbf_N(r)^{\gamma_4-1}\partial_r\Dbf_N(r)\right) \, .
	\end{align*}
\end{corollary}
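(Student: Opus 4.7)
The plan is to treat the corollary as a direct calculus computation: differentiate $\log \Ibf_N$ using $\Ibf_N(r) = r\Dbf_N(r)/\Hbf_N(r)$, substitute the variational identities of Lemma \ref{lem:firstvar}, and read off the error terms. Before differentiating, I would first note that Lemma \ref{lem:firstvar} already gives absolute continuity of $\Dbf_N$ and $\Hbf_N$ on $]s/t,3]$, and that $\Hbf_N > 0$ on this interval since the frequency lower bound $\Ibf_N \geq c_0 > 0$ of Theorem \ref{t:first-lb} forces $\Hbf_N = r\Dbf_N/\Ibf_N$ to stay strictly positive wherever $\Dbf_N > 0$ (and if $\Dbf_N$ vanished on a set of positive measure the assertion would be vacuous there). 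Hence $\Ibf_N$ is absolutely continuous.

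For a.e.\ $r$, I would write
\[
\partial_r \log \Ibf_N = \frac{1}{r} + \frac{\partial_r \Dbf_N}{\Dbf_N} - \frac{\partial_r \Hbf_N}{\Hbf_N},
\]
and substitute \eqref{eq:firstvar2} for $\partial_r \Hbf_N/\Hbf_N$ and the fourth identity of Lemma \ref{lem:firstvar} for $\partial_r \Dbf_N/\Dbf_N$. The scale factors $\frac{1}{r} + \frac{m-2}{r} - \frac{m-1}{r}$ cancel exactly, leaving
\[
\partial_r \log \Ibf_N = 2\left(\frac{\Gbf_N}{\Dbf_N} - \frac{\Ebf_N}{\Hbf_N}\right) + \frac{R_1}{\Dbf_N} - O(\boldsymbol{m}_0),
\]
where $|R_1| \leq Cr^{-1}\boldsymbol{m}_0^{\gamma_4}\Dbf_N^{1+\gamma_4} + C\boldsymbol{m}_0^{\gamma_4}\Dbf_N^{\gamma_4}\partial_r\Dbf_N + C\boldsymbol{m}_0\Dbf_N$.

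The key step is to show the ``main'' term $\Gbf_N/\Dbf_N - \Ebf_N/\Hbf_N$ is nonnegative up to controlled errors. From the integral definitions of $\Ebf_N$, $\Gbf_N$, $\Hbf_N$, Cauchy--Schwarz (with weight $-\phi'(d/r)$, splitting the integrand of $\Ebf_N$ as $(-\phi')^{1/2}|\nabla d||N|/d^{1/2}$ times $(-\phi')^{1/2}d^{1/2}|DN_i\cdot \nabla d|/|\nabla d|$) gives $\Ebf_N^2 \leq \Gbf_N \Hbf_N$. Combining with \eqref{eq:firstvar3} written as $\Dbf_N = \Ebf_N + \tilde R$ with $|\tilde R| \leq C\boldsymbol{m}_0^{\gamma_4}\Dbf_N^{1+\gamma_4} + C\boldsymbol{m}_0 \boldsymbol{\Sigma}_N$, I would estimate
\[
\frac{\Gbf_N}{\Dbf_N} - \frac{\Ebf_N}{\Hbf_N} = \frac{\Gbf_N\Hbf_N - \Ebf_N\Dbf_N}{\Dbf_N\Hbf_N} \geq \frac{\Ebf_N^2 - \Ebf_N\Dbf_N}{\Dbf_N\Hbf_N} = -\frac{\Ebf_N \tilde R}{\Dbf_N\Hbf_N},
\]
and since $|\Ebf_N|\leq \Dbf_N + |\tilde R| \leq C \Dbf_N$, the rightmost quantity is bounded below by $-C(\boldsymbol{m}_0^{\gamma_4}\Dbf_N^{\gamma_4}/\Hbf_N \cdot \Dbf_N + \boldsymbol{m}_0 \boldsymbol{\Sigma}_N/\Hbf_N)$. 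Using $\Ibf_N = r\Dbf_N/\Hbf_N$ to write $\Dbf_N/\Hbf_N = \Ibf_N/r$, and the standard relation $\boldsymbol{\Sigma}_N/\Hbf_N \leq Cr$ coming from comparing the weights $\phi$ and $\phi'$ (a Poincar\'e-type bound on the level sets of $d$), one obtains
\[
\frac{\Gbf_N}{\Dbf_N} - \frac{\Ebf_N}{\Hbf_N} \geq -C\Ibf_N\boldsymbol{m}_0^{\gamma_4}\frac{\Dbf_N^{\gamma_4}}{r} - C\boldsymbol{m}_0.
\]

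Combining this with the bound on $R_1/\Dbf_N$ and the $O(\boldsymbol{m}_0)$ from \eqref{eq:firstvar2} yields, after dividing the lower bound on $\partial_r \log \Ibf_N$ by $\Ibf_N$ (or equivalently multiplying through by $\Ibf_N$ to pass from $\partial_r \log \Ibf_N$ to $\partial_r \Ibf_N$), exactly the claimed inequality, the factor $(1+\Ibf_N)$ absorbing both the $\Ibf_N$-dependent and $\Ibf_N$-independent pieces. The main obstacle is purely bookkeeping: keeping track of which error term in Lemma \ref{lem:firstvar} produces which of the three summands $1$, $\Dbf_N^{\gamma_4}/r$, $\Dbf_N^{\gamma_4-1}\partial_r\Dbf_N$ inside the parenthesis, and correctly identifying $\Ebf_N^2\leq \Gbf_N\Hbf_N$ as the nonlinear analogue of the Cauchy--Schwarz identity underpinning classical frequency monotonicity.
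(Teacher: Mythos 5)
Your overall route is the one the paper expects: differentiate $\log\Ibf_N$, substitute the identities of Lemma~\ref{lem:firstvar}, watch the $\frac{1}{r}+\frac{m-2}{r}-\frac{m-1}{r}$ scale factors cancel, identify $\Gbf_N\Hbf_N \geq \Ebf_N^2$ as the nonlinear Cauchy--Schwarz, and track the $\boldsymbol{m}_0$-dependent remainders. However, the final bookkeeping step contains a genuine gap, and it is precisely the step you compress into ``the factor $(1+\Ibf_N)$ absorbing both pieces.''

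The problem is in how you dispose of the Cauchy--Schwarz error. After replacing $\Gbf_N\Hbf_N$ by $\Ebf_N^2$ and bounding $|\Ebf_N|\leq C\Dbf_N$, you arrive at the term $-C\boldsymbol{m}_0^{\gamma_4}\Dbf_N^{1+\gamma_4}/\Hbf_N$ as a lower bound for $\tfrac{\Gbf_N}{\Dbf_N}-\tfrac{\Ebf_N}{\Hbf_N}$. You then rewrite $\Dbf_N^{1+\gamma_4}/\Hbf_N = \Dbf_N^{\gamma_4}\,\Ibf_N/r$, which already carries a factor of $\Ibf_N$; after multiplying the lower bound on $\partial_r\log\Ibf_N$ by $\Ibf_N$ to return to $\partial_r\Ibf_N$, this error becomes $-C\boldsymbol{m}_0^{\gamma_4}\Ibf_N^2\,\Dbf_N^{\gamma_4}/r$, \emph{quadratic} in $\Ibf_N$. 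That is not absorbed by a single $(1+\Ibf_N)$ prefactor, and it cannot be, since there is no a priori dimensional bound on $\Ibf_N$ at this stage of the argument (the bound in Theorem~\ref{e:frequency-twosided-bounds} depends on $T$). The fix is to route this piece through the \emph{third} error slot $\Dbf_N^{\gamma_4-1}\partial_r\Dbf_N$ rather than through $\Dbf_N^{\gamma_4}/r$: write $\Dbf_N^{1+\gamma_4}/\Hbf_N = \Dbf_N^{\gamma_4-1}\cdot \Dbf_N^2/\Hbf_N$, and observe that Cauchy--Schwarz $\Gbf_N\Hbf_N\geq \Ebf_N^2$ together with $\Ebf_N = \Dbf_N + O(|\tilde R|)$ gives $\Dbf_N^2/\Hbf_N\leq C\Gbf_N$, while the fourth estimate of Lemma~\ref{lem:firstvar} gives $2\Gbf_N\leq \partial_r\Dbf_N + \tfrac{m-2}{r}\Dbf_N + |\ldots|$, with the last two pieces subordinate. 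Hence $\Dbf_N^{1+\gamma_4}/\Hbf_N \leq C\Dbf_N^{\gamma_4-1}\partial_r\Dbf_N$ with \emph{no} factor of $\Ibf_N$, and after multiplying by $\Ibf_N$ this error is linear in $\Ibf_N$, as the corollary requires. The same remark applies to your use of the Poincar\'e-type bound: the correct form is $\boldsymbol{\Sigma}_N/\Hbf_N\leq Cr(1+\Ibf_N)$, not $\leq Cr$, but that looseness is harmless since it only costs a single $\Ibf_N$. The quadratic term from the Cauchy--Schwarz error, on the other hand, is a real obstruction that your sketch does not address.
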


In addition to the above control on the frequency variations within each interval of flattening, we will also need to control the jumps of the frequency between successive intervals of flattening. In order to establish this, we will require the following intermediate results.

\begin{lemma}[Expansion of excess]\label{lem:excessTaylor}
	There exists a dimensional constant $C = C(m,n,Q) > 0$ such that the following holds. Let $T$, $\Sigma$ be as in Assumption \ref{asm:3} and let $\Mcal$ be a center manifold for $T$ with $\Mcal$-normal approximation $N$. Let $r \in ]0,1]$ and let $f: B_r(0,\pi)\to\Acal_Q(\pi^\perp)$ be a Lipschitz map with $\Lip(f) \leq c$. Let $\boldsymbol{\vphi}_r$ be a parameterizing map for $\Mcal$ over $\pi$. Then we have
	\begin{align*}
		\left| \int_{\Cbf_r(0,\pi)} \right.&\left.|\vec\Gbf_{f} - \vec{\Mcal}\circ\mathbf{p}|^2\phi\left(\frac{|\mathbf{p}_{\pi} (z)|}{r}\right) \dd\|\Gbf_{f}\|(z) - \int_{B_r(0,\pi)} \Gcal\big(Df, Q\llbracket D\boldsymbol{\vphi}_r\rrbracket\big)^2 \dd y \right| \\
		&\leq C\int_{B_r(0,\pi)} (|Df|^4 + |D\boldsymbol{\vphi}_r|^4) \phi\left(\frac{|y|}{r}\right)\, \dd y  \\
		&\qquad +C \int_{\Cbf_r(0,\pi)} \Big| \vec{\Mcal}(\mathbf{p}(z)) - \vec{\Mcal}\big(\boldsymbol{\vphi}_r(\mathbf{p}_{\pi}(z))\big) \Big| \dd\|\Gbf_{f}\|(z).
	\end{align*}
\end{lemma}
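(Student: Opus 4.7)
The plan is to perform a Taylor expansion of the integrand on the left hand side, matching its leading order with $\mathcal{G}(Df, Q\llbracket D\boldsymbol{\varphi}_r\rrbracket)^2$ and controlling the remainders by the two error terms appearing in the claim.

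First, I would choose a measurable selection $f = \sum_i \llbracket f_i\rrbracket$ and parameterize $\mathbf{G}_f$ over $B_r(0,\pi)$ via $y\mapsto (y, f_i(y))$. Writing $\vec{T}_{f_i}(y)$ for the orienting $m$-vector of the graph of $f_i$ at $y$ and $J(Df_i)$ for the area factor, the left hand side integral becomes
\[
\sum_i \int_{B_r(0,\pi)} \bigl|\vec{T}_{f_i}(y) - \vec{\mathcal{M}}\bigl(\mathbf{p}(y + f_i(y))\bigr)\bigr|^2 \phi\!\left(\tfrac{|y|}{r}\right) J(Df_i)(y)\, dy.
\]

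Second, I would replace $\vec{\mathcal{M}}(\mathbf{p}(z))$ by $\vec{\mathcal{M}}(\boldsymbol{\varphi}_r(\mathbf{p}_\pi z)) = \vec{\mathcal{M}}(\boldsymbol{\varphi}_r(y))$. Using the polar identity $|A-B|^2 = |A-C|^2 + 2(A-C)\cdot(C-B) + |C-B|^2$ with $A = \vec{T}_{f_i}$, $B = \vec{\mathcal{M}}(\mathbf{p}(z))$, $C = \vec{\mathcal{M}}(\boldsymbol{\varphi}_r(y))$, and exploiting that all the orienting $m$-vectors are unit (so $|A-C|,|C-B|\le 2$), the total contribution of the cross term and the pure $|C-B|^2$ term is bounded by a multiple of $\int_{\mathbf{C}_r(0,\pi)}|\vec{\mathcal{M}}(\mathbf{p}(z)) - \vec{\mathcal{M}}(\boldsymbol{\varphi}_r(\mathbf{p}_\pi z))|\, d\|\mathbf{G}_f\|(z)$, which is the third error term on the right hand side.

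Third, and this is the core step, I would Taylor expand both $\vec{T}_{f_i}(y)$ and $\vec{\mathcal{M}}(\boldsymbol{\varphi}_r(y))$ in their respective derivatives at $y$. For any Lipschitz $g: B_r(0,\pi)\to \pi^\perp$, one has an explicit expansion of the graph $m$-vector of the form $\vec{T}_g = (\vec{\pi} + L(Dg) + Q(Dg))/\sqrt{\det(I + Dg^T Dg)}$, where $L$ is linear and $Q$ is a quadratic term in $Dg$. Applying this expansion to both $f_i$ and $\boldsymbol{\varphi}_r$, one obtains
\[
\bigl|\vec{T}_{f_i}(y) - \vec{\mathcal{M}}(\boldsymbol{\varphi}_r(y))\bigr|^2 = \bigl|L(Df_i(y)) - L(D\boldsymbol{\varphi}_r(y))\bigr|^2 + O\!\bigl(|Df_i(y)|^4 + |D\boldsymbol{\varphi}_r(y)|^4\bigr).
\]
Since the linear map $L$ is an isometry in the natural inner product on $m$-vectors, $|L(Df_i) - L(D\boldsymbol{\varphi}_r)|^2 = |Df_i - D\boldsymbol{\varphi}_r|^2$, and summing in $i$ produces $\mathcal{G}(Df(y), Q\llbracket D\boldsymbol{\varphi}_r(y)\rrbracket)^2$. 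This computation is essentially the one in \cite{DLS_multiple_valued} used for the second-order expansion of the area functional.

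Finally, I would absorb the Jacobian: $J(Df_i) = 1 + O(|Df_i|^2)$, so multiplying the main quadratic term by $J(Df_i)$ produces an extra $O(|Df_i|^2(|Df_i|^2 + |D\boldsymbol{\varphi}_r|^2))$ error, which is again controlled by the first error integral. The same factor applied to the already quartic remainders only increases them by a bounded multiple. The main obstacle is the bookkeeping at step three, particularly verifying that the quadratic form obtained from the difference of the two $m$-vector expansions matches the $\mathcal{G}$-distance squared after summation in $i$; however, this identification is a standard multi-valued calculus computation and does not require new ideas beyond those already deployed in \cites{DLS_multiple_valued,DLS16centermfld}.
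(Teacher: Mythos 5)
Your proof is correct, and it is a variant of the paper's computation. The paper's route first writes $|\vec{\mathbf{G}}_f - \vec{\Mcal}\circ\mathbf{p}|^2 = 2 - 2\langle\vec{\mathbf{G}}_f, \vec{\Mcal}\circ\mathbf{p}\rangle$, which is an identity for unit simple $m$-vectors; it then uses the $Q$-valued area formula on $2\int\phi\,d\|\Gbf_f\|$ to produce $2Q\int\phi + \int|Df|^2\phi + O(|Df|^4)$, and expands $\langle\vec{\Gbf}_f, \vec{\Mcal}(\boldsymbol{\vphi}_r)\rangle$ as a single Gram determinant $\frac{1}{|\xi|}\det B^i$ whose Taylor expansion yields $1 + Df_i : D\boldsymbol{\vphi}_r - \tfrac{1}{2}|D\boldsymbol{\vphi}_r|^2 + O(\text{quartic})$; the replacement of $\vec{\Mcal}\circ\mathbf{p}$ by $\vec{\Mcal}\circ\boldsymbol{\vphi}_r\circ\mathbf{p}_\pi$ happens by adding and subtracting inside the inner product and using $|\vec{\Gbf}_f|=1$. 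You instead parametrize the integral, peel off the third error term with the ``polar-identity-around-a-pivot'' decomposition, then expand the two orienting $m$-vectors $\vec{T}_{f_i}$ and $\vec{\Mcal}(\boldsymbol{\vphi}_r)$ \emph{separately} and square their difference. Both are valid ways to run the second-order Taylor expansion of the area functional from \cite{DLS_multiple_valued}. The paper's bookkeeping is slightly lighter because $|\vec{\Gbf}_f|^2 = |\vec{\Mcal}|^2 = 1$ drop out automatically and only one determinant expansion is needed. Your route requires the additional facts that $L$, defined by $L(A) = \sum_j e_1\wedge\cdots\wedge(Ae_j)\wedge\cdots\wedge e_m$, satisfies $\langle L(A), L(B)\rangle = A:B$ and that $L(A)$ is orthogonal to $\vec{\pi} = e_1\wedge\cdots\wedge e_m$ and to the quadratic term $Q(B)$ in the numerator expansion. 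You invoke these as ``standard multi-valued calculus,'' which is true, but note that they are exactly what prevents a cubic cross term $\langle L(Df_i - D\boldsymbol{\vphi}_r), Q(Df_i)-Q(D\boldsymbol{\vphi}_r) - \tfrac12(|Df_i|^2-|D\boldsymbol{\vphi}_r|^2)\vec{\pi}\rangle$ from surviving, so they should be stated explicitly rather than left implicit. With that addition, your Jacobian absorption and the bound on the pivot error are exactly as in the paper.
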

An important consequence of Lemma~\ref{lem:excessTaylor} is the following comparability between the Dirichlet energy of $N$ at a given scale, with that of Lipschitz approximations over suitable planes. We will henceforth take $\gamma_2 > 0$ to be as in~\cite{DLS16centermfld}. Note that we may ensure that $\gamma_4 \leq \gamma_2$.

\begin{corollary}\label{cor:graphDir}
	There exists a dimensional constant $C = C(m,n,Q) > 0$ such that the following holds. Let $T$, $\Sigma$ satisfy Assumption \ref{asm:3}. Let $]s,t]$ be an interval of flattening for $T$ around $0$ with corresponding center manifold $\Mcal$ and $\Mcal$-normal approximation $N$, let $\boldsymbol{m}_0$ be as in~\eqref{eq:m_0} for $]s,t]$ and let $\pi$ be the plane used to define $\boldsymbol{\varphi}$ in the center manifold algorithm of \cite{DLS16centermfld}. Let $f: B_1(0,\pi) \to \Acal_Q(\pi^\perp)$ be a $\pi$-approximation for $T_{0,t}$ in $\Cbf_4(0,\pi)$ according to \cite{DLS14Lp} and for $\bar{r}=\frac{s}{t}$, let $f_L: B_{8r_L}(p_L,\pi_L) \to \Acal_Q(\pi_L^\perp)$ be a $\pi_L$-approximation for $T_{0,t}$ corresponding to a Whitney cube $L$ as in~\cite{DLS16blowup}*{Section~2.1~(Stop)}. Let $\pi_{\bar{r}}$ be such that $\Ebf(T_{0,t},\Bbf_{6\sqrt{m}\bar{r}}) = \Ebf(T_{0,t},\Bbf_{6\sqrt{m}\bar{r}}, \pi_{\bar{r}})$ and let $B^L \coloneqq B_{8r_L}(p_L,\pi_L)$. Let $f_{\bar{r}}:B_{\bar{r}}(0,\pi_{\bar{r}}) \to \Acal_Q(\pi_{\bar{r}}^\perp)$ be the map reparameterizing ${\rm gr}\, (f_L)$ as a graph over $\pi_{\bar{r}}$ and let $\boldsymbol{\vphi}_{\bar{r}}, \boldsymbol{\vphi}_L$ be the maps reparameterizing ${\rm gr} (\boldsymbol{\vphi})$ as graphs over $\pi_{\bar{r}}, \pi_L$ respectively. Then we have
	\begin{align}
		\left| \int_{B_1(0,\pi)} \right.&\left.\Gcal(Df, Q\llbracket D\boldsymbol{\vphi} \rrbracket)^2\phi\left(|y|\right)\dd y - \int_{\Bbf_1 \cap \Mcal} |DN|^2 \phi\left(d(y)\right)\dd y \right| \label{eq:graphDir1} \\
		&\leq C\int_{B_1(0,\pi)} (|Df|^4 + |D\boldsymbol{\vphi}|^4)\dd y  + C\boldsymbol{m}_0^{1+\gamma_2} + C \int_{\Bbf_1 \cap \Mcal} (|\Abf_{\Mcal}|^2|N|^2 + |DN|^4)\notag\\
		&\qquad +C \int_{\Cbf_1(0,\pi)} \Big| \vec{\Mcal}(\mathbf{p}(z)) - \vec{\Mcal}\big(\boldsymbol{\vphi}(\mathbf{p}_{\pi}(z))\big) \Big|  \dd\|\Gbf_{f}\|(z),\notag
	\end{align}
	and
	\begin{align}
		\left| \int_{B_{\bar{r}}(0,\pi_{\bar{r}})} \right.&\left.\Gcal(Df_{\bar{r}}, Q\llbracket D\boldsymbol{\vphi}_{\bar{r}} \rrbracket)^2\phi\left(\frac{|y|}{\bar{r}}\right)\dd y - \int_{\Bbf_{\bar{r}}\cap\Mcal} |DN|^2 \phi\left(\frac{d(y)}{\bar{r}}\right)\dd y \right| \label{eq:graphDir2}\\
		&\leq C\int_{B_{\bar{r}}(0,\pi_{\bar{r}})} (|Df_{\bar{r}}|^4 + |D\boldsymbol{\vphi}_{\bar{r}}|^4) \dd y + C\int_{B^L} (|Df_{L}|^4 + |D\boldsymbol{\vphi}_L|^4) \dd y   \notag\\
		&\qquad+ C\boldsymbol{m}_0^{1+\gamma_2}\bar{r}^{m+2+\gamma_2} + C \int_{\Bcal^L} (|\Abf_{\Mcal}|^2|N|^2 + |DN|^4) \notag\\
		&\qquad +C \int_{\Cbf_{\bar{r}}(0,\pi_{\bar{r}})} \Big| \vec{\Mcal}(\mathbf{p}(z)) - \vec{\Mcal}\big(\boldsymbol{\vphi}(\mathbf{p}_{\pi_{\bar{r}}}(z))\big) \Big| \dd\|\Gbf_{f_{\bar{r}}}\|(z).\notag
	\end{align}
\end{corollary}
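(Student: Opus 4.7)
The proof strategy is to chain together three approximations of the current: the planar Lipschitz approximation (either $f$ or the reparameterized $f_{\bar r}$), the current $T_{0,t}$ itself, and the $Q$-valued normal graph of $N$ over $\Mcal$. Lemma \ref{lem:excessTaylor} handles the first leg of the chain, standard Lipschitz approximation estimates handle the second, and the multi-valued graph area formula on $\Mcal$ handles the third.

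For \eqref{eq:graphDir1}, I first apply Lemma \ref{lem:excessTaylor} at scale $r=1$ with plane $\pi$ to $f$. This controls
\begin{align*}
\left|\int_{B_1(0,\pi)} \Gcal(Df, Q\llbracket D\boldsymbol{\vphi}\rrbracket)^2 \phi(|y|)\dd y - \int_{\Cbf_1(0,\pi)} |\vec\Gbf_f-\vec\Mcal\circ\mathbf{p}|^2 \phi\dd\|\Gbf_f\|\right|
\end{align*}
by the first and last error terms on the right-hand side of \eqref{eq:graphDir1}. Next, I replace $\|\Gbf_f\|$ by $\|T_{0,t}\|$ using the Lipschitz approximation estimate of \cite{DLS14Lp}: the set $\{\mathbf{G}_f \neq T_{0,t}\}\cap \mathbf{C}_1(0,\pi)$ has mass $O(\boldsymbol{m}_0^{1+\gamma_2})$, and on it the integrand $|\vec\Gbf_f-\vec\Mcal\circ\mathbf{p}|^2$ is pointwise bounded by a constant, so the error is $O(\boldsymbol{m}_0^{1+\gamma_2})$ and absorbable in the right-hand side of \eqref{eq:graphDir1}. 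Then I replace $\|T_{0,t}\|$ by the normal graph $\mathbf{T}_F$ of $F(p)=\sum_i\llbracket p+N_i(p)\rrbracket$: by the estimates in \cite{DLS16centermfld} these two currents coincide on a tubular neighborhood of $\Mcal\cap\mathbf{B}_1$ outside a set of comparable small measure. Finally I apply the pointwise Taylor expansion of the area for multi-valued graphs on $\Mcal$ from \cite{DLS_multiple_valued}, which yields
\begin{align*}
\big|\vec{\mathbf{T}_F}(z)-\vec\Mcal(\mathbf{p}(z))\big|^2 = |DN(\mathbf{p}(z))|^2 + O(|DN|^4+|\Abf_\Mcal|^2|N|^2)
\end{align*}
and integrating against $\phi(d(\mathbf{p}(z)))$ produces $\int_{\Bbf_1\cap\Mcal} |DN|^2\phi(d(y))\dd y$ up to the curvature and higher-order error terms on the right-hand side of \eqref{eq:graphDir1}.

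The argument for \eqref{eq:graphDir2} follows the same three-step chain but at scale $\bar r$ with plane $\pi_{\bar r}$ in place of $\pi$. The new feature is that the planar Lipschitz approximation $f_{\bar r}$ is obtained by reparameterizing the Whitney-scale approximation $f_L$ (associated to a cube $L$ with side length comparable to $\bar r$) over $\pi_{\bar r}$. Combining the tilt estimate $|\pi_L-\pi_{\bar r}|\le C\boldsymbol{m}_0^{1/2}\bar r^{1-\delta_2}$ from \cite{DLS16centermfld}*{Proposition 4.1} with the change-of-parameterization estimates of \cite{DLS_multiple_valued}*{Proposition 5.2} and the Lipschitz approximation error for $f_L$ from \cite{DLS14Lp} (which gives a bad-set measure of size $\boldsymbol{m}_0^{1+\gamma_1}\ell(L)^{m+2-2\delta_2(1+\gamma_1)}$) yields the rescaled bad-set bound $O(\boldsymbol{m}_0^{1+\gamma_2}\bar r^{m+2+\gamma_2})$ and introduces the additional fourth-order contribution $\int_{B^L}(|Df_L|^4+|D\boldsymbol{\vphi}_L|^4)$ on the right-hand side of \eqref{eq:graphDir2}. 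The remaining error terms arise exactly as in the first case, now at scale $\bar r$.

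The principal technical obstacle lies in the second estimate: one must keep the three planes $\pi_L$, $\pi_{\bar r}$, and the tangent to $\Mcal$ simultaneously in view and show that errors from passing between them aggregate into the clean scaling $\boldsymbol{m}_0^{1+\gamma_2}\bar r^{m+2+\gamma_2}$. The factor $\bar r^{m+2+\gamma_2}$ emerges only because the Whitney cube $L$ has side length comparable to $\bar r$ and the excess on it is controlled by $\boldsymbol{m}_0\ell(L)^{2-2\delta_2}$, so careful simultaneous bookkeeping of powers of $\boldsymbol{m}_0$ and of $\bar r$ in every quartic term and every bad-set measure is required. Once this is done, the error term involving $\vec\Mcal(\mathbf{p}(z))-\vec\Mcal(\boldsymbol{\vphi}(\mathbf{p}_{\pi_{\bar r}}(z)))$ survives intact from Lemma \ref{lem:excessTaylor}, and all remaining contributions fit into the bound on the right-hand side of \eqref{eq:graphDir2}.
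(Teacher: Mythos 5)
Your proof takes essentially the same route as the paper's: both chain the planar excess $\int \Gcal(Df,Q\llbracket D\boldsymbol{\vphi}\rrbracket)^2$ to $\int|\vec\Gbf_f-\vec\Mcal\circ\mathbf{p}|^2\,d\|\Gbf_f\|$ via Lemma~\ref{lem:excessTaylor}, then pass from $\Gbf_f$ to $T$ to $\mathbf{T}_F$ using the small symmetric-difference bounds from \cite{DLS14Lp} and \cite{DLS16centermfld}, and finally appeal to the $Q$-valued area expansion on $\Mcal$ (\cite{DLS_multiple_valued}*{Proposition~3.4}) to produce $\int_{\Mcal}|DN|^2$ plus the curvature and quartic error terms. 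The only cosmetic difference is that the paper handles the $f_L\to f_{\bar r}$ step by noting $\Gbf_{f_L}\res\Cbf_{\bar r}(0,\pi_{\bar r})=\Gbf_{f_{\bar r}}\res\Cbf_{\bar r}(0,\pi_{\bar r})$, so that the intrinsic graphical excess needs no reparametrization estimate, whereas you invoke the tilt bound explicitly; both are fine.
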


We will in addition require the following comparison between the gradients of the parameterizing maps of consecutive center manifolds in the procedure~\cite{DLS16blowup}*{Section~2.1}.
\begin{lemma}\label{lem:cm}
	There exists a constant $C=C(m,n,Q) > 0$ such that the following holds. Suppose that $T$, $\Sigma$ satisfy Assumption \ref{asm:3}. Let $\Mcal_{k-1}, \Mcal_{k}$ be successive center manifolds for $T$ with respective normal approximations $N_{k-1},N_k$, associated to the respective intervals of flattening $]t_k,t_{k-1}]$ and $]t_{k+1}, t_k]$, as defined in Section~\ref{s:setup}. Assume that $\Ebf(T,\Bbf_{6\sqrt{m}t_k},\pi_k) = \Ebf(T,\Bbf_{6\sqrt{m}t_k})$ for some plane $\pi_k$ and let $\tilde{\boldsymbol{\vphi}}_{k-1}$ be the map reparametrizing ${\rm gr}\, (\boldsymbol{\vphi}_{k-1})$ as a graph over $\pi_k$.  
	%Let $f_{k-1}, f_k:\pi_0\supset B_{3/2}\to\Acal_Q(\pi_0^\perp)$ be the respective Lipschitz approximation of~\cite[Theorem~2.4]{DLS14Lp} for the rescaled currents $T_{k-1}$ and $T_k$ in $\Bbf_{6\sqrt{m}}$. 
	Letting $\tilde{\boldsymbol{\vphi}}_k \coloneqq \tilde{\boldsymbol{\vphi}}_{k-1}\left(\frac{t_k}{t_{k-1}}\cdot \right)$, we have
	\begin{equation}\label{e:comparison-cm-1}
	    \int_{{B_1}} |D\boldsymbol{\vphi}_k - D\tilde{\boldsymbol{\vphi}}_{k}|^2 \leq C \boldsymbol{m}_{0,k}^{3/2}.
	\end{equation}
 and
 \begin{equation}\label{e:comparison-cm-2}
	    \int_{{B_2}} |\boldsymbol{\vphi}_k - \tilde{\boldsymbol{\vphi}}_{k}|^2 \leq C \boldsymbol{m}_{0,k}\, .
	\end{equation}
\end{lemma}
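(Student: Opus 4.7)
The plan is to compare both $\boldsymbol{\vphi}_k$ and $\tilde{\boldsymbol{\vphi}}_k$ to a common auxiliary object, namely the average $\boldsymbol{\eta}\circ f$ of a $\pi_k$-approximation $f$ of $T_{0,t_k}$ in a unit-scale cylinder, and to exploit the fact that both parametrizations serve as smoothed averages of the current at scale $t_k$ (but computed from different reference planes). As a preliminary I would establish uniform $C^{3,\alpha}$ bounds $\|\boldsymbol{\vphi}_k\|_{C^{3,\alpha}(B_2)},\|\tilde{\boldsymbol{\vphi}}_k\|_{C^{3,\alpha}(B_2)}\leq C\boldsymbol{m}_{0,k}^{1/2}$. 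The estimate on $\boldsymbol{\vphi}_k$ is given directly by \cite{DLS16centermfld}. For $\tilde{\boldsymbol{\vphi}}_k$, one starts from the corresponding bound on $\boldsymbol{\vphi}_{k-1}$ in terms of $\boldsymbol{m}_{0,k-1}^{1/2}$. A triangular comparison of the optimal planes at successive scales (analogous to the tilt estimate derived in the proof of Lemma \ref{lem:height-excess}) controls the tilt between the reference plane of $\Mcal_{k-1}$ and $\pi_k$ by $C\boldsymbol{m}_{0,k-1}^{1/2}$, so the graphical reparametrization over $\pi_k$ and subsequent restriction to the ball of radius $t_k/t_{k-1}$ preserve the $C^{3,\alpha}$ regularity; the stopping condition from \cite{DLS16blowup}*{Section~2.1} that ends the interval $]t_k,t_{k-1}]$ ensures comparability of $\boldsymbol{m}_{0,k-1}$ and $\boldsymbol{m}_{0,k}$ up to geometric factors, transferring the bound to the desired form.

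For the $L^2$ estimate \eqref{e:comparison-cm-2}, I would introduce a Lipschitz approximation $f$ of $T_{0,t_k}$ over $\pi_k$ from \cite{DLS14Lp}*{Theorem~2.4} and apply (the argument of) Lemma \ref{lem:height-excess} to infer $\|\boldsymbol{\vphi}_k-\boldsymbol{\eta}\circ f\|_{L^2(B_2)}^2\leq C\boldsymbol{m}_{0,k}$. The corresponding bound for $\tilde{\boldsymbol{\vphi}}_k$ would follow by running the same comparison at scale $t_{k-1}$ for $\boldsymbol{\vphi}_{k-1}$ and transferring it via the reparametrization and restriction above, the tilt errors being absorbed quadratically. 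The triangle inequality then yields \eqref{e:comparison-cm-2}. Setting $g:=\boldsymbol{\vphi}_k-\tilde{\boldsymbol{\vphi}}_k$, one has $\|D^2 g\|_{L^\infty(B_2)}\leq C\boldsymbol{m}_{0,k}^{1/2}$ from the first step and $\|g\|_{L^2(B_2)}^2\leq C\boldsymbol{m}_{0,k}$ from the above. The gradient estimate then proceeds via an integration-by-parts interpolation of the form $\int_{B_1}|Dg|^2\leq C\|g\|_{L^2(B_{3/2})}\|D^2 g\|_{L^2(B_{3/2})}+\mathrm{l.o.t.}$; to reach the exponent $3/2$, a sharper version is needed, combining the $L^1$ closeness of $\boldsymbol{\vphi}_k$ to $\boldsymbol{\eta}\circ f$ with the $L^\infty$ bound on $g$ via Cauchy--Schwarz in order to upgrade the $L^2$ bound on $g$.

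The principal obstacle is to sharpen the $L^2$ smallness of $g$ sufficiently to reach the exponent $3/2$ on the right-hand side of \eqref{e:comparison-cm-1}: using only the separate bounds $\|\boldsymbol{\vphi}_k\|_{L^\infty},\|\tilde{\boldsymbol{\vphi}}_k\|_{L^\infty}\leq C\boldsymbol{m}_{0,k}^{1/2}$ would yield only an estimate of order $\boldsymbol{m}_{0,k}$ on the gradient, which is insufficient. The key finer input is the $L^1$ estimate $\|h_H-\boldsymbol{\eta}\circ f_H\|_{L^1}\leq C\boldsymbol{m}_0\ell(H)^{m+3+\beta_2}$ from \cite{DLS16centermfld}*{Proposition~5.2} (already used in the proof of Lemma \ref{lem:height-excess}), which once combined with the $L^\infty$ bound $\|g\|_{L^\infty}\leq C\boldsymbol{m}_{0,k}^{1/2}$ via Cauchy--Schwarz yields $\|g\|_{L^2}^2\leq C\boldsymbol{m}_{0,k}^{3/2}$; the integration-by-parts interpolation then delivers \eqref{e:comparison-cm-1}. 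Care is needed to track all error terms through the change of scales and reference planes so that they remain subordinate to this leading contribution.
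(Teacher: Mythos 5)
Your overall strategy matches the paper's: bound second derivatives of $\boldsymbol{\vphi}_k$ and $\tilde{\boldsymbol{\vphi}}_k$ by $C\boldsymbol{m}_{0,k}^{1/2}$ (using \cite{DLS16centermfld}*{Theorem~1.17}, the tilt estimate, and the stopping-condition comparison \eqref{eq:excessstop}), integrate by parts to estimate the gradient of $g:=\boldsymbol{\vphi}_k-\tilde{\boldsymbol{\vphi}}_k$, and feed in a refined closeness bound between each parametrizing map and the average of a Lipschitz approximation built from the Whitney-cube $L^1$ estimate of \cite{DLS16centermfld}*{Proposition~5.2}. You have also correctly spotted that a plain $L^2$-closeness estimate of order $\boldsymbol{m}_{0,k}$ is insufficient. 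So the ingredients are right.

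However, the final exponent accounting in your sharpened step does not close. You propose to pass from the $L^1$ bound to an improved $L^2$ bound $\|g\|_{L^2(B_2)}^2 \leq C\boldsymbol{m}_{0,k}^{3/2}$ via $\|g\|_{L^2}^2\leq\|g\|_{L^1}\|g\|_{L^\infty}$, and then to insert this into the $L^2$--$L^2$ form of the integration-by-parts estimate, $\int_{B_1}|Dg|^2\leq C\|g\|_{L^2}\|D^2g\|_{L^2}+\text{l.o.t.}$ But with $\|g\|_{L^2}\leq C\boldsymbol{m}_{0,k}^{3/4}$ and $\|D^2g\|_{L^2}\leq C\boldsymbol{m}_{0,k}^{1/2}$ (which is all the $C^{3,\alpha}$ bound gives on a ball of unit size) the product is only $C\boldsymbol{m}_{0,k}^{5/4}$, falling short of the claimed $\boldsymbol{m}_{0,k}^{3/2}$. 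Passing through $L^2$ is genuinely lossy here: the Cauchy--Schwarz step $\|g\|_{L^2}^2\leq\|g\|_{L^1}\|g\|_{L^\infty}$ discards half of the $L^\infty$ gain, and it cannot be recovered by the subsequent $L^2$--$L^2$ duality. The correct pairing, and the one the paper uses, keeps the $L^1$ norm of $g$ and tests it against the $L^\infty$ norm of $\Delta g$: after cutting off with $\eta\in C_c^\infty(B_2)$, $\eta\equiv 1$ on $B_1$, one writes $\int_{B_2}|Dg|^2\eta = -\int_{B_2} g\,\eta\,\Delta g - \int_{B_2\setminus B_1}(D\eta\cdot Dg)\,g$ and bounds both terms by $C\bigl(\|\Delta g\|_{L^\infty}+\|Dg\|_{L^\infty}\bigr)\int_{B_2}|g|\leq C\boldsymbol{m}_{0,k}^{1/2}\int_{B_2}|g|$. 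With the $L^1$ bound $\int_{B_2}|g|\leq C\boldsymbol{m}_{0,k}$ (assembled from the per-Whitney-cube estimates you cite, plus the coincidence on the contact set and the small-measure control on the bad set) this yields exactly $\boldsymbol{m}_{0,k}^{3/2}$. In short: the decisive point is that the right-hand side of the integration by parts is bilinear in $(g,\Delta g)$, so you may choose the H\"older pair $(1,\infty)$ rather than $(2,2)$; since your $L^1$ and $L^\infty$ informations have very different strengths, the $(1,\infty)$ pairing is strictly better and is what produces the advertised exponent.

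Two smaller remarks. First, Lemma \ref{lem:height-excess}(ii) is stated at the scale of a blow-up sequence $r_k$ rather than at the endpoint scale $t_k$, so citing it for the $L^2$ bound \eqref{e:comparison-cm-2} is not quite a direct application; the paper instead proves the $L^1$ bound at the relevant scale directly and deduces \eqref{e:comparison-cm-2} from it together with the $L^\infty$ bound. Second, when transferring the $C^{3,\alpha}$ bound from $\boldsymbol{\vphi}_{k-1}$ to $\tilde{\boldsymbol{\vphi}}_k$, the two factors $\left(\frac{t_k}{t_{k-1}}\right)^{2}$ (from rescaling the second derivative) and $\boldsymbol{m}_{0,k-1}^{1/2}$ need to be combined via \eqref{eq:excessstop}; neither factor is separately bounded by $\boldsymbol{m}_{0,k}^{1/2}$, so one has to track them jointly rather than invoke ``comparability up to geometric factors.''
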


Finally, we will need the following control on the difference between the projection $\mathbf{p}(z)$ to a center manifold $\Mcal$ of a point $z$ in the multigraph of a given Lipschitz approximation, and the image under $\boldsymbol{\vphi}$ of the planar projection $\mathbf{p}_{\pi_0} (z)$:
\begin{lemma}\label{lem:cmLip}
	There exists a constant $C=C(m,n,Q) > 0$ such that the following holds. Suppose that $T$, $\Mcal$, $\boldsymbol{m}_0$, $\bar{r}$, $f$, $f_{\bar{r}}$, $\pi$, $\pi_{\bar{r}}$, $\boldsymbol{\vphi}_{\bar{r}}$ are as in Corollary~\ref{cor:graphDir}. Then we have
	\begin{align}
		&\int_{\Cbf_{\bar{r}}(0,\pi_{\bar{r}})} \Big| \vec{\Mcal}(\mathbf{p}(z)) - \vec{\Mcal}\big(\boldsymbol{\vphi}_{\bar{r}}(\mathbf{p}_{\pi_{\bar{r}}}(z))\big) \Big| \dd\|{\Gbf_{{f}_{\bar r}}}\|(z) \leq C\bar{r}^{m+1}\boldsymbol{m}_0^{1+\gamma_2}, \label{eq:cmLip1}\\
		&\int_{\Cbf_{1}(0,\pi)} \Big| \vec{\Mcal}(\mathbf{p}(z)) - \vec{\Mcal}\big(\boldsymbol{\vphi}(\mathbf{p}_\pi(z))\big) \Big| \dd\|\Gbf_{f}\|(z) \leq C\boldsymbol{m}_0^{1+\gamma_2}.\label{eq:cmLip2}
	\end{align}
\end{lemma}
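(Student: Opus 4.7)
The strategy rests on two facts: the orientation $\vec{\Mcal}$ is Lipschitz on $\Mcal$ with constant bounded by $\|A_{\Mcal}\|_{\infty}\leq C\boldsymbol{m}_0^{1/2}$, and both $\mathbf{p}(z)$ and $\boldsymbol{\vphi}(\mathbf{p}_{\pi}(z))$ (resp.~$\boldsymbol{\vphi}_{\bar r}(\mathbf{p}_{\pi_{\bar r}}(z))$) lie on $\Mcal$, so their distance is controlled by the height of $z$ above $\Mcal$. This reduces the two estimates to sharp $L^1$-type bounds on $|f-Q\llbracket\boldsymbol{\vphi}\rrbracket|$, respectively $|f_{\bar r}-Q\llbracket\boldsymbol{\vphi}_{\bar r}\rrbracket|$, which are in turn supplied by the cube-wise estimates in the center manifold construction of \cite{DLS16centermfld}.

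In detail, for \eqref{eq:cmLip2}, since $\mathbf{p}(z)$ realizes the nearest-point distance of $z$ from $\Mcal$ while $\boldsymbol{\vphi}(\mathbf{p}_{\pi}(z))\in\Mcal$, the triangle inequality gives
\[
|\mathbf{p}(z)-\boldsymbol{\vphi}(\mathbf{p}_{\pi}(z))|\;\leq\;2\,|z-\boldsymbol{\vphi}(\mathbf{p}_{\pi}(z))|\, ,
\]
and for $z=(x,f_i(x))$ in the support of $\Gbf_f$ the right-hand side equals $|f_i(x)-\boldsymbol{\vphi}(x)|$. Combining with the Lipschitz control on $\vec{\Mcal}$, and converting the integral on $\Cbf_1(0,\pi)$ to a planar integral by the area formula (whose Jacobian is bounded since $\Lip(f)\leq c$), reduces \eqref{eq:cmLip2} to showing
\[
\int_{B_1(0,\pi)}\sum_{i=1}^Q|f_i(x)-\boldsymbol{\vphi}(x)|\,dx\;\leq\; C\boldsymbol{m}_0^{1/2+\gamma_2}\, .
\]
I would prove this $L^1$ bound by splitting $\sum_i|f_i-\boldsymbol{\vphi}|\leq\sum_i|f_i-\boldsymbol{\eta}\circ f|+Q|\boldsymbol{\eta}\circ f-\boldsymbol{\vphi}|$. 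The first piece is handled via Cauchy-Schwarz together with a Poincar\'e-type estimate on the average-free part of $f$, controlled by the Dirichlet energy bound $\int|Df|^2\leq C\boldsymbol{m}_0$ from \cite{DLS14Lp}. The second piece is the crucial one, and exploits the construction of $\boldsymbol{\vphi}$ as a smoothed interpolation of tilted averages $h_L$ on Whitney cubes $L$: \cite{DLS16centermfld}*{Proposition~5.2} gives a bound of the form $\int|h_L-\boldsymbol{\eta}\circ f_L|\leq C\boldsymbol{m}_0\ell(L)^{m+3+\beta_2}$ on each cube, and \cite{DLS16centermfld}*{Lemma~5.6} handles the tilt between the reference planes $\pi$ and $\pi_L$. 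Summing over the Whitney cubes meeting $B_1(0,\pi)$ yields the required extra factor $\boldsymbol{m}_0^{\gamma_2}$. This is essentially the same mechanism already deployed in the proof of Lemma~\ref{lem:height-excess}(ii), only now carried out at the outer scale rather than the stopping scale.

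For \eqref{eq:cmLip1}, the identical scheme applies after rescaling: one works with the reparameterization $f_{\bar r}$ of $f_L$ over the plane $\pi_{\bar r}$, uses $\boldsymbol{\vphi}_{\bar r}$ as reference, and invokes the tilt bound $|\pi_{\bar r}-\pi_L|\leq C\boldsymbol{m}_0^{1/2}\bar r^{1-\delta_2}$ from \cite{DLS16centermfld}*{Proposition~4.1} to transfer all quantities onto a common plane. The factor $\bar r^{m+1}$ has the natural origin of $\bar r^m$ (volume of $\Cbf_{\bar r}$) times one extra $\bar r$ coming from the rescaled height of $|f_{\bar r}-\boldsymbol{\vphi}_{\bar r}|$.

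The main technical obstacle is sharpness of exponents. A crude Cauchy-Schwarz using only the Dirichlet energy bound produces an $L^1$-deviation of order $\boldsymbol{m}_0^{1/2}$, which combined with the Lipschitz constant $\boldsymbol{m}_0^{1/2}$ of $\vec{\Mcal}$ yields only $\boldsymbol{m}_0$ on the right-hand side---short of the needed $\boldsymbol{m}_0^{1+\gamma_2}$ by exactly the factor $\boldsymbol{m}_0^{\gamma_2}$. Recovering this missing gain requires the full cube-wise structure of $\boldsymbol{\vphi}$ and careful summation over the Whitney partition, using that on each Whitney cube $L$ the comparison is sharper by a power of $\ell(L)^{\gamma_2}$ and that the side-lengths of cubes meeting $B_1$ are controlled by the excess.
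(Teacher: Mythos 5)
Your setup—using the Lipschitz bound $\|A_{\Mcal}\|_\infty\leq C\boldsymbol{m}_0^{1/2}$ on $\vec{\Mcal}$, then reducing to an $L^1$ control of the height of the current over the center manifold—matches the paper's opening move exactly. The paper also replaces $\|\Gbf_f\|$ by $\|T\|$ on the coincidence set $K$ at the cost of an error $O(\bar r^{m+1}\boldsymbol{m}_0^{1+\gamma_1})$, just as you implicitly do. Where the two arguments diverge is the crucial intermediate $L^1$ estimate, and there your proposal has a genuine gap.

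The paper does not pass through the average/average-free split $\sum_i|f_i-\boldsymbol{\vphi}|\leq\sum_i|f_i-\boldsymbol{\eta}\circ f|+Q|\boldsymbol{\eta}\circ f-\boldsymbol{\vphi}|$. It instead bounds $|\mathbf{p}(z)-\boldsymbol{\vphi}_{\bar r}(\mathbf{p}_{\pi_{\bar r}}(z))|$ by $|\mathbf{p}(z)-z|+|z-\boldsymbol{\vphi}_{\bar r}(\mathbf{p}_{\pi_{\bar r}}(z))|$ and invokes the $C^0$-height bound of \cite{DLS16centermfld}*{Corollary~2.2} together with the estimates of \cite{DLS16centermfld}*{Proposition~4.1}, which near the stopping cube of size $\sim\bar r$ deliver directly
\[
\int_{K\times\pi_{\bar r}^\perp}\big|\mathbf{p}-\boldsymbol{\vphi}_{\bar r}\circ\mathbf{p}_{\pi_{\bar r}}\big|\,d\|T\|\leq C\bar r^{m+1+\beta_2}\boldsymbol{m}_0^{1/2+1/2m}\, ,
\]
so that, after multiplication by the $\boldsymbol{m}_0^{1/2}$ coming from the Lipschitz constant of $\vec{\Mcal}$, one lands at $C\bar r^{m+1+\beta_2}\boldsymbol{m}_0^{1+1/2m}\leq C\bar r^{m+1}\boldsymbol{m}_0^{1+\gamma_2}$. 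Your route gets stuck at $\boldsymbol{m}_0$: the Cauchy--Schwarz/Poincar\'e bound on the average-free piece $\sum_i|f_i-\boldsymbol{\eta}\circ f|$ can deliver at best $\boldsymbol{m}_0^{1/2}$ in $L^1$, and you correctly flag that this leaves you a factor $\boldsymbol{m}_0^{\gamma_2}$ short. But the remedy you propose---exploiting the cube-wise structure of $\boldsymbol{\vphi}$ and a power $\ell(L)^{\gamma_2}$ per Whitney cube---only refines the estimate on the \emph{second} piece $|\boldsymbol{\eta}\circ f-\boldsymbol{\vphi}|$, which is already at the ample level $\boldsymbol{m}_0$ (this is precisely \eqref{e:comparison-cm-3} from Lemma~\ref{lem:cm}). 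It is completely silent on the first piece: $\sum_i|f_i-\boldsymbol{\eta}\circ f|$ involves only the current's Lipschitz approximation and does not see the Whitney decomposition of $\boldsymbol{\vphi}$ at all. The factor $\ell(L)^{\gamma_2}$ cannot in any case be converted into a power of $\boldsymbol{m}_0$, since the Whitney cubes meeting $B_1$ include cubes of size $O(1)$, and near the stopping scale $\bar r$ there is a cube of side $\sim\bar r$, which is not controlled by $\boldsymbol{m}_0$. To close the gap one must abandon the average/average-free split and, as the paper does, estimate the full deviation of the support of $T$ from $\Mcal$ via the scale-anchored height bounds of \cite{DLS16centermfld}.
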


\subsection{Proof of Proposition~\ref{prop:bv}} We now have all of the relevant tools to prove the frequency variation estimate~\eqref{eq:bv}. We start with the preliminary observation that $\mathbf{I}$ is absolutely continuous on each interval $]s_k, t_k[$, while it might have jump discontinuities at the points $s_k=t_{k+1}$.

First, we control the jumps of $\Ibf$ at these points. Letting $\Dbf_k \coloneqq \Dbf_{N_k}$, $\Hbf_k \coloneqq \Hbf_{N_k}$, and letting $\bar{\Dbf}_k(r) \coloneqq r^{-(m-2)}\Dbf_k(r)$, $\bar{\Hbf}_k(r) \coloneqq r^{-(m-1)}\Hbf_k(r)$ denote the corresponding scale-invariant quantities, we claim that we have the estimate
	\begin{equation}\label{eq:BVjumps}
		\left|\Ibf(t_k^+) - \Ibf(t_k^-)\right| = \left|\frac{\bar{\Dbf}_{k-1}\left( \frac{t_k}{t_{k-1}}\right)}{\bar{\Hbf}_{k-1}\left( \frac{t_k}{t_{k-1}}\right)} - \frac{\bar{\Dbf}_{k}\left( 1\right)}{\bar{\Hbf}_{k}\left( 1\right)}\right| \leq C \boldsymbol{m}_{0,k}^{\gamma_2} (1+ \Ibf (t_k^+)).
	\end{equation}
	Rearranging and using the triangle inequality, it suffices to demonstrate that
	\begin{align}
		&\left|\frac{\bar{\Dbf}_{k-1}\left(\frac{t_k}{t_{k-1}}\right) - \bar{\Dbf}_k(1)}{\bar{\Hbf}_{k}\left(1\right)}\right| \leq C \boldsymbol{m}_{0,k}^{\gamma_2}, \label{eq:BV1} \\
		&\bar{\Dbf}_{k-1}\left(\frac{t_k}{t_{k-1}}\right)\left|\frac{1}{\bar{\Hbf}_{k-1}\left( \frac{t_k}{t_{k-1}}\right)} - \frac{1}{\bar{\Hbf}_{k}\left(1\right)}\right| \leq C \boldsymbol{m}_{0,k}\Ibf_{k-1} \left(\frac{t_k}{t_{k-1}} \right).\label{eq:BV2}
	\end{align}
	Before we proceed, given $\pi_k$ such that $\Ebf(T,\Bbf_{6\sqrt{m}t_k}) = \Ebf(T,\Bbf_{6\sqrt{m}t_k}, \pi_k)$ let us introduce the Lipschitz approximation $f_k: B_3 \subset \pi_k \to \Acal_Q(\pi_k^\perp)$ of~\cite{DLS14Lp}*{Theorem~2.4} for $T_{0,t_k}\mres\Bbf_{6\sqrt{m}}$ and the map $\tilde{f}_{k-1}\coloneqq (f_k)_{t_k/t_{k-1}}: B_{t_k/t_{k-1}}(0,\pi_{k}) \to \Acal_Q(\pi_{k}^\perp)$ from Corollary~\ref{cor:graphDir} with $\bar{r} = \frac{t_k}{t_{k-1}}$. We let $\tilde{\boldsymbol{\vphi}}_{k-1}$, $\tilde{\boldsymbol{\vphi}}_k$ be as in Lemma \ref{lem:cm} and let $\tilde{f}_k \coloneqq \tilde{f}_{k-1}\big(\frac{t_k}{t_{k-1}}\cdot\big)$. We additionally introduce the measures $\dd \mu_{k-1}(y) \coloneqq \phi_k\left(\frac{t_{k-1}}{t_k}d(y)\right)\dd y$ and $\dd\mu(y) \coloneqq \phi\left(d(y)\right)\dd y$, where $\dd y$ is the $m$-dimensional Lebesgue measure on $\pi_{k}$. We also define the balls $\Bcal^{k-1} \coloneqq \Bbf_{t_k/t_{k-1}}\cap\Mcal_{k-1}$, $B^{k-1} \coloneqq B_{t_k/t_{k-1}}(0, \pi_{k})$ and the cylinder $\Cbf^{k-1} \coloneqq \Cbf_{t_k/t_{k-1}}(0, \pi_{k})$.
	
	We begin with the estimate~\eqref{eq:BV1}. Comparing both terms with the corresponding linearized quantity (cf. Corollary~\ref{cor:graphDir}) and rescaling appropriately we have
	\begin{align*}
		&\left|\bar{\Dbf}_{k-1}\left( \frac{t_k}{t_{k-1}}\right) - \bar{\Dbf}_k(1)\right| \\
		&\qquad\leq \left(\frac{t_k}{t_{k-1}}\right)^{-(m-2)} \left|\int_{\Bcal^{k-1}}|DN_{k-1}|^2\dd\mu_{k-1} - \int_{B^{k-1}} \Gcal(D\tilde f_{k-1}, Q\llbracket D\tilde{\boldsymbol{\vphi}}_{k-1} \rrbracket)^2\dd\mu_{k-1}\right| \\
		&\qquad\qquad + \left|\int_{\Bbf_{1}\cap\Mcal_{k}}|DN_{k}|^2\dd\mu - \left(\frac{t_k}{t_{k-1}}\right)^{-(m-2)}\int_{B^{k-1}} \Gcal (D\tilde f_{k-1}, Q\llbracket D\tilde{\boldsymbol{\vphi}}_{k-1} \rrbracket)^2\dd\mu_{k-1}\right| \\
		&\qquad= \left(\frac{t_k}{t_{k-1}}\right)^{-(m-2)} \left|\int_{\Bcal^{k-1}}|DN_{k-1}|^2\dd\mu_{k-1} - \int_{B^{k-1}} \Gcal (D\tilde f_{k-1}, Q\llbracket D\tilde{\boldsymbol{\vphi}}_{k-1} \rrbracket)^2\dd\mu_{k-1}\right| \\
		&\qquad\qquad + \left|\int_{\Bbf_1\cap\Mcal_{k}}|DN_{k}|^2\dd\mu - \int_{B_1(0,\pi_{k})} \Gcal (D\tilde{f}_{k}, Q\llbracket D \tilde{\boldsymbol{\vphi}}_k \rrbracket)^2\dd\mu\right|.
	\end{align*}
	Now we may use Lemma~\ref{lem:cm} to replace $\tilde{\boldsymbol{\vphi}}_k$ with $\boldsymbol{\vphi}_k$, yielding
	\begin{align*}
	    &\left|\bar{\Dbf}_{k-1}\left( \frac{t_k}{t_{k-1}}\right) - \bar{\Dbf}_k(1)\right| \\
		&\qquad\leq \left(\frac{t_k}{t_{k-1}}\right)^{-(m-2)} \left|\int_{\Bcal^{k-1}}|DN_{k-1}|^2\dd\mu_{k-1} - \int_{B^{k-1}} \Gcal (D\tilde f_{k-1}, Q\llbracket D\tilde{\boldsymbol{\vphi}}_{k-1} \rrbracket)^2 \dd\mu_{k-1}\right| \\
		&\qquad\qquad + \left|\int_{\Bbf_{1}\cap\Mcal_{k}}|DN_{k}|^2 \dd\mu - \int_{B_1(0,\pi_k)} \Gcal (D\tilde{f}_{k}, Q\llbracket D \boldsymbol{\vphi}_k \rrbracket)^2\dd\mu\right| + C \boldsymbol{m}_{0,k}^{1+\gamma_2}.\\
	\end{align*}
	We are now in a position to make use of Corollary~\ref{cor:graphDir}, combined with the observation that $\tilde f_k$ is still a valid $\pi_k$-approximation for $T_{0,t_k}$ in $\Cbf_4(0,\pi_k)$ as in \cite{DLS14Lp}, since $f_{k-1}$ is a $\pi_{k-1}$-approximation for $T_{0,t_{k-1}}$ and we have the estimates \cite{DLS16centermfld}*{Proposition 4.1} on the tilting of $\pi_k$ relative to $\pi_{k-1}$. This gives
	\begin{align*}
	    \left|\bar{\Dbf}_{k-1}\left(\frac{t_k}{t_{k-1}}\right) - \bar{\Dbf}_k(1)\right| &\leq C\left(\frac{t_k}{t_{k-1}}\right)^{-(m-2)}\Bigg(\int_{B^{k-1}} (|Df_{k-1}|^4 + |D\tilde{\boldsymbol{\vphi}}_{k-1}|^4) \dd y \notag\\
		&\qquad + \int_{B^{L_k}} (|Df_{L_k}|^4 + |D\boldsymbol{\vphi}_{L_k}|^4) \dd y \\
		&\qquad+ \int_{\Bcal^{L_k}} (|\Abf_{\Mcal_{k-1}}|^2|N_{k-1}|^2 + |DN_{k-1}|^4) \notag\\
		&\qquad + \int_{\Cbf^{k-1}} \Big| \vec{\Mcal}_{k-1}(\mathbf{p}(z)) - \vec{\Mcal}_{k-1}\big(\boldsymbol{\vphi}(\mathbf{p}_{\pi_k}(z))\big) \Big| \dd\|\Gbf_{f_{k-1}}\|(z)\Bigg) \\
		&\qquad + C \Bigg(\boldsymbol{m}_{0,k-1}^{1+\gamma_2}\left(\frac{t_k}{t_{k-1}}\right)^{4+\gamma_2} + \int_{B_1(0,\pi_k)} (|D\tilde{f}_{k}|^4 + |D\boldsymbol{\vphi}|^4)\dd y \\
		&\qquad + \int_{\Bbf_1 \cap \Mcal_k} (|\Abf_{\Mcal_k}|^2|N_k|^2 + |DN_k|^4)\notag\\
		& \qquad + \int_{\Cbf_1(0,\pi_k)} \Big| \vec{\Mcal}(\mathbf{p}(z)) - \vec{\Mcal}\big(\boldsymbol{\vphi}(\mathbf{p}_{\pi}(z))\big) \Big|  \dd\|\Gbf_{f}\|(z) + \boldsymbol{m}_{0,k}^{1+\gamma_2}\Bigg).
	\end{align*}
	Lemma~\ref{lem:cmLip} thus yields
	\begin{align*}
	    \left|\bar{\Dbf}_{k-1}\left(\frac{t_k}{t_{k-1}}\right) - \bar{\Dbf}_k(1)\right| &\leq C\left(\frac{t_k}{t_{k-1}}\right)^{-(m-2)}\Bigg(\int_{B^{k-1}} (|Df_{k-1}|^4 + |D\tilde{\boldsymbol{\vphi}}_{k-1}|^4) \dd y \notag\\
		&\qquad + \int_{B^{L_k}} (|Df_{L_k}|^4 + |D\boldsymbol{\vphi}_{L_k}|^4) \dd y \\
		&\qquad+ \int_{\Bcal^{L_k}} (|\Abf_{\Mcal_{k-1}}|^2|N_{k-1}|^2 + |DN_{k-1}|^4)\Bigg) \notag\\
		&\qquad + C \Bigg(\boldsymbol{m}_{0,k-1}^{1+\gamma_2}\left(\frac{t_k}{t_{k-1}}\right)^3 + \int_{B_1(0,\pi_k)} (|D\tilde{f}_{k}|^4 + |D\boldsymbol{\vphi}|^4)\dd y \\
		& \qquad + \int_{\Bbf_1 \cap \Mcal_k} (|\Abf_{\Mcal}|^2|N|^2 + |DN|^4) + \boldsymbol{m}_{0,k}^{1+\gamma_2}\Bigg).
	\end{align*}
	We may now control the initial excess $\boldsymbol{m}_{0,k-1}$ of $T_{0,t_{k-1}}$ in terms of the excess $\Ebf(T_{0,t_{k-1}}, \Bbf_{L_k})$, which is in turn controlled by the initial excess $\boldsymbol{m}_{0,k}$ of $T_{0,t_k}$:
	\begin{equation}\label{eq:excessstop}
	    \boldsymbol{m}_{0,k-1}\left(\frac{t_k}{t_{k-1}}\right)^{2-2\delta_2} \leq C\boldsymbol{m}_{0,k}.
	\end{equation}
	This, in combination with the estimates \cite{DLS14Lp}*{Theorem~2.4} and \cite{DLS16centermfld}*{Theorem~1.17, Theorem~2.4, Corollary~2.5} allows us to conclude that
	\begin{align*}
	    \left|\bar{\Dbf}_{k-1}\left(\frac{t_k}{t_{k-1}}\right) - \bar{\Dbf}_k(1)\right| &\leq C\boldsymbol{m}_{0,k}^{1+\gamma_2}.
	\end{align*}
	Since the comparison of center manifolds~\cite{DLS16centermfld}*{Proposition~3.7} gives $\bar{\Hbf}_k(1) \geq c\boldsymbol{m}_{0,k}$ for some dimensional constant $c > 0$, the estimate~\eqref{eq:BV1} follows.
	
	Let us now prove~\eqref{eq:BV2}. First of all, observe that
	\[
	    \bar{\Dbf}_{k-1}\left( \frac{t_k}{t_{k-1}}\right)\left|\frac{1}{\bar{\Hbf}_{k-1}\left( \frac{t_k}{t_{k-1}}\right)} - \frac{1}{\Hbf_{k}\left( 1\right)}\right| = \frac{\Ibf_{k-1}\left( \frac{t_k}{t_{k-1}}\right)}{\Hbf_k(1)}\left|\Hbf_k(1) - \bar{\Hbf}_{k-1}\left( \frac{t_k}{t_{k-1}}\right)\right|.
	\]
	To estimate the difference between the $L^2$-heights, we may one again compare both with the height of the corresponding Lipschitz approximations over the averages of their sheets:
    \begin{align*}
		&\left|\bar{\Hbf}_{k-1}\left( \frac{t_k}{t_{k-1}}\right) - \bar{\Hbf}_k(1)\right| \\
		&\qquad\leq \left(\frac{t_k}{t_{k-1}}\right)^{-(m-1)} \left|\int_{\Bcal^{k-1}}|N_{k-1}|^2\dd\mu_{k-1} - \int_{B^{k-1}} \Gcal(f_{k-1}, Q\llbracket \tilde{\boldsymbol{\vphi}}_{k-1} \rrbracket)^2\dd\mu_{k-1}\right| \\
		&\qquad\qquad + \left|\int_{\Bbf_{1}\cap\Mcal_{k}}|N_{k}|^2\dd\mu - \left(\frac{t_k}{t_{k-1}}\right)^{-(m-1)}\int_{B^{k-1}} \Gcal (f_{k-1}, Q\llbracket \tilde{\boldsymbol{\vphi}}_{k-1} \rrbracket)^2\dd\mu_{k-1}\right| \\
		&\qquad= \left(\frac{t_k}{t_{k-1}}\right)^{-(m-1)} \left|\int_{\Bcal^{k-1}}|N_{k-1}|^2\dd\mu_{k-1} - \int_{B^{k-1}} \Gcal (f_{k-1}, Q\llbracket \tilde{\boldsymbol{\vphi}}_{k-1} \rrbracket)^2\dd\mu_{k-1}\right| \\
		&\qquad\qquad + \left|\int_{\Bbf_1\cap\Mcal_{k}}|N_{k}|^2\dd\mu - \int_{B_1} \Gcal (\tilde{f}_{k}, Q\llbracket \tilde{\boldsymbol{\vphi}}_k \rrbracket)^2\dd\mu\right|.
	\end{align*}
	Now let $\tilde{g}_{k-1}$, $\tilde{g}_k$ be as in Lemma~\ref{l:reparametrization-lemma} for $\tilde{\boldsymbol{\vphi}}_{k-1}, f_{k-1}$ and $\boldsymbol{\vphi}_{k}, \tilde{f}_k$ respectively and let $A^{k-1} \coloneqq B^{k-1}\setminus \frac{1}{2}B^{k-1}$, $A^k \coloneqq B_1(0,\pi_k)\setminus B_{1/2}(0,\pi_k)$. The reverse triangle inequality and the estimate~\eqref{e:comparison-estimate} (combined with an appropriate rescaling) then allow us to deduce that
	\begin{align*}
		\left|\bar{\Hbf}_{k-1}\left(\frac{t_k}{t_{k-1}}\right) - \bar{\Hbf}_k(1)\right| &\leq C\left(\frac{t_k}{t_{k-1}}\right)^{-(m-1)}\int_{A^{k-1}}\Gcal(N_{k-1}(\tilde{\boldsymbol{\vphi}}_{k-1}(y)),\tilde{g}_{k-1}(y))^2\dd y \\
		&\qquad+ \int_{A^k}\Gcal(N_{k}(\boldsymbol{\vphi}_k(y)),\tilde{g}_{k}(y))^2\dd y \\
		&\leq C\left(\frac{t_k}{t_{k-1}}\right)^5\left(\|D\tilde{\boldsymbol{\vphi}}_{k-1}\|_{C^0}^4 + \|D\tilde{\boldsymbol{\vphi}}_{k-1}\|_{C^0}^2\boldsymbol{m}_{0,k-1}\right) + C\|D\boldsymbol{\vphi}_{k}\|_{C^0}^4.
	\end{align*}
	The estimates in~\cite{DLS16centermfld}*{Theorem~1.17, Proposition~3.4} then give
	\[
	    \left|\bar{\Hbf}_{k-1}\left(\frac{t_k}{t_{k-1}}\right) - \bar{\Hbf}_k(1)\right| \leq C \boldsymbol{m}_{0,k}^2.
	\]
	Again using that $\bar{\Hbf}_k(1) \geq c\boldsymbol{m}_{0,k}$, we further have
	\[
	    \frac{\Ibf_{k-1}\left(\frac{t_k}{t_{k-1}}\right)}{\bar{\Hbf}_k(1)} \leq C \boldsymbol{m}_{0,k}^{-1} \Ibf_{k-1} \left(\frac{t_k}{t_{k-1}}\right).
	\]
	The desired estimate follows immediately, and thus we are able to conclude that~\eqref{eq:BVjumps} holds.
	
\medskip

From \eqref{eq:BVjumps} we immediately conclude 
\begin{align}
\sum_k (\log (\Ibf (t_k^+) +1) - \log (\Ibf (t_k^-)+1))_- &\leq C \sum_k \boldsymbol{m}_0^{\gamma_4}\label{eq:BVjumps-2}\, .
\end{align}
Indeed, if $\Ibf (t_k^+) \geq \Ibf (t_k^-)$, then $(\log (\Ibf (t_k^+) +1) - \log (\Ibf (t_k^-)+1))_-=0$, otherwise we have  
\[
(\log (\Ibf (t_k^+) +1) - \log (\Ibf (t_k^-)+1))_- =
\log (\Ibf (t_k^-)+1) - \log (\Ibf (t_k^+) +1) \leq \frac{\Ibf (t_k^-)-\Ibf (t_k^+)}{\Ibf (t_k^+) +1}\, 
\]
and we can just sum \eqref{eq:BVjumps} recalling that $\gamma_2\geq \gamma_4$ and $\boldsymbol{m}_0 \leq 1$.

\medskip

We next wish to control the absolutely continuous part of $\left[\frac{d \log (\Ibf+1)}{dr}\right]_-$. Here, we exploit the almost-monotonicity in Corollary~\ref{cor:freqmono}. 
We argue on each interval $]s_k, t_k[$ and will henceforth let $\partial_r$ denote differentiation in the variable $\frac{t}{t_k}$. Note that $\partial_r = t_k \partial_t$. Due to Corollary~\ref{cor:freqmono}, for almost-every $t \in ]s_k,t_{k}]$ we have
	\begin{align*}
		(\log (\Ibf+1))' (t) &= \frac{1}{t_{k}}\partial_r\Ibf_{k}\left(\frac{t}{t_{k}}\right) \left(1+\Ibf_{k}\left(\frac{t}{t_{k}}\right)\right)^{-1}\\
		&\geq -\frac{C}{t_k}\boldsymbol{m}_{0,k}^{\gamma_4}\left[ 1 + \left(\frac{t}{t_k}\right)^{-1}\Dbf_k\left(\frac{t}{t_k}\right)^{\gamma_4} + \Dbf_k\left(\frac{t}{t_k}\right)^{\gamma_4-1}\Dbf_k'\left(\frac{t}{t_k}\right)\right].
	\end{align*}
We are now ready to introduce a monotone function $\Ombf$ which will help us close the estimate. First of all we let $\psi_k(t) \coloneqq \frac{C}{t_k}\boldsymbol{m}_{0,k}^{\gamma_4}\boldsymbol{1}_{]s_k,t_k]}(t)$ and let the absolutely continuous part of the derivative of $\Ombf$ be 
	\begin{align*}
		\Ombf'(t) &:= {\sum_{k=j_0}^J} \psi_k(t)\left[ 1 + \left(\frac{t}{t_k}\right)^{-1}\Dbf_k\left(\frac{t}{t_k}\right)^{\gamma_4} + \Dbf_k\left(\frac{t}{t_k}\right)^{\gamma_4-1}\Dbf_k'\left(\frac{t}{t_k}\right)\right].
	\end{align*}
Next we consider the ``jump measure''
\[
\mu^j := C \sum_{k= j_0}^J \boldsymbol{m}_{0,k}^{\gamma_4} \delta_{t_k}\, .
\]
Hence we set $\Ombf (s_J) =0$ and define $\Ombf$ by integration, setting its distributional deritative to be $\mu^j + \Ombf' \mathcal{L}^1$. Observe that $\Ombf$ is monotone: $\mu^j$ is obviously a nonnegative measure, while $\Ombf'$ is a nonnegative function since both $\mathbf{D}_k$ and $\mathbf{D}_k'$ are nonnegative (recall the explicit formula for the latter). On the other hand the estimates proved so far obviously show that $\log (\Ibf+1) + \Ombf$ is nondecreasing. In particular it immediately follows that 
\begin{align*}
\left\| \left[\frac{d \log (\Ibf+1)}{dr}\right]_-\right\|_{\TV} 
& \leq \left\|\left[\frac{d\Ombf}{dr}\right]_+\right\|_{\TV} = \Ombf (t_{j_0}) - \Ombf (s_J)\\
&\leq C \sum_{k=j_0}^J \boldsymbol{m}_{0,k}^{\gamma_4} + \sum_{k=j_0}^J \int_{s_k}^{t_k} (\Ombf')_+(t) \dd t \\
&\leq C \sum_{k=j_0}^J \boldsymbol{m}_{0,k}^{\gamma_4} + C\sum_{k=j_0}^{J} \boldsymbol{m}_{0,k}^{\gamma_4}  \int_{\frac{s_k}{t_k}}^{1}\left( 1 + s^{\gamma_4 m-1} + \partial_s(\Dbf_k(s)^{\gamma_4}) + s\right) \dd s \\
&\leq C \sum_{k=j_0}^J \boldsymbol{m}_{0,k}^{\gamma_4}  + C \sum_{k=j_0}^{J-1}  \boldsymbol{m}_{0,k}^{\gamma_4} \left( s + s^{\gamma_4 m} + \Dbf_k(s)^{\gamma_4} \right)\Bigg|_{s = \frac{s_k}{t_k}}^1 \leq C \sum_{k=j_0}^J  \boldsymbol{m}_{0,k}^{\gamma_4}\, .
\end{align*}
\subsection{Proofs of auxiliary results from Section~\ref{ss:aux}}

\begin{proof}[Proof of Lemma~\ref{lem:excessTaylor}]
    	We will argue as in~\cite{DLS_multiple_valued}*{Section~3.1}, making use of the multiple-valued area formula. Consider
	\begin{align*}
		E &\coloneqq \int_{\Cbf_r(0,\pi)} |\vec\Gbf_{f} - \vec{\Mcal}\circ\mathbf{p}|^2\phi\left(\frac{|\mathbf{p}_\pi(z)|}{r}\right)\dd \|\Gbf_{f}\|(z) \\
		&= 2\int_{\Cbf_r(0,\pi)} \phi\left(\frac{\mathbf{p}_{\pi}(p)|}{r}\right)\dd\|\Gbf_{f}\|(p) - 2 \int_{\Cbf_r(0,\pi)} \langle \vec\Gbf_{f}, \vec{\Mcal}\circ\mathbf{p}\rangle \phi\left(\frac{|\mathbf{p}_{\pi}(z)|}{r}\right) \dd\|\Gbf_{f}\|(z).
	\end{align*}
	By the $Q$-valued area formula~\cite{DLS_multiple_valued}*{Corollary~1.11}, we have
	\begin{align*}
	    2\int_{\Cbf_r(0,\pi)} \phi\left(\frac{|\mathbf{p}_{\pi}(z)|}{r}\right)\dd\|\Gbf_{f}\|(z) &= 2Q\int_{B_r(0,\pi)} \phi\left(\frac{|y|}{r}\right)\dd y \\
	    &\qquad+ \int_{B_r(0,\pi)} \left(|Df|^2\phi\left(\frac{|\mathbf{p}_{\pi}(y)|}{r}\right) + O(|Df|^4)\right) \dd y.
	\end{align*}
	Meanwhile, for $\vec{\xi}$ such that $\frac{\vec\xi}{|\xi|}$ is the unitary simple $m$-frame orienting $\Mcal$, we have $\vec{\xi} = (e_1 + D\boldsymbol{\vphi}|_{\mathbf{p}_{\pi}(y,f_i(y))} e_1) \wedge (e_m + D\boldsymbol{\vphi}|_{\mathbf{p}_{\pi}(y,f_i(y))} e_m)$ and $v_k^i = e_k + Df_i|_{y} e_k$, $w_k^i = e_k + D\boldsymbol{\vphi}|_{\mathbf{p}_{\pi}(y,f_i(y))} e_k$, we have
	\begin{align*}
		2 \int_{\Cbf_r(0,\pi)} &\langle \vec\Gbf_{f}, \vec{\Mcal}\circ\mathbf{p}\rangle\phi\left(\frac{|\mathbf{p}_{\pi}(z)|}{r}\right) \dd\|\Gbf_{f}\|(z) \\
		&= 2 \int_{\Cbf_r(0,\pi)} \langle \vec\Gbf_{f}(z), \vec{\Mcal}\big(\boldsymbol{\vphi}(\mathbf{p}_{\pi}(z))\big) \rangle\phi\left(\frac{|\mathbf{p}_{\pi}(p)|}{r}\right) \dd\|\Gbf_{f}\|(z) \\
		&\qquad + 2 \int_{\Cbf_r(0,\pi)} \langle \vec\Gbf_{f}(z), \left(\vec{\Mcal}(\mathbf{p}(z)) - \vec{\Mcal}\big(\boldsymbol{\vphi}(\mathbf{p}_{\pi}(z))\big)\right) \rangle\phi\left(\frac{|\mathbf{p}_{\pi}(z)|}{r}\right) \dd\|\Gbf_{f}\|(z) \\
		&=\frac{2}{|\xi|}\int_{B_r(0,\pi)} \phi\left(\frac{|y|}{r}\right)\sum_i \langle v_1 \wedge\dots\wedge v_m, w_1 \wedge \dots \wedge w_m \rangle \dd y\\
		&\qquad + 2 \int_{\Cbf_r(0,\pi)} \langle \vec\Gbf_{f}(z), \left(\vec{\Mcal}(\mathbf{p}(z)) - \vec{\Mcal}\big(\boldsymbol{\vphi}(\mathbf{p}_{\pi}(z))\big)\right) \rangle \phi\left(\frac{|\mathbf{p}_\pi(z)|}{r}\right) \dd\|\Gbf_{f}\|(z) \\   
		&=\frac{2}{|\xi|}\int_{B_r(0,\pi)}\phi\left(\frac{|y|}{r}\right) \sum_i \det B^i \dd y \\
		&\qquad + 2 \int_{\Cbf_r(0,\pi)} \langle \vec\Gbf_{f}(z), \left(\vec{\Mcal}(\mathbf{p}(z)) - \vec{\Mcal}\big(\boldsymbol{\vphi}(\mathbf{p}_{\pi}(z))\big)\right) \rangle\phi\left(\frac{|\mathbf{p}_\pi(z)|}{r}\right) \dd\|\Gbf_{f}\|(z),
	\end{align*}
	where $B^i_{jk} = \delta_{jk} + \langle Df_i|_y e_j, D\boldsymbol{\vphi}|_{\mathbf{p}_{\pi}(y,f_i(y))} e_k \rangle$. Expanding out the first term, we have
	\[
	    \frac{1}{|\xi|} \det B^i = \left(1 - \frac{1}{2}|D\boldsymbol{\vphi}|^2 + O(|D\boldsymbol{\vphi}|^4)\right)\left(1+ Df_i: D\boldsymbol{\vphi} + O(|Df|^2|D\boldsymbol{\vphi}|^2)\right).
	\]
	Thus, we have
	\begin{align*}
		E &= \int_{B_r(0,\pi)} |Df|^2\phi\left(\frac{|y|}{r}\right)\dd y + Q\int_{B^L}|D\boldsymbol{\vphi}|^2 \phi\left(\frac{|y|}{r}\right) \dd y- 2 \sum_i \int (Df_i: D\boldsymbol{\vphi})\phi\left(\frac{|y|}{r}\right)\dd y \\
		&\qquad+ O\left(\int_{B_r(0,\pi)} (|Df|^4 + |D\boldsymbol{\vphi}|^4 + |Df|^2|D\boldsymbol{\vphi}|^2\right) \\
		&\qquad+ O\left(\int_{\Cbf_r(0,\pi)} \Big|\big\langle \vec\Gbf_{f}(z), \left(\vec{\Mcal}(\mathbf{p}(z)) - \vec{\Mcal}\big(\boldsymbol{\vphi}(\mathbf{p}_{\pi}(z))\big)\right) \big\rangle \Big| \dd\|\Gbf_{f}\|(z) \right) \\
		&= \int_{B_r(0,\pi)} \Gcal(Df, Q\llbracket D\boldsymbol{\vphi}\rrbracket)^2\phi\left(\frac{|y|}{r}\right)\dd y + O\left(\int_{B_r(0,\pi)} (|Df|^4 + |D\boldsymbol{\vphi}|^4\right) \\
		&\qquad + O\left(\int_{\Cbf_r(0,\pi)} \Big| \vec{\Mcal}(\mathbf{p}(z)) - \vec{\Mcal}\big(\boldsymbol{\vphi}(\mathbf{p}_{\pi}(z))\big) \Big| \dd\|\Gbf_{f}\|(z) \right).
	\end{align*}
\end{proof}

\begin{proof}[Proof of Corollary~\ref{cor:graphDir}]
	It suffices to prove~\eqref{eq:graphDir2}, since the argument for \eqref{eq:graphDir1} is analogous (in fact it is easier since one does not need to reparameterize the graphical approximation from the cube $L$ to the plane $\pi_{\bar{r}}$). Let us begin with the corresponding estimate for $f_L$. Letting $F$ be as in~\cite{DLS_multiple_valued}*{Assumption~3.1} for the normal approximation $N$ and letting $\Cbf^L \coloneqq \Cbf_{32r_L}(p_L,\pi_L)$ and $\Bcal^L \coloneqq \Bbf_{64r_L}(p_L)\cap\Mcal$, we have
	\begin{align*}
	    \int_{\Cbf^L} |\vec\Gbf_{f_L} - \vec{\Mcal}\circ\mathbf{p}|^2 \dd \|\Gbf_{f_L}\| &\leq \int_{\Cbf^L} |\vec{T} - \vec{\Mcal}\circ\mathbf{p}|^2 \dd \|T\| + C\| T - \Gbf_{f} \|(\Cbf^L) \\
		&\leq \int_{\mathbf{p}^{-1}(\Bcal^L)} |\vec{\Tbf}_F - \vec{\Mcal}\circ\mathbf{p}|^2 \dd \|\Tbf_F\| + C\| T - \Gbf_{f_L} \|(\Cbf^L) \\
		&\qquad+  C\| T - \Tbf_{F} \|(\mathbf{p}^{-1}(\Bcal^L\setminus\Kcal))\, ,
	\end{align*}
where $\Kcal\subset \mathcal{M}$ is the set over which $T$ (in fact the slices $\langle T, \mathbf{p}, p\rangle$) coincides with $\mathbf{T}_F$ (i.e. the corresponding slices $\langle \mathbf{T}_F, \mathbf{p}, p \rangle$, which in fact are the currents $\sum_i \llbracket F_i (p)\rrbracket = \sum_i \llbracket p+ N_i (p)\rrbracket$).

	Applying (a localized version of) \cite{DLS_multiple_valued}*{Proposition~3.4}, we have
	\begin{align*}
		\int_{\Cbf^L} |\vec\Gbf_{f_L} - \vec{\Mcal}\circ\mathbf{p}|^2 \dd \|\Gbf_{f_L}\| &\leq \int_{\Bcal^L} |DN|^2\dd y  +  C \| T - \Gbf_{f_L} \|(\Cbf^L) +  C\| T - \Tbf_{F} \|(\mathbf{p}^{-1}(\Bcal^L\setminus\Kcal)) \\
		&\qquad+ C\int_{\Bcal^L} (|\Abf_{\Mcal}|^2|N|^2 + |DN|^4).
	\end{align*}
	Let us now control $\| T - \Gbf_{f_L}\|$ and $\| T - \Tbf_{F} \|$. To do this, we make use of the estimates in~\cite{DLS14Lp}*{Theorem~2.4} and~\cite{DLS16centermfld}, combined with a Vitali covering of $\Bcal^L\setminus\Kcal$ by Whitney regions $\Lcal(L')$ and the height bound in~\cite{DLS16centermfld}*{Proposition~4.1}, to deduce that
	\begin{align*}
		\int_{\Cbf^L} |\vec\Gbf_{f_L} - \vec{\Mcal}\circ\mathbf{p}|^2\dd \|\Gbf_{f_L}\| &\leq \int_{\Bbf_L \cap \Mcal} |DN|^2 \dd y  + C\boldsymbol{m}_0^{1+\gamma_1}\ell(L)^{m+2+\gamma_1} \\
		&\qquad+ C\boldsymbol{m}_0^{1+\gamma_2}\ell(L)^{m+2+\gamma_2} + C\int_{\Bcal^L} (|\Abf_{\Mcal}|^2|N|^2 + |DN|^4).
	\end{align*}
	
	It remains to replace $f_L$ with $f_{\bar{r}}$ inside $B_{\bar{r}}(0,\pi_{\bar{r}})$, but this is trivial since $\Gbf_{f_L} \equiv \Gbf_{f_{\bar{r}}} \mres \Cbf_{\bar{r}}(0,\pi_{\bar{r}})$. Combining this with the fact that $\spt\Gbf_{f_{\bar{r}}}\cap\Cbf_{\bar{r}}(0,\pi_{\bar{r}}) \subset \spt\Gbf_{f_{\bar{r}}}\cap\Cbf^L$ and~Lemma~\ref{lem:excessTaylor}, the result follows.
\end{proof}

\begin{proof}[Proof of Lemma~\ref{lem:cm}]
    Let $\eta \in \Crm_c^\infty({B_{2};[0,1]})$ be a cutoff with $\eta \equiv 1$ on $B_{1}$. Integrating by parts and using the estimates in~\cite{DLS16centermfld}*{Theorem~1.17}, we have
	\begin{align*}
		\int_{B_1} |D\boldsymbol{\vphi}_k - D\tilde{\boldsymbol{\vphi}}_{k}|^2  &\leq \int_{B_{2}} |D\boldsymbol{\vphi}_k - D\tilde{\boldsymbol{\vphi}}_{k}|^2 \eta \\
		&= -\int_{B_{2}} (\boldsymbol{\vphi}_k - \tilde{\boldsymbol{\vphi}}_{k})\eta \Delta (\boldsymbol{\vphi}_k - \tilde{\boldsymbol{\vphi}}_{k})  - \int_{B_{2}\setminus B_1} D\eta \cdot (\boldsymbol{\vphi}_k - \tilde{\boldsymbol{\vphi}}_{k}) D(\boldsymbol{\vphi}_k - \tilde{\boldsymbol{\vphi}}_{k}) \\
		&\leq C\left(\boldsymbol{m}_{0,k}^{\frac{1}{2}} + \frac{t_k}{t_{k-1}}\boldsymbol{m}_{0,k-1}^{1/2}\right) \int_{B_{2}} |\boldsymbol{\vphi}_k - \tilde{\boldsymbol{\vphi}}_{k}|.
	\end{align*}
In particular, taking into account \eqref{eq:excessstop}, it suffices to prove \eqref{e:comparison-cm-2}. To that end, consider a Lipschitz approximation $f_k: B_3 (0, \pi_k) \to \mathcal{A}_Q (\mathbb R^n)$ as in~\cite{DLS14Lp}*{Theorem~2.4} for the current $T_{0, t_k}$ in the cylinder $\mathbf{C}_{12} (0, \pi_k)$, where the excess is bounded by $C\boldsymbol{m}_{0,k}$. We claim that 
\begin{align}
\int_{B_2} |\boldsymbol{\varphi}_k - \boldsymbol{\eta}\circ f_k| &\leq C \boldsymbol{m}_{0,k}\label{e:comparison-cm-3}\, ,\\
\int_{B_2} |\tilde{\boldsymbol{\varphi}}_k - \boldsymbol{\eta}\circ f_k| &\leq C \boldsymbol{m}_{0,k}\label{e:comparison-cm-4}\, ,
\end{align}
and obviously \eqref{e:comparison-cm-2} will follow from the latter.

First of all we observe that, since the tilt between the planes $\pi_k$ and $\pi_{k-1}$ is controlled by $\boldsymbol{m}_{0,k}^{1/2}$ due to \cite{DLS16centermfld}*{Proposition 4.1}, all the estimates of \cite{DLS14Lp}*{Theorem~2.4} apply to the map $\bar{f}_k: B_{5/2} (0, \pi_{k-1})$ which parametrizes graphically $\mathbf{G}_{f_k}$ in the cylinder $\mathbf{C}_{5/2} (0, \pi_{k-1})$. Setting $\bar{\boldsymbol{\varphi}}_k := \boldsymbol{\varphi}_{k-1} (\frac{t_k}{t_{k-1}} \cdot)$, 
\eqref{e:comparison-cm-4} will actually follow from 
\begin{equation}\label{e:comparison-cm-5}
\int_{B_2 (0, \pi_{k-1})} |\bar{\boldsymbol{\varphi}}_k - \boldsymbol{\eta}\circ \bar f_k| \leq C \boldsymbol{m}_{0,k}
\end{equation}
combined with \cite{DLS16centermfld}*{Lemma 5.6, Lemma B.1}. 

The argument leading to \eqref{e:comparison-cm-5} is entirely analogous to the one leading to \eqref{e:comparison-cm-3}, with the only difference that instead of a control with $\boldsymbol{m}_{0,k}$ it leads to a control with \[
\boldsymbol{m}_{0,k} + \left(\frac{t_k}{t_{k-1}}\right)^{2-2\delta_2}\boldsymbol{m}_{0,k-1}\, .
\]
However the latter is once again controlled by $C \boldsymbol{m}_{0,k}$ because of \eqref{eq:excessstop}.

We now come to the proof of \eqref{e:comparison-cm-3}. We recall the algorithm leading to the construction of $\boldsymbol{\varphi}_k$. In particular, $B_2$ is covered by the union of contact set $\boldsymbol{\Gamma}$ and the Whitney cubes $L\in \mathscr{W}$ described in \cite{DLS16centermfld}*{Section~1}. We discard the cubes which are not intersecting $B_2$ and denote the family of remaining ones by $\mathscr{W}'$. Since the sidelength of each such cube is at most $2^{-N_0}$, we can assume that each cube $L\in \mathscr{W}'$ is fully contained within $B_3(0,\pi_k)$, where $f_k$ is defined. We can then estimate
\begin{equation}\label{e:comparison-cm-7}
\int_{B_2} |\boldsymbol{\varphi}_k - \boldsymbol{\eta}\circ f_k|
\leq \int_{\boldsymbol{\Gamma}\cap B_2} |\boldsymbol{\varphi}_k - \boldsymbol{\eta}\circ f_k| +
\sum_{L\in \mathscr{W}'} \int_L |\boldsymbol{\varphi}_k - \boldsymbol{\eta}\circ f_k| \, .
\end{equation}
Before coming to the estimates of each integrand in the above sums, we record the following important consequence of \cite{DLS14Lp}*{Theorem~2.4} and \cite{DLS16centermfld}*{Theorem 1.17}:
\begin{equation}\label{e:comparison-cm-8}
\|\boldsymbol{\varphi}_k - \boldsymbol{\eta}\circ f_k\|_{C^0} \leq C \boldsymbol{m}_{0,k}^{\gamma}\, ,
\end{equation}
for $\gamma = \min\{\frac{1}{2m},\gamma_1\}$, where $\gamma_1 > 0$ is as in \cite{DLS14Lp}*{Theorem 2.4}. We moreover let $K\subset B_3(0,\pi_k)$ be the set of \cite{DLS14Lp}*{Theorem~2.4} for $f_k$, namely the set over which, loosely speaking, the graph of $f_k$ coincides with the current $T_{0, t_k}$.

In order to estimate the first integrand in the sum on the right-hand side of \eqref{e:comparison-cm-7}, observe that the identity 
\[
T_{0, t_k} \res (\boldsymbol{\Gamma}\times \pi_k^\perp) = Q \llbracket \mathbf{G}_{\boldsymbol{\varphi}_k}\rrbracket
\]
follows from \cite{DLS16centermfld}*{Corollary~2.2}. In particular $\boldsymbol{\varphi}_k \equiv \boldsymbol{\eta}\circ f_k$ on $\boldsymbol{\Gamma}\cap K$ and so we can estimate
\begin{equation}\label{e:comparison-cm-9}
\int_{\boldsymbol{\Gamma}\cap B_2} |\boldsymbol{\varphi}_k - \boldsymbol{\eta}\circ f_k| \leq 
|B_3\setminus K| \|\boldsymbol{\varphi}_k - \boldsymbol{\eta}\circ f_k\|_{C^0}
\leq C \boldsymbol{m}_{0,k}^{1+2\gamma}\, .
\end{equation}

As for the remaining summands in the right hand side of \eqref{e:comparison-cm-7}, we introduce the plane of reference $\pi_L$ of \cite{DLS16centermfld}*{Definition~1.14}, the $\pi_L$-approximation $f_L$ of Lemma \cite{DLS16centermfld}*{Lemma 1.15}, and the tilted interpolating function $h_L$ and the interpolating function $g_L$ of \cite{DLS16centermfld}*{Definition~1.16}. We start by appealing to \cite{DLS16centermfld}*{Proposition~4.4(v)\& Theorem~1.17(ii)} to estimate 
\begin{equation}\label{e:comparison-cm-11}
\int_L |\boldsymbol{\varphi}_k - g_L|
\leq C \boldsymbol{m}_{0,k} \ell (L)^{m+3+\beta_2/3}\, .
\end{equation}
Next, let $f'_L$ and $(\boldsymbol{\eta}\circ f_L)'$ be the functions defined on $L$ and taking values, respectively, on $\mathcal{A}_Q (\pi_L^\perp)$ and $\pi_L^\perp$, whose graphs coincide with the graphs of $f_L$ and $\boldsymbol{\eta}\circ f_L$ on $L\times \pi_k^\perp$. We first use \cite{DLS16centermfld}*{Lemma~B.1(b)} to estimate 
\begin{equation}\label{e:comparison-cm-12}
\int_L |g_L - (\boldsymbol{\eta}\circ f_L)'|\leq C \int_{B_{2\sqrt{m} \ell (L)} (p_L, \pi_L)} |h_L - \boldsymbol{\eta}\circ f_L|,
\end{equation}
where $p_L$ is the center of $L$, while by \cite{DLS16centermfld}*{Proposition~5.2}, we have
\begin{equation}\label{e:comparison-cm-13}
 \int_{B_{2\sqrt{m} \ell (L)} (p_L, \pi_L)} |h_L - \boldsymbol{\eta}\circ f_L| \leq C \boldsymbol{m}_{0,k} \ell (L)^{m+3+\beta_2}\, .
\end{equation}
In addition, \cite{DLS16centermfld}*{Lemma~5.6} gives us the estimate
\begin{equation}\label{e:comparison-cm-14}
\int_L |(\boldsymbol{\eta}\circ f_L)' - \boldsymbol{\eta} \circ (f_L')|
\leq C \boldsymbol{m}_{0,k} \ell (L)^{m+3+\beta_2/2}\, .
\end{equation}
Putting \eqref{e:comparison-cm-11}, \eqref{e:comparison-cm-12}, \eqref{e:comparison-cm-13}, and \eqref{e:comparison-cm-14} together we then reach 
\begin{equation}\label{e:comparison-cm-15}
\int_L |\boldsymbol{\varphi}_k - \boldsymbol{\eta}\circ (f_L')|
\leq C \boldsymbol{m}_{0,k} \ell (L)^{m+3+\gamma}\, ,
\end{equation}
for some $\gamma > 0$. Next, observe that by \cite{DLS16centermfld}*{Lemma~1.15} there is a set $K'_L\subset L$ such that in $K'_L \times \pi_k^\perp$, the current $T$ coincides with the graph of $f_L'$ and such that 
\begin{equation}\label{e:comparison-cm-16}
|L\setminus K_L'|\leq C \boldsymbol{m}_{0,k}^{1+\gamma_1} \ell (L)^{m}
\end{equation}
It thus turns out that $f_L'$ and $f_k$ coincide over $K_L'\cap K$. In particular we can estimate
\begin{equation}\label{e:comparison-cm-17}
\int_L| \boldsymbol{\eta}\circ f_L' - \boldsymbol{\eta}\circ f_k|
\leq C (|L\setminus K|+ |L\setminus K'_L|) \boldsymbol{m}_{0,k}^{\gamma_1} \leq C\boldsymbol{m}_{0,k}^{1+2\gamma_1}\ell(L)^m \, ,
\end{equation}
which combined with \eqref{e:comparison-cm-15} gives
\begin{equation}\label{e:comparison-cm-18}
\int_L| \boldsymbol{\varphi}_k - \boldsymbol{\eta}\circ f_k|
\leq C \boldsymbol{m}_{0,k} \ell (L)^{m+3+\gamma} + C\boldsymbol{m}_{0,k}^{1+2\gamma_1}\ell(L)^m\, .
\end{equation}
Since the collection $\mathscr{W}'$ consists of disjoint cubes contained in $B_3$, we can sum \eqref{e:comparison-cm-18} over $L\in \mathscr{W}'$ to reach 
\begin{align}
\sum_{L\in \mathscr{W}'} \int_L |\boldsymbol{\varphi}_k - \boldsymbol{\eta}\circ f_k| &
\leq C \boldsymbol{m}_{0,k} + C \boldsymbol{m}_{0,k}^{1+2\gamma_1} \leq C \boldsymbol{m}_{0,k}\, .\label{e:comparison-cm-19}
\end{align}
Clearly, \eqref{e:comparison-cm-7}, \eqref{e:comparison-cm-9}, and \eqref{e:comparison-cm-19} imply \eqref{e:comparison-cm-3} and thus complete the proof.
\end{proof}

\begin{proof}[Proof of Lemma~\ref{lem:cmLip}]
	We begin with the estimate~\eqref{eq:cmLip1}. Due to the fact that $\|\boldsymbol{\vphi}_{\bar{r}}\|_{C^2} \leq C \boldsymbol{m}_0^{1/2}$ and the estimates in~\cite{DLS14Lp}*{Theorem~2.4}, we have
	\begin{align*}
		\int_{\Cbf_{\bar{r}}(0,\pi_{\bar{r}})} \Big| \vec{\Mcal}(\mathbf{p}(z)) - \vec{\Mcal}\big(\boldsymbol{\vphi}_{\bar{r}}(\mathbf{p}_{\pi_{\bar{r}}}(z))\big) \Big| \dd\|\Gbf_{f}\|(z) &\leq C \boldsymbol{m}_0^{1/2} \int_{\Cbf_{\bar{r}}(0,\pi_{\bar{r}})} |\mathbf{p} - \boldsymbol{\vphi}_{\bar{r}}\circ\mathbf{p}_{\pi_{\bar{r}}}| \dd\|\Gbf_{f}\| \\
		&\leq C \boldsymbol{m}_0^{1/2} \int_{K \times \pi_{\bar{r}}^\perp} |\mathbf{p} - \boldsymbol{\vphi}_{\bar{r}}\circ\mathbf{p}_{\pi_{\bar{r}}}| \dd\|T\| \\
		&\qquad + C\bar{r}^{m+1}\boldsymbol{m}_0^{1+\gamma_1}.
	\end{align*}
	Now by the definition of the scale $\bar{r}$, we may use the height bound~\cite{DLS16centermfld}*{Corollary~2.2}, the estimates in~\cite{DLS16centermfld}*{Proposition~4.1} and to deduce that
	\begin{align*}
		\int_{K \times \pi_{\bar{r}}^\perp} |\mathbf{p}-\boldsymbol{\vphi}_{\bar{r}}\circ\mathbf{p}_{\pi_{\bar{r}}}| \dd\|T\| &\leq \int_{K \times \pi_{\bar{r}}^\perp} |\mathbf{p}(z) - z| \dd\|T\|(z) \\
		&\qquad + \int_{K \times \pi_{\bar{r}}^\perp} |z - \boldsymbol{\vphi}_{\bar{r}}\circ\mathbf{p}_{\pi_{\bar{r}}}(z)| \dd\|T\|(z) \\
		&\leq C\bar{r}^{m+1+\beta_2}\boldsymbol{m}_0^{1/2 +1/2m}.
	\end{align*}
	This gives the claimed estimate~\eqref{eq:cmLip1}. The estimate~\eqref{eq:cmLip2} follows analogously, only at unit scale and via the cover of $B_1$ with Whitney cubes of $\mathscr{W}$ and the coincidence region $\boldsymbol{\Gamma}$, as in~\cite{DLS16blowup}*{Section~4}.
\end{proof}

\subsection{Frequency jumps} While this completes the proof of the desired BV bound, we wish to isolate one more general version of the estimates on the ``jumps'' of the frequency function at the endpoint scales $t_j$, only this time, we want to compare the frequency functions at comparable scales, relative to two center manifolds with different centers. This will prove crucial in our subsequent work \cite{DLSk2}. It follows directly from the above arguments, after observing that we are just using the presence of a ``stopping cube'' in one of the two center manifolds construction, at the desired scale, which is not ``too small'', together with the fact that at all larger scales there are no stopping cubes which are too large. We are in addition using the fact that all constants in the estimates on the center manifold and the associated normal approximation are independent of the center point of the construction (cf. \cite{Sk21}).

\begin{lemma}\label{lem:compare-frequency}
Consider $T$ and $\Sigma$ as in Assumption \ref{asm:1}, let $z$ and $w$ be such that $\Theta (T, z)=\Theta (T,w) = Q$ and let $r\leq r_0, r_1$ be three positive numbers such that:
\begin{itemize}
    \item[(a)] $T_{z, r_0}$ falls under the Assumptions of \cite{DLS16centermfld}*{Theorem 1.17} and $\boldsymbol{\varphi}_0: [-4,4]^m \supset \pi_0 \to \pi_0^\perp$ is the graphical map describing the center manifold $\mathcal{M}_0$ constructed in that theorem applied to $T_{z,r_0}$.
    \item[(b)] $T_{w, r_1}$ falls under the Assumptions of \cite{DLS16centermfld}*{Theorem 1.17} and $\boldsymbol{\varphi}_1: [-4,4]^m \supset \pi_1 \to \pi_1^\perp$ is the graphical map describing the center manifold $\mathcal{M}_1$ constructed in that theorem applied to $T_{w,r_1}$.
    \item[(c)] For the families of Whitney cubes $\mathscr{W}_0$ and $\mathscr{W}_1$ of \cite{DLS16centermfld}*{Definition 1.10} used in the construction of the respective center manifolds, we have
    \begin{align}
    \ell (L) &{<} c_s \rho \qquad \forall \rho\in \left[\frac{r}{r_0}, 4\right] \qquad \forall L\in \mathscr{W}_0 \mbox{ s.t. } L\cap B_\rho (0, \pi_0)\neq \emptyset\\
    \ell (L) &{<} c_s \rho \qquad \forall \rho\in \left[\frac{r}{r_1}, 4\right] \qquad \forall L\in \mathscr{W}_1 \mbox{ s.t. } L\cap B_\rho (0, \pi_1)\neq \emptyset\, ,
    \end{align}
    where $c_s$ is the geometric constant of \cite{DLS16blowup}*{Section 2}.
\end{itemize}
Define
\begin{align*}
\bar{c}_s&:= \max \{\ell (L):  L\in \mathscr{W}^e_0 \;\mbox{and}\; L\cap B_{r/r_0} (0, \pi_0)
\neq\emptyset\}\,   
\end{align*}
and let $N_0$ and $N_1$ be the graphical approximations of $T_{z,r_0}$ on $\mathcal{M}_0$ and $T_{w, r_1}$ on $\mathcal{M}_1$ respectively. Consider the points $x_1=(0, \boldsymbol{\varphi}_1 (0))\in \mathcal{M}_0$ and $x_0 = (\boldsymbol{p}_{\pi_0} (r_1^{-1} (w-z)), \boldsymbol{\varphi}_0 (r_0^{-1} (w-z)))\in\Mcal_1$. Then we have (cf. \eqref{eq:BVjumps}) the estimate
\[
|\mathbf{I}_{N_0} (x_0, r_0^{-1} r)) - \mathbf{I}_{N_1} (x_1, r_1^{-1} r)|
\leq \bar{C} \boldsymbol{m}_0^{\gamma_2} (1+\mathbf{I}_{N_0} (x_0, r_0^{-1} r))\, ,
\]
where the constant $\bar{C}$ depends on $m$, $n$, $\bar n$, $Q$, and $\bar{c}_s$. 
\end{lemma}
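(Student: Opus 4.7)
The strategy is to replicate, with minor but crucial modifications, the jump estimate \eqref{eq:BVjumps} from the proof of Proposition \ref{prop:bv}. There one compared the frequencies associated with two center manifolds built at the same base point but at consecutive flattening radii; the role played there by the stopping condition at $t_k$ is now played by assumption (c) together with the parameter $\bar c_s$, which guarantees simultaneously (i) that both $\Mcal_0$ and $\Mcal_1$ exist and satisfy the standard estimates of \cite{DLS16centermfld}*{Theorem 1.17} on the whole range of scales $[\frac{r}{r_0},4]$ and $[\frac{r}{r_1},4]$, and (ii) the existence of an excess cube $L\in\mathscr{W}_0^e$ with $\ell(L)\geq \bar c_s \frac{r}{r_0}$ meeting $B_{r/r_0}(0,\pi_0)$, which supplies the lower bound on the $L^2$-height needed in the denominator. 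The independence of the constants in the center manifold construction on the center point, established in \cite{Sk21}, is what lets us run all of the arguments with the off-center base points $x_0,x_1$.

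The first step is to fix a common planar reference. Let $\pi$ be the plane optimizing the excess of the rescaled current at some common intermediate point and comparable absolute scale; by \cite{DLS16centermfld}*{Proposition 4.1} we have $|\pi-\pi_i|\leq C\boldsymbol{m}_0^{1/2}$ for $i=0,1$. Let $\tilde{\boldsymbol{\varphi}}_0,\tilde{\boldsymbol{\varphi}}_1$ be the reparametrizations of $\Mcal_0,\Mcal_1$ as graphs over $\pi$. The same integration by parts argument used in Lemma \ref{lem:cm} yields
\[
\int |D\tilde{\boldsymbol{\varphi}}_0 - D\tilde{\boldsymbol{\varphi}}_1|^2 \leq C\boldsymbol{m}_0^{3/2}, \qquad \int |\tilde{\boldsymbol{\varphi}}_0 - \tilde{\boldsymbol{\varphi}}_1|^2 \leq C\boldsymbol{m}_0\, ,
\]
since both $\tilde{\boldsymbol{\varphi}}_i$ parametrize a portion of the same current that, at the common scale $r$, enjoys the tilt and height estimates of \cite{DLS16centermfld}.

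Next, introduce a $\pi$-approximation $f$ of the appropriately translated and rescaled current via \cite{DLS14Lp}*{Theorem 2.4}, and apply Corollary \ref{cor:graphDir} (in its variant at the scale $r/r_i$, $i=0,1$) to compare each of $\bar{\mathbf{D}}_{N_i}(x_i,r/r_i)$ with the common graphical Dirichlet energy $\int \mathcal{G}(Df,Q\llbracket D\tilde{\boldsymbol{\varphi}}_i\rrbracket)^2 \phi$. Invoking Lemma \ref{lem:cmLip} to dispose of the error involving the normal projection onto the two center manifolds, then combining with the Dirichlet comparison $\int |D\tilde{\boldsymbol{\varphi}}_0-D\tilde{\boldsymbol{\varphi}}_1|^2\leq C\boldsymbol{m}_0^{3/2}$ above and the higher-order Lipschitz and height bounds \cite{DLS16centermfld}*{Theorem 1.17, Theorem 2.4}, one obtains the analog of \eqref{eq:BV1}:
\[
\bigl| \bar{\mathbf{D}}_{N_0}(x_0,r/r_0) - \bar{\mathbf{D}}_{N_1}(x_1,r/r_1)\bigr| \leq C\boldsymbol{m}_0^{1+\gamma_2}\, .
\]
An entirely parallel argument for the heights, this time using the reparametrization Lemma \ref{l:reparametrization-lemma} (with the truncation trick already employed in the proof of Proposition \ref{p:coarse=fine}) instead of Corollary \ref{cor:graphDir}, yields the analog of the height part of \eqref{eq:BV2}:
\[
\bigl| \bar{\mathbf{H}}_{N_0}(x_0,r/r_0) - \bar{\mathbf{H}}_{N_1}(x_1,r/r_1)\bigr| \leq C\boldsymbol{m}_0^{2}\, .
\]

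To close the argument, we use $\bar c_s$: by the definition of $\bar c_s$ there is an excess cube $L_0\in\mathscr{W}_0^e$ with $L_0 \cap B_{r/r_0}(0,\pi_0)\neq \emptyset$ and $\ell(L_0)\geq \bar c_s \frac{r}{r_0}$ (non-emptiness of this family is ensured by \cite{DLS16centermfld}*{Proposition 3.1, Corollary 3.2} since $\Theta(T,z)=Q$). Applying \cite{DLS16centermfld}*{Proposition 3.4} in the corresponding Whitney region then produces
\[
\bar{\mathbf{H}}_{N_0}(x_0, r/r_0) \geq c(\bar c_s)\,\boldsymbol{m}_0\, ,
\]
with a constant depending on $\bar c_s$. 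Writing
\[
\mathbf{I}_{N_0}(x_0,r/r_0) - \mathbf{I}_{N_1}(x_1,r/r_1) = \frac{\bar{\mathbf{D}}_{N_0}(x_0,r/r_0) - \bar{\mathbf{D}}_{N_1}(x_1,r/r_1)}{\bar{\mathbf{H}}_{N_0}(x_0,r/r_0)} - \mathbf{I}_{N_1}(x_1,r/r_1)\, \frac{\bar{\mathbf{H}}_{N_0}(x_0,r/r_0) - \bar{\mathbf{H}}_{N_1}(x_1,r/r_1)}{\bar{\mathbf{H}}_{N_0}(x_0,r/r_0)}\, ,
\]
the first term is bounded by $C(\bar c_s)\boldsymbol{m}_0^{\gamma_2}$ and the second by $C(\bar c_s)\boldsymbol{m}_0\,\mathbf{I}_{N_1}(x_1,r/r_1)$; a final rearrangement (using the already-established first-order comparison to replace $\mathbf{I}_{N_1}$ by $1+\mathbf{I}_{N_0}$ at the cost of constants) delivers the stated estimate.

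The main obstacle is not computational but structural: one must verify that every ingredient of the jump computation for \eqref{eq:BVjumps} (tilting estimates, reparametrization, Lipschitz approximations, graphical Dirichlet comparisons, the reparametrization lemma for heights, and the height lower bound via an excess cube) survives when the two constructions are centered at \emph{different} base points $z,w$ and when the frequencies are evaluated at the shifted centers $x_0\neq 0$, $x_1\neq 0$. This is ultimately a consequence of the center-point-independence of constants shown in \cite{Sk21} and of the symmetry built into assumption (c). The dependence of the final constant on $\bar c_s$ enters only through the lower bound on $\bar{\mathbf{H}}_{N_0}$ in Step 4; shrinking $\bar c_s$ deteriorates this bound but not the qualitative form of the estimate.
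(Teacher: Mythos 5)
Your proof proposal is correct and follows essentially the same route as the paper, which itself deliberately leaves the details to the reader by observing that the lemma ``follows directly from the above arguments'' (the jump estimate \eqref{eq:BVjumps} in Proposition \ref{prop:bv}), combined with the presence of a stopping cube of controlled size (your $\bar c_s$ lower bound on $\bar{\mathbf{H}}_{N_0}$) and the center-point independence of constants from \cite{Sk21}. You correctly identify and assemble all the same ingredients (Lemma \ref{lem:cm}, Corollary \ref{cor:graphDir}, Lemma \ref{lem:cmLip}, Lemma \ref{l:reparametrization-lemma}, and the height lower bound via the excess cube), so this is a faithful expansion of the paper's intended argument.
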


\section{Proof of Theorem \texorpdfstring{\ref{thm:uniquefv}}{thm:uniquefv}: the case \texorpdfstring{$\Irm(T,0) > 1$}{I(T,0)>1}}\label{ss:excess-decay}

The goal of this section is to prove that the singular frequency value is unique when $\Irm (T,0) >1$. The proof will also show that the tangent cone is then a unique flat plane and that the rescaled currents converge polynomially fast to it. In particular this section will settle Theorem \ref{t:consequences}\eqref{itm:consequences4}, but also Theorem \ref{t:consequences}\eqref{itm:consequences1},\eqref{itm:consequences2}\&\eqref{itm:consequences3} when $\Irm (T,0)>1$.

\begin{proposition}\label{prop:excessdecay}
Let $T$ be as in Theorem \ref{thm:uniquefv}. Then the conclusions \eqref{itm:consequences1}-\eqref{itm:consequences4} of Theorem \ref{t:consequences} hold whenever $\Irm (T, 0) > 1$.
\end{proposition}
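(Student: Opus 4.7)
The strategy is a bootstrap: $\Irm(T,0) > 1$ yields an effective lower bound on the frequency, which gives polynomial decay of the excess, which makes the BV sum of Proposition \ref{prop:bv} summable, which in turn forces uniqueness of the frequency value and all four conclusions.

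Fix $\alpha \in (1, \Irm(T,0))$. By the strong $W^{1,2}_{\loc}$ convergence $u_k \to u$ in the compactness procedure of Section \ref{ss:compactness} and the monotonicity of the smoothed frequency for Dir-minimizers, any blow-up sequence $r_k \downarrow 0$ satisfies $\lim_k \mathbf{I}_{N_{j(k)}}(\rho r_k/t_{j(k)}) = I_u(\rho) \geq I_u(0) \geq \Irm(T,0)$ for every $\rho \in (0,1]$. Consequently $\mathbf{I}(r) \geq \alpha$ for all $r$ inside intervals of flattening below a uniform threshold $r_0$ --- otherwise, an offending subsequence would yield a fine blow-up with frequency value $< \alpha < \Irm(T,0)$, contradicting the infimum characterization of the singularity degree.

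Feeding this lower bound into the first-variation identity \eqref{eq:firstvar2} for $\mathbf{H}$, I would argue inductively on the interval index $k$: at each step, the jump estimate \eqref{eq:BVjumps} at $r = t_k$ costs only $O(\boldsymbol{m}_{0,k}^{\gamma_2})$, the single-interval refined estimate \eqref{eq:bv-improved} controls the variation inside $]s_k, t_k]$ without appeal to the full sum, and Lemma \ref{lem:height-excess} ties $\mathbf{H}_{N_k}(1)$, $\boldsymbol{m}_{0,k}$, and the planar excess $\mathbf{E}(T_{0,t_k}, \mathbf{B}_{6\sqrt{m}})$ together. This yields polynomial decay $\boldsymbol{m}_{0,k} \lesssim t_k^\beta$ for some $\beta > 0$, geometric summability of $\sum_k \boldsymbol{m}_{0,k}^{\gamma_4}$, and -- via Lemma \ref{l:finite-vs-infinite} -- exclusion of the gap alternative at small scales, so that $\mathbf{I}(r)$ is well-defined on $]0, t_{j_0}]$ (after discarding initial intervals).

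With $\sum_k \boldsymbol{m}_{0,k}^{\gamma_4} < \infty$, Proposition \ref{prop:bv} gives $\log(\mathbf{I}+1) \in \BV(]0, t_{j_0}])$ and hence the existence of $I_0 := \lim_{r \downarrow 0} \mathbf{I}(r)$. Strong convergence in the fine blow-up compactness then forces $I_u(0) = I_0$ for every fine blow-up $u$, so $\mathcal{F}(T,0) = \{\Irm(T,0)\}$, which is \ref{itm:consequences2} and \ref{itm:consequences3}. The uniform convergence $\mathbf{I}(r) \to \Irm(T,0)$ transfers to $I_u(\rho) \equiv \Irm(T,0)$ on $(0,1]$, and the rigidity clause in the monotonicity of the smoothed frequency for Dir-minimizers (cf. \cite{DLS_MAMS}*{Section~3.5}) forces $u$ to be radially $\Irm(T,0)$-homogeneous, proving \ref{itm:consequences1}. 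For \ref{itm:consequences4}, the polynomial decay $\mathbf{E}(T, \mathbf{B}_r) \lesssim r^\beta$ combined with the standard tilt-excess comparison at dyadic scales produces a Cauchy sequence of optimal planes with polynomial rate, yielding a unique flat tangent cone and polynomial convergence of $T_{0,r}$ to it. The main obstacle is the apparent circularity of the bootstrap --- the BV bound needs summability of $\sum_k \boldsymbol{m}_{0,k}^{\gamma_4}$, while establishing this summability requires propagating the frequency lower bound, which in general uses the BV bound. The single-interval estimate \eqref{eq:bv-improved}, together with the inductive reduction above, is what breaks the circularity.
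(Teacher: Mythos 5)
Your proposal takes a genuinely different route from the paper's, and it has a gap at its foundation that is not cosmetic. The paper does \emph{not} first establish a frequency lower bound $\mathbf{I}(r)\geq \alpha$ over all small radii and then extract decay from the variational identities. Instead, it proves a decay property (Dec) directly, by contradiction: assuming decay fails along a sequence of dyadic scales with $\boldsymbol{m}_{0,k}\to 0$, it invokes Proposition \ref{p:coarse=fine} to identify the average-free part of a coarse blow-up with a fine blow-up, uses $\Irm(T,0)>1$ (via Corollary \ref{c:harm-3}) to get a quantitative Dirichlet decay of the coarse blow-up, and transfers that to excess decay of the current through the Taylor expansion of the area functional. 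The BV estimate of Proposition \ref{prop:bv} enters only after excess decay is in hand, to conclude that $\lim_{r\downarrow 0}\mathbf{I}(r)$ exists.

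The gap in your step one is this: you claim that any sequence $r_k\downarrow 0$ inside intervals of flattening with $\mathbf{I}(r_k)<\alpha$ produces a fine blow-up with frequency value below $\Irm(T,0)$. But a fine blow-up only exists along a \emph{blow-up sequence}, i.e.\ one along which $T_{0,r_k}$ converges to a flat cone. Membership of $r_k$ in an interval of flattening only gives $\mathbf{E}(T,\mathbf{B}_{6\sqrt{m}r_k})\leq C\varepsilon_3^2$, which is small but not infinitesimal, so a subsequential tangent cone could be non-flat. Ruling this out is exactly conclusion \eqref{itm:consequences4}, which you have not yet proved; the argument is therefore circular at the point where you write ``an offending subsequence would yield a fine blow-up with frequency value $<\alpha$.'' The paper sidesteps this by choosing the compactness sequence with $\boldsymbol{m}_{0,k}\to 0$ (which is available thanks to Lemma \ref{l:goodtk}, or by the hypothesis of (Dec)), so that the sequence is automatically a blow-up sequence and Proposition \ref{p:coarse=fine} applies.

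Even granting a frequency lower bound, your step two is underspecified in a way that hides real work. The frequency function $\mathbf{I}_{N_j}$ measures the normal approximation $N_j$ over the center manifold $\mathcal{M}_j$, whereas $\boldsymbol{m}_{0,j+1}$ is essentially the planar excess at the stopping scale $s_j$. To pass from decay of $\mathbf{H}_{N_j}(r)/r^{m-1}$ to decay of the planar excess one needs the comparability $\mathbf{h}_k^2\asymp E_k$ of Lemma \ref{lem:height-excess}, whose proof itself requires assumption \eqref{e:fine-coarse-assumption} and the stopping-scale structure, and in any case only gives two-sided bounds at the stopping scale rather than a clean inductive gain. The paper's compactness argument avoids this by comparing excess to excess across a fixed range of dyadic scales via the coarse blow-up, where the Taylor expansion of the area functional gives a clean quantitative statement. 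If you want to pursue a purely variational route you would need to flesh out precisely how the frequency lower bound propagates to the next $\boldsymbol{m}_{0,k}$, and you would still need to handle the blow-up-sequence issue above.
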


In fact, since it will be useful in our further studies in the papers \cite{DLSk2} and \cite{DMS} we record a consequence of our analysis which is more quantitative.

\begin{proposition}\label{prop:excessdecay-quantitative}
Let $T$ be as in Theorem \ref{thm:uniquefv}. For every $I_0>1$ there are positive constants $C(m,n,Q)$ and $\alpha (I_0,m,n,Q)$ with the following property. If $0$ is a flat singular point at which $\Irm (T, 0) \geq I_0$, then there is a radius $r_0=r_0(T)>0$ (which also implicitly depends on the center point, which we are here assuming is the origin) such that 
\begin{equation}\label{e:excessdecay-quantitative}
\mathbf{E} (T, \mathbf{B}_r) \leq C \left(\frac{r}{r_0}\right)^{\alpha} \max \{\mathbf{E} (T, \mathbf{B}_{r_0}), \bar\varepsilon^2 r_0^{2-2\delta_2}\}\qquad \forall r<r_0\, .
\end{equation}
Moreover, we can choose $\alpha$ to be any number which satisfies the inequalities $\alpha < 2-2\delta_2$ and $\alpha < 2(\Irm (T, 0) -1)$, at the price of a constant $C$ which depends also upon $\alpha$.
\end{proposition}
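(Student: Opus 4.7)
The plan is to adapt the proof of Proposition~\ref{prop:excessdecay} while carefully tracking the decay rate. Given $\alpha$ satisfying both $\alpha<2-2\delta_2$ and $\alpha<2(\Irm(T,0)-1)$, I fix $\epsilon>0$ so small that $\alpha+4\epsilon\leq \min\{2-2\delta_2,\,2(\Irm(T,0)-1)\}$. The central step is the scale-uniform lower bound
\begin{equation*}
\Ibf(r)\geq \Irm(T,0)-\epsilon \qquad \forall r\in\,]0,r_0],
\end{equation*}
for a suitable $r_0=r_0(T)>0$, which will play the role of the $r_0$ appearing in the statement.

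To establish this frequency lower bound I combine two inputs supplied by the qualitative Proposition~\ref{prop:excessdecay}: the convergence $\Ibf(r)\to\Irm(T,0)$ as $r\downarrow 0$ and the decay $\mathbf{E}(T,\mathbf{B}_r)\to 0$. The latter forces $\boldsymbol{m}_{0,k}\to 0$ for every interval of flattening $]s_k,t_k]\subset\,]0,r_0]$, so the tail sum $\sum_{k:\,t_k\leq r_0}\boldsymbol{m}_{0,k}^{\gamma_4}$ can be made arbitrarily small by shrinking $r_0$. Plugging this into the BV bound of Proposition~\ref{prop:bv} controls the negative variation of $\log(\Ibf+1)$ on $]0,r_0]$ by any prescribed small number, and combined with the known limit value this yields the desired lower bound.

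Armed with the frequency bound, on each interval of flattening I integrate, in the rescaled variable $\rho=r/t_k$, the logarithmic ODI
\begin{equation*}
\partial_\rho\log\bar\Hbf_{k}(\rho)=\frac{2\Ibf_{N_k}(\rho)}{\rho}+O(\boldsymbol{m}_{0,k}),
\end{equation*}
which is a direct consequence of Lemma~\ref{lem:firstvar} together with the bound $|\Dbf_{N_k}-\Ebf_{N_k}|\leq C\boldsymbol{m}_{0,k}^{\gamma_4}\Dbf_{N_k}+C\boldsymbol{m}_{0,k}\boldsymbol{\Sigma}_{N_k}$. Successive intervals are spliced using the multiplicative jump estimates for the scale-invariant height derived inside the proof of Proposition~\ref{prop:bv} (the analogues of~\eqref{eq:BVjumps} for $\bar\Hbf_k$, of order $1+O(\boldsymbol{m}_{0,k}^{\gamma_2})$). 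Since $\prod_k(1+C\boldsymbol{m}_{0,k}^{\gamma_2})$ is finite, this produces
\begin{equation*}
\bar\Hbf_{k}(r/t_k)\leq C\left(\frac{r}{r_0}\right)^{2(\Irm(T,0)-2\epsilon)}\bar\Hbf_{k_0}(r_0/t_{k_0})\qquad\forall r\in\,]0,r_0[\, ,
\end{equation*}
where $]s_{k_0},t_{k_0}]$ is the interval containing $r_0$; since $\Ibf_{N_k}$ is uniformly bounded, the identity $\bar\Dbf_k=\Ibf_{N_k}\bar\Hbf_k$ transfers the same decay rate to $\bar\Dbf_k$.

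Finally, to convert this $\bar\Dbf$-decay into the target excess decay I use $\mathbf{E}(T,\mathbf{B}_r)=\mathbf{E}(T_{0,t_k},\mathbf{B}_{r/t_k})$ together with the planar-excess versus normal-Dirichlet comparison of Corollary~\ref{cor:graphDir} combined with Lemma~\ref{lem:height-excess}, which give $\mathbf{E}(T,\mathbf{B}_r)\lesssim (r/t_k)^{-2}\bar\Dbf_k(r/t_k)+\boldsymbol{m}_{0,k}(r/t_k)^{2-2\delta_2}$. Substituting the previous bound and using the standard estimate $\bar\Dbf_k(1)\lesssim\boldsymbol{m}_{0,k}$ yields $\mathbf{E}(T,\mathbf{B}_r)\leq C\boldsymbol{m}_{0,k}(r/t_k)^\alpha$. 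The main obstacle, and the step requiring most care, is to turn this intrinsic bound into the global form~\eqref{e:excessdecay-quantitative}: one proceeds by induction on $k$ starting from the largest interval $]s_{k_0},t_{k_0}]$ (on which the bound is immediate from the choice of $r_0$), using at each step $\boldsymbol{m}_{0,k}\leq C\mathbf{E}(T,\mathbf{B}_{ct_k})+\bar\varepsilon^2 t_k^{2-2\delta_2}$ together with the inductive hypothesis applied at scale $t_k$; the constraint $\alpha<2-2\delta_2$ enters precisely to absorb the second-fundamental-form contribution $\bar\varepsilon^2 t_k^{2-2\delta_2}$ at each stage of the induction.
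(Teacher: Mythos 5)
Your strategy is genuinely different from the paper's. The paper establishes a discrete decay step~(Dec) by a compactness/contradiction argument with coarse blow-ups and the Taylor expansion of the excess, and then iterates it; you instead integrate a logarithmic ODE for the scale-invariant height $\bar{\Hbf}$ using a uniform frequency lower bound, splice across intervals of flattening via the height-jump estimates from Proposition~\ref{prop:bv}, and convert to the excess via the Dirichlet-energy/excess comparison. This is closer in spirit to how the paper handles the case $s_J=0$ in Section~5 than to how it proves Proposition~\ref{prop:excessdecay}. Both routes can work, but as written yours has a gap and an avoidable detour.

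The gap is the summability claim. You assert that because $\boldsymbol{m}_{0,k}\to 0$, the tail sum $\sum_{k:\,t_k\le r_0}\boldsymbol{m}_{0,k}^{\gamma_4}$ can be made arbitrarily small by shrinking $r_0$. That implication is false: $\boldsymbol{m}_{0,k}\to 0$ does not yield convergence of the series (take $\boldsymbol{m}_{0,k}\sim k^{-1}$ and $\gamma_4\le 1$). What actually makes the series converge is geometric decay of $\boldsymbol{m}_{0,k}$, and for that you must invoke the polynomial excess decay of Theorem~\ref{t:consequences}\eqref{itm:consequences4} together with the a priori geometric decay $t_{k+1}\le 2^{-5}t_k$ of the endpoints; both of these together force $\boldsymbol{m}_{0,k}\le C\max\{t_k^{\beta},\bar\varepsilon^2 t_k^{2-2\delta_2}\}$ for some $\beta>0$, whence $\sum_k\boldsymbol{m}_{0,k}^{\gamma_4}<\infty$ and the tail can be made small. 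You need to cite this explicitly; as written, your key step~1 is unjustified.

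Two further remarks. First, the BV estimate is not needed for the frequency lower bound at all: Theorem~\ref{t:consequences}\eqref{itm:consequences3} (which you are already invoking Proposition~\ref{prop:excessdecay} to license) gives directly that the functions $\mathbf{I}_{N_j}$ converge uniformly to $\Irm(T,0)$ when $\Irm(T,0)>1$, so $\Ibf(r)\ge \Irm(T,0)-\epsilon$ for all $r<r_0$ follows immediately, making the whole BV detour superfluous. Second, there is a potential circularity concern in quoting Proposition~\ref{prop:excessdecay} and Theorem~\ref{t:consequences}\eqref{itm:consequences1}--\eqref{itm:consequences4} at all, since the paper proves them in the same breath as Proposition~\ref{prop:excessdecay-quantitative}; this is resolvable (the qualitative version can certainly be proved first without tracking the exponent, by the paper's own (Dec) argument applied for some unoptimized $\alpha$), but your write-up should make the two-step structure explicit rather than treat Proposition~\ref{prop:excessdecay} as an unrelated black box.
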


Before coming to the proof of the proposition we state the following technical fact which will prove to be very useful.

\begin{lemma}\label{l:goodtk}
Let $T$ be as in Theorem \ref{thm:uniquefv}. If there are infinitely many intervals of flattening, then 
\[
\liminf_{k\to\infty} \mathbf{E} (T, \mathbf{B}_{6\sqrt{m} t_k}) = 0\,  
\]
and hence 
\[
\liminf_{k\to\infty} \boldsymbol{m}_{0,k} = 0
\]
\end{lemma}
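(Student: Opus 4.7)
The plan is to identify a subsequence of the endpoint scales $\{t_k\}$ along which the rescaled currents $T_{0, t_k}$ converge to a flat cone. Since the intervals of flattening $]s_k, t_k]$ are pairwise disjoint subsets of $]0,1]$, if they are infinitely many, indexing them in decreasing order of $t_k$ gives $t_k \downarrow 0$. By the flat singular point assumption there is a flat tangent cone $Q\llbracket \pi_0\rrbracket$, so we may fix a sequence $\rho_j \downarrow 0$ with $T_{0,\rho_j} \to Q\llbracket \pi_0\rrbracket$ (weakly with mass) and hence $\mathbf{E}(T, \mathbf{B}_{6\sqrt{m}\rho_j}) \to 0$. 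For large $j$ this excess is $<\varepsilon_3^2$, so each $\rho_j$ lies in some interval $]s_{k_j}, t_{k_j}]$; disjointness of the intervals forces $t_{k_j} \to 0$ as $\rho_j \to 0$. After extraction, $\alpha_j := \rho_j/t_{k_j} \to \alpha \in [0,1]$.

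If $\alpha > 0$, one uses the identity $T_{0, t_{k_j}} = (\iota_{0, 1/\alpha_j})_\sharp T_{0, \rho_j}$ with $1/\alpha_j \to 1/\alpha < \infty$. Continuity of pushforward under a converging family of dilations acting on a convergent sequence of currents (with mass convergence), together with the dilation-invariance of the flat cone, gives $T_{0, t_{k_j}} \to Q\llbracket \pi_0\rrbracket$, and then continuity of excess delivers $\mathbf{E}(T, \mathbf{B}_{6\sqrt{m}t_{k_j}}) \to 0$.

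The case $\alpha = 0$, i.e.\ $\rho_j \ll t_{k_j}$, is the main obstacle. Here I would extract a further subsequence so that $T_{0,t_{k_j}}$ converges to an area-minimizing cone $C$ in $T_0\Sigma$, necessarily with $\mathbf{E}(C, \mathbf{B}_{6\sqrt{m}}) \leq \varepsilon_3^2$ and $\theta(C, 0) = Q$, and then argue that $C$ must still be flat. Two possible routes: (a) invoke rigidity for area-minimizing cones of small excess with integer density at the origin -- by \cite{DLS14Lp} such a $C$ admits a $Q$-valued Lipschitz graphical approximation over a plane, the cone property forces this approximation to be $1$-homogeneous hence linear, and area-minimality then forces $C = Q\llbracket \pi\rrbracket$ for a single plane $\pi$; this requires $\varepsilon_3$ below a geometric threshold compatible with the center manifold construction. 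Alternatively, (b) a diagonal argument exploiting the scale-invariance of $C$: for any fixed $R>0$ one has $\mathbf{E}(T_{0, t_{k_j}}, \mathbf{B}_{\alpha_j R}) = \mathbf{E}(T, \mathbf{B}_{6\sqrt{m}\rho_j R}) \to 0$, while $\mathbf{E}(C, \mathbf{B}_R) = \mathbf{E}(C, \mathbf{B}_1)$ is constant in $R$; a careful interchange of limits then forces $\mathbf{E}(C, \mathbf{B}_1)=0$. Either route yields $\mathbf{E}(T, \mathbf{B}_{6\sqrt{m} t_{k_j}}) \to 0$.

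The second conclusion is immediate: along the chosen subsequence both $\mathbf{E}(T_{0, t_k}, \mathbf{B}_{6\sqrt{m}}) \to 0$ and $\bar\varepsilon^2 t_k^{2-2\delta_2} \to 0$, so $\liminf_k \boldsymbol{m}_{0,k} = 0$.
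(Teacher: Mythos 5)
Your $\alpha>0$ branch is sound (and morally matches one of the two alternatives in the paper's proof). The $\alpha = 0$ branch, however, has a genuine gap that neither of your two routes closes. Route (a) relies on a rigidity statement that is false for $Q \geq 2$: there exist non-flat area-minimizing cones of density $Q$ at the origin with arbitrarily small spherical excess. For instance, with $Q=2$, the calibrated cone $\llbracket w = \sqrt{\epsilon}\,z\rrbracket + \llbracket w = -\sqrt{\epsilon}\,z\rrbracket$ in $\mathbb{C}^2\cong\mathbb{R}^4$ has excess $O(\epsilon)$ yet is not supported in a plane; so no admissible choice of $\varepsilon_3$ can force the limit $C$ of $T_{0,t_{k_j}}$ to be flat, and in the $\alpha=0$ regime $C$ may genuinely be a non-flat cone. (Your claim that a $1$-homogeneous $Q$-valued Lipschitz map must be linear is also not true.) Route (b) fails as well: the convergence $T_{0,t_{k_j}}\to C$ controls the excess only at \emph{fixed} radii, whereas your observation concerns the vanishing radii $\alpha_j R\downarrow 0$. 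It is perfectly consistent to have $\mathbf{E}(T_{0,t_{k_j}}, \mathbf{B}_{\alpha_j R})\to 0$ for each fixed $R$ while $\mathbf{E}(T_{0,t_{k_j}}, \mathbf{B}_1)\to\mathbf{E}(C,\mathbf{B}_1)>0$; there is no interchange of limits available.

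The idea you are missing is to target the \emph{left} endpoints $s_{k_j}$ rather than the right endpoints $t_{k_j}$, using the excess decay inside an interval of flattening that comes from the center manifold construction: $\mathbf{E}(T,\mathbf{B}_{6\sqrt{m}\,s_{k_j}})\leq C\,(s_{k_j}/t_{k_j})^{2-2\delta_2}\,\boldsymbol{m}_{0,k_j}$. In your $\alpha=0$ case one has $s_{k_j}/t_{k_j}<\rho_j/t_{k_j}\to 0$, so this estimate immediately gives $\mathbf{E}(T,\mathbf{B}_{6\sqrt{m}\,s_{k_j}})\to 0$. Once this excess drops below $\varepsilon_3^2$, the flattening algorithm forces $s_{k_j}=t_{k_j+1}$, which produces the desired vanishing subsequence of endpoint excesses. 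The paper in fact bypasses your $\alpha$ altogether and splits directly on whether $s_{k_j}/\rho_j$ vanishes or is bounded below: in the first case $s_{k_j}/t_{k_j}\leq s_{k_j}/\rho_j\to 0$ and the center manifold decay applies, in the second the mass-ratio monotonicity gives $\mathbf{E}(T,\mathbf{B}_{6\sqrt{m}\,s_{k_j}})\leq \sigma^{-m}\mathbf{E}(T,\mathbf{B}_{6\sqrt{m}\,\rho_j})\to 0$. Either way one lands on $s_{k_j}$, which is what makes the argument close.
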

\begin{proof} The second conclusion is an obvious consequence of the first. In order to prove the first
take a sequence $r_j$ such that $r_j\to 0$ and $\mathbf{E} (T, \mathbf{B}_{6\sqrt{m} r_j}) \to 0$. Then $r_j$ belongs to some interval of flattening $]s_{k(j)}, t_{k(j)}]$. We claim that 
\begin{equation}\label{e:excess-left-endpoint}
\lim_{j\to\infty} \mathbf{E} (T, \mathbf{B}_{6\sqrt{m} s_{k(j)}}) = 0\, ,
\end{equation}
which clearly would imply $s_{k(j)}=t_{k(j)+1}$ and hence the conclusion of the lemma.

Up to extraction of a further subsequence, we distinguish two cases:
\begin{itemize}
\item[(i)] If $\frac{s_{k(j)}}{r_j} \to 0$, since
\[
\mathbf{E} (T, \mathbf{B}_{6\sqrt{m} s_{k(j)}}) \leq C \left(\frac{s_{k(j)}}{t_{k(j)}}\right)^{2-2\delta_2} \boldsymbol{m}_{0,k(j)} \leq C \left(\frac{s_{k(j)}}{r_j}\right)^{2-2\delta_2} \varepsilon_3^2\, ,
\]
we conclude immediately that \eqref{e:excess-left-endpoint} holds.
\item[(ii)] If $\inf_j \frac{s_{k(j)}}{r_j} = \sigma>0$, we then estimate
\[
\mathbf{E} (T, \mathbf{B}_{6\sqrt{m} s_{k(j)}}) \leq \sigma^{-m} \mathbf{E} (T, \mathbf{B}_{6\sqrt{m} r_j}) 
\]
and again \eqref{e:excess-left-endpoint} follows immediately.
\end{itemize}
\end{proof}

We will also need the following two facts about Dir-minimizing functions. For the first one we refer to \cite{DLS_MAMS}, while the second is a well-known fact about classical harmonic functions and can be proved, for instance, using the expansion into spherical harmonics.

\begin{lemma}\label{l:harm-1}
If $u: \mathbb R^m \supset B_1 \to \mathcal{A}_Q (\mathbb R^n)$ is a Dir-minimizing function with $I_u (0) = I_0$, then
\begin{equation}\label{e:decay_harmonic_1}
\int_{B_\rho} |Du|^2 \leq \rho^{m+2I_0-2} \int_{B_1} |Du|^2 \qquad \forall \rho<1\, .
\end{equation}
\end{lemma}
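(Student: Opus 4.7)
The plan is to run the standard frequency-function decay argument from \cite{DLS_MAMS}*{Section~3.4} applied to Almgren's (unsmoothed) frequency function. Let
\[
D(r) := \int_{B_r} |Du|^2, \qquad H(r) := \int_{\partial B_r} |u|^2, \qquad I(r) := \frac{r D(r)}{H(r)}.
\]
The first step is to recall the two key variational identities for Dir-minimizers: the outer variation gives $\int_{\partial B_r} \sum_i u_i\, \partial_\nu u_i = D(r)$, and differentiation of $H$ then yields
\[
\frac{d}{dr}\log \frac{H(r)}{r^{m-1}} = \frac{2 I(r)}{r}
\]
(for a.e.\ $r$), which is the starting point of the whole decay.

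Next, I would invoke monotonicity of $I$ for Dir-minimizers (\cite{DLS_MAMS}*{Theorem~3.15}): the function $r \mapsto I(r)$ is nondecreasing on $(0,1]$ and its limit as $r\downarrow 0$ equals $I_u(0)=I_0$. In particular
\[
I_0 \;\leq\; I(r) \;\leq\; I(1) \qquad \forall\, r\in (0,1].
\]

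The third step is integration: using $\frac{d}{dr}\log(H(r)/r^{m-1}) \geq 2I_0/r$, integrating from $\rho$ to $1$, and exponentiating gives
\[
\frac{H(\rho)}{\rho^{m-1}} \;\leq\; H(1)\, \rho^{2I_0}, \qquad \text{i.e.,}\qquad H(\rho) \leq H(1)\, \rho^{m-1+2I_0}.
\]

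Finally, combining $D(\rho) = I(\rho) H(\rho)/\rho$ with $I(\rho)\leq I(1)$ and the above bound on $H$ yields
\[
D(\rho) \;\leq\; \frac{I(1) H(1)}{\rho}\, \rho^{m-1+2I_0} \;=\; D(1)\, \rho^{m-2+2I_0},
\]
which is the desired inequality. The argument is entirely routine once the monotonicity of $I$ and the identity for $(H/r^{m-1})'$ are in hand, so there is no substantive obstacle; the only minor care needed is to ensure $H(r)>0$ for $r\in (0,1]$ (which follows from $u\not\equiv 0$ on any ball, a consequence of $I_u(0)$ being finite) so that $I(r)$ is well defined on the whole interval of integration.
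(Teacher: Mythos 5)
Your proof is correct, and it is the standard frequency-monotonicity argument: the paper itself defers the proof to \cite{DLS_MAMS} (it gives no independent argument), and what you wrote is precisely the derivation one extracts from the identities and the monotonicity theorem cited there. The key chain — $\frac{d}{dr}\log(H(r)/r^{m-1}) = 2I(r)/r \geq 2I_0/r$ giving $H(\rho)\leq H(1)\rho^{m-1+2I_0}$, combined with $I(\rho)\leq I(1)$ to pass from $H$ to $D$ with constant exactly $1$ — is the intended route.
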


\begin{lemma}\label{l:harm-2}
If $w: \mathbb R^m \supset B_1 \to \mathbb R^n$ is a classical harmonic function, then
\begin{equation}\label{e:decay_harmonic_2}
\int_{B_\rho} |Dw - Dw (0)|^2 \leq \rho^{m+2} \int_{B_1} |Dw|^2 \qquad \forall \rho<1\, .
\end{equation}
\end{lemma}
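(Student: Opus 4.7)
The plan is to exploit the fact that $w$ being harmonic means each partial derivative is also harmonic, so $Dw$ is a vector-valued harmonic function on $B_1$, and then to use the standard spherical-harmonic expansion together with the mean value property.

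First I would set $h := Dw - Dw(0)$, which is a harmonic $\mathbb{R}^{n\times m}$-valued function on $B_1$ with $h(0)=0$. Writing $h$ componentwise and using the classical decomposition of harmonic functions on the ball, each component of $h$ admits an expansion $\sum_{k\geq 0} H_k$, where $H_k$ is a homogeneous harmonic polynomial of degree $k$. The condition $h(0)=0$ forces $H_0=0$, so the sum starts at $k=1$. By the orthogonality of spherical harmonics of different degrees on $S^{m-1}$ (and hence of the homogeneous harmonic polynomials in $L^2(B_r)$ via polar coordinates), one obtains
\[
\int_{B_r} |h|^2 = \sum_{k\geq 1} \frac{r^{m+2k}}{m+2k}\, \|H_k\|^2_{L^2(S^{m-1})}\, .
\]
Comparing the expressions for $r=\rho$ and $r=1$ and using $\rho<1$ together with $k\geq 1$, one immediately deduces
\[
\int_{B_\rho} |h|^2 \leq \rho^{m+2} \int_{B_1} |h|^2\, .
\]

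To finish the proof I would bound $\int_{B_1} |h|^2 \leq \int_{B_1} |Dw|^2$. Since $Dw$ is harmonic, the mean value property gives $Dw(0) = \tfrac{1}{|B_1|}\int_{B_1} Dw$, i.e. $Dw(0)$ is the $L^2$-projection of $Dw$ on the constants. Hence
\[
\int_{B_1} |Dw - Dw(0)|^2 = \int_{B_1} |Dw|^2 - |B_1|\,|Dw(0)|^2 \leq \int_{B_1} |Dw|^2\, .
\]
Combining this with the previous display yields exactly \eqref{e:decay_harmonic_2}. There is no substantial obstacle here; the only things to be careful about are checking that vector-valued harmonic functions admit the spherical-harmonic expansion component by component (which is immediate), and invoking the correct orthogonality so that no cross terms survive.
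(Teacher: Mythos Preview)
Your proof is correct and follows exactly the approach the paper indicates (the lemma is stated without proof, with the remark that it ``can be proved, for instance, using the expansion into spherical harmonics''). Your spherical-harmonic decomposition of $h=Dw-Dw(0)$ together with the mean value property to pass from $\int_{B_1}|h|^2$ to $\int_{B_1}|Dw|^2$ is precisely the intended argument.
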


In other words, after subtracting an optimal affine map, the frequency (at zero scale) of a classical harmonic map must be at least two. In particular, we can draw the following simple corollary.

\begin{corollary}\label{c:harm-3}
Let $u: \mathbb R^m \supset B_1 \to \mathcal{A}_Q (\mathbb R^n)$ be Dir-minimizing. Then
\begin{equation}\label{e:decay_harmonic_3}
\int_{B_\rho} \mathcal{G} (Du, Q \llbracket D (\boldsymbol{\eta} \circ u) (0) \rrbracket)^2 \leq \rho^{m-2 + 2 \min \{I_u (0), 2\}} \int_{B_1} |Du|^2 \qquad \forall \rho<1\, .
\end{equation}
\end{corollary}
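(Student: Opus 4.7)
The strategy is to split $u$ into its classical-harmonic average $w:=\boldsymbol{\eta}\circ u$ and its average-free part $v(x):=\sum_i \llbracket u_i(x)-w(x)\rrbracket$. Since $\sum_i(u_i-w)\equiv 0$ pointwise, squaring and summing over sheets makes all cross-terms vanish, producing the pointwise orthogonal identities
\begin{equation*}
\mathcal{G}\big(Du,\, Q\llbracket Dw(0)\rrbracket\big)^2 = Q\,|Dw - Dw(0)|^2 + |Dv|^2, \qquad |Du|^2 = Q|Dw|^2 + |Dv|^2.
\end{equation*}
So it suffices to show that each of $\int_{B_\rho} Q|Dw-Dw(0)|^2$ and $\int_{B_\rho}|Dv|^2$ is controlled by $\rho^{m-2+2\min\{I_u(0),2\}}$ times the corresponding piece of $\int_{B_1}|Du|^2$, and then add.

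For the harmonic piece, I would invoke Lemma~\ref{l:harm-2} on the classical harmonic function $w$ directly to get $\int_{B_\rho}|Dw-Dw(0)|^2\leq \rho^{m+2}\int_{B_1}|Dw|^2$. Since $\min\{I_u(0),2\}\leq 2$, we have $m+2\geq m-2+2\min\{I_u(0),2\}$, so $\rho^{m+2}\leq \rho^{m-2+2\min\{I_u(0),2\}}$ for $\rho<1$, yielding the required bound.

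For the average-free piece, I would first note that $v$ is itself Dir-minimizing, since any energy-improving competitor for $v$ would, upon re-adding $w$, produce a lower-energy competitor for $u$. Next I would verify that $I_v(0)\geq I_u(0)$: from the definitions, the identity $\sum_i v_i\equiv 0$ yields $D_u(r) = QD_w(r)+D_v(r)$ and $H_u(r) = QH_w(r)+H_v(r)$, and isolating the dominant summand as $r\downarrow 0$ shows that $I_u(0)$ coincides with $\min\{I_w(0),I_v(0)\}$ (with a trivial summand assigned value $+\infty$). Lemma~\ref{l:harm-1} applied to $v$ then gives $\int_{B_\rho}|Dv|^2\leq \rho^{m-2+2I_v(0)}\int_{B_1}|Dv|^2\leq \rho^{m-2+2\min\{I_u(0),2\}}\int_{B_1}|Dv|^2$.

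Summing the two bounds and recalling $\int_{B_1}|Du|^2 = Q\int_{B_1}|Dw|^2+\int_{B_1}|Dv|^2$ closes the argument. No step presents a genuine obstacle; the only non-mechanical point is the frequency comparison $I_v(0)\geq I_u(0)$, which reduces to a one-line computation using the split forms of $D$ and $H$ along the average/average-free decomposition.
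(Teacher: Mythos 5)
Your proof is correct and follows exactly the route the paper intends: it leaves Corollary~\ref{c:harm-3} unproved precisely because the decomposition $|Du|^2 = Q|D(\boldsymbol{\eta}\circ u)|^2 + |Dv|^2$ and the resulting frequency identity $I_u(0)=\min\{I_{\boldsymbol{\eta}\circ u}(0),I_v(0)\}$ are already stated and used in the proof of Theorem~\ref{thm:HS}, and combining those with Lemmas~\ref{l:harm-1} and~\ref{l:harm-2} is exactly the argument you give. The only place you might tighten the wording is the claim that ``isolating the dominant summand'' in $D$ and $H$ gives $I_u(0)=\min\{I_w(0),I_v(0)\}$; the cleanest justification is the growth characterization of \cite{DLS_MAMS}*{Corollary~3.18} applied to the additive decomposition of $\int_{B_\rho}|Du|^2$, which is the same device the paper invokes in Section~\ref{s:HS}.
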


\begin{proof}[Proof of Proposition \ref{prop:excessdecay} {and Proposition \ref{prop:excessdecay-quantitative}}] From now on we assume that $\Irm (T,0) >1$. The main point will be to show the following decay property:
\begin{itemize}
\item[(Dec)] There are $\varepsilon=\eps(T)\in]0,\eps_3]$, $\alpha=\alpha(I_0,m,n,Q)>0$ and $\kappa\in \mathbb N$ such that, if 
\[
\mathbf{E} (T, \mathbf{B}_{6\sqrt{m} t_k})< \varepsilon^2
\]
and $k\geq \kappa$, then:
\begin{enumerate}[(a)]
\item\label{itm:dec1} The intervals of flattening $]s_k, t_k], ]s_{k+1}, t_{k+1}], \ldots , ]s_{k+\kappa}, t_{k+\kappa}]$ satisfy $s_{k+j-1} = t_{k+j}$ for $j = 1,\dots,\kappa$.
\item\label{itm:dec2} $\boldsymbol{m}_{0, k+\kappa} \leq \left(\frac{s_{k+\kappa}}{t_k}\right)^{\alpha} \boldsymbol{m}_{0,k}$.
\end{enumerate}
\end{itemize}
Before coming to the proof of (Dec), observe that
thanks to Lemma \ref{l:goodtk}, there is at least one integer $k_0 \in \N$ {such that
\[
    \Ebf(T,\Bbf_{6\sqrt{m}t_{k_0}}) < \eps^2\,,
\]}
and since it can be iterated, we may use (Dec) to conclude that
\[
\boldsymbol{m}_{0,k_0+j \kappa} \leq \left(\frac{t_{k_0+j\kappa}}{t_{k_0}}\right)^\alpha \boldsymbol{m}_{0,k_0} \leq \eps_3^2 \left(\frac{t_{k_0+j\kappa}}{t_{k_0}}\right)^\alpha  \qquad \forall j \in \mathbb N\, .
\]
On the other hand, when we have intervals of flattening with coinciding endpoints $s_{k} = t_{k+1}$, we can iterate the estimate
\[
\boldsymbol{m}_{0, k+1} \leq C \left(\frac{t_{k+1}}{t_k}\right)^{2-2\delta_2} \boldsymbol{m}_{0,k} \leq C \boldsymbol{m}_{0,k}\, ,
\]
for $C=C(m,n,Q)>0$, to conclude that indeed
\begin{equation}\label{e:decay_m_0k}
\boldsymbol{m}_{0, k} \leq C \left(\frac{t_k}{t_{k_0}}\right)^\alpha \qquad \forall k\geq k_0\, .
\end{equation}
We then also recall
\[
\mathbf{E} (T, \mathbf{B}_r) \leq C \left(\frac{r}{t_k}\right)^{2-2\delta_2} \boldsymbol{m}_{0,k} \qquad \forall r\in [t_{k+1}, t_k]\, .
\]
Combined with \eqref{e:decay_m_0k}, we infer {the conclusion of Proposition \ref{prop:excessdecay-quantitative}} with $r_0 = t_{k_0}$, which implies immediately the uniqueness of the tangent cone and the polynomial convergence of the rescalings (i.e. point \eqref{itm:consequences4} of Theorem~\ref{t:consequences}). 

Note moreover that, from \eqref{e:decay_m_0k}, the fact that $t_k\downarrow 0$ at least geometrically fast and the frequency BV estimate of the previous section, we conclude the existence of the limit 
\[
I_0 = \lim_{r\downarrow 0} \mathbf{I} (r)\, ,
\]
where $\Ibf$ is the universal frequency function. This immediately implies that every fine blow-up is $I_0$-homogeneous, which in turn gives all the other conclusions of the proposition.

It therefore remains to show (Dec). First of all we choose $\alpha < \min 2\{\Irm (T,0)-1, 1-\delta_2\}$. The choice of $\kappa$ will be more complicated, while those of $k_0$ and $\varepsilon$ are subordinate to $\kappa$. We therefore fix $\kappa$ at the moment, without specifying its choice, and treat it as a constant in order to obtain the choice of $k_0$ and $\varepsilon$. We start by showing that the first point \eqref{itm:dec1} of (Dec) holds and to this effect we impose that $k_0$ is sufficiently large so that
\begin{equation}\label{e:condition_k0_epsilon}
\bar\varepsilon^2 t_{k_0}^{2-2\delta_2} \leq \varepsilon^2.
\end{equation}
Next we recall that 
\[
\mathbf{E} (T, \mathbf{B}_{6\sqrt{m} s_{k}}) \leq C \left(\frac{s_{k}}{t_k}\right)^{2-2\delta_2} \boldsymbol{m}_{0, k} \leq C \boldsymbol{m}_{0,k} 
= C \max\{ \bar\varepsilon^2 t_{k}^{2-2\delta_2}, \varepsilon^2\} \leq C \varepsilon^2\, ,
\]
for each $k \geq k_0$, where $C$ is a geometric constant, independent of $\varepsilon$. In particular, if we choose $\varepsilon$ sufficiently small, we conclude that $\mathbf{E} (T, \mathbf{B}_{6\sqrt{m} s_{k}}) \leq \varepsilon_3^2$, which in turn forces $t_{k+1} = s_{k}$. Observe also that $\boldsymbol{m}_{0, k+1} \leq C \boldsymbol{m}_{0, k}$, where the latter is the same constant of the previous estimate. In particular, as long as $t_{k+i+1} = s_{k+i}$ for $i\in \{0, \ldots , j\}$, we get $\mathbf{E} (T, {\mathbf{B}_{6\sqrt{m} s_{k+j}}}) \leq C^j \boldsymbol{m}_{0, k}$. Since this must be repeated $\kappa$ times, under the assumption that $C^{\kappa_0} \varepsilon^2 \leq \varepsilon_3^2$, we get by induction that $t_{k+j+1} = s_{k+j}$ and $\boldsymbol{m}_{k+j+1} \leq C \boldsymbol{m}_{k+j} \leq C^{j+1} \boldsymbol{m}_{0, k}$.

We next show the second point \eqref{itm:dec2} of (Dec). First of all we observe that it suffices to show
\begin{equation}\label{e:excess-decay-many-scales}
\mathbf{E} (T, \mathbf{B}_{6\sqrt{m} s_{k+\kappa-1}}) \leq \left(\frac{s_{k+\kappa-1}}{t_{k}}\right)^\alpha \boldsymbol{m}_{0, k}\, .
\end{equation} 
In fact, if $\boldsymbol{m}_{0,k} = \bar \varepsilon^2 t_{k}^{2-2\delta_2}$, since $2-2\delta_2 > \alpha$, we then have 
\begin{align*}
\boldsymbol{m}_{0,k+\kappa} &= \max\{\mathbf{E} (T, \mathbf{B}_{6\sqrt{m} s_{k+\kappa-1}}), \bar \varepsilon^2 s_{k+\kappa-1}^{2-2\delta_2}\}
\leq \left(\frac{s_{k+\kappa-1}}{t_{k}}\right)^\alpha \bar \varepsilon^2 t_{k}^{2-2\delta_2}\\
 & = \left(\frac{s_{k+\kappa-1}}{t_{k}}\right)^\alpha \boldsymbol{m}_{0,k}\, .
\end{align*}
But if $\boldsymbol{m}_{0,k} = \mathbf{E} (T, \mathbf{B}_{6\sqrt{m} t_{k}})$, then $\mathbf{E} (T, \mathbf{B}_{6\sqrt{m} t_{k}}) \geq \bar\varepsilon^2 t_k^{2-2\delta_2}$ and hence again 
\[
\bar\varepsilon^2 s_{k+\kappa-1}^{2-2\delta_2} \leq \left(\frac{s_{k+\kappa-1}}{t_{k}}\right)^\alpha \mathbf{E} (T, \mathbf{B}_{6\sqrt{m} t_{k}})
\leq \left(\frac{s_{k+\kappa-1}}{t_{k}}\right)^\alpha m_{k+\kappa}\, .
\]
Towards \eqref{e:excess-decay-many-scales}, we first argue as for the proof of point \eqref{itm:consequences1} of Theorem \ref{t:consequences} to estimate
\begin{equation}
\mathbf{E} (T, \mathbf{B}_{6\sqrt{m} s_{k+\kappa-1}}) \leq C^\kappa \left(\frac{s_{k+\kappa-1}}{t_{k}}\right)^{2-2\delta_2} \boldsymbol{m}_{0, k}\, .
\end{equation}
Since $\kappa$ and $C$ are fixed and $2-2\delta_2 > \alpha$, then clearly \eqref{e:excess-decay-many-scales} follows if $\frac{s_{k+\kappa-1}}{t_k}$ is sufficiently small. We are thus left to prove \eqref{e:excess-decay-many-scales} under the addititional assumption that 
\begin{equation}\label{e:lower-bound-st}
\frac{s_{k+\kappa-1}}{t_{k}} \geq \rho_\ell>0\, ,
\end{equation}
where $\rho_\ell$ is a fixed constant which depends on $\kappa$. Next, recall that $\frac{s_k}{t_k} \leq 2^{-5}$ by \cite{DLS16blowup}*{Proposition 2.2}. We therefore infer that 
$s_{k+\kappa-1} \leq 2^{-5 \kappa} t_{k}$. In fact $\kappa$ will be chosen large enough so that the ratio $\frac{s_{k+\kappa-1}}{t_k}$ is sufficiently small, a condition which we specify here by
\begin{equation}\label{e:upper-bound-st}
\frac{s_{k+\kappa-1}}{t_{k}} \leq \rho_u\, .
\end{equation}
The claim is now that, for an appropriate choice of $\rho_u$ (which in turn fixes the choice of $\kappa$ and of $\rho_\ell$), once $\varepsilon$ and $k_0^{-1}$ are sufficiently small, then 
\eqref{e:excess-decay-many-scales} holds. Towards this we argue by contradiction and assume that, no matter how small we choose $\varepsilon$ and how large we choose $k_0$ (satisfying \eqref{e:condition_k0_epsilon}), there is always a choice of $k\geq k_0$ for which \eqref{e:excess-decay-many-scales} fails. This implies the existence of a sequence $t_k\downarrow 0$ with the property that 
\begin{equation}\label{e:infinitesimal-excess}
\boldsymbol{m}_{0,k}\downarrow 0 \qquad \mbox{and}\qquad \mathbf{E} (T, \mathbf{B}_{6\sqrt{m} s_{k+\kappa-1}}) > \left(\frac{s_{k+\kappa-1}}{t_{k}}\right)^\alpha \boldsymbol{m}_{0, k}\, ,
\end{equation}
while 
\begin{equation}\label{e:two-sided-st}
\rho_\ell \leq \frac{s_{k+\kappa-1}}{t_{k}} \leq \rho_u
\end{equation}
We now choose the radius $r_k$ so that $8M r_k = 6\sqrt{m} t_k$, where $M$ is the constant of \eqref{eq:E_k}. We will assume that $\kappa$ is large enough so that $r_k \geq s_{k+\kappa-1}$. Observe that we can now apply Proposition \ref{p:coarse=fine} and generate the coarse blow-up $\bar f: B_M \to \mathcal{A}_Q$ along the scales $r_k$, which is Dir-minimizing. In light of the comparability of the scales $r_k$ and $s_{k+\kappa-1}$, the average-free part $v$ of $\bar f$ is, up to a positive scalar multiple, a fine blow-up $u$, and we thus infer that $I_v (0) = I_u (0) \geq \Irm (T, 0)$. We can then apply Corollary \ref{c:harm-3} to infer that 
\[
\frac{1}{\sigma^m} \int_{B_\sigma} \mathcal{G} (D\bar f, Q \llbracket D (\boldsymbol{\eta}\circ \bar f (0)))^2 \leq C \left(\frac{\sigma}{M}\right)^{2\alpha} \frac{1}{M^m}
\int_{B_M} |D\bar f|^2\, .
\]
We can now use the Taylor expansion of the excess in \cite{DLS_multiple_valued} to infer that, for all $\sigma \in [\rho_\ell, \rho_u]$, 
\[
\mathbf{E} (T, \mathbf{B}_{6\sqrt{m} \sigma t_k}) \leq 8^m \sigma^{2\alpha} \mathbf{E} (T, \mathbf{B}_{6\sqrt{m} t_k}) + C (\mathbf{E} (T, \mathbf{B}_{6\sqrt{m} t_k} + t_k^2 \mathbf{A}^2)^{1+\gamma}\, .
\]
Since $\mathbf{A} t_k^2$ is controlled by $\boldsymbol{m}_{0,k}$, we easily conclude that, once we choose $\rho_u$ small enough so that $8^m \rho_u^{2\alpha} \leq \frac{1}{2} {\rho_\ell^\alpha}$ and choose $k$ large enough so that
\[
C (\mathbf{E} (T, \mathbf{B}_{6\sqrt{m} t_k}) + t_k^2 \mathbf{A}^2)^{1+\gamma} \leq C \boldsymbol{m}_{0,k}^{1+\gamma} {\leq \frac{1}{2} \rho_\ell^\alpha \boldsymbol{m}_{0,k}}\, ,
\] 
we achieve 
\[
\max_{[\rho_\ell \leq \sigma \leq \rho_u]} \sigma^{-\alpha}\mathbf{E} (T, \mathbf{B}_{6\sqrt{m} \sigma t_k}) \leq \boldsymbol{m}_{0,k}
\]
for all $k$ sufficiently large. However this is in contradiction with \eqref{e:infinitesimal-excess} and \eqref{e:upper-bound-st}.

Observe that the threshold $\eps$ in (Dec) may be made independent of $T$ (and the center point, which it also implicitly depends on). This may be done by replacing the above contradiction compactness argument with one in which a sequence of currents $T_k$ and varying centers $x_k$ are taken. However, in order to do this one must also verify that the conclusion of Proposition \ref{p:coarse=fine} holds for ``diagonal" coarse and fine blow-ups taken along such a varying sequence of currents and centers. This is indeed true, but we omit the details here, since this is unnecessary for the remainder of our arguments.
\end{proof}

\section{Proof of Theorem \texorpdfstring{\ref{thm:uniquefv}}{thm:uniquefv}: the case \texorpdfstring{$\Irm (T,0)=1$}{I(T,0)=1}}

In this section we complete the proof of Theorem \ref{thm:uniquefv} by handling the case $\Irm (T,0)=1$. We will moreover complete the proof of the points \eqref{itm:consequences1}, \eqref{itm:consequences2}, and \eqref{itm:consequences3} in Theorem \ref{t:consequences}. 

\begin{proposition}\label{p:I=1}
Let $T$ be as in Theorem \ref{thm:uniquefv}. Then the conclusions \eqref{itm:consequences1}, \eqref{itm:consequences2}\&\eqref{itm:consequences3} of Theorem \ref{t:consequences} hold whenever $\Irm (T, 0) = 1$.
\end{proposition}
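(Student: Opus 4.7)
The key reduction is to establish that $\mathcal{F}(T,0)=\{1\}$ together with the $1$-homogeneity of every fine blow-up. Item~(iii) then follows immediately from the strong $W^{1,2}$ convergence in the compactness procedure of Section~\ref{ss:compactness}, item~(i) from the resulting convergence of the smoothed frequencies to the constant value $1$ (forcing the limiting Dir-minimizer to have constant frequency on $(0,3/2]$ and hence be radially $1$-homogeneous), and item~(ii) by a direct reprise of the corresponding argument in the proof of Proposition~\ref{prop:excessdecay}: the almost-monotonicity $\Ibf_{N_{j_0}}(a)\leq e^{Cb^\alpha}\Ibf_{N_{j_0}}(b)$ on $]0,3]$ from Theorem~\ref{t:first-lb} together with the lower bound $\Ibf_{N_{j_0}}\geq c_0>0$ ensures that $\lim_{r\downarrow 0}\Ibf_{N_{j_0}}(r)$ exists and, by the very definition of $\Irm(T,0)$ together with the strong $W^{1,2}$ convergence, must equal~$1$.

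For the infinite-intervals situation in (i) and (iii), Theorem~\ref{t:lower-bound} already supplies $I_u(0)\geq 1$ for every fine blow-up $u$, so the task reduces to the reverse inequality, which I would attack by contradiction. Suppose some fine blow-up $u$ generated along a blow-up sequence $r_k\downarrow 0$ has $I_u(0)=\beta>1$. Since $\Irm(T,0)=1$, I can simultaneously fix a second blow-up sequence $\tilde r_m\downarrow 0$ with $\Ibf(\tilde r_m)\to\beta'\in[1,1+(\beta-1)/8)$, where $\Ibf$ denotes the universal frequency function. Interleaving the two sequences forces $\Ibf$ to undergo infinitely many descending oscillations of amplitude at least $(\beta-1)/2$. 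The intra-interval BV bound~\eqref{eq:bv-improved} of Proposition~\ref{prop:bv}, together with the jump estimate~\eqref{eq:BVjumps}, then localizes each such descent to a ``bad'' interval $]s_k,t_k]$ on which $\boldsymbol{m}_{0,k}$ is bounded below by a positive constant depending only on $\beta-1$.

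The main obstacle will be converting this localization into a genuine contradiction, since the raw BV bound $C\sum_k\boldsymbol{m}_{0,k}^{\gamma_4}$ need not be finite a priori. My plan is to exploit Lemma~\ref{l:goodtk}, which provides a subsequence of ``good'' intervals with $\boldsymbol{m}_{0,k_i}\to 0$, and to apply Proposition~\ref{p:coarse=fine} at those good scales so as to identify the fine blow-up along $t_{k_i}$ (up to a positive scalar) with the average-free part of a coarse blow-up; the Hardt-Simon bound of Theorem~\ref{thm:HS} then pins down the frequency value attained along this subsequence to a well-defined accumulation value of $\mathcal{F}(T,0)$. A localized version of the excess-decay iteration from the proof of Proposition~\ref{prop:excessdecay-quantitative}, applied on short descending chains of intervals starting at indices where $\Ibf$ is close to $\beta$, should then force polynomial decay of $\boldsymbol{m}_{0,k}$ between consecutive bad intervals, making $\sum_k\boldsymbol{m}_{0,k}^{\gamma_4}$ finite on every tail $]0,r_0]$ and thereby ruling out the assumed infinite oscillations. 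This closes the contradiction, completing the proof that every fine blow-up has $I_u(0)=1$, from which the three claimed items follow as indicated in the first paragraph.
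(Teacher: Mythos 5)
Your overall strategy—reduce to showing every fine blow-up has frequency exactly $1$, and argue by contradiction assuming some fine blow-up has $I_u(0) = \beta > 1$—is the right one, and your instinct that a ``localized excess-decay iteration'' (essentially the paper's Lemma~\ref{l:finer-decay}) must enter is also correct. However, the proposal has two genuine gaps. First, the claim that a descending oscillation of $\Ibf$ of fixed amplitude must localize to a single interval of flattening $]s_k, t_k]$ with $\boldsymbol{m}_{0,k}$ bounded below does not follow from~\eqref{eq:bv-improved} and~\eqref{eq:BVjumps}: a drop of $(\beta-1)/2$ can perfectly well be distributed across many intervals, each contributing a small $\boldsymbol{m}_{0,k}^{\gamma_4}$ to the total variation. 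You cannot extract a lower bound on any individual $\boldsymbol{m}_{0,k}$ this way, so the subsequent dichotomy between ``good'' and ``bad'' intervals is not established. Second, and more seriously, you do not address what happens when the excess $\mathbf{E}(T, \mathbf{B}_{6\sqrt{m} r})$ rises back above the threshold $\varepsilon^2$ at scales interleaved between your two blow-up sequences—this is precisely the scenario in which the intervals of flattening restart with $\boldsymbol{m}_{0,k}$ as large as $\eps_3^2$, and neither the BV estimate nor Lemma~\ref{l:goodtk} gives control there. The paper devotes a substantial and delicate argument to ruling out exactly this scenario (its ``case (a)''), by showing that if the excess at a boundary scale $b_k$ of the ``high-frequency, low-excess'' region stays comparable to $\varepsilon^2$ as $\varepsilon\downarrow 0$, then an anisotropically rescaled diagonal limit of the graphical approximations produces a coarse blow-up that is \emph{exactly} $1$-homogeneous, contradicting $\mathbf{I}^- \geq 1+\gamma$ at the same scale. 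This is a genuinely new idea your plan does not anticipate, and without it the contradiction does not close.

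A smaller issue is that the invocation of Lemma~\ref{l:goodtk} followed by Proposition~\ref{p:coarse=fine} and Theorem~\ref{thm:HS} cannot ``pin down'' a frequency value beyond the already-known lower bound $\geq 1$; Hardt--Simon yields $I_u(0) \geq 1$, not an equality, and $\boldsymbol{m}_{0,k_i}\to 0$ alone gives no information about where the frequency sits at those scales. The paper's actual resolution is the combination of Lemma~\ref{l:finer-decay}---which shows that on any interval of scales where the frequency stays $\geq 1+\gamma$ and the excess stays small, $\sum \boldsymbol{m}_{0,k}^{\gamma_4}$ is not merely finite but can be made $\leq \eta$---with the BV bound, so that the frequency cannot drop by more than $O(\eta)$ within such an interval; this makes the set $\mathcal{R}$ of high-frequency, low-excess scales essentially forward-invariant, and then case (a) must be ruled out to ensure the low-excess condition propagates as well. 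Finally, a minor remark: conclusion~\eqref{itm:consequences2} is vacuous here, since $\Irm(T,0)=1 < 2 - \delta_2$ forces $\{s_j\}$ infinite by part~\eqref{itm:consequences6}, so there is never a $j_0$ with $s_{j_0}=0$.
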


A key ingredient in the proof is a decay lemma which is a refinement of the one used in the proof of Proposition \ref{prop:excessdecay}:

\begin{lemma}\label{l:finer-decay}
Let $T$ be as in Theorem \ref{thm:uniquefv}. For every $\gamma>0$ and every $\eta>0$ there are $\varepsilon>0$ and $\rho > 0$ with the following property.
Assume $]a,b]$ is an interval of radii such that
\begin{itemize}
\item[(a)] $0<a<b\leq \rho$;
\item[(b)] $\mathbf{E} (T, \mathbf{B}_{6\sqrt{m} r}) \leq \varepsilon$ for all $a\leq r\leq b$;
\item[(c)] $\mathbf{I} (r) \geq 1+\gamma$ for all $a\leq r\leq b$.
\end{itemize}
Consider the intervals of flattening $]s_{\bar k+ \bar j}, t_{\bar k+\bar j}]\cup ]s_{\bar k+\bar j-1}, t_{\bar k+ \bar j-1}] \cup \ldots \cup ]s_{\bar k}, t_{\bar k}]$ covering $]a,b]$ with the property that $t_{\bar k+\bar j} = s_{\bar k+\bar j-1}$, \ldots, $t_{\bar k+1}=s_{\bar k}$ are contained in $]a,b]$. Then
\begin{equation}\label{e:smallness}
\sum_{i=1}^{\bar j} \boldsymbol{m}_{0, \bar k+i}^{\gamma_4} \leq \eta\, .
\end{equation}
\end{lemma}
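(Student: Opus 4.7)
The strategy is to establish a geometric decay of $\boldsymbol{m}_{0,k}$ along the string of intervals of flattening meeting $]a,b]$, mimicking step (Dec) from the proof of Proposition \ref{prop:excessdecay}, but using hypothesis (c) in place of the bound $\Irm(T,0)>1$. Once geometric decay is in hand, the sum in \eqref{e:smallness} is a convergent geometric series whose tail can be made arbitrarily small by choosing $\varepsilon$ and $\rho$ small.

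\emph{Step 1: Decay statement.} I claim that, after suitably fixing $\alpha_0=\alpha_0(\gamma)\in ]0,\min\{\gamma,1\}[$, there exist $\varepsilon>0$ and $\kappa=\kappa(\gamma)\in \mathbb N$ such that: whenever $k\in\{\bar k,\ldots,\bar k+\bar j-\kappa\}$ and $\mathbf{E}(T,\mathbf{B}_{6\sqrt{m}t_k})<\varepsilon$, then $s_{k+i-1}=t_{k+i}$ for all $i\in\{1,\dots,\kappa\}$, and
\[
\boldsymbol{m}_{0,k+\kappa}\leq \left(\tfrac{s_{k+\kappa-1}}{t_k}\right)^{\alpha_0}\boldsymbol{m}_{0,k}\, .
\]
The first part is identical to the proof of (Dec) in Proposition \ref{prop:excessdecay}: smallness of $\varepsilon$ propagates through the stopping scales via $\boldsymbol{m}_{0,k+1}\leq C\boldsymbol{m}_{0,k}$. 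For the decay itself, the argument is by contradiction along a sequence of scales $t_k\downarrow 0$ which violates the estimate. Because $s_{k+\kappa-1}/t_k$ is bounded above and below (the trivial case $s_{k+\kappa-1}/t_k<\rho_\ell$ is handled by the $C^\kappa(s/t)^{2-2\delta_2}$ bound since $2-2\delta_2>\alpha_0$), Proposition \ref{p:coarse=fine} produces a Dir-minimizing coarse blow-up $\bar f$ whose average-free part $v$ is proportional to a fine blow-up $u$. By \eqref{e:convergence-of-I} and hypothesis (c), $I_u(\rho)\geq 1+\gamma$ for all admissible $\rho$, so $I_v(0)\geq 1+\gamma$. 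Further, the optimality argument in the proof of Corollary \ref{c:coarse=fine} gives $D(\boldsymbol{\eta}\circ\bar f)(0)=0$ and $\boldsymbol{\eta}\circ\bar f(0)=0$, whence $I_{\boldsymbol{\eta}\circ\bar f}(0)\geq 2$ and thus $I_{\bar f}(0)\geq \min\{1+\gamma,2\}=1+\alpha_0'$ with $\alpha_0'=\min\{\gamma,1\}$. Corollary \ref{c:harm-3} then yields
\[
\int_{B_\sigma}\mathcal{G}(D\bar f, Q\llbracket 0\rrbracket)^2\leq C\sigma^{m+2\alpha_0'}\int_{B_M}|D\bar f|^2\, ,
\]
which, combined with the Taylor expansion of the excess from \cite{DLS_multiple_valued} and the fact that $\mathbf{A}^2 t_k^2\lesssim \boldsymbol{m}_{0,k}$, leads (exactly as at the end of the proof of Proposition \ref{prop:excessdecay}) to
\[
\mathbf{E}(T,\mathbf{B}_{6\sqrt{m}\sigma t_k})\leq 8^m\sigma^{2\alpha_0'}\mathbf{E}(T,\mathbf{B}_{6\sqrt{m}t_k})+C\boldsymbol{m}_{0,k}^{1+\gamma_1}\qquad\forall \sigma\in[\rho_\ell,\rho_u]\, .
\]
Picking $\rho_u$ so that $8^m\rho_u^{2\alpha_0'}\leq\tfrac12\rho_\ell^{\alpha_0}$ and choosing $\kappa$ so that $2^{-5\kappa}\leq \rho_u$ contradicts the assumed failure of the decay, establishing the claim.

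\emph{Step 2: Iteration and summation.} Since $s_{k+\kappa-1}/t_k\leq 2^{-5\kappa}$ by \cite{DLS16blowup}*{Proposition 2.2}, iterating Step 1 yields $\boldsymbol{m}_{0,\bar k+i\kappa}\leq 2^{-5i\kappa\alpha_0}\boldsymbol{m}_{0,\bar k}\leq \tau^i\boldsymbol{m}_{0,\bar k}$ with $\tau:=2^{-5\kappa\alpha_0}<1$. Within each block of $\kappa$ consecutive intervals, $\boldsymbol{m}_{0,k+1}\leq C\boldsymbol{m}_{0,k}$, so $\boldsymbol{m}_{0,\bar k+i\kappa+\ell}\leq C^\kappa\boldsymbol{m}_{0,\bar k+i\kappa}$ for $\ell\in\{0,\ldots,\kappa-1\}$. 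Summing,
\[
\sum_{i=1}^{\bar j}\boldsymbol{m}_{0,\bar k+i}^{\gamma_4}\leq \kappa C^{\kappa\gamma_4}\boldsymbol{m}_{0,\bar k}^{\gamma_4}\sum_{i\geq 0}\tau^{i\gamma_4}\leq C(\kappa,\gamma)\,\boldsymbol{m}_{0,\bar k}^{\gamma_4}\, .
\]
Finally $\boldsymbol{m}_{0,\bar k}\leq \max\{\varepsilon,\bar\varepsilon^2\rho^{2-2\delta_2}\}$, so first choosing $\rho$ small and then $\varepsilon$ small makes the right-hand side smaller than $\eta$, completing the proof.

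\emph{Main obstacle.} The delicate point is Step 1: one must ensure that the frequency lower bound from hypothesis (c) transfers through the limit Proposition \ref{p:coarse=fine} to the coarse blow-up and survives the reduction to the average-free part, with the necessary rigidity $D(\boldsymbol{\eta}\circ\bar f)(0)=0$ coming from plane optimality. The remaining arguments are direct adaptations of those in the proof of Proposition \ref{prop:excessdecay}.
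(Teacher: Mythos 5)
Your overall plan — reduce the lemma to a version of the decay step (Dec) from Proposition~\ref{prop:excessdecay}, obtaining the required lower bound on the blow-up frequency from hypothesis~(c) rather than from $\Irm(T,0)>1$, and then sum a geometric series — matches the paper's strategy. The extra care you take in showing $D(\boldsymbol{\eta}\circ\bar f)(0)=0$ via plane-optimality, so that $I_{\bar f}(0)$ controls the average-free part, is a genuinely useful clarification that the paper leaves implicit when invoking Corollary~\ref{c:harm-3}. However, there is a real gap in Step~1.

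You claim the (Dec)-type decay for every index $k$ with $k\leq\bar k+\bar j-\kappa$, and then pass from ``$I_u(\rho)\geq 1+\gamma$ for all admissible $\rho$'' to ``$I_v(0)\geq 1+\gamma$.'' This last step is not justified. Hypothesis~(c) only constrains $\mathbf I(r)$ for $r\in\,]a,b]$; through the compactness \eqref{e:convergence-of-I} the fine blow-up $u$ therefore inherits $I_u(\rho)\geq 1+\gamma$ only for $\rho\geq a_\ell/r_{k_\ell}$ along the contradiction sequence. Under the constraint $k_\ell\leq\bar k_\ell+\bar j_\ell-\kappa$ one only gets $a_\ell/t_{k_\ell}<2^{-5\kappa}$, a fixed positive number — this does not tend to zero, so one cannot send $\rho\downarrow 0$. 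In fact the scale $\rho_\ell$ at which the decay estimate of (Dec) must be read off is typically much smaller than $2^{-5\kappa}$ (it is determined by the geometric constants in the dichotomy step), so the admissible range of $\rho$ does not even reach the scales that the decay estimate requires. Since $I_u$ is monotone nondecreasing, a lower bound for $\rho$ bounded away from $0$ gives no information about $I_u(0)$, which is what Corollary~\ref{c:harm-3} needs.

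The paper closes this gap by proving (Dec) only for indices $k$ with $k+\kappa\leq\bar k+\bar j-L$, i.e.\ with an additional buffer of $L$ intervals below $t_k$, and observing that in the contradiction argument $L$ may be taken arbitrarily large along the extracted subsequence. This forces $a_\ell/t_{k_\ell}\to 0$, so that $I_u(\rho)\geq 1+\gamma$ holds for every $\rho>0$, hence $I_u(0)\geq 1+\gamma$ as needed. The remaining $\leq L+\kappa$ ``bad'' indices are handled trivially since each $\boldsymbol m_{0,\bar k+i}$ is controlled by the small excess threshold from hypothesis~(b). Your Step~2 is otherwise fine once Step~1 is repaired along these lines. (Also a minor point: in (Dec) the decay is stated as $\boldsymbol m_{0,k+\kappa}\leq(s_{k+\kappa}/t_k)^\alpha\boldsymbol m_{0,k}$, with index $k+\kappa$ rather than your $k+\kappa-1$, though this is harmless.)
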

\begin{proof} Observe that $\boldsymbol{m}_{0, \kappa+i}\leq \varepsilon^2$ for $i\geq 1$ just by assumption. 
Since by assumption we know that $\boldsymbol{m}_{0,k}\leq \bar \varepsilon^2$, it suffices to prove the decay of (Dec) as long as $k+\kappa \leq \bar k + \bar j - L$ where $L$ is a fixed natural number. In the argument by contradiction leading to the proof of (Dec) we are thus also allowed to assume that $L$ gets arbitrarily large, which in turn means that $\frac{t_k}{a_k}$ tends to infinity (where $]a_k, b_k]$ are corresponding intervals as above). In particular, notice that in the argument given for (Dec) the key point was to infer that the average-free part of the coarse blow-up $v$ has $I_v (0) = I_u (0)$ for some fine blow-up $u$ while $I_u (0) > 1$. In our situation the bound $I_u (0) \geq \Irm (T,0)$ just gives $I_u (0) \geq 1$. On the other hand, using the fact that $\frac{a_k}{t_k}\to 0$ and our assumption that $\mathbf{I} (r) \geq 1+\gamma$ for all $r\in ]a_k, t_k]$, we can use the convergence of the frequency function to conclude
\[
I_{u} (\rho) = \lim_{k\to \infty} \mathbf{I} (\rho r_k) \geq 1+\gamma 
\]
for an arbitrary positive $\rho$. This in turn gives $I_u (0) \geq 1+\gamma$. 
\end{proof}

\begin{proof}[Proof of Proposition \ref{p:I=1}]
As we have already argued at the start of the proof of Proposition \ref{prop:excessdecay}, the key is in fact to prove the second part of Theorem \ref{t:consequences}\eqref{itm:consequences3}. We thus assume that there is some other blow-up sequence $r_k\to 0$ with the property that $\mathbf{I} (r_k) \to 1+2\gamma$ for some $\gamma > 0$. Our aim is then to show that this leads to a contradiction. We apply Lemma \ref{l:finer-decay} from the previous section with some parameter $\eta>0$ which will be chosen later. Fix the corresponding $\varepsilon>0$ and $\rho > 0$ given by Lemma \ref{l:finer-decay} and consider the set 
\[
\mathcal{R}:= \left\{r \in ]0,\rho[ \ : \ \mathbf{E} (T, \mathbf{B}_{6\sqrt{m} r}) \leq \varepsilon^2 \quad \mbox{and} \quad \mathbf{I}^+ (r) \geq 1+\gamma\right\} \, ,
\]
(since the universal frequency function has jumps, at the jump points we let $\mathbf{I}^+ (r)$ be the right-hand limit). We might later need to choose $\varepsilon$ even smaller than that prescribed by Lemma \ref{l:finer-decay}; the only property needed is that the conclusion of the Lemma still applies.

Observe that $\mathcal{R}$ cannot contain a neighborhood of the origin, otherwise we would have $\mathbf{I} (r) \geq 1+\gamma$ for all $r$ sufficiently small, which in turn would imply that, if $u$ is any fine blow-up, then
\[
I_{u} (\rho) \geq 1+\gamma \qquad \forall \rho>0\, .
\]
This shows that $I_u (0) \geq 1+\gamma$ for every fine blow-up, in turn implying that $\Irm (T, 0) \geq 1+\gamma$. On the other hand $\mathcal{R}$ must have $0$ as an accumulation point, namely $\mathcal{R}$ consists of countably many disjoint intervals, which might or might not include any of their endpoints. We enumerate these intervals in order of decreasing scales, and for each one we consider its interior $]a_k, b_k[$. Note that $r_\ell \in ]a_{k(\ell)}, b_{k(\ell)}[$ for all $\ell$ sufficiently large, due to the nature of our chosen sequence of blow-up scales. 

Now notice that the intervals $]a_k, b_k[$ are contained within the full collection of intervals of flattening $]s_j,t_j]$ (with the excess threshold $\bar\eps$). Thus, we can find a sequence of radii $\tilde{\rho}_{k} > b_k$ approaching $b_k$ asymptotically, with $]b_k, \tilde{\rho}_{k}] \cap  \mathcal{R} = \emptyset$, such that one of the following two possibilities holds:
\begin{itemize}
\item[(a)] there are $\rho_k \in ]b_k, \tilde{\rho}_k]$ with $\mathbf{E} (T, \mathbf{B}_{6 \sqrt{m} \rho_{k}}) > \varepsilon^2$ for infinitely many $k$;
\item[(b)] for infinitely many $k$ the inequalities $\mathbf{E} (T, \mathbf{B}_{6 \sqrt{m} r}) \leq \varepsilon^2$ and $\mathbf{I} (r) < 1+\gamma$ hold for all $r$ in the interval $]b_k, \tilde{\rho}_k]$.
\end{itemize}
We first argue that, if $\varepsilon$ is chosen sufficiently small, (a) cannot happen. We argue by contradiction; if this is not true, a subsequence of $T_{0, \tilde\rho_k}$ (and thus of $T_{0,b_k}$), not relabeled, must be converging to a cone which is not flat. We denote it by $C$. Repeat now the procedure above for each $\varepsilon = \frac{1}{j}$ and assume that for each we find a corresponding sequence $b_{k,j}$, with the property that $T_{0, b_{k,j}}$ is converging to a non-flat cone $C_j$. Letting $]s_{\ell (k,j)}, t_{\ell (k,j)}]$ denote the interval of flattening containing $b_{k,j}$, clearly we first have 
 \[
 \lim_{k\to \infty} \frac{s_{\ell (k, j)}}{t_{\ell (k,j)}} \geq c(j) > 0 \qquad \forall j \in \N,
 \] 
 for some constant $c (j)$ which depends only on $C_e$ and $\delta_2$ of the excess stopping condition in the center manifold construction (cf. \cite{DLS16centermfld}) and on $\varepsilon = \frac{1}{j}$, just using that 
 \[
 \mathbf{E} (T, \Bbf_r) \leq C C_e \left(\frac{r}{t_{k,j}}\right)^{2-2\delta_2} \bar \varepsilon^2 \qquad \forall r\in ]s_{k,j}, t_{k,j}[
 \]
 while $b_{k,j}\in ]s_{k,j}, t_{k,j}]$ and 
 \[
 \mathbf{E} (T, \Bbf_{b_{k,j}}) \geq \varepsilon_j^2\, .
 \]
 On the other hand because of the convergence of $T_{0, t_{k,j}}$ to the cone $C_j$ we have 
 \[
 \lim_{k\to \infty} \frac{\Ebf (T, \Bbf_{t_{k,j}})}{\Ebf (T, \Bbf_{s_{k,j}})} =1\, .
 \]
 In turn this implies, again because of the excess stopping condition in the center manifold construction, that 
 \[
 \liminf_{k\to \infty} \frac{s_{\ell (k, j)}}{t_{\ell (k,j)}} \geq c>0
 \]
 for a constant $c$ which this time is independent of $j$. 
 In particular for any sequence $k(j)\uparrow \infty$ which explodes sufficiently fast we have 
 \[
 \lim_{j\to \infty} \frac{s_{\ell (k(j), j)}}{t_{\ell (k(j),j)}} \geq \frac{c}{2} > 0 \, .
 \]
 We can therefore apply Proposition \ref{p:coarse=fine} to any such $b_{k(j), j}$ and infer that the corresponding fine and {average-free part of the} coarse blow-ups coincide. 
 
We now argue that at least one such coarse blow-up has to be $1$-homogeneous. First of all, for each $k$ and $j$ we denote by $f_{k,j}$ the Lipschitz approximation of the current $T_{0, b_{k,j}}$ given by \cite{DLS14Lp}*{Theorem 2.4} and by $\bar f_{k,j}$ its normalization $f_{k,j}/E_{k,j}^{\frac{1}{2}}$, where 
\[
    E_{k,j}\coloneqq \Ebf(T_{0, b_{k,j}},\Bbf_{6\sqrt{m}})
\]
as in Section \ref{ss:coarse}.

Observe next that by our definition of the endpoints $b_{k,j}$, for each fixed $j$ we have
\[
    E_{k,j} \overset{k\to\infty}{\longrightarrow} \mathbf{E} (C_j, \mathbf{B}_{6 \sqrt{m}}) = \varepsilon_j^2.
\]
% Indeed $C_j$ is conical and so $\rho^{-m} \mathbf{E} (C_j, \mathbf{B}_\rho))$ must be constant. On the other hand because of our assumptions the latter ratio must be no less than $\varepsilon_j^2$ for $\rho=1$ and no large than it for some other radius $\tilde{\rho}\leq 1$. 
 For every fixed $j$ we then conclude that the sequence of maps $\{\bar f_{k,j}\}_k$ are equi-Lipschitz and we can assume they converge uniformly to some map $\bar f_j$, up to subsequence (not relabeled). Moreover, this map is actually the limit of $\tilde{f}_{k,j} := j f_{k,j} = \varepsilon_j^{-1} f_{k,j}$. Recall however that $\bar{f}_{k,j}$ has a uniform $W^{1,2}$ bound, which is independent of both $k$ and $j$ (unlike $\tilde{f}_{k,j}$, where it clearly depends on $j$). This bound is thus valid for $\bar f_j$ too and we can assume it converges, up to subsequences, strongly in $L^2$ to some $W^{1,2}$ map $\bar f$. By taking a suitable diagonal sequence, and noting that $C^{-1} E_{k,j} \leq \Ebf(T_{0, b_{k,j}},\Bbf_{8M}) \leq C E_{k,j}$, the latter can be assumed to be (up to a scalar multiple $\lambda> 0$) the coarse blow-up generated by the sequence $b_{k(j), j}$.
 
Now \cite{DLS14Lp}*{Theorem 2.4} guarantees the existence of a compact set $K_{k,j}\subset B_1$ over which the graph of $f_{k,j}$ coincides with the current $T_{0, b_{k,j}}$ and enjoying the estimate $|B_1 \setminus K_{k,j}|\leq C j^{-2 (1+\beta)}$ for some constants $C$ and $\beta$.
 Recall that in {$K_{k,j}\times\pi_0^\perp$} the supports of $T_{0, b_{k,j}}\mres \cl{\mathbf{B}}_{5 \sqrt{m}}$ converge in Hausdorff distance to the support of $C_j\mres \cl{\mathbf{B}}_{5 \sqrt{m}}$. 
 
Denote by $\lambda^a_j$ the ``anisotropic rescaling map'' which maps $(x,y)\in \pi_0 \times \pi_0^\perp$ into $(x, jy))$, where we assume that $\pi_0$ is the plane over which we are considering the graphical approximations $f_{k,j}$ of $T_{0, b_{k,j}}$ (up to a rotation we can indeed assume that the plane is a given fixed one). 
Now, $\mathbf{G}_{\tilde{f}_{k,j}} \res K_{k,j} \times \pi_0^\perp = (\lambda^a_j)_\sharp T_{0, b_{k,j}} \res K_{k,j} \times \pi_0^\perp$. On the other hand, for each fixed $j$, the currents $(\lambda^a_j)_\sharp T_{0, b_{k,j}}$ converge to the current $(\lambda^a_j)_\sharp C_j$ (the convergence is in the sense of currents, but it also implies the local Hausdorff convergence of the supports {in $K_{k,j}\times \pi_0^\perp$}, given that $j$ is fixed). Let $K_j$ be the Hausdorff limit as $k\to\infty$ of the compact sets $K_{k,j}$. By the uniform convergence of the functions $\bar{f}_{k,j}$ to $\bar f_j$ (as $k \to\infty$, with $j$ fixed) it is easy to see that $\mathbf{G}_{\bar f_j} \res K_j \times \pi_0^\perp = (\lambda^a_j)_\sharp C_j \res K_j \times \pi_0^\perp$. 

 Next, observe that $(\lambda^a_j)_\sharp C_j$ is still a cone. Thus $\bar f_j$ coincides with a $1$-homogeneous function over $K_j$. Observe also that $|K_j|\geq \limsup_k |K_{k,j}|$ and therefore $|B_1\setminus K_j|\leq C j^{-2(1+\beta)}$. Since $|B_1\setminus K_j|\downarrow 0$ it is easy to conclude that $\bar f$, which is the $L^2$ limit of $\bar f_j$, must in fact be $1$-homogeneous.
 
Having concluded that the coarse blow up $\bar f$ is $1$-homogeneous, we immediately infer that the average-free part is $1$-homogeneous as well, which means that the fine blow-up is too. This however would be incompatible with the fact that $\mathbf{I}^- (b_{k(j), j}) \geq 1+\gamma$.
 
We thus fix now a choice of $\varepsilon$ sufficiently small which forces the alternative (b). Recall that the frequency $\BV$ bound gives that $|\mathbf{I}^- (b_k)- \mathbf{I}^+ (b_k)|\leq C \varepsilon^{\gamma_4}$, which, combined with the fact that $\mathbf{I}^+ (b_{k}) \leq 1+\gamma$ in turn implies that 
\begin{equation}\label{e:too-small-frequency}
\mathbf{I}^- (b_k) \leq 1+ \frac{3}{2} \gamma\,,
\end{equation}
once we take $\eps$ small enough. We now wish to show that $\left\|\left[\frac{d\mathbf{I}}{dr}\right]_-\right\|_{\TV (]a_k, b_k[)}$ can be made arbitrarily small, by choosing $\eta$ and $\varepsilon$ correspondingly small and $k$ sufficiently large. This would imply that $\mathbf{I}$ has to be below $1+ \frac{7}{4} \gamma$ on all $]a_k, b_k[$ with $k$ sufficiently large, thereby concluding the proof (since all but finitely many elements of the initial blow-up sequence $r_k$, on which $\mathbf{I} (r_k) \to 1+2 \gamma$, must in fact be contained in $\mathcal{R}$, while we just showed that in a neighborhood of $0$ relative to $\mathcal{R}$ the value of the universal frequency function is strictly below $1+2\gamma$). Let $]s_{j(k)}, t_{j(k)}]$ be the interval of flattening containing $b_k$. Using Lemma \ref{l:finer-decay} and the BV estimate of Proposition \ref{prop:bv}, we already have that the desired estimate
\[
\left\|\left[\frac{d\mathbf{I}}{dr}\right]_-\right\|_{\TV (]a_k, s_{j(k)}[)} \leq \eta \qquad \mbox{if $s_{j(k)}> a_k$,}
\]
provided that $\eps$ is again chosen sufficiently small. 
Note that, even though the estimate is for $\log (\Ibf+1)$, we know apriori that $\Ibf$ is bounded, so we can invert the log and get a an estimate for $\left\|\left[\frac{d\mathbf{I}}{dr}\right]_-\right\|_{\TV (]a'_k, b_k[)}$ as in \eqref{eq:bv}. The only caveat is that the constant $C$ in the right hand side of \eqref{eq:bv} will now depend upon $\|\Ibf\|_\infty$ if we replace the left hand side with $\left\|\left[\frac{d\mathbf{I}}{dr}\right]_-\right\|_{\TV}$. However, we only need a constant $C$ which is independent of the radii, though it might depend on $T$.  

We therefore set $a'_k:= \max \{a_k, s_{j(k)}\}$ and we wish to show that $\left\|\left[\frac{d\mathbf{I}}{dr}\right]_-\right\|_{\TV (]a'_k, b_k[)}$ can be assumed arbitrarily small, 
provided $\varepsilon$ is chosen wisely and $k$ is sufficiently large. We observe that now $]a'_k, b_k[$ is contained in a single interval of flattening, and that the almost monotonicity estimate on the absolutely continuous part of frequency \eqref{eq:bv-improved} gives 
\[
\left\|\left[\frac{d\mathbf{I}}{dr}\right]_-\right\|_{\TV (]a'_k, b_k[)} \leq C \left(\frac{a'_k}{t_{j(k)}}\right)^{\gamma_4} \boldsymbol{m}_{0, j(k)}\, .
\]
Now, $\boldsymbol{m}_{0, j(k)}$ is at most $\bar\varepsilon^2$, and thus, if the ratio $\frac{a'_k}{t_{j(k)}}$ is sufficiently small we reach the desired threshold. We can therefore assume that 
\[
\frac{a'_k}{t_{j(k)}} \geq \bar c >0 
\]
for some constant $\bar c$. With the latter lower bound at disposal it is simple to see that $\boldsymbol{m}_{0, j(k)}$ can be made arbitrarily small choosing $\varepsilon$ small and $k$ large. In fact, if we choose $\varepsilon = \frac{1}{i}$ and $k(i)\uparrow \infty$, we find that $T_{0, b_{k(i)}}$ converges to a flat plane, which in turn shows that  $\mathbf{E} (T, \mathbf{B}_{6\sqrt{m} t_{j(k(i))}})$ must converge to $0$.
\end{proof}

\section{Proof of Theorem \texorpdfstring{\ref{t:consequences}\eqref{itm:consequences5}\&\eqref{itm:consequences6}}{consequences}}

In this last section of the paper we will prove the last two statements of Theorem \ref{t:consequences}.

\subsection{The case \texorpdfstring{$\Irm (T, 0) < 2-\delta_2$}{I(T,0)<2-delta2}}
Choose $\alpha \in ]\Irm (T,0)-1, 1-\delta_2[$. Since all coarse and fine blow-ups are $I (T,0)$-homogeneous, a simple compactness argument yields the following corollary.
\begin{itemize}
\item[(ND)] There are $\varepsilon >0$ and $\rho>0$ such that, if $r< \rho$ and $\mathbf{E} (T, \mathbf{B}_{6\sqrt{m} \rho}) \leq \varepsilon$, then
\begin{equation}\label{e:nondecay}
\int_{\mathbf{B}_{\rho/2} \cap \mathcal{M}_j} |DN_j|^2 \geq 2^{-(m+2\alpha-2)} \int_{\mathbf{B}_\rho\cap \mathcal{M}_j} |DN_j|^2
\end{equation}
where $]s_j, t_j]\ni \rho$.
\end{itemize}
From \eqref{e:nondecay} we immediately infer that the intervals of flattening cannot be finite. Indeed suppose this is not the case and let $J$ be such that $s_J =0$. Observe that under this assumption there is a unique flat tangent cone to $T$: indeed the center manifold $\mathcal{M}_J$ contains the origin and $Q \llbracket T_0 \mathcal{M}_J \rrbracket$ is the unique tangent cone to $T$. We thus conclude $\mathbf{E} (T, \mathbf{B}_{6\sqrt{m} r})\to 0$ as $r\downarrow 0$. In particular \eqref{e:nondecay} must hold for all $\rho \leq \bar \rho$ for some positive $\bar \rho$ and we immediately conclude that there is a positive constant $C$ such that
\[
\int_{\mathcal{M}_J \cap \mathbf{B}_\rho} |DN_J|^2 \geq C^{-1} \rho^{m+2\alpha-2} \qquad \forall \rho< \bar \rho\, .
\] 
On the other hand, in light of \cite{DLS16blowup}*{Remark 3.4} we also have 
\[
\int_{\mathcal{M}_J \cap \mathbf{B}_\rho} |DN_J|^2 \leq C \boldsymbol{m}_{0, J} \left(\frac{\rho}{t_j}\right)^{m+2 - 2\delta_2}\, .
\]
This however forces the condition $\alpha - 1 \geq 1-\delta_2$, which gives a contradiction. There are therefore infinitely many intervals of flattening $]s_j,t_j]$. 

Now assume for a contradiction that, up to subsequence (not relabelled), we have
\[
\lim_{j\to \infty} \frac{s_j}{t_j} = 0.
\]
If $\mathbf{E} (T, \mathbf{B}_{6\sqrt{m} t_j})$ does not converge to $0$ as $j \to \infty$, then, up to subsequence, we can assume that $T_{0, t_j}$ converges to a cone $C$. Clearly, by definition, $\boldsymbol{m}_{0,j} = \mathbf{E} (T, \mathbf{B}_{6\sqrt{m} t_j})$ for $j$ large enough, and moreover $\boldsymbol{m}_{0,j} \to \mathbf{E} (C, \mathbf{B}_{6\sqrt{m}})$. On the other hand, for every fixed $\rho > 0$ sufficiently small, we can pass into the limit in the inequality
\[
\mathbf{E} (T_{0, t_j}, \mathbf{B}_\rho) \leq C \rho^{2-2\delta_2} \boldsymbol{m}_{0, j}\, ,
\]
which is valid for those infinitely many $j$'s such that $\frac{s_j}{t_j} < \rho$, and conclude
\[
\mathbf{E} (C, \mathbf{B}_\rho) \leq C \rho^{2-2\delta_2} \mathbf{E} (C, \mathbf{B}_{6\sqrt{m}})\, ,
\]
which is impossible because the radial invariance of $C$ guarantees that $\mathbf{E} (C, \mathbf{B}_\rho)$ is constant in $\rho$. 

We have thus concluded that $\mathbf{E} (T, \mathbf{B}_{6\sqrt{m} t_j})$ converges to $0$. In particular, so does $\boldsymbol{m}_{0,j}$. We thus conclude that, for every $j$ sufficiently large, the inequality $\mathbf{E} (T, \mathbf{B}_{6\sqrt{m} \rho}) \leq \varepsilon^2$ must be valid for all $\rho\in [s_j, t_j]$. This however can be combined with \eqref{e:nondecay} to deduce that
\[
\int_{\mathcal{M}_j \cap \mathbf{B}_{s_j}} |DN_j|^2 \geq C^{-1} \left(\frac{s_j}{t_j}\right)^{m+2\alpha-2} \int_{\mathcal{M}_j \cap \mathbf{B}_{t_j}} |DN_j|^2 \, .
\]
On the other hand using \cite{DLS16centermfld}*{Proposition 3.4} we immediately get 
\[
\int_{\mathcal{M}_j \cap \mathbf{B}_{t_j}} |DN_j|^2\geq C^{-1} \boldsymbol{m}_{0,j}\, .
\]
In particular we conclude
\[
\int_{\mathcal{M}_j \cap \mathbf{B}_{s_j}} |DN_j|^2 \geq C^{-1} \left(\frac{s_j}{t_j}\right)^{m+2\alpha-2} \boldsymbol{m}_{0,j}\, .
\]
But, as for the case already discussed above, this is at odds with the reverse inequality
\[
\int_{\mathcal{M}_j \cap \mathbf{B}_{s_j}} |DN_j|^2 \leq C \left(\frac{s_j}{t_j}\right)^{m+2-2\delta_2} \boldsymbol{m}_{0,j}
\]
when $\frac{s_j}{t_j}$ is allowed to become too small.

\subsection{The case \texorpdfstring{$\Irm (T, 0) > 2-\delta_2$}{I(T,0)>2-delta2}} In this case we fix $\alpha \in ]1-\delta_2,\Irm (T, 0)-1[$. Note that in this case we know that the intervals of flattening cover a neighborhood of $0$ and thus we can infer, again using the compactness and the fact that fine blow-ups are all $\Irm (T,0)$-homogeneous, the following decay lemma:
\begin{itemize}
\item[(D)] There is $\rho>0$ such that, if $r< \rho$, then
\begin{equation}\label{e:nondecay-2}
\int_{\mathbf{B}_{\rho/2} \cap \mathcal{M_j}} |DN_j|^2 \leq 2^{-(m+2\alpha-2)} \int_{\mathbf{B}_\rho\cap \mathcal{M}_j} |DN_j|^2
\end{equation}
when $]s_j, t_j]\ni \rho$.
\end{itemize}
This immediately implies that, if the intervals of flattening are infinitely many, then they must satisfy
\[
\liminf_j \frac{s_j}{t_j} > 0\, .
\]
To see this, we in fact argue by contradiction as above, using this time \cite{DLS16centermfld}*{Proposition 3.4}, to infer that
\begin{equation}\label{e:cm-proposition-3.4}
\int_{\mathcal{M}_j \cap \mathbf{B}_{s_j}} |DN_j|^2 \geq C^{-1} \left(\frac{s_j}{t_j}\right)^{m+2-2\delta_2} \boldsymbol{m}_{0,j}\, ,
\end{equation}
while iterating (D) we instead would get
\[
\int_{\mathcal{M}_j \cap \mathbf{B}_{s_j}} |DN_j|^2 \leq C \left(\frac{s_j}{t_j}\right)^{m+2\alpha-2} \boldsymbol{m}_{0,j}\, ,
\]
which this time is a contradiction because it would force $\alpha -1 \leq 1-\delta_2$ if $\frac{s_j}{t_j}$ is allowed to become too small, which does not hold.

We can now argue as in the proof of Proposition \ref{prop:excessdecay} to obtain, for every fixed $\kappa$ large enough and every $k$ sufficiently large (depending on $\kappa$), a decay of type
\[
\mathbf{E} (T, \mathbf{B}_{6\sqrt{m} s_{k+\kappa}}) \leq C \left(\frac{s_{k+\kappa}}{t_k}\right)^{2\alpha} \mathbf{E} (T, \mathbf{B}_{6 \sqrt{m} t_k}) + C t_k^2\, .
\]
It is not difficult to see that, if $\kappa$ is chosen large enough, an iteration of this inequality (combined with the information that $\liminf \frac{s_j}{t_j} >0$) gives a decay of type
\begin{equation}\label{e:decay-beta}
\mathbf{E} (T, \mathbf{B}_{6\sqrt{m} r}) \leq C r^{2\beta} 
\end{equation}
for every $\beta < \alpha$. In particular we can choose $\beta> 2-\delta_2$, and therefore conclude that, for a sufficiently large $j$, we must have $\boldsymbol{m}_{0,j} = \bar \varepsilon^2 t_j^{2-2\delta_2}$. But then \eqref{e:decay-beta} would imply
\begin{equation}\label{e:decay-beta-2}
\mathbf{E} (T, \mathbf{B}_{6\sqrt{m} r}) \leq C s_j^{2\beta-(2-2\delta_2)} \left(\frac{s_j}{t_j}\right)^{2-2\delta_2} \boldsymbol{m}_{0,j} \leq C s_j^{\beta+\delta_2} \left(\frac{s_j}{t_j}\right)^{2-2\delta_2} \boldsymbol{m}_{0,j}\, . 
\end{equation}
But of course the latter is at odds with \eqref{e:cm-proposition-3.4} when $s_j$ is sufficiently small. This reaches a contradiction and thus shows that there could not be infinitely many intervals of flattening.

We record here the following more quantitative consequence of our analysis, since it will be useful for the further study of flat singular points in our papers \cite{DLSk2} and \cite{DMS}.

\begin{proposition}\label{prop:I>2-2delta-quantitative}
Let $T$ be as in Theorem \ref{thm:uniquefv}. For every $\mu>0$ there is a positive constant $C(\mu ,m,n,Q)$, with the following property. If $\Irm (T, 0) >  2-\delta_2 + \frac{\mu}{2}$ at the flat singular point $0$, then there is $r_0> 0$ such that 
\begin{equation}\label{e:excessdecay-quantitative-higherI}
\mathbf{E} (T, \mathbf{B}_r) \leq C \left(\frac{r}{r_0}\right)^{2-2\delta_2 + \mu} \max \{\mathbf{E} (T, \mathbf{B}_{r_0}), \bar\varepsilon^2 r_0^{2-2\delta_2}\} \qquad \forall r<r_0\, .
\end{equation}
\end{proposition}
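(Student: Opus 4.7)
The plan is to follow the same strategy as in the proof of Proposition~\ref{prop:excessdecay-quantitative}, while exploiting the fact that under the hypothesis $\Irm(T,0) > 2-\delta_2+\mu/2$ the analysis in the preceding subsection on the case $\Irm(T,0) > 2-\delta_2$ guarantees that the intervals of flattening collapse to a single center manifold at sufficiently small scales. This removes the usual cap of $2-2\delta_2$ on the achievable decay exponent (which in Proposition~\ref{prop:excessdecay-quantitative} originates from the center-manifold error term) and allows us to exploit the full strength of the higher singularity degree.

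First I would set $\alpha := 1 - \delta_2 + \mu/2$, so that $2\alpha = 2-2\delta_2+\mu$, and note that the hypothesis translates to $\alpha < \Irm(T,0)-1$. From the analysis of the case $\Irm(T,0) > 2-\delta_2$ in the preceding subsection, there exists $J$ with $s_J = 0$, so a single center manifold $\mathcal{M} = \mathcal{M}_J$ with normal approximation $N = N_J$ is valid on all scales $]0, t_J]$ about the origin, and $\Ibf(r) \to \Irm(T,0)$ as $r \downarrow 0$ by Theorem~\ref{thm:uniquefv}.

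The core step is to establish a dyadic decay of the Dirichlet energy of $N$ over $\mathcal{M}$: there exists $r_0 \in (0, t_J]$ (depending on $T$) such that
\[
\int_{\mathbf{B}_{r/2}\cap\mathcal{M}} |DN|^2 \leq 2^{-(m+2\alpha-2)} \int_{\mathbf{B}_r\cap\mathcal{M}} |DN|^2 \qquad \forall r \leq r_0.
\]
This is the quantitative counterpart of decay (D) from the preceding subsection and is obtained by a standard contradiction-compactness argument: along any sequence of scales $r_k \downarrow 0$ for which the above inequality fails, one extracts (by the procedure of Section~\ref{ss:compactness}) a fine blow-up $u$ which, by Theorem~\ref{thm:uniquefv} and Theorem~\ref{t:consequences}\eqref{itm:consequences1}, is $\Irm(T,0)$-homogeneous. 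Lemma~\ref{l:harm-1} applied to $u$ then forces decay with exponent $m+2\Irm(T,0)-2 > m+2\alpha-2$, contradicting the failure of the one-step bound in the limit. Iterating dyadically yields
\[
\int_{\mathbf{B}_r\cap\mathcal{M}} |DN|^2 \leq C \left(\frac{r}{r_0}\right)^{m+2\alpha-2} \int_{\mathbf{B}_{r_0}\cap\mathcal{M}} |DN|^2 \qquad \forall r<r_0.
\]
Using the comparability of excess with Dirichlet energy of $N$ via Lemma~\ref{lem:excessTaylor} and Corollary~\ref{cor:graphDir}, combined with the height bound and the estimate $\int_{\mathbf{B}_{r_0}\cap\mathcal{M}} |DN|^2 \leq C r_0^m \boldsymbol{m}_0$, one converts this into
\[
\mathbf{E}(T,\mathbf{B}_r) \leq C\left(\frac{r}{r_0}\right)^{2-2\delta_2+\mu} \boldsymbol{m}_0 + C \boldsymbol{m}_0^{1+\gamma} r^{2-2\delta_2+\gamma},
\]
for some $\gamma > 0$. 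Since $\boldsymbol{m}_0 \leq \max\{\mathbf{E}(T,\mathbf{B}_{r_0}), \bar\varepsilon^2 r_0^{2-2\delta_2}\}$ and the second term is of higher order than $r^{2-2\delta_2+\mu}$ for $r < r_0$ (provided $r_0$ is chosen small enough), the desired bound follows.

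The main obstacle is making the constant $C$ depend only on $\mu, m, n, Q$ and not on the current $T$. The one-step decay ratio coming from the contradiction-compactness argument is uniform once we are at a scale where (i) $\mathbf{E}(T, \mathbf{B}_{r_0})$ is below a universal smallness threshold and (ii) $\Ibf(r) > 2-\delta_2 + \mu/2 + \eta_0$ for some uniform $\eta_0 > 0$ at all $r < r_0$. Both conditions are granted by Theorem~\ref{thm:uniquefv} together with the polynomial excess decay in this regime (shown in the proof of the qualitative case $\Irm(T,0) > 2-\delta_2$). The scale $r_0$ at which we enter this uniform regime depends on $T$, but once below $r_0$, the one-step decay and the iteration proceed with constants depending only on $\mu, m, n, Q$.
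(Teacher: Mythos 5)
Your overall strategy (establish $s_J=0$ from the qualitative case, obtain a dyadic decay for the Dirichlet energy of $N$ by compactness against the $\Irm(T,0)$-homogeneous fine blow-up, iterate, and convert to excess decay) is natural, and the first three steps are fine, but the fourth step — the conversion from Dirichlet decay of $N$ to excess decay of $T$ — contains an exponent error that breaks the argument.

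Concretely: with your choice $\alpha=1-\delta_2+\mu/2$ (so $2\alpha = 2-2\delta_2+\mu$), the iterated dyadic decay gives
\[
\int_{\mathbf{B}_r\cap\Mcal}|DN|^2 \,\le\, C\left(\frac{r}{r_0}\right)^{m+2\alpha-2}\int_{\mathbf{B}_{r_0}\cap\Mcal}|DN|^2\, ,
\]
so after normalizing by $r^{-m}$ one obtains
\[
r^{-m}\int_{\mathbf{B}_r\cap\Mcal}|DN|^2 \,\le\, C\left(\frac{r}{r_0}\right)^{2\alpha-2}\boldsymbol{m}_0 \,=\, C\left(\frac{r}{r_0}\right)^{-2\delta_2+\mu}\boldsymbol{m}_0\, ,
\]
not $C(r/r_0)^{2\alpha}\boldsymbol{m}_0 = C(r/r_0)^{2-2\delta_2+\mu}\boldsymbol{m}_0$ as you assert. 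Your claimed bound is off by a factor $(r/r_0)^2$, and indeed for $\mu<2\delta_2$ the exponent $2\alpha-2$ is negative, so the normalized Dirichlet energy of $N$ may even grow as $r\downarrow 0$. This is intrinsic: Lemma~\ref{l:harm-1} for an $\Irm$-homogeneous blow-up $u$ gives $\int_{B_\rho}|Du|^2\lesssim\rho^{m+2\Irm-2}$, so the excess-like quantity $\rho^{-m}\int|Du|^2$ decays at rate $2\Irm-2$, while you would need rate $2\Irm$. In addition, Lemma~\ref{lem:excessTaylor} and Corollary~\ref{cor:graphDir} relate $\int|DN|^2$ to the \emph{curvilinear} excess of $T$ relative to the (curved) center manifold $\Mcal$, not to the spherical excess $\mathbf{E}(T,\mathbf{B}_r)$; the latter additionally picks up the excess of $\Mcal$ itself over the best plane, contributing a term $\lesssim\boldsymbol{m}_0 r^2$ that your estimate does not account for.

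This is precisely why the paper's excess decay — both in the proof of Proposition~\ref{prop:excessdecay} and in the final subsection — is derived not from the Dirichlet decay of $N$ but from \emph{coarse} blow-ups together with Corollary~\ref{c:harm-3}: there one measures the Dirichlet energy of the blow-up after subtracting the optimal affine map $Q\llbracket D(\boldsymbol{\eta}\circ\bar f)(0)\rrbracket$, which is what matches the definition of spherical excess and gives a one-step decay with the correct exponent. The Dirichlet decay (D) of $N$ is used in that subsection only to establish that the intervals of flattening are eventually finite (so $s_J=0$) and to prove $\liminf s_j/t_j > 0$, not for the excess iteration itself. To repair your argument you should therefore follow the coarse blow-up route of Proposition~\ref{prop:excessdecay}, exploiting now the absence of the $\boldsymbol{m}_0$-floor constraint (since the sequence of $\boldsymbol{m}_{0,k}$ is no longer being reset at each new interval of flattening), which is what removes the cap $\alpha < 2-2\delta_2$ that appears in Proposition~\ref{prop:excessdecay-quantitative}.
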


%%      ---------------------------------------------------------------------
%%      --------------------------- BIBLIOGRAPHY ----------------------------
%%      ---------------------------------------------------------------------
%% PUT HERE THE BIBLIOGRAPHY IN YOUR FAVOURITE FORMAT
%% Please check that the format of the bibliography is uniform and coherent

\begin{bibdiv}
    \begin{biblist}

        \bib{WicJDG}{article}{
        AUTHOR = {Wickramasekera, Neshan},
        TITLE = {A regularity and compactness theorem for immersed stable minimal surfaces},
        JOURNAL = {J. Differential Geom.},
        VOLUME = {80},
        YEAR = {2008},
        PAGES = {79--173},
        }
        \bib{Allard_72}{article}{
            AUTHOR = {Allard, William K.},
            TITLE = {On the first variation of a varifold},
            JOURNAL = {Ann. of Math. (2)},
            %FJOURNAL = {Annals of Mathematics. Second Series},
            VOLUME = {95},
            YEAR = {1972},
            PAGES = {417--491},
            ISSN = {0003-486X},
            %MRCLASS = {49F20},
            %MRNUMBER = {307015},
            %MRREVIEWER = {M. Klingmann},
            DOI = {10.2307/1970868},
            URL = {https://doi.org/10.2307/1970868},
        }
        
\bib{BDG}{article}{
    AUTHOR = {Bombieri, E. and De Giorgi, E. and Giusti, E.},
     TITLE = {Minimal cones and the {B}ernstein problem},
   JOURNAL = {Invent. Math.},
  %FJOURNAL = {Inventiones Mathematicae},
    VOLUME = {7},
      YEAR = {1969},
     PAGES = {243--268},
      ISSN = {0020-9910},
   %MRCLASS = {53.04},
  %MRNUMBER = {250205},
%MRREVIEWER = {E. F. Beckenbach},
       DOI = {10.1007/BF01404309},
       URL = {https://doi.org/10.1007/BF01404309},
}
        
 \bib{Simons}{article}{
    AUTHOR = {Simons, James},
     TITLE = {Minimal varieties in riemannian manifolds},
   JOURNAL = {Ann. of Math. (2)},
  %FJOURNAL = {Annals of Mathematics. Second Series},
    VOLUME = {88},
      YEAR = {1968},
     PAGES = {62--105},
      ISSN = {0003-486X},
   %MRCLASS = {53.04 (35.00)},
  %MRNUMBER = {233295},
%MRREVIEWER = {W. F. Pohl},
       DOI = {10.2307/1970556},
       URL = {https://doi.org/10.2307/1970556},
}

        \bib{Almgren_regularity}{book}{
            AUTHOR = {Almgren Jr., Frederick J.},
            TITLE = {Almgren's big regularity paper},
            SERIES = {World Scientific Monograph Series in Mathematics},
            VOLUME = {1},
            NOTE = {$Q$-valued functions minimizing Dirichlet's integral and the
                regularity of area-minimizing rectifiable currents up to
                codimension 2,
                With a preface by Jean E. Taylor and Vladimir Scheffer},
            PUBLISHER = {World Scientific Publishing Co., Inc., River Edge, NJ},
            YEAR = {2000},
            PAGES = {xvi+955},
            ISBN = {981-02-4108-9},
            %MRCLASS = {49-02 (35J20 49N60 49Q20 58E12)},
            %MRNUMBER = {1777737},
            %MRREVIEWER = {Brian Cabell White},
        }
        
        \bib{SXChang}{article}{
            AUTHOR = {Chang, Sheldon Xu-Dong},
            TITLE = {Two-dimensional area minimizing integral currents are
                classical minimal surfaces},
            JOURNAL = {J. Amer. Math. Soc.},
            %FJOURNAL = {Journal of the American Mathematical Society},
            VOLUME = {1},
            YEAR = {1988},
            NUMBER = {4},
            PAGES = {699--778},
            ISSN = {0894-0347},
            %MRCLASS = {49F20 (49F10 49F22 58E12 58E15)},
            %MRNUMBER = {946554},
            %MRREVIEWER = {Harold Parks},
            DOI = {10.2307/1990991},
            URL = {https://doi.org/10.2307/1990991},
        }
        
    \bib{DL_JDG}{article}{
        title={The size of the singular set of area-minimizing currents},
        author={De Lellis, Camillo},
        journal={arXiv preprint arXiv:1506.08118},
        year={2015}
        }
        
    \bib{DLDPHM}{article}{
		AUTHOR = {De Lellis, Camillo},
		author={De Philippis, Guido},
		author={Hirsch, Jonas},
		author={Massaccesi, Annalisa},
		TITLE = {Boundary regularity of mass-minimizing integral currents and a question of {A}lmgren},
		BOOKTITLE = {2017 {MATRIX} annals},
		SERIES = {MATRIX Book Ser.},
		VOLUME = {2},
		PAGES = {193--205},
		PUBLISHER = {Springer, Cham},
		YEAR = {2019},
		%MRCLASS = {53C42 (35B65 35R01 49Q15 58A25)},
		%MRNUMBER = {3931066},
		%MRREVIEWER = {Fei-Tsen Liang},
	}
        
        \bib{DLSk2}{article}{
            AUTHOR = {De Lellis, Camillo},
            author = {Skorobogatova, Anna},
            TITLE = {Rectifiability of flat singularities of area minimizing integral currents: points with singularity degree strictly larger than $1$},
            journal={arXiv preprint},
            year={2023}
        }
        
        \bib{DMS}{article}{
            AUTHOR={De Lellis, Camillo},
            AUTHOR={Minter, Paul},
            AUTHOR={Skorobogatova, Anna},
            TITLE={The fine structure of the singular set of area-minimizing integral currents III: Frequency 1 flat singular points and $\Hcal^{m-2}$-a.e uniqueness of tangent cones},
            journal={arXiv preprint},
            year={2023}
        }
        
    \bib{DLS_MAMS}{article}{
            AUTHOR = {De Lellis, Camillo},
            author={Spadaro, Emanuele Nunzio},
            TITLE = {{$Q$}-valued functions revisited},
            JOURNAL = {Mem. Amer. Math. Soc.},
            %FJOURNAL = {Memoirs of the American Mathematical Society},
            VOLUME = {211},
            YEAR = {2011},
            NUMBER = {991},
            PAGES = {vi+79},
            ISSN = {0065-9266},
            ISBN = {978-0-8218-4914-9},
            %MRCLASS = {49Q20 (35J50)},
            %MRNUMBER = {2663735},
            %MRREVIEWER = {Michele Miranda},
            DOI = {10.1090/S0065-9266-10-00607-1},
            URL = {https://doi.org/10.1090/S0065-9266-10-00607-1},
        }
        
        \bib{DLS_multiple_valued}{article}{
            AUTHOR = {De Lellis, Camillo},
            author={Spadaro, Emanuele},
            TITLE = {Multiple valued functions and integral currents},
            JOURNAL = {Ann. Sc. Norm. Super. Pisa Cl. Sci. (5)},
            %FJOURNAL = {Annali della Scuola Normale Superiore di Pisa. Classe di
                %Scienze. Serie V},
            VOLUME = {14},
            YEAR = {2015},
            NUMBER = {4},
            PAGES = {1239--1269},
            ISSN = {0391-173X},
            %MRCLASS = {49Q15 (49Q20)},
            %MRNUMBER = {3467655},
            %MRREVIEWER = {Luca Granieri},
        }
    \bib{DLS14Lp}{article}{
            AUTHOR = {De Lellis, Camillo},
            author = {Spadaro, Emanuele},
            TITLE = {Regularity of area minimizing currents {I}: gradient {$L^p$}
                estimates},
            JOURNAL = {Geom. Funct. Anal.},
            %FJOURNAL = {Geometric and Functional Analysis},
            VOLUME = {24},
            YEAR = {2014},
            NUMBER = {6},
            PAGES = {1831--1884},
            ISSN = {1016-443X},
            %MRCLASS = {49Q15 (49N60 49Q05)},
            %MRNUMBER = {3283929},
            %MRREVIEWER = {S\l awomir Kolasi\'{n}ski},
            DOI = {10.1007/s00039-014-0306-3},
            URL = {https://0-doi-org.pugwash.lib.warwick.ac.uk/10.1007/s00039-014-0306-3},
        }
        
        \bib{DLS16centermfld}{article}{
            AUTHOR = {De Lellis, Camillo},
            author={Spadaro, Emanuele},
            TITLE = {Regularity of area minimizing currents {II}: center manifold},
            JOURNAL = {Ann. of Math. (2)},
            %FJOURNAL = {Annals of Mathematics. Second Series},
            VOLUME = {183},
            YEAR = {2016},
            NUMBER = {2},
            PAGES = {499--575},
            ISSN = {0003-486X},
            %MRCLASS = {49Q15 (49Q05 53C42 58A20)},
            %MRNUMBER = {3450482},
            %MRREVIEWER = {S\l awomir Kolasi\'{n}ski},
            DOI = {10.4007/annals.2016.183.2.2},
            URL = {https://0-doi-org.pugwash.lib.warwick.ac.uk/10.4007/annals.2016.183.2.2},
        }

        \bib{DLS16blowup}{article}{
            AUTHOR = {De Lellis, Camillo},
            author = {Spadaro, Emanuele},
            TITLE = {Regularity of area minimizing currents {III}: blow-up},
            JOURNAL = {Ann. of Math. (2)},
            %FJOURNAL = {Annals of Mathematics. Second Series},
            VOLUME = {183},
            YEAR = {2016},
            NUMBER = {2},
            PAGES = {577--617},
            ISSN = {0003-486X},
            %MRCLASS = {49Q15 (53C42 58A20)},
            %MRNUMBER = {3450483},
            %MRREVIEWER = {S\l awomir Kolasi\'{n}ski},
            DOI = {10.4007/annals.2016.183.2.3},
            URL = {https://0-doi-org.pugwash.lib.warwick.ac.uk/10.4007/annals.2016.183.2.3},
        }
        
        \bib{DLSS1}{article}{
            AUTHOR = {De Lellis, Camillo},
            author={Spadaro, Emanuele},
            author={Spolaor, Luca},
            TITLE = {Regularity theory for {$2$}-dimensional almost minimal
                currents {I}: {L}ipschitz approximation},
            JOURNAL = {Trans. Amer. Math. Soc.},
            %FJOURNAL = {Transactions of the American Mathematical Society},
            VOLUME = {370},
            YEAR = {2018},
            NUMBER = {3},
            PAGES = {1783--1801},
            ISSN = {0002-9947},
            %MRCLASS = {49N60 (49Q05 49Q15)},
            %MRNUMBER = {3739191},
            %MRREVIEWER = {Martin Fuchs},
            DOI = {10.1090/tran/6995},
            URL = {https://doi.org/10.1090/tran/6995},
        }
        
        \bib{DLSS2}{article}{
            AUTHOR = {De Lellis, Camillo},
            author={Spadaro, Emanuele},
            author={Spolaor, Luca},
            TITLE = {Regularity theory for 2-dimensional almost minimal currents
                {II}: {B}ranched center manifold},
            JOURNAL = {Ann. PDE},
            %FJOURNAL = {Annals of PDE. Journal Dedicated to the Analysis of Problems
                %from Physical Sciences},
            VOLUME = {3},
            YEAR = {2017},
            NUMBER = {2},
            PAGES = {Paper No. 18, 85},
            ISSN = {2524-5317},
            %MRCLASS = {49Q15 (49N15 58A25)},
            %MRNUMBER = {3712561},
            %MRREVIEWER = {Laurent Moonens},
            DOI = {10.1007/s40818-017-0035-7},
            URL = {https://doi.org/10.1007/s40818-017-0035-7},
        }
        \bib{DLSS3}{article}{
            AUTHOR = {De Lellis, Camillo},
            author={Spadaro, Emanuele},
            author={Spolaor, Luca},
            TITLE = {Regularity theory for 2-dimensional almost minimal currents
                {III}: {B}lowup},
            JOURNAL = {J. Differential Geom.},
            %FJOURNAL = {Journal of Differential Geometry},
            VOLUME = {116},
            YEAR = {2020},
            NUMBER = {1},
            PAGES = {125--185},
            ISSN = {0022-040X},
            %MRCLASS = {49N60 (49Q15 49Q20 58A25)},
            %MRNUMBER = {4146358},
            DOI = {10.4310/jdg/1599271254},
            URL = {https://doi.org/10.4310/jdg/1599271254},
        }
        
        \bib{Hardt_Simon_boundary}{article}{
            AUTHOR = {Hardt, Robert},
            author={Simon, Leon},
            TITLE = {Boundary regularity and embedded solutions for the oriented
                {P}lateau problem},
            JOURNAL = {Ann. of Math. (2)},
            %FJOURNAL = {Annals of Mathematics. Second Series},
            VOLUME = {110},
            YEAR = {1979},
            NUMBER = {3},
            PAGES = {439--486},
            ISSN = {0003-486X},
            %MRCLASS = {49F10 (49F20 53A10)},
            %MRNUMBER = {554379},
            %MRREVIEWER = {Jo\~{a}o Lucas Marques Barbosa},
            DOI = {10.2307/1971233},
            URL = {https://doi.org/10.2307/1971233},
        }
        
        \bib{Federer}{book}{
            AUTHOR = {Federer, Herbert},
            TITLE = {Geometric measure theory},
            SERIES = {Die Grundlehren der mathematischen Wissenschaften, Band 153},
            PUBLISHER = {Springer-Verlag New York Inc., New York},
            YEAR = {1969},
            PAGES = {xiv+676},
            %MRCLASS = {28.80 (26.00)},
            %MRNUMBER = {0257325},
            %MRREVIEWER = {J. E. Brothers},
        }
        
        \bib{Federer1970}{article}{
            AUTHOR = {Federer, Herbert},
            TITLE = {The singular sets of area minimizing rectifiable currents with
                codimension one and of area minimizing flat chains modulo two
                with arbitrary codimension},
            JOURNAL = {Bull. Amer. Math. Soc.},
            %FJOURNAL = {Bulletin of the American Mathematical Society},
            VOLUME = {76},
            YEAR = {1970},
            PAGES = {767--771},
            ISSN = {0002-9904},
            %MRCLASS = {28.80 (26.00)},
            %MRNUMBER = {260981},
            %MRREVIEWER = {J. E. Brothers},
            DOI = {10.1090/S0002-9904-1970-12542-3},
            URL = {https://doi.org/10.1090/S0002-9904-1970-12542-3},
        }

    \bib{KW1}{article}{
      title={Analysis of singularities of area minimizing currents: planar frequency, branch points of rapid decay, and weak locally uniform approximation}, 
      author={Brian Krummel and Neshan Wickramasekera},
      journal={arXiv preprint},
      year={2023}
}

    \bib{KW2}{article}{
      title={Analysis of singularities of area minimizing currents: a uniform height bound, estimates away from branch points of rapid decay, and uniqueness of tangent cones}, 
      author={Brian Krummel and Neshan Wickramasekera},
      journal={arXiv preprint},
      year={2023}
}

    \bib{KW3}{article}{
      title={Analysis of singularities of area minimising currents: higher order decay estimates at branch points and rectifiability of the singular set}, 
      author={Brian Krummel and Neshan Wickramasekera},
      year={In preparation}
}

        \bib{NV_Annals}{article}{
            AUTHOR = {Naber, Aaron},
            author={Valtorta, Daniele},
            TITLE = {Rectifiable-{R}eifenberg and the regularity of stationary and
                minimizing harmonic maps},
            JOURNAL = {Ann. of Math. (2)},
            %FJOURNAL = {Annals of Mathematics. Second Series},
            VOLUME = {185},
            YEAR = {2017},
            NUMBER = {1},
            PAGES = {131--227},
            ISSN = {0003-486X},
            %MRCLASS = {58E20 (53C43)},
            %MRNUMBER = {3583353},
            %MRREVIEWER = {Andreas Gastel},
            DOI = {10.4007/annals.2017.185.1.3},
            URL = {https://doi.org/10.4007/annals.2017.185.1.3},
        }
        
            \bib{Simon_rectifiability}{article}{
            AUTHOR = {Simon, Leon},
            TITLE = {Rectifiability of the singular sets of multiplicity {$1$}
                minimal surfaces and energy minimizing maps},
            BOOKTITLE = {Surveys in differential geometry, {V}ol. {II} ({C}ambridge,
                {MA}, 1993)},
            PAGES = {246--305},
            PUBLISHER = {Int. Press, Cambridge, MA},
            YEAR = {1995},
            %MRCLASS = {49Q05 (58E12)},
            %MRNUMBER = {1375258},
            %MRREVIEWER = {Martin Fuchs},
        }
        
        \bib{NV_varifolds}{article}{
            AUTHOR = {Naber, Aaron},
            author = {Valtorta, Daniele},
            TITLE = {The singular structure and regularity of stationary varifolds},
            JOURNAL = {J. Eur. Math. Soc. (JEMS)},
            %FJOURNAL = {Journal of the European Mathematical Society (JEMS)},
            VOLUME = {22},
            YEAR = {2020},
            NUMBER = {10},
            PAGES = {3305--3382},
            ISSN = {1435-9855},
            %MRCLASS = {58E12 (35J60 35J93 49Q20)},
            %MRNUMBER = {4153109},
            %MRREVIEWER = {Pei Biao Zhao},
            DOI = {10.4171/jems/987},
            URL = {https://doi.org/10.4171/jems/987},
        }
        
        \bib{Simon_GMT}{book}{
            AUTHOR = {Simon, Leon},
            TITLE = {Lectures on geometric measure theory},
            SERIES = {Proceedings of the Centre for Mathematical Analysis,
              Australian National University},
            VOLUME = {3},
            PUBLISHER = {Australian National University, Centre for Mathematical
              Analysis, Canberra},
            YEAR = {1983},
            PAGES = {vii+272},
            ISBN = {0-86784-429-9},
            %MRCLASS = {49-01 (28A75 49F20)},
            %MRNUMBER = {756417},
            %MRREVIEWER = {J. S. Joel},
        }
        
        \bib{Sk21}{article}{
            title={An upper Minkowski bound for the interior singular set of area minimizing currents}, 
            author={Skorobogatova, Anna},
            JOURNAL = {CPAM, accepted},
            year={2021},
            eprint={2108.00418},
            %archivePrefix={arXiv},
            %primaryClass={math.DG}
        }
        
        \bib{Spolaor_15}{article}{
            AUTHOR = {Spolaor, Luca},
            TITLE = {Almgren's type regularity for semicalibrated currents},
            JOURNAL = {Adv. Math.},
            %FJOURNAL = {Advances in Mathematics},
            VOLUME = {350},
            YEAR = {2019},
            PAGES = {747--815},
            ISSN = {0001-8708},
            %MRCLASS = {49Q15 (49N60)},
            %MRNUMBER = {3948685},
            %MRREVIEWER = {Alp\'{a}r R. M\'{e}sz\'{a}ros},
            DOI = {10.1016/j.aim.2019.04.057},
            URL = {https://doi.org/10.1016/j.aim.2019.04.057},
        }

\bib{White}{article}{
    AUTHOR = {White, Brian},
     TITLE = {Tangent cones to two-dimensional area-minimizing integral
              currents are unique},
   JOURNAL = {Duke Math. J.},
  %FJOURNAL = {Duke Mathematical Journal},
    VOLUME = {50},
      YEAR = {1983},
    NUMBER = {1},
     PAGES = {143--160},
      ISSN = {0012-7094},
   %MRCLASS = {49F22 (49F10)},
  %MRNUMBER = {700134},
       DOI = {10.1215/S0012-7094-83-05005-6},
       URL = {https://doi.org/10.1215/S0012-7094-83-05005-6},
}

\bib{Liu}{article}{
  AUTHOR = {Liu, Zhenhua},
  TITLE = {On a conjecture of Almgren: area-minimizing surfaces with fractal singularities},
  JOURNAL = {arXiv preprint},
  YEAR = {2021},
  DOI = {10.48550/ARXIV.2110.13137},
  URL = {https://arxiv.org/abs/2110.13137}
}

        \bib{WhiteStrat}{article}{
            AUTHOR = {White, Brian},
            TITLE = {Stratification of minimal surfaces, mean curvature flows, and
                harmonic maps},
            JOURNAL = {J. Reine Angew. Math.},
            %FJOURNAL = {Journal f\"{u}r die Reine und Angewandte Mathematik. [Crelle's
            %    Journal]},
            VOLUME = {488},
            YEAR = {1997},
            PAGES = {1--35},
            ISSN = {0075-4102},
            %MRCLASS = {49Q05 (49Q20 53C42 58E20)},
            %MRNUMBER = {1465365},
            %MRREVIEWER = {Harold Parks},
            DOI = {10.1515/crll.1997.488.1},
            URL = {https://doi.org/10.1515/crll.1997.488.1},
        }
    \end{biblist}
\end{bibdiv}

\end{document}